\newtheorem{theorem}{Theorem}
\newtheorem{proposition}[theorem]{Proposition}
\newtheorem{lemma}[theorem]{Lemma}
\newtheorem{definition}[theorem]{Definition}
\newtheorem{corollary}[theorem]{Corollary}
\theoremstyle{remark}
\newtheorem{remark}[theorem]{Remark}
\newcommand{\fr}[2]{\hbox{\large$\frac{#1}{#2}$}}
\renewcommand{\tt}{\tau}
\numberwithin{equation}{section}
\numberwithin{theorem}{section}
\begin{document}
\bibliographystyle{amsalpha} 
\title[Orthogonal complex
structures]{Orthogonal complex structures on domains in $\mathbb{R}^4$} 

\author{Simon Salamon}
\thanks{Partially supported by MIUR (Metriche Riemanniane e
   Variet\`a Differenziabili, PRIN\,05)}
\address{Simon Salamon, Dipartimento di Matematica, Politecnico di
  Torino, Corso Duca degli Abruzzi 24, 10129 Torino, Italy.}
\email{simon.salamon@polito.it}
\author{Jeff Viaclovsky}
\address{\vspace{-2mm}Jeff Viaclovsky, Department of Mathematics, MIT, Cambridge, MA 02139}
\address{Department of Mathematics, University of Wisconsin, Madison, 
WI, 53706}
\email{jeffv@math.wisc.edu}
\thanks{Research partially supported by NSF Grant DMS-0503506}
\dedicatory{To Nigel Hitchin on the occasion of his 60th birthday}
\begin{abstract}
 An orthogonal complex structure on a domain in $\mathbb{R}^4$ is a complex structure 
which is integrable and is compatible with the Euclidean metric.
This gives rise to a first order system of partial differential 
equations which is conformally invariant.
We prove two Liouville-type uniqueness theorems for solutions
of this system, and use these to give an alternative proof of 
the classification of compact locally conformally flat 
Hermitian surfaces first proved by Pontecorvo.
We also give a classification of non-degenerate
quadrics in $\mathbb{CP}^3$ under the action of the conformal group
$SO_\circ(1,5)$. Using this classification, we show that generic  
quadrics give rise to orthogonal complex structures defined on the 
complement of unknotted solid tori which are smoothly embedded in $\mathbb{R}^4$.
\end{abstract}
\date{April 8, 2008}
\maketitle
\newcommand{\pp}{\partial / \partial}
\newcommand{\pzj}{\partial / \partial z_j}
\newcommand{\pzbj}{\partial / \partial \bar{z}_j}
\newcommand{\pxj}{\partial / \partial x_j}
\newcommand{\pyj}{\partial / \partial y_j}
\newcommand{\dzj}{dz_j}
\newcommand{\dbzj}{d \bar{z}_j}
\newcommand{\la}{\langle}
\newcommand{\ra}{\rangle}
\newcommand{\bz}{\bar{z}}
\newcommand{\bw}{\bar{w}}
\newcommand{\bo}{\bar{1}}
\newcommand{\bt}{\bar{2}}
\newcommand{\xa}{\xi_0}
\newcommand{\bxa}{\overline{\xi}_0}
\newcommand{\xb}{\xi_{12}}
\newcommand{\bxb}{\overline{\xi}_{12}}
\newcommand{\cc}{\mathbb{C}}
\newcommand{\hh}{\mathbb{H}}
\newcommand{\rr}{\mathbb{R}}
\newcommand{\bc}{\bar{c}}
\newcommand{\cp}{\mathbb{CP}}
\newcommand{\Wa}{\xi_0 z_1 - \xi_{12} \bar{z}_2}
\newcommand{\Wb}{\xi_0 z_2 + \xi_{12} \bar{z}_1}
\newcommand{\vs}{\vskip10pt\medbreak\noindent}
\newcommand{\ba}{\begin{array}}
\newcommand{\ea}{\end{array}} 
\newcommand{\be}[1]{\begin{equation}\label{#1}}
\newcommand{\ee}{\end{equation}}
\newcommand{\rf}[1]{(\ref{#1})}
\newcommand{\ab}{{\alpha\beta}}
\newcommand{\C}{\mathbb{C}}
\newcommand{\HH}{\mathbb{H}}
\newcommand{\R}{\mathbb{R}}
\newcommand{\ol}{\overline}
\def\bar{\ol}
\newcommand{\vv}{\mathbf{v}}
\newcommand{\ww}{\mathbf{w}}
\newcommand{\xx}{\mathbf{x}}
\newcommand{\yy}{\mathbf{y}}
\newcommand{\zz}{\mathbf{z}}
\newcommand{\tK}{\tilde\Lambda\kern1pt}
\newcommand{\tQ}{\tilde Q\kern1pt}
\newcommand{\tp}{^{\kern-1pt\top}\kern-2pt}
\newcommand{\mat}[1]{\left(\!\ba{cc}#1\ea\!\right)}
\newcommand{\Ext}{\raise1pt\hbox{$\bigwedge$}}
\renewcommand{\Im}{\mathop{\mathfrak{Im}}}
\renewcommand{\Re}{\mathop{\mathfrak{Re}}}
\newcommand{\block}[2]{\big\{#1\kern2pt|\kern2pt#2\big\}}
\newcommand{\bxi}{\hbox{\boldmath$\xi$}}
\newcommand{\FF}{F}
\newcommand{\K}{K}
\newcommand{\hyp}{\mathscr{H}}
\newcommand{\Q}{\mathscr{Q}}
\renewcommand{\ge}{\geqslant}
\renewcommand{\le}{\leqslant}
\renewcommand{\l}{\lambda}
\newcommand{\m}{\mu}
\newcommand{\n}{\nu}
\newcommand{\rank}{\mathrm{rank}\,}
\parskip1pt

\setcounter{tocdepth}{1}
\vspace{-5mm}
\tableofcontents

%%%%%%%%%%%%%%%%%%%%%%%%%%%%%%%%%%%%%%%%%%%%%%%%
\section{Introduction}
%%%%%%%%%%%%%%%%%%%%%%%%%%%%%%%%%%%%%%%%%%%%%%%%

Let $(M^4,g)$ denote a $4$-dimensional oriented Riemannian manifold.
\begin{definition}
  An {\em{almost complex structure}} is an endomorphism $J : TM
  \rightarrow TM$ satisfying $J^2 = - I$.  The almost complex
  structure $J$ is said to be {\em{orthogonal}} if it is an orthogonal
  transformation, that is,
\begin{align*}
g(J v, J w) = g(v, w)
\end{align*}
for every $v, w \in T_pM,$ and preserves orientation. 
An orthogonal almost complex structure is said to 
be an {\em{orthogonal complex structure}} or an {\em{OCS}} 
if $J$ is integrable. 
\end{definition}
\begin{remark}
There are several equivalent conditions for integrability:

(a) There exist holomorphic coordinates compatible with $J$. 

(b) The Nijenhuis tensor $N^i_{jk}$ of $J$ vanishes.

(c) The space of $(1,0)$ vector fields relative to $J$ is closed under
Lie bracket.

\noindent We shall assume the reader is familiar with these conditions, see
\cite{Chern}.
\end{remark}

We are interested in $(\mathbb{R}^4, g_E)$, where $g_E$ is the
Euclidean metric. In this case, an OCS is simply a map
\begin{align}\label{OCSmap}
J : \rr^4 \rightarrow \{ M \in SO(4)\>:\> M^2 = - I \}
\end{align}
which satisfies any of the equivalent conditions (a), (b), or (c). 

The first part of the paper deals with a uniqueness question;
the following theorem can be viewed as a ``Liouville Theorem''.
Let $\mathcal{H}^k(\Lambda)$ denote the $k$-Hausdorff measure of a subset $
\Lambda \subset \rr^4$.  The class of $k$-times continuously differentiable
functions is denoted by $C^k$.

\begin{theorem}
\label{t1}
Let $J$ be an orthogonal complex structure of class $C^1$ on
$\mathbb{R}^4 \setminus \Lambda$, where $\Lambda$ is a closed set with
$\mathcal{H}^1(\Lambda) = 0$.  Then either $J$ is constant or $J$ can be
maximally extended to the complement $\mathbb{R}^4 \setminus \{p\}$ of
a point. In both cases, $J$ is the image of the standard
orthogonal complex structure $J_0$ on $\mathbb{R}^4$ under a 
conformal transformation.
\end{theorem}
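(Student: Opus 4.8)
The plan is to exploit the twistor description of OCS's on domains in $\mathbb{R}^4$. Recall that $S^4 = \mathbb{R}^4 \cup \{\infty\}$ has twistor space $\mathbb{CP}^3$ with the twistor fibration $\pi : \mathbb{CP}^3 \to S^4$, and an OCS $J$ on an open set $U \subseteq \mathbb{R}^4$ is exactly the same thing as a holomorphic section $s : U \to \mathbb{CP}^3$ of $\pi$ over $U$, i.e.\ the image $s(U)$ is a (locally closed) complex surface in $\mathbb{CP}^3$ transverse to the fibers. So given $J$ on $\mathbb{R}^4 \setminus \Lambda$, I get a complex surface $\Sigma_0 = s(\mathbb{R}^4 \setminus \Lambda) \subset \mathbb{CP}^3$. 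The first step is to show that, because $\mathcal{H}^1(\Lambda) = 0$, the closure $\Sigma = \overline{\Sigma_0}$ is an honest analytic subvariety of $\mathbb{CP}^3$: the removable-singularity input is that a complex-analytic set in $\mathbb{CP}^3$ cannot be completed across a set that is too small, and here $s(\Lambda)$ projects into $\Lambda \times (\text{fiber})$, which has Hausdorff dimension at most $1 + 2 = 3 < 4 = \dim_{\mathbb{R}}\Sigma_0$, so Shiffman's / Bishop's theorem on removable singularities for analytic sets applies and $\Sigma$ is a closed complex analytic subset, hence (by Chow) a projective algebraic surface.

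The second step is to analyze which algebraic surfaces $\Sigma \subset \mathbb{CP}^3$ can arise this way. The key constraint is that $\pi|_\Sigma : \Sigma \to S^4$ is generically one-to-one onto its image (which contains $\mathbb{R}^4 \setminus \Lambda$, an open dense set of $S^4$), so $\Sigma$ meets the generic twistor line in exactly one point; thus $\Sigma$ has degree one, i.e.\ $\Sigma$ is a hyperplane $\mathbb{CP}^2 \subset \mathbb{CP}^3$. (One must check $\Sigma$ is irreducible — it is, since $\Sigma_0$ is connected and dense in it — and rule out $\Sigma$ being a union of fibers, which it is not since it dominates $S^4$.) Now the linear sections $\mathbb{CP}^2 \subset \mathbb{CP}^3$ that are transverse to the twistor fibration are classically known: generic hyperplanes give the OCS's obtained from $J_0$ by Möbius transformations of $\mathbb{R}^4$, while the "horizontal" hyperplanes (those tangent to the Klein quadric / containing a twistor line) fail transversality exactly along one fiber, i.e.\ over one point of $S^4$. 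This dichotomy is precisely the two cases in the statement: either $\Sigma$ is transverse everywhere and pulls back $J_0$ to a conformally transformed OCS on all of $\mathbb{R}^4$ (and if moreover $\infty \notin$ the bad locus one gets a constant $J$ after a conformal change — constancy on $\mathbb{R}^4$ meaning conformally-standard with no removed point), or $\Sigma$ degenerates over a single point $p$ and $J$ extends maximally to $\mathbb{R}^4 \setminus \{p\}$.

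The third step is to reconcile "extends to $\mathbb{R}^4 \setminus \Lambda$" with the conclusion "extends to $\mathbb{R}^4 \setminus \{p\}$": once we know $\Sigma$ is a hyperplane, $s$ is recovered on the transversality locus of $\pi|_\Sigma$ as $\pi|_\Sigma^{-1}$, which is an open set whose complement in $S^4$ is either empty or the single point $p$; since $\mathbb{R}^4 \setminus \Lambda \subseteq$ (transversality locus of $\pi|_\Sigma$)$\,\cup\,\Lambda$ and both sides agree on a dense open set of $\mathbb{R}^4$, real-analyticity of $J$ on its domain (an OCS is automatically real-analytic, being a solution of the conformally-invariant elliptic-type first-order system) forces $J$ to coincide with this extension wherever defined. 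Finally the explicit description of the good hyperplanes as an orbit of the conformal group $SO_\circ(1,5)$ acting on $\mathbb{CP}^3$ (which the abstract announces is catalogued later in the paper) gives the last sentence: in every case $J$ is $J_0$ transported by a conformal transformation.

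The main obstacle I anticipate is the first step — genuinely controlling the closure of $\Sigma_0$ across $s(\Lambda)$. One needs that $\Sigma_0$ has \emph{locally finite volume} near $\Lambda$ (or finite $\mathcal{H}^4$-measure of the closure of a bounded piece) before the removable-singularity theorem for analytic sets can be invoked, and establishing the volume bound requires exploiting the conformal invariance of the system together with the smallness $\mathcal{H}^1(\Lambda) = 0$ — plausibly via a monotonicity formula for the (minimal, since complex) surface $\Sigma_0$, or via a direct estimate on the section $s$ showing it cannot oscillate wildly near a $1$-null set. A secondary subtlety is the borderline Hausdorff dimension count ($3$ vs.\ $4$): the relevant removability results (Shiffman, or Harvey–Lawson for the volume version) need codimension-type hypotheses that I would have to confirm are met here, possibly after first improving $\mathcal{H}^1(\Lambda)=0$ to $\Lambda$ having zero capacity of the appropriate order.
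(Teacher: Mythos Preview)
Your approach is essentially identical to the paper's: lift $J$ to its graph in $\mathbb{CP}^3$, apply Shiffman's removable-singularity theorem to extend the analytic set across $\pi^{-1}(\Lambda)$, observe that the closure meets a generic twistor fiber in one point and hence has degree one (so is a hyperplane), and finish with the transitivity of $SO(5)\subset SO_\circ(1,5)$ on $(\mathbb{CP}^3)^*$. Two small corrections. First, your anticipated obstacle does not arise: Shiffman's theorem requires only that the exceptional set $E=\pi^{-1}(\Lambda)\cong\Lambda\times\mathbb{CP}^1$ satisfy $\mathcal{H}^{2k-1}(E)=\mathcal{H}^3(E)=0$, and this follows directly from $\mathcal{H}^1(\Lambda)=0$; no volume, monotonicity, or capacity estimates are needed. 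Second, \emph{every} hyperplane in $\mathbb{CP}^3$ contains exactly one twistor line (any two lines in $\mathbb{CP}^2$ meet, so at most one; and $S^4\not\cong\mathbb{CP}^2$, so at least one), so the dichotomy in the theorem is simply whether that unique bad point is $\infty$ (giving constant $J$ on $\mathbb{R}^4$) or a finite point $p$.
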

\begin{remark}
The proof of Theorem \ref{t1} is fairly elementary, and 
is a generalization of \cite[Proposition 6.6]{WoodIJM}, which 
considered the case in which $\Lambda$ is a single point. 
\end{remark}
\begin{remark} This theorem is somewhat reminiscent of the well-known 
  Liouville Theorem in conformal geometry on $\rr^n$ due to
  Caffarelli-Gidas-Spruck \cite{CGS}, which generalized earlier work
  of Obata on $S^n$ \cite{Obata}. In the former, it was proved that a
  positive constant scalar curvature metric on $\rr^n$ conformal to
  the Euclidean metric must be the image of the standard metric on
  $S^n$ under a conformal transformation. 
\end{remark}

Theorem \ref{t1} says that if the Hausdorff dimension 
of the singular set is less than one, then it must correspond to 
a constant OCS. This is sharp, in the sense that there 
exists OCSes with singular set having dimension one. 
Our second uniqueness theorem is as follows: 
\begin{theorem}
\label{t2}
Let $J$ be an orthogonal complex structure 
of class $C^1$ on $\Omega = \rr^4 \setminus \Lambda$,
where $\Lambda$ is a round circle or a straight line, and assume that $J$ is
not conformally equivalent to a constant orthogonal 
complex structure.  Then $J$ is unique up
to sign, and $\Omega$ is a maximal domain of definition for $J$.
\end{theorem}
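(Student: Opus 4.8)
The plan is to pass to the twistor fibration of $S^4$, attach to $J$ a complex surface in $\mathbb{CP}^3$, show that surface lies in a quadric, and then read off both conclusions from the geometry of the quadric. By conformal invariance I may apply a conformal transformation of $S^4=\mathbb{R}^4\cup\{\infty\}$ and assume $\Lambda$ is a straight line $L$, so that $\Omega$ is conformally $S^4$ with the round circle $C=L\cup\{\infty\}$ deleted; the case of a round circle in $\mathbb{R}^4$ reduces to this after extending $J$ across $\infty$ by a point-removability argument of the type behind Theorem~\ref{t1}. Recall that an orthogonal complex structure on an open $U\subset S^4$ compatible with the orientation is exactly a holomorphic section $s_J$ of the twistor fibration $\pi\colon\mathbb{CP}^3\to S^4$ over $U$; equivalently $\Sigma_J:=s_J(U)$ is a complex surface meeting each fibre (``real line'') $\pi^{-1}(x)$, $x\in U$, transversally in one point, the free antiholomorphic involution $j$ of $\mathbb{CP}^3$ acts on each fibre by the antipodal map, and $j(\Sigma_J)=\Sigma_{-J}$. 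So $J$ yields a complex surface $\Sigma=\Sigma_J\subset\mathbb{CP}^3\setminus\pi^{-1}(C)$.

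The heart of the proof is to show that $\Sigma$ is an open subset of an irreducible quadric $Q\subset\mathbb{CP}^3$, equal to its own Zariski closure. One first needs $\Sigma$ to have locally finite volume near $\pi^{-1}(C)$; since $\Sigma$ is the graph over $\Omega$ of the $\mathbb{CP}^1$-valued map $\mu$ representing $J$, this reduces to a gradient bound $|\nabla\mu|=O\big(\mathrm{dist}(\cdot,C)^{-1}\big)$, which I would obtain from the first-order elliptic system satisfied by $\mu$ by a rescaling--compactness argument using the smoothness of $C$. The delicate point --- and the reason the theorem is special to round circles --- is that $\pi^{-1}(C)$ is a \emph{real} hypersurface in $\mathbb{CP}^3$, too large for the Remmert--Stein or Bishop--Shiffman extension theorems to apply directly. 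To get around this I would exploit the large conformal symmetry group $SO_\circ(1,2)\times SO(3)$ of the pair $(S^4,C)$: its $SO(3)$-orbits are the round $2$-spheres linking $C$, whose twistor preimages form a one-parameter family of quadrics, and intersecting $\Sigma$ with these yields complex curves that can be controlled explicitly; patching the resulting data (equivalently, first reducing to $SO(3)$-invariant $J$) shows the Zariski closure of $\Sigma$ is an irreducible surface of degree at most two. A surface of degree one would make $\Sigma$ a section of $\pi$ over all of $S^4$ minus a point, i.e. $J$ would be conformally equivalent to a constant orthogonal complex structure --- precisely the case excluded by hypothesis; so the closure is an irreducible quadric $Q$.

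Granting this, both conclusions come from the double cover $\pi|_Q\colon Q\to S^4$. Since $j$ is free and carries $\Sigma$ to $\Sigma_{-J}$, one checks $j(Q)=Q$; then no real line is tangent to $Q$ (a tangency would force a $j$-fixed point) and $Q$ has no vertex, so $Q$ is smooth and $\pi|_Q$ is a genuine double cover, degenerating only where a real line lies inside $Q$. Its degeneracy locus is nonempty --- otherwise it would be an unbranched double cover of the simply connected $S^4$, producing an orthogonal complex structure on all of $S^4$ --- and since $\Sigma$ is a section over $S^4\setminus C$ it must equal $C$; hence $\pi^{-1}(x_0)\subset Q$ for every $x_0\in C$. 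Over the simply connected $\Omega$ the cover splits into two sheets, each the graph of an orthogonal complex structure; because $Q\cap\pi^{-1}(x)$ is a $j$-invariant, hence antipodal, pair in the fibre $\mathbb{CP}^1$, these are $J$ and $-J$. Any three pairwise skew lines determine a smooth quadric uniquely, and the lines $\pi^{-1}(x_0)$, $x_0\in C$, depend only on $C$, so every non-conformally-constant orthogonal complex structure on $\Omega$ produces this same $Q$ and thus equals $J$ or $-J$. Finally, if $J$ extended continuously to a point $x_0\in C$, its lift would be a complex surface through some $p\in\pi^{-1}(x_0)\subset Q$ and contained in $Q$, hence coinciding near $p$ with the surface $Q$ and so containing an arc of the line $\pi^{-1}(x_0)$ --- impossible for a section. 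Thus $\Omega$ is a maximal domain of definition.

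The main obstacle is the middle step --- showing $\Sigma$ is cut out by a quadratic equation across the real hypersurface $\pi^{-1}(C)$ --- where the volume bound, the conformal symmetry of $(S^4,C)$, and Theorem~\ref{t1} (to dispatch the degenerate ``constant'' case) all come in.
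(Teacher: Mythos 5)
Your reduction to twistor space and your endgame (reading off $\pm J$ from the two sheets of $\pi|_Q$, and maximality from the fact that a section cannot contain an arc of a fibre) are sound, and you correctly identify the crux: $\pi^{-1}(C)$ is a real hypersurface, so the Shiffman/Bishop extension theorems do not apply to it directly. But the workaround you propose for this crux does not work as stated, and this is where the proof has a genuine gap. The twistor preimage $\pi^{-1}(S^2)$ of a round $2$-sphere linking $C$ is \emph{not} a complex quadric (nor any complex hypersurface): for instance, in the coordinates of the paper the preimage of $\{z_2=0\}\cup\{\infty\}$ is cut out by $\ol{W_1}\,\xi_{12}=W_2\,\ol{\xi_0}$, which is not holomorphic. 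So there is no one-parameter family of quadrics to intersect $\Sigma$ with, and the parenthetical ``first reducing to $SO(3)$-invariant $J$'' is not available either --- nothing forces an arbitrary $J$ on $\Omega$ to be $SO(3)$-invariant, and averaging would destroy integrability. As it stands, the step ``the Zariski closure of $\Sigma$ is an irreducible surface of degree at most two'' is unproved, and everything downstream depends on it.

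The paper's resolution of exactly this difficulty is different and worth internalizing: one does not try to extend $\Sigma$ across the real set $\pi^{-1}(C)$ at all. Instead one first exhibits (Theorem \ref{s1u}) the \emph{real} quadric $\Q_1$ whose discriminant locus is the given circle; this is an algebraic surface that \emph{contains} $\pi^{-1}(C)$. One then applies Bishop's theorem (Theorem \ref{EBish}) with exceptional set $B=\Q_1$: either the graph $C_2$ of $J$ lies inside $\Q_1$ (giving one of its two branches, hence $J$ or $-J$), or $C_2\cap\Q_1$ is an analytic set of complex dimension at most one, so $\ol{C_2}\cap\Q_1$ has vanishing $\mathcal{H}^4$-measure, Bishop applies, and $\ol{C_2}$ is an algebraic variety of degree one, hence a hyperplane, making $J$ conformally constant --- which is excluded by hypothesis. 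If you want to keep your framework, this is the lemma you must insert in place of the $2$-sphere argument; your subsequent observations (the $j$-invariance of the fibre pairs giving $\pm J$, uniqueness of a smooth quadric through three pairwise skew fibres over $C$, and non-extendability) then go through essentially as you wrote them, and in fact your ``three skew lines'' remark gives a pleasant alternative to the paper's Bishop-based proof that $\Q_1$ is determined by its discriminant circle.
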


\begin{remark}
  All such $J$ are conformally equivalent (up to sign) by a conformal 
  transformation identifying the singular circles, 
  and are induced from 
  a ``real'' quadric in $\mathbb{CP}^3$. This correspondence will be made
  explicit in Section \ref{dosrq}. These examples arise in Pontecorvo's
  classification of locally conformally flat Hermitian surfaces, 
  and correspond geometrically to locally conformally
  flat Hermitian metrics on $\cp^1 \times S_g$, where $S_g$ is a Riemann surface
  of genus $g\ge2$ \cite{Pontecorvo}.  
  In Section \ref{dosrq}, using Theorems \ref{t1}
  and \ref{t2}, we will give a new proof of 
  Pontecorvo's classification.
\end{remark}

\begin{remark} We remark that theorems of Bernard Shiffman \cite{Shiffman}
and Errett Bishop \cite{Bishop} are crucial for the proofs of
Theorems \ref{t1} and \ref{t2}, respectively. These theorems give
conditions for when the closure of an analytic set 
is an analytic set, and are generalizations of the 
classical theorem of Thullen-Remmert-Stein, see Section \ref{remove}.
\end{remark} 

  Let $g_{rnd}$ denote a ``round'' metric on the sphere with positive
constant curvature. Since $(S^4 \setminus \{p\}, g_{rnd})$ is
conformally equivalent to $(\mathbb{R}^4, g_E)$, all of our theorems
can of course be phrased on $S^4$.  
The twistor space of $(S^4,g_{rnd})$ is $\mathbb{CP}^3$ -- this is the
total space of a bundle parametrizing orthogonal almost complex
strctures on $S^4$ -- and we let $\pi : \mathbb{CP}^3 \rightarrow S^4$
denote the twistor projection with fiber $SO(4)/U(2) = \mathbb{CP}^1$.
We will see in Section \ref{corresp} that complex hypersurfaces in
$\mathbb{CP}^3$ yield OCSes on subdomains of $S^4$, wherever such a
hypersurface is a single-valued graph.  Conversely, any OCS $J$ on a
domain $\Omega$ yields a holomorphic hypersurface -- the graph of $J$
in the twistor space.  Therefore the problem of finding OCSes on
subdomains is directly related to the geometry of hypersurfaces of
$\mathbb{CP}^3$ under the twistor projection.  It is well known that,
for purely topological reasons, $S^4$ does not admit any global almost
complex structure \cite[page 217]{Steenrod};
this is of course why we
must restrict our attention to proper subdomains of $S^4$.

In Section \ref{class2}, we give a classification of quadric
hypersurfaces under the action of the conformal group, and give a
canonical form for any quadric.  For reasons that will be clear later,
we let $(\xi_0, \xi_{12}, W_1, W_2)$ denote coordinates in
$\mathbb{C}^4$.  The group $SO_{\circ}(1,5)$ of conformal
transformations of $S^4$ acts on $\mathbb{CP}^3$, and is a subgroup of
the holomorphic automorphism group $PGL(4, \mathbb{C})$ of
$\mathbb{CP}^3$, see Section~\ref{action}.

\begin{theorem} 
\label{cf1}
  Any non-singular quadric hypersurface in $\mathbb{CP}^3$ 
is equivalent under the action of $SO_{\circ}(1,5)$ to 
the zero set of 
\begin{align*}
e^{\lambda+i\nu}\xi_0{}\!^2+
e^{-\lambda+i\nu}\xi_{12}{}\!^2+
e^{\mu-i\nu}W_1{}\!^2+
e^{-\mu-i\nu}W_2{}\!^2,
\end{align*}
where $\lambda, \mu, \nu \in \rr$, 
or to the zero set of  
\begin{align}
i(\xi_0^2+\xi_{12}^2) -k \xi_0W_1 + k \xi_{12}W_2 
-\xi_0W_2+\xi_{12}W_1,
\end{align}
where $k \in [0,1).$
\end{theorem}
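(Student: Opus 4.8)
The plan is to reduce the classification to the linear algebra of a single endomorphism. A non-singular quadric is the zero locus of a non-degenerate symmetric bilinear form $S$ on $\cc^4$, determined up to a nonzero scalar, and by Section~\ref{action} the group $SO_\circ(1,5)\subset PGL(4,\cc)$ is the image of $SL(2,\HH)$, the group of $\cc$-linear automorphisms of $\cc^4$ commuting with the quaternionic structure $j$ of the twistor fibration. Fix a matrix $J$ for $j$, so that $J\bar J=-I$ and the relevant $A$ are those with $\bar A=J^{-1}AJ$, the induced action on quadrics being $S\mapsto A\tp SA$. Introduce the conjugate quadric $j(Q)$, whose defining form $\tilde S$ has matrix $J\tp\bar SJ$, and set
\[
N:=S^{-1}\tilde S\in GL(4,\cc).
\]
A direct computation gives: (i) under $A\in SL(2,\HH)$ the induced map on quadrics sends $N\mapsto A^{-1}NA$, so the conjugacy class of $N$ is a conformal invariant of $Q$; (ii) $SN=\tilde S$ is symmetric, so $N$ is self-adjoint for $S$, whence $S$ is block-diagonal on the generalized eigenspaces of $N$; (iii) $jNj^{-1}=N^{-1}$ and $\det N$ is unimodular, so the eigenvalue multiset of $N$ is stable under $\l\mapsto\bar\l^{-1}$, with matching Jordan block sizes; and (iv) rescaling $S\mapsto cS$ multiplies $N$ by the unit scalar $\bar c/c$. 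The classification then proceeds by first listing the possible conjugacy classes of $N$ compatible with (ii), (iii) and an existence constraint coming from $j$, then normalizing $S$ for each by the residual subgroup of $SL(2,\HH)$ together with the scalar freedom (iv), and finally checking that each resulting canonical form is actually attained.

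The heart of the matter is the case analysis on the Jordan type of $N$. The extra $j$-constraint is that for $|\l|=1$ one has $jG_\l=G_\l$, forcing $\dim_{\cc}G_\l$ to be even (since $j^2=-1$ on $G_\l$), and, more sharply, a short computation shows that no $j$ with $jNj^{-1}=N^{-1}$ and $j^2=-1$ can act on a $2$-dimensional space on which $N$ is a single Jordan block with eigenvalue on the unit circle. Together with (ii) and (iii) this eliminates the non-diagonalizable Jordan partitions $[3,1]$, $[4]$ and $[2,1,1]$, leaving only the diagonalizable ones and the partition $[2,2]$. In the diagonalizable case $N$ has eigenvalues $\{\alpha,\bar\alpha^{-1},\beta,\bar\beta^{-1}\}$; by (ii), $S$ is diagonal in an eigenbasis of $N$, and since $j$ interchanges the $\alpha$- and $\bar\alpha^{-1}$-eigenlines (and likewise the $\beta$- and $\bar\beta^{-1}$-eigenlines) that eigenbasis may be chosen to be an $\HH$-basis, so the diagonalization takes place inside $SL(2,\HH)$; using (iv) and the residual diagonal torus one normalizes $S$ to $\mathrm{diag}(e^{\l+i\n},e^{-\l+i\n},e^{\m-i\n},e^{-\m-i\n})$. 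A quick check confirms that this $S$ gives $N=\mathrm{diag}(e^{-2\l-2i\n},e^{2\l-2i\n},e^{-2\m+2i\n},e^{2\m+2i\n})$, of the required shape, so this family is nonempty and exhausts the generic case, the parameters being $\l,\m,\n\in\rr$ up to a finite residual symmetry. In the partition $[2,2]$ case the two blocks carry eigenvalues $\alpha$ and $\bar\alpha^{-1}$; after normalizing with (iv) and the residual stabilizer inside $SL(2,\HH)$ a single real parameter survives, and writing out the associated $S$ yields the second quadratic form; I would pin down its exact range $k\in[0,1)$ by computing that stabilizer and the induced action, with $k=0$ the coincident-eigenvalue member and $k\to1$ the excluded limit in which $N$ acquires a zero eigenvalue and the quadric degenerates.

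I expect the main obstacle to be this last case: showing that (ii), (iii) and the $j$-existence constraint are simultaneously satisfiable, among the non-diagonalizable types, only for partition $[2,2]$; realizing the resulting normal form inside $SL(2,\HH)$ rather than all of $GL(4,\cc)$, which is exactly what controls the size of the surviving parameter; and identifying that parameter with a point of the half-open interval $[0,1)$, with endpoints correctly included or excluded, rather than of a circle or a ray. By contrast the diagonalizable case is comparatively routine once (i)--(iv) are in place; the only care needed there is to confirm — using the explicit form of $N$ above, and the fact that the normalizer of the diagonal torus in $SL(2,\HH)$ acts on $(\l,\m,\n)$ through a finite group — that no further continuous modulus survives.
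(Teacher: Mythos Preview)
Your approach is sound and genuinely different from the paper's. The paper does not introduce the endomorphism $N=S^{-1}\tilde S$ at all; instead it splits $Q=Q_1+iQ_2$ into its ``real'' and ``imaginary'' parts relative to the quaternionic structure (Lemma~\ref{realim}), uses Proposition~\ref{realcan} to put the real part into a fixed standard form $Q_0$, and then analyzes the residual action of the stabilizer $\mathscr G\cong SL(2,\R)\times_{\mathbb Z_2}SU(2)$ on the imaginary part. That imaginary part is encoded as a real $3\times3$ matrix $X$ on which $\mathscr G$ acts as $SO_\circ(1,2)\times SO(3)$ (Proposition~\ref{pi1pi2}), and the dichotomy diagonal/non-diagonal comes from a hyperbolic singular value decomposition of $X$ (Theorem~\ref{SVD}). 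Your route via the Jordan type of $N$ is more in the spirit of classifying pencils of quadrics and is arguably cleaner conceptually; the paper's route is more concrete, produces the $3\times3$ matrix $M_k$ directly, and makes the residual finite symmetries (Lemma~\ref{restr}) and the degeneration $k\to1$ completely explicit.

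One technical point to tighten: your ``short computation'' for a $2$-dimensional Jordan block with unit-modulus eigenvalue does rule out $[2,1,1]$ and, via parity, $[3,1]$, but it does \emph{not} by itself touch $[4]$, since there is no $2$-dimensional generalized eigenspace there. The correct argument is uniform: for a single Jordan block of size $n\ge2$ with $|\lambda|=1$, any $M$ satisfying $M\bar N M^{-1}=N^{-1}$ must preserve the canonical flag $\ker(N-\lambda I)^k$ and is therefore upper triangular, whence $(M\bar M)_{ii}=|M_{ii}|^2\ge0$ can never equal $-1$. This disposes of $[4]$ as well. Beyond that, the outline is correct; as you anticipate, the real work you have not yet done is the $[2,2]$ normalization inside $SL(2,\HH)$ and the identification of the surviving modulus with $k\in[0,1)$, which in the paper is handled by the explicit matrix $M_k$ and the determinant computation \eqref{detQ}--\eqref{4ith}.
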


\begin{remark}
For the complete classification, there are some additional relations
required on $(\l, \m, \n)$ in the diagonalizable case, 
see Lemma \ref{restr} below, 
but $k$ is a complete invariant in the second case. 
To classify the entire moduli space of quadrics, one needs also 
to consider singular quadrics; this 
will appear in a forthcoming paper. 
\end{remark}

Using this we can completely describe the geometry of
non-degenerate quadrics under the twistor projection $\pi$. The
discriminant locus of a quadric $\Q$ is the subset of points $p\in S^4$
for which $\pi^{-1}(p)\cap \Q$ has cardinality different from 2, see Section~\ref{grq}.

\begin{theorem}
\label{quadricst} Let $\Q$ be a non-singular quadric hypersurface in 
$\cp^3$. There are three possibilities:

(0) $\Q$ is a real quadric with discriminant locus a circle in $S^4$,
and $\Q$ contains all of the twistor lines over the circle. 

(1) $\Q$ does not contain any twistor
lines. In this case, the discriminant locus is a torus $T^2 \subset
S^4$ with an smooth unknotted embedding.

(2) $\Q$ contains exactly one or exactly two twistor lines. 
The discriminant locus is a singular torus pinched at one or 
two points, respectively.

\end{theorem}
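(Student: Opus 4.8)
The plan is to work directly from the canonical forms given in Theorem \ref{cf1}, using the explicit description of the twistor projection $\pi\colon\cp^3\to S^4$ to be developed in Section \ref{corresp}. Recall that in the coordinates $(\xi_0,\xi_{12},W_1,W_2)$ the twistor lines are the fibers $\pi^{-1}(p)$, each a linearly embedded $\cp^1$, and that the antiholomorphic ``real structure'' $j$ on $\cp^3$ induced by the quaternionic structure on $\cc^4$ has the twistor lines as its fixed $\cp^1$'s. A quadric $\Q=\{Q=0\}$ is \emph{real} precisely when $j(\Q)=\Q$; this is case (0), and there $\Q$ is a union of twistor lines, so it equals $\pi^{-1}(C)$ for $C\subset S^4$ the image, which one checks is a round circle — this is exactly the situation analyzed in Section \ref{dosrq}, so case (0) can largely be quoted. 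For the non-real quadrics I would use the diagonal canonical form $Q=\sum a_j\zeta_j^2$ with the $\zeta_j$ the four coordinates and $a_j=e^{\pm\lambda+i\nu}$, etc.; the key point is that $\Q$ contains a twistor line iff $Q$ vanishes identically on some fiber, which in these coordinates becomes a concrete linear-algebra condition on the $a_j$, satisfied only in degenerate subcases (giving case (2)) and never generically (giving case (1)).

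The heart of the argument is computing the discriminant locus. Over a point $p\in S^4$ the fiber $\pi^{-1}(p)\cong\cp^1$ meets $\Q$ in the zero set of the restriction of $Q$, a binary quadratic form in the fiber coordinate; generically this has two distinct roots, and $p$ lies in the discriminant locus exactly when the form has a repeated root or vanishes identically. Writing the restriction explicitly — parametrize the fiber over $p$ by a $\cp^1$ worth of lines in $\cc^4$ depending on $p$, substitute into $Q$, and take the discriminant of the resulting binary quadratic — one gets a single real-analytic equation $\Delta_Q(p)=0$ on $S^4\cong\rr^4\cup\{\infty\}$. The task is to identify the zero set of $\Delta_Q$: I expect it to be, after the conformal normalization, something like a level set $\{|z_1|=r_1,\ |z_2|=r_2\}$ or the image of such under an inversion — i.e. a torus — with the pinching arising exactly when one of the $r_i$ degenerates to $0$ or the torus passes through $\infty$, which corresponds to the fiber where $Q$ vanishes identically, i.e. to a twistor line contained in $\Q$.

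Having shown the discriminant locus is a torus (smooth in case (1), pinched in case (2)), the remaining topological claim is that the embedding $T^2\hookrightarrow S^4$ is unknotted. For this I would exhibit an explicit isotopy, or better, use the conformal canonical form to put the torus in a standard position: a flat torus $\{|z_1|=r_1\}\cap\{|z_2|=r_2\}$ sitting inside a round $S^3\subset\rr^4$ is the standard Clifford-type torus, which bounds a solid torus on each side inside $S^3$ and hence is unknotted in $S^4$; the general torus is the image of this under an element of $SO_\circ(1,5)$ and under a conformal map that may move it off a round $S^3$, but conformal maps of $S^4$ are isotopic to the identity, so unknottedness is preserved. The pinched cases are the adiabatic limits $r_i\to0$ and follow by continuity. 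The main obstacle I anticipate is the middle step: producing the discriminant equation $\Delta_Q(p)=0$ in usable closed form for the diagonal family, and recognizing its zero set as a torus rather than something more complicated — the bookkeeping of how the fiber $\cp^1$ sits inside $\cc^4$ as $p$ varies, and keeping track of the real structure throughout, is where the real work lies; once the equation is in hand, both the torus identification and the unknottedness should be routine.
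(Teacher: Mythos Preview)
Your overall strategy --- restrict $Q$ to each twistor fiber, obtain a binary quadratic, and study the vanishing of its discriminant $\Delta$ --- matches the paper exactly. The gap is in the second half, where your expectations about the shape of $\{\Delta=0\}$ do not survive contact with the actual computation.

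First, a small but genuine error in case~(0): a real quadric is \emph{not} equal to $\pi^{-1}(C)$ for a circle $C$. The quadric has real dimension $4$, while $\pi^{-1}(C)$ has real dimension $3$. What is true is that $\Q$ \emph{contains} $\pi^{-1}(C)$; the complement $\Q\setminus\pi^{-1}(C)$ is a $2:1$ cover of $S^4\setminus C$.

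More seriously, your plan for case~(1) rests on the hope that $D=\{\Delta=0\}$ is a Clifford torus $\{|z_1|=r_1,\,|z_2|=r_2\}$ or a conformal image of one. This holds only in the very special subfamily $\lambda=\mu=0$ (Proposition~\ref{Cliff} in the paper). For generic $(\lambda,\mu,\nu)$ the discriminant, written out, is a quartic in $(x_1,y_1,x_2,y_2)$ with no such symmetry, and there is no conformal map taking it to a Clifford torus --- the whole point of the canonical form is that distinct $(\lambda,\mu,\nu)$ give conformally \emph{inequivalent} quadrics, hence conformally inequivalent discriminant loci. So your unknottedness argument (``the general torus is the image of a standard one under an element of $SO_\circ(1,5)$, and conformal maps are isotopic to the identity'') does not apply: there is no such conformal map to invoke.

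What the paper does instead is substantially harder. Smoothness of $D$ is established by a lengthy case analysis of where $\nabla\Delta$ can degenerate (Section~\ref{smoothness}). That $D$ is a torus comes from the Riemann--Hurwitz formula for the branched double cover $\Q\cong S^2\times S^2\to S^4$, which forces $\chi(D)=0$; ruling out $S^2$ components requires an auxiliary observation that $\{\Im\Delta=0\}$ is a smoothly embedded $S^3$, combined with the Generalized Schoenflies Theorem. Unknottedness then comes from Alexander's Solid Torus Theorem applied inside that $S^3$. None of this is visible from your outline, and the step you flag as ``routine once the equation is in hand'' is in fact where the real content lies.
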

\begin{remark}
  In Case (1), ``unknotted'' means that the discriminant locus is isotopic
  to a standard torus $T^2 \subset \rr^3 \subset \rr^4 \subset S^4$.
  This is equivalent to saying that $T^2$ bounds an embedded solid
  torus $S^1 \times D^2 \subset S^4$. 
\end{remark}
As discussed above, the quadrics in Case (0) are all conformally equivalent, 
and yield well-defined OCSes
on the complement of a circle or line in $\mathbb{R}^4$.  In Case (1) we have
\begin{theorem}
\label{torusq}
Any quadric containing no twistor lines yields two distinct well-defined 
orthogonal complex structures $J_1$
and $J_2$ on $S^4 \setminus C$, where $C = S^1 \times D^2$ is a solid
torus.
\end{theorem}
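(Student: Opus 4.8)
The plan is to exploit the correspondence, established in Section \ref{corresp}, between complex hypersurfaces in $\cp^3$ and orthogonal complex structures on subdomains of $S^4$: wherever a holomorphic hypersurface $\Q$ meets a twistor fiber $\pi^{-1}(p) = \cp^1$ transversally in exactly two points, it determines locally two single-valued sections of the twistor projection, hence two OCSes near $p$. So the content is to show that for a quadric $\Q$ containing no twistor lines, these two local sections patch up to globally defined $J_1, J_2$ on all of $S^4 \setminus C$, and that the complement $C$ of their common domain is precisely the solid torus bounding the discriminant locus $T^2$ from Theorem \ref{quadricst}(1).

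First I would recall that $\Q$, being a quadric, meets each twistor line $\cp^1$ in a degree-two divisor, i.e. (counted with multiplicity) in two points; since $\Q$ contains no twistor line, the intersection $\pi^{-1}(p) \cap \Q$ is always a nonempty zero-dimensional scheme of length $2$. Over the complement of the discriminant locus $T^2$ this is two distinct reduced points, depending smoothly (indeed real-analytically) on $p$, and the two points are never interchanged along a small loop because the twistor involution has no fixed points on a non-real quadric away from the discriminant (this is where the canonical form of Theorem \ref{cf1} and the analysis of the discriminant in Section \ref{grq} enter). Thus on $S^4 \setminus T^2$ the two-valued section splits globally into two single-valued real-analytic sections $s_1, s_2$ of $\pi$, giving OCSes $J_1, J_2$; they are distinct since $s_1(p) \ne s_2(p)$ everywhere, and neither extends across any point of $T^2$ because there the fiber intersection degenerates to a single (double) point, so $s_1$ and $s_2$ collide and the graph fails to be a single-valued immersed section — by the maximality/removability results of Section \ref{remove} this obstruction is genuine.

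The remaining, and main, point is to identify which component of $S^4 \setminus T^2$ is the actual domain. By Theorem \ref{quadricst}(1) the unknotted torus $T^2$ separates $S^4$ into two open solid tori, $C = S^1 \times D^2$ and its complement; I expect the correct statement to be that $J_1$ and $J_2$ are defined on $S^4 \setminus C$ rather than on all of $S^4 \setminus T^2$. The reason I anticipate this is the one-point-extension phenomenon from Theorem \ref{t1} and the structure of the graph of $J_i$ as a holomorphic hypersurface: the closure in $\cp^3$ of the graph of $s_i$ over one component is exactly $\Q$ (an irreducible quadric), while over the other component the two sheets $s_1, s_2$ would have to be swapped to close up — in other words the monodromy of the double cover $\Q \to S^4$ is nontrivial around the core circle $S^1 \times \{0\}$ of one of the two solid tori, and that solid torus is $C$. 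Establishing this monodromy computation — equivalently, showing that a loop linking the core of $C$ once interchanges the two intersection points on the fiber, while a loop on the other side does not — is the crux and will likely be handled by an explicit computation in the canonical coordinates of Theorem \ref{cf1}, or by a homological/degree argument using that $\Q$ is connected and the branch locus is $T^2$. Once the monodromy is pinned down, $S^4 \setminus C$ is the maximal common domain, $J_1 \ne J_2$ there, and the theorem follows.
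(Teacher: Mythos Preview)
There is a genuine topological misconception running through the proposal. You write that ``the unknotted torus $T^2$ separates $S^4$ into two open solid tori, $C = S^1 \times D^2$ and its complement'', but this is false: $T^2$ has codimension~$2$ in $S^4$ and therefore does not separate it at all. The solid torus $C$ that $T^2$ bounds is a $3$-dimensional submanifold-with-boundary sitting inside $S^4$; its complement $S^4\setminus C$ is a connected open $4$-manifold (in fact diffeomorphic to $D^2\times S^2$), not another solid torus. So the whole picture of ``two solid-torus components, monodromy trivial on one and nontrivial on the other'' does not match the actual topology.

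Relatedly, the assertion in your second paragraph that the two-valued section splits globally over $S^4\setminus T^2$ is incorrect, and the reason you give (about the twistor involution having no fixed points on a non-real quadric) is a red herring. The twistor involution $j$ sends $\Q$ to the conjugate quadric $\overline\Q$, which is a \emph{different} hypersurface when $\Q$ is not real; it has nothing to do with the deck transformation of the branched cover $\pi|_\Q:\Q\to S^4$. What is true is that a small meridian circle linking $T^2$ has nontrivial monodromy---that is exactly what it means for $T^2$ to be the branch locus---so the unbranched cover over the connected set $S^4\setminus T^2$ is genuinely nontrivial. The splitting only occurs after one removes the full $3$-dimensional $C$, not just its boundary $T^2$.

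The paper's argument sidesteps the monodromy computation entirely by working upstairs in $\Q\cong S^2\times S^2$. One checks that the preimage $\pi^{-1}(C)$ of the closed solid torus is a copy of $S^1\times S^2$ (a solid torus double-branched over its boundary torus). Since $H^1(S^2\times S^2)=0$, the generalized Jordan curve theorem forces this closed hypersurface to disconnect $\Q$ into two pieces. The restriction of $\pi$ to $\Q\setminus\pi^{-1}(C)$ is then an honest $2$-sheeted cover of the connected set $S^4\setminus C$, so each of the two pieces maps diffeomorphically onto $S^4\setminus C$, furnishing the two sections $J_1,J_2$. Your monodromy idea can be salvaged---it amounts to showing $S^4\setminus C$ is simply connected---but you would first need to correct the dimension count and drop the twistor-involution reasoning.
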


We mention that the pair of OCSes on an open set of $S^4$ induced by a
quadric determines a bi-hermitian structure in the sense of
\cite{PontecorvoBi}. See \cite{GatesHullRocek, Kobak, AGG, Hitchin3}
for interesting generalizations of this concept to situations in which
there is no integrable twistor space.
There is also a close relation of the work in this paper 
to the notion of harmonic morphism; related work and examples 
may be found in \cite{Baird}, \cite{BairdWood}, \cite{BairdWood2}, 
\cite{GudWood}, \cite{WoodIJM}.

The methods of this paper involve twistor theory, complex analytic
geometry, and representation theory. Our study of
quadrics began with the realization of a few specific
examples, but a considerable amount of effort was required to formulate the
appropriate canonical forms (Theorem \ref{cf1}) and then to establish the 
behavior arising from generic orbits of the conformal group. The proofs of
Theorems~\ref{quadricst} and \ref{torusq} are completed in
Sections~\ref{smoothness} and \ref{topology}, by combining the above
techniques with some special direct calculations and topological arguments
involving branched coverings.\smallbreak

In a forthcoming paper we will give some related results for the
higher-dimensional case of $\mathbb{R}^{2n}$.

\subsection{Acknowledgements} The authors would like to thank 
Vestislav Apostolov, Paul
Gauduchon, Nigel Hitchin, Claude LeBrun and Max Pontecorvo for
insightful discussions on complex structures. We also thank Denis
Auroux and John Morgan for very helpful conversations regarding the
topology of branched coverings.  The authors worked together at both
the Centro De Giorgi in Pisa and the Institute for Mathematical
Sciences in London.

%%%%%%%%%%%%%%%%%%%%%%%%%%%%%%%%%%%%%%%%%%%%%%%%%%%%%%%%%%%%%%%%%%%%
\section{Twistor geometry}
%%%%%%%%%%%%%%%%%%%%%%%%%%%%%%%%%%%%%%%%%%%%%%%%%%%%%%%%%%%%%%%%%%%% 
 
In this section, we discuss some elementary $4$-dimensional twistor
geometry. This material is known (see \cite{AHS, Atiyah, Pontecorvo},
\cite[Section 6]{WoodIJM});
we must include it here for notation and to set our conventions, but
we omit proofs of the key results. 
%%%%%%%%%%%%%%%%%%%%%%%%%%%%%%%%%%%%%%%%%%%%%%%%%%%%%%%%%%%%%%%
\subsection{Tangent vectors and differentials}
%%%%%%%%%%%%%%%%%%%%%%%%%%%%%%%%%%%%%%%%%%%%%%%%%%%%%%%%%%%%%%%

We consider $\mathbb{R}^{4}$ and take coordinates 
$x_1, y_1, x_2, y_2$.  Letting $z_j = x_j + i y_j$ and $\bar{z}_j = x_j - i y_j$,
define complex one-forms
\begin{align*}
\dzj &= dx_j + i dy_j, \ \ \ \dbzj = dx_j - i dz_j,
\end{align*}
and tangent vectors
\begin{align*}
\pzj &= (1/2) \left(\pxj - i \pyj \right), \ \ \ 
\pzbj = (1/2) \left(\pxj + i \pyj \right).
\end{align*}
The standard complex structure $J_0:T\R^{4} \rightarrow T\R^{4}$ 
on $\mathbb{R}^{4}$ is given by 
\begin{align*}
J_0 (\pp x_j) &=  \pp y_j, \ \ \ J_0 (\pp y_j) = - \pp x_j. 
\end{align*}
Complexify the tangent space, and let 
$T^{(1,0)}(J_0) = \mbox{span}\{\pp z_1, \pp z_2 \}$
be the $i$-eigenspace and 
$T^{(0,1)}(J_0)= \mbox{span}\{\pp \bar{z}_1, \pp \bar{z}_2 \}$ 
the $-i$-eigenspace of $J_0$. 
The map $J_0$ also induces an endomorphism of $1$-forms by
\begin{align*}
 J_0 ( \omega) ( v_1) = \omega (J_0\tp\kern1pt v_1) = - \omega(J_0 v_1),
 \end{align*}
which satisfies
\begin{align*} 
J_0 ( dx_j ) = dy_j, \ \ \ J_0 ( dy_j ) = - dx_j.
\end{align*}
Then 
$\Lambda^{1,0}(J_0) = \mbox{span}\{ dz_1, dz_2 \} $ is the $-i$-eigenspace and 
$\Lambda^{0,1}(J_0) = \mbox{span}\{ d\bar{z}_1, d\bar{z}_2 \} $ 
is the $+i$-eigenspace of $J_0$.

%%%%%%%%%%%%%%%%%%%%%%%%%%%%%%%%%%%%%%%%%%%%%%%%
\subsection{Quaternionic vectors and matrices}\label{quat}
A quaternion $q = z_1 + j z_2$ is determined by a pair of complex
numbers $(z_1, z_2)\in\cc^2$. In the early sections of this paper, we
shall identify $\cc^4 = \cc^2 \oplus \cc^2 = \hh \oplus \hh$ by
setting \[ (c_1, c_2, c_3, c_4) = ( c_1 + j c_2,\, c_3 + j c_4).
\] Moreover, we shall view $ \hh \oplus \hh = \hh^2$ as a
{\em{right}} $\hh$-module, and use $j_r : \hh^2 \rightarrow \hh^2$ to
denote right multiplication by $j$:
\begin{align*}
j_r(c_1, c_2, c_3, c_4) = ( - \bc_2, \bc_1, - \bc_4, \bc_3). 
\end{align*}
The composition $\ol{j_r}$ of $j_r$ followed by complex
conjugation is represented by the matrix
\be{JJr}
\mathbb{J}_r = \mat{\K & 0\\0 & \K}, 
\ee
where
\be{JorK} \K = \mat{0&\!-1\\1&0}.\ee 
(This $2\times2$ matrix is sometimes denoted by
$J$ since it represents a standard almost complex structure on $\R^2$,
though that notation would be confusing in our context.)  Thus, right
multiplication by the quaternion $j$ on a column vector is the mapping
$\vv\mapsto\mathbb{J}_r\ol\vv$. For example, the first column of
\rf{JJr} is the transpose of $j_r(1,0,0,0)$.

We may now define $GL(2, \hh)$ as the subgroup of $GL(4,\cc)$
consisting of those $4\times4$ matrices $G$ for which
\begin{align}
\label{GJr}
G\mathbb{J}_r =  \mathbb{J}_r\ol{G},
\end{align}
and $\det G\ne0$.
In these terms, $GL(2, \hh)$ consists of matrices of the form 
\begin{align}
\label{GABCD}
G=\left(
\begin{matrix} 
A & B \\
C & D \\
\end{matrix} \right),
\end{align}
where each $2 \times 2$ submatrix has the form 
\begin{align*}
\left(
\begin{matrix} 
c_1 & c_2 \\
- \bc_2 & \bc_1 \\
\end{matrix} \right).
\end{align*}
It turns out that $\det G$ is real and positive, and $SL(2,\hh)$ is defined to be
the 15-dimensional subgroup of $GL(2,\hh)$ defined by the condition
$\det G=1$. It is in fact isomorphic to the connected group 
$\mathit{Spin}_{\circ}(1,5)$.

%%%%%%%%%%%%%%%%%%%%%%%%%%%%%%%%%%%%%%%%%%%%%%%%%%%%%%%%%%%
\subsection{Twistor space of $S^4$} The twistor projection
$\pi: \mathbb{CP}^3 \rightarrow \mathbb{HP}^1$ is given by
\begin{align}
\label{piz}
\pi([z_1,z_2,z_3,z_4]) = [ z_1 + j z_2, z_3 + j z_4],
\end{align}
where we use right multplication to define quaternionic projective
space, building on the notation of Section~\ref{quat}. Since
$\mathbb{HP}^1$ is naturally isometric to $S^4$ with the round metric,
we can regard the twistor projection as a mapping
$\pi : \mathbb{CP}^3 \rightarrow S^4$.
\iffalse 
The fiber over the point $[1, q]$ is given by
\begin{align*}
\mathbb{CP}^1 = \pi^{-1}( [1,q]) = \big\{ [z_1, z_2, z_3, z_4] 
\ : \  z_3 + j z_4 = q(z_1 + j z_2) \big \}. 
\end{align*}
\fi

For reasons that shall soon become apparent, we shall now denote a
point in $\cp^3$ as $[ \xi_0, \xi_{12}, W_1, W_2]$,
and consider a fixed quaternion $q = z_1 + j z_2$. With this new
notation, the fiber $\cp^1=\pi^{-1}([1,q])$ is given by
\begin{align}
\label{suggest}
W_1 + j W_2 &= q ( \xi_0 + j \xi_{12}) 
= \xi_0 z_1 - \xi_{12} \bz_2 + j ( \xi_0 z_2 + \xi_{12} \bz_1). 
\end{align}
This clearly exhibits the fiber as a holomorphic curve in $\cp^3$.

In (\ref{suggest}), $\xi_0$ and $\xi_{12}$ cannot both vanish, and the fiber
is parametrized by $[\xi_0,\xi_{12}]\in\cp^1$. The
fiber over infinity is given by points of the form
$[0,0, W_1, W_2]$,
with $[W_1, W_2] \in \cp^1$. 

\subsection{The twistor fiber}

There are two standard models of the twistor fiber, namely  
\begin{align*}
Z_2^+ & = \{ \mbox{maximal oriented isotropic complex planes in } \cc^4 \},\\
\mathcal{J}_2^+ &=  \{ J \in SO(4) \>:\> J^2 = -I \}.
\end{align*}
To define an isomorphism between these models, 
let $\xi_{0}, \xi_{12} \in \mathbb{C}$,
and define $1$-forms 
\begin{align*} 
\eta_1 = \xi_0 dz_1 - \xi_{12} d\bar{z}_2, \ \ \ \ 
\eta_2 = \xi_0 dz_2 + \xi_{12} d\bar{z}_1.
\end{align*}
If $\xi_0 \neq 0$ then the span of the $\eta_i$ 
defines a maximal isotropic subspace in $\mathbb{R}^{4} \otimes \mathbb{C}
= \mathbb{C}^{4}$, where 
we think of $\mathbb{C}^{4}$ as the space of complex $1$-forms
$\Lambda^1_{\mathbb{C}} = \Lambda^{(1,0)} \oplus \Lambda^{(0,1)}$. 

Define $\psi: \mathbb{CP}^1 \rightarrow Z_2^+$
by $\psi([ \xi_0, \xi_{12}]) = \mbox{span}_{\mathbb{C}} \{ \eta_1, \eta_2\}$.
\begin{proposition}
The map $\psi$ is a biholomorphism, where $Z_2^+$ has the 
complex structure as a Hermitian symmetric space.
\end{proposition}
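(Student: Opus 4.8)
The plan is to show that $\psi$ is well-defined, holomorphic, bijective, and has holomorphic inverse, by writing everything down in explicit coordinates and then observing that both source and target are homogeneous under a common group action. First I would check that $\psi$ is well-defined: rescaling $[\xi_0,\xi_{12}]$ by $c\in\cc^*$ rescales each $\eta_i$ by $c$, hence does not change the span, so $\psi$ descends to $\cp^1$. When $\xi_0\ne0$ the two forms $\eta_1=\xi_0dz_1-\xi_{12}d\bar z_2$ and $\eta_2=\xi_0dz_2+\xi_{12}d\bar z_1$ are clearly linearly independent (their $dz_1,dz_2$ components alone are), and one computes directly that the span is isotropic for the complexified Euclidean inner product: $\langle\eta_1,\eta_1\rangle=\langle\eta_2,\eta_2\rangle=\langle\eta_1,\eta_2\rangle=0$ using $\langle dz_j,dz_k\rangle=\langle d\bar z_j,d\bar z_k\rangle=0$ and $\langle dz_j,d\bar z_k\rangle=2\delta_{jk}$. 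For the boundary point $[0,1]$ the span is $\mathrm{span}\{d\bar z_2,d\bar z_1\}=\Lambda^{(0,1)}(J_0)$, which is the isotropic plane for $J_0$ itself, so $\psi$ extends continuously (indeed holomorphically) over all of $\cp^1$. One also checks the orientation condition so that the image lies in $Z_2^+$ rather than its conjugate.

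Next I would prove bijectivity. A maximal isotropic plane $V\subset\Lambda^1_\cc$ of the correct orientation is, by the standard linear algebra of $SO(4)$ in dimension four, exactly the $(1,0)$-space (or here, to match conventions, the $\Lambda^{1,0}$-eigenspace) of a unique $J\in\mathcal J_2^+$; this is the content of the identification $Z_2^+\cong\mathcal J_2^+$ and I may take it as known. So it suffices to show each such $V$ arises from a unique $[\xi_0,\xi_{12}]$. Given $V$, if $V\ne\Lambda^{(0,1)}(J_0)$ then $V$ meets $\Lambda^{(1,0)}(J_0)=\mathrm{span}\{dz_1,dz_2\}$ only in $0$, so $V$ is the graph of a linear map $\Lambda^{(1,0)}(J_0)\to\Lambda^{(0,1)}(J_0)$; the isotropy condition forces this map to have the specific skew form encoded by a single parameter $\xi_{12}/\xi_0$, recovering $[\xi_0,\xi_{12}]$ uniquely, and the remaining plane $\Lambda^{(0,1)}(J_0)$ corresponds to $[0,1]$. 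This gives a set-theoretic inverse.

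Finally, holomorphicity in both directions: in the affine chart $\xi_0=1$ the plane $\psi([1,\xi_{12}])$ is the graph of the matrix $\begin{pmatrix}0&-\xi_{12}\\\xi_{12}&0\end{pmatrix}$ with respect to the decomposition $\Lambda^{(1,0)}(J_0)\oplus\Lambda^{(0,1)}(J_0)$, so in the natural affine coordinates on the Grassmannian $Z_2^+$ the map $\psi$ is literally linear in $\xi_{12}$, hence holomorphic with holomorphic inverse on that chart; symmetry (using the chart $\xi_{12}=1$, or equivalently the $SU(2)$-action that rotates $(\xi_0,\xi_{12})$ and acts on $\R^4$ preserving $J_0$-type structures) covers the point $[0,1]$. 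Since a holomorphic bijection of smooth compact complex manifolds (here $\cp^1$ and the Hermitian symmetric space $Z_2^+\cong SO(4)/U(2)\cong\cp^1$) with holomorphic local inverse is a biholomorphism, we are done. The only genuinely delicate point is making sure the conventions for $\Lambda^{1,0}$ versus $\Lambda^{0,1}$ and the choice of orientation are threaded consistently so that the image lands in $Z_2^+$ and not in the space of anti-self-dual complex structures; everything else is a direct computation in the coordinates already set up in this section.
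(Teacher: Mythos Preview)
The paper does not actually prove this proposition: at the start of Section~2 the authors say they ``include it here for notation and to set our conventions, but we omit proofs of the key results,'' and indeed the statement is followed immediately by ``We will henceforth use this isomorphism\ldots''. So there is no paper proof to compare against; your direct coordinate verification is the natural way to supply one, and the outline is essentially correct.

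There is one small slip in the surjectivity paragraph. From the hypothesis $V\ne\Lambda^{(0,1)}(J_0)$ you want to conclude that $V\cap\Lambda^{(0,1)}(J_0)=\{0\}$, so that $V$ projects isomorphically onto $\Lambda^{(1,0)}(J_0)$ and is therefore the graph of a linear map $\Lambda^{(1,0)}(J_0)\to\Lambda^{(0,1)}(J_0)$. What you wrote instead is that $V$ meets $\Lambda^{(1,0)}(J_0)$ only in $0$, which is false for $V=\Lambda^{(1,0)}(J_0)=\psi([1,0])$. With this correction the isotropy condition forces the graph map to be skew with respect to the pairing $\langle dz_j,d\bar z_k\rangle=2\delta_{jk}$, hence of the form $\left(\begin{smallmatrix}0&-\xi\\ \xi&0\end{smallmatrix}\right)$, and you recover $[\xi_0,\xi_{12}]$ uniquely as claimed. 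A second minor point of phrasing: $\psi([0,1])=\Lambda^{(0,1)}(J_0)$ is the $(1,0)$-space of $-J_0$, not of $J_0$; compare the paper's remark that $[1,0]$ corresponds to $J_0$. Neither issue affects the substance of your argument.
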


We will henceforth use this isomorphism to identify $Z_2^+$
with $\cp^1$.
Next, let $J \in \mathcal{J}_2^+$, then $J$ determines a space of
complex $(1,0)$ forms, $\Lambda^{1,0}(J)$.  In general, an
endomorphism $J$ is an orthogonal map (with respect to the Euclidean
metric) if and only if $\Lambda^{1,0}(J)$ is an isotropic subspace
with respect to the complexified Euclidean inner product. This gives
an isomorphism from $\mathcal{J}_2^+$ to $Z_2^+$.  For the inverse
map, simply write a maximal isotropic subspace as a graph over $\cc^2
= \rr^4$, this graph is the corresponding $J$.
Note there is a natural $SO(4)$ action on $\mathcal{J}_2^+$ by
conjugation, and for which the stabilizer of any point is $U(2)$, so
$\mathcal{J}_2^+$ is naturally isomorphic to $SO(4)/U(2)$.
\begin{proposition}
  Under this isomorphism, the matrix $J \in \mathcal{J}_2^+$,
  corresponding to the point $\psi([1,\> \xi\!=\!f\!+\!ig])\in Z_2^+$
  is given by
\begin{align*}
J = \frac{-1}{1 + |\xi|^2} 
\left(
\begin{matrix} 
0  & 1 - |\xi|^2 & 2g & 2f \\
\!-1 + |\xi|^2 & 0 & 2f & -2g \\
-2g & -2f & 0 & 1 - |\xi|^2 \\
-2f & 2g & \!-1 + |\xi|^2 & 0 \\
\end{matrix}
\right).
\end{align*}
\end{proposition}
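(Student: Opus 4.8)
The plan is to verify the formula for $J$ directly by matching its $\pm i$-eigenspaces with the isotropic subspaces $\psi([1,\xi])$. Recall from the previous discussion that the isomorphism $\mathcal{J}_2^+\to Z_2^+$ sends $J$ to $\Lambda^{1,0}(J)$, the $(-i)$-eigenspace of $J$ acting on complexified $1$-forms; so to prove the proposition it suffices to show that the $4\times4$ real matrix $J$ displayed above (i) lies in $\mathcal{J}_2^+$, i.e. is in $SO(4)$ and squares to $-I$, and (ii) has $(-i)$-eigenspace on $1$-forms equal to $\mathrm{span}_{\cc}\{\eta_1,\eta_2\}$ with $\eta_1 = dz_1 - \xi\, d\bar z_2$, $\eta_2 = dz_2 + \xi\, d\bar z_1$ (setting $\xi_0 = 1$, $\xi_{12} = \xi = f + ig$).

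First I would set up the action on $1$-forms explicitly. Writing $J$ in the ordered real basis $dx_1, dy_1, dx_2, dy_2$ of $\Lambda^1_{\cc}$ (which is how the matrix is presented, reading off the coefficients $\pp x_j \mapsto \pp y_j$ etc.), I would form the complex combinations $dz_1 = dx_1 + i\,dy_1$, $d\bar z_1 = dx_1 - i\,dy_1$, $dz_2, d\bar z_2$, and compute $J(dz_1), J(dz_2)$ using the convention $J(\omega)(v) = -\omega(Jv)$ fixed earlier in the section — being careful about the sign/transpose bookkeeping so that the endomorphism of $1$-forms is the one for which $\Lambda^{1,0}(J_0) = \mathrm{span}\{dz_1, dz_2\}$ is the $(-i)$-eigenspace, consistent with the case $\xi = 0$ of the displayed matrix. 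A clean way to organize this: let $\eta_1 = dz_1 - \xi\,d\bar z_2$ and $\eta_2 = dz_2 + \xi\,d\bar z_1$, express these in the real basis, and check that $J\eta_1 = -i\,\eta_1$ and $J\eta_2 = -i\,\eta_2$ by a direct $4\times4$ matrix-times-vector computation; then the complex conjugates $\bar\eta_1, \bar\eta_2$ automatically give the $(+i)$-eigenspace, and since these four vectors span $\Lambda^1_{\cc}$ when $|\xi|\ne\infty$, this pins down $J$ uniquely and shows $J^2 = -I$ as a bonus.

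Next I would check $J \in SO(4)$: that $J$ is real is clear from the entries; orthogonality and $\det J = 1$ can be read off from the block structure, or — more conceptually — they follow from the fact that $\eta_1, \eta_2$ span an isotropic subspace for the complexified Euclidean form (established in the surrounding text via the $Z_2^+$ model), which is exactly equivalent to $J$ being an orthogonal almost complex structure, so the hard analytic content is already in hand. The remaining task is then bookkeeping: confirm the overall scalar $-1/(1+|\xi|^2)$ and the placement of the entries $1 - |\xi|^2$, $2f$, $2g$ with their signs. I would double-check against two sanity points: $\xi = 0$ must recover (the negative of) the block-diagonal form representing $\pm J_0$, and replacing $\xi$ by a path to $\infty$ should recover the fiber-over-infinity structure $[0,0,W_1,W_2]$ from the twistor description.

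The main obstacle I expect is purely the sign and convention discipline: there are several competing conventions in play — right versus left quaternion multiplication, the action $J\omega = -\omega(Jv)$ on forms, the identification of $\Lambda^{1,0}(J)$ as the $(-i)$- rather than $(+i)$-eigenspace, and the ordering $(x_1,y_1,x_2,y_2)$ versus $(x_1,x_2,y_1,y_2)$ — and getting the displayed matrix exactly (not off by a sign, transpose, or basis permutation) requires threading all of these consistently. Once the conventions are nailed down, the computation itself is a short explicit linear-algebra check with no genuine difficulty, so I would present it compactly: state the two eigenvector equations $J\eta_j = -i\eta_j$, indicate they are verified by direct substitution, and note that uniqueness of the graph representation of an isotropic subspace (already used above to define the inverse map $Z_2^+ \to \mathcal{J}_2^+$) finishes the proof.
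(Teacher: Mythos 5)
Your strategy is the right one, and in fact it is the only one available: the paper states this proposition without proof (it is among the ``key results'' of Section 2 whose proofs are explicitly omitted), so the content of a proof here is exactly the direct verification you describe --- check that the displayed matrix is real, orthogonal, squares to $-I$, and has the prescribed $\mp i$-eigenspaces, then invoke uniqueness of the graph/annihilator representation of a maximal isotropic subspace. Your bookkeeping of the conventions is also correct: the matrix acts on tangent vectors in the ordered basis $(\partial/\partial x_1,\partial/\partial y_1,\partial/\partial x_2,\partial/\partial y_2)$ (consistent with $\xi=0$ giving $\mathbb{J}_r$), and since $J^{\top}=-J$ for any such matrix, the induced action $\omega\mapsto-\omega\circ J$ on $1$-forms is represented by the same matrix, so checking $J\eta_j=-i\eta_j$ on form components is legitimate.

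One concrete warning, which lands squarely on the hazard you flagged: if you carry out the $4\times4$ substitution with the conventions exactly as stated in the paper ($z_j=x_j+iy_j$, $\Lambda^{1,0}$ the $-i$-eigenspace, $J\omega=-\omega(J\cdot)$), the displayed matrix satisfies $J\eta_j'=-i\eta_j'$ for $\eta_1'=dz_1-\bar\xi\,d\bar z_2$ and $\eta_2'=dz_2+\bar\xi\,d\bar z_1$, not for the forms built from $\xi$; equivalently, its $-i$-eigenspace on vectors is spanned by $\bar\xi\,\partial/\partial z_1+\partial/\partial\bar z_2$ and $\bar\xi\,\partial/\partial z_2-\partial/\partial\bar z_1$. (A quick test case: for $\xi=i$ the matrix sends $\partial/\partial x_1\mapsto\partial/\partial x_2$ and $\partial/\partial y_1\mapsto-\partial/\partial y_2$, and one checks $dz_1-i\,d\bar z_2$ does not annihilate $\partial/\partial x_1+i\,\partial/\partial x_2\in T^{(0,1)}$, while $dz_1+i\,d\bar z_2$ does.) So the displayed matrix represents $\psi([1,\bar\xi])$ as printed; to represent $\psi([1,\xi])$ one must replace $g$ by $-g$ in the four entries where it appears. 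This is a harmless conjugation slip --- the explicit matrix is never used elsewhere in the paper --- but your write-up should either adjust the matrix or state the eigenvector equations with $\bar\xi$, rather than assert the identity as printed. With that adjustment, the remaining steps (linear independence of $\eta_1',\eta_2',\bar\eta_1',\bar\eta_2'$, which follows from the nonvanishing determinant $(1+|\xi|^2)^2$; membership in $SO(4)$ via isotropy plus connectedness to the $\xi=0$ case for the orientation condition) go through exactly as you outline.
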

\begin{remark}
  Note that with our conventions, the standard complex structure $J_0$
  on $\mathbb{R}^{4}$ corresponds to $[1,0]\in\cp^1$, or rather the
  matrix $\mathbb{J}_r$ of (\ref{JJr}).
\end{remark}

%%%%%%%%%%%%%%%%%%%%%%%%%%%%%%%%%%%%%%%%%%%%%%%%%%%%%%%%%%%
\subsection{Holomorphic coordinates}
Next, let $\mathcal{Z}_+(\mathbb{R}^{4})$ denote the twistor bundle of
$\mathbb{R}^{4}$. As a smooth manifold, this is a product
$\mathcal{Z}_+(\rr^{4}) = Z_2^+ \times \rr^{4}$,
but it carries a special complex structure defined as follows:
on the horizontal space (tangent to $\rr^4$), the complex structure is
defined tautologically, while the vertical space (tangent to $\cp^1$)
carries a canonical complex structure.

We next find holomorphic coordinates on $\mathcal{Z}_+(\rr^{4})$.
Motivated from (\ref{suggest}), consider the functions
\begin{align}
\label{Wxi} 
W_1 = \xi_0 z_1 - \xi_{12} \bar{z}_2, \ \ \ \ W_2 = \xi_0 z_2 + \xi_{12} \bar{z}_1.
\end{align}
Define the map 
$\Psi : Z_2^+ \times \rr^{4} \rightarrow \mathbb{CP}^3 \setminus \cp^1$ by
\begin{align}
\label{Psi}
  \Psi\big([\xi_0,\xi_{12}],\,(z_1, z_2)\big) = [\xi_0,\xi_{12},W_1,
  W_2].
\end{align}

\begin{theorem}
The map $\Psi$ is a biholomorphism from 
$ Z_2^+ \times \rr^{4}$ to $\mathbb{CP}^3 \setminus \cp^1$,
where  $Z_2^+ \times \rr^{4}$ has the complex structure 
$J_1$ as the twistor space of $\mathbb{R}^{4}$.
\end{theorem}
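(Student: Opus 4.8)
The plan is to verify that $\Psi$ is a well-defined, bijective, holomorphic map with holomorphic inverse, exploiting the explicit formulas \rf{Wxi}--\rf{Psi}. First I would check that $\Psi$ genuinely lands in $\cp^3\setminus\cp^1$: since $\xi_0,\xi_{12}$ are not both zero on $Z_2^+\cong\cp^1$, the image point $[\xi_0,\xi_{12},W_1,W_2]$ has at least one nonzero entry among its first two coordinates, so it avoids the fiber over infinity $\{[0,0,W_1,W_2]\}$, which is precisely the $\cp^1$ being removed. I would also observe that $\Psi$ is well-defined on equivalence classes: scaling $[\xi_0,\xi_{12}]$ by $\lambda\in\cc^*$ scales $(W_1,W_2)$ by the same $\lambda$ (the formulas \rf{Wxi} are linear and homogeneous of degree one in $(\xi_0,\xi_{12})$ with the $z_j$ as fixed coefficients), so the resulting point of $\cp^3$ is unchanged.

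Next I would construct the inverse explicitly. Given a point $[\xi_0,\xi_{12},W_1,W_2]\in\cp^3\setminus\cp^1$, the pair $[\xi_0,\xi_{12}]$ is a well-defined point of $\cp^1\cong Z_2^+$, and then \rf{Wxi} is a linear system for $(z_1,z_2)$ that one solves by inverting the $2\times2$ (over $\rr$, or rather a conjugate-linear) system
\begin{align*}
\mat{\xi_0 & -\xi_{12}\\ \xi_{12} & \xi_0}\mat{z_1 & \bz_2\\ \cdot & \cdot}
\end{align*}
— more precisely, from $W_1=\xi_0 z_1-\xi_{12}\bz_2$ and $\bar W_2=\bar\xi_0\bz_2+\bar\xi_{12}z_1$ one recovers $z_1$, and similarly $z_2$, because the relevant determinant is $|\xi_0|^2+|\xi_{12}|^2\neq0$. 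This shows $\Psi$ is a bijection, and the inverse formulas are manifestly smooth (indeed real-analytic) in the coordinates. Since the chart map $\Psi$ has degree $+1$ on each twistor fiber and the target point determines $(z_1,z_2)$ uniquely, injectivity and surjectivity both follow.

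Finally I would check holomorphicity with respect to the twistor complex structure $J_1$ on $Z_2^+\times\rr^4$. On the vertical directions (tangent to the $\cp^1$ factor) this is immediate because $\Psi$ restricted to a fiber $\{[\xi_0,\xi_{12}]\}\times\{(z_1,z_2)\}$ varies holomorphically in $[\xi_0,\xi_{12}]$ — the formulas are rational in homogeneous coordinates. On the horizontal directions one must check that $\Psi$ intertwines the \emph{tautological} complex structure on the $\rr^4$ factor (which at the point $[\xi_0,\xi_{12}]$ is the OCS $J$ corresponding to that point of $Z_2^+$) with the standard complex structure of $\cp^3$; concretely this means $d\Psi$ sends the $J$-$(1,0)$ vectors on $\rr^4$ to holomorphic tangent vectors of $\cp^3$. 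The key computation is that, for fixed $[\xi_0,\xi_{12}]$, the functions $W_1,W_2$ of \rf{Wxi} are holomorphic in the $z_j$ precisely for the complex structure $J$ attached to $[\xi_0,\xi_{12}]$: this is because $\eta_1=\xi_0\,dz_1-\xi_{12}\,d\bz_2$ and $\eta_2=\xi_0\,dz_2+\xi_{12}\,d\bz_1$ span $\Lambda^{1,0}(J)$ by the isomorphism $\psi$ already established, and $dW_1=\eta_1$, $dW_2=\eta_2$ for fixed $\xi$'s. Thus the main obstacle — and the only nonformal point — is organizing this horizontal holomorphicity argument so that the mixing of the $\xi$-dependence and the $z$-dependence is handled cleanly, i.e. verifying that the full differential $d\Psi$ (not just its restrictions to the factors) is complex-linear for $J_1$; once the two factor-wise statements are in hand this follows since $J_1$ is by construction block-diagonal with respect to the horizontal–vertical splitting and $\Psi$ respects that splitting up to holomorphic terms. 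Having established that $\Psi$ is a holomorphic bijection with smooth inverse between complex manifolds of the same dimension, it is automatically a biholomorphism.
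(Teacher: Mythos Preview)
Your argument is correct. The paper itself omits the proof of this theorem: at the start of Section~2 it announces that the twistor-geometric background is well known and that ``we omit proofs of the key results,'' deferring to the cited references. So there is no proof in the paper to compare against; your direct verification is exactly what is called for.

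Two small comments on the write-up. First, the displayed $2\times2$ matrix in your inverse construction has $\xi_{12},\xi_0$ in its second row, which would give determinant $\xi_0^2+\xi_{12}^2$ (possibly zero, e.g.\ at $[\xi_0,\xi_{12}]=[1,i]$); your prose immediately afterwards has it right, pairing $W_1$ with $\bar W_2$ to get a system with determinant $|\xi_0|^2+|\xi_{12}|^2\ne0$. Second, the ``mixing'' you flag in the holomorphicity check is most cleanly dispatched by working in the affine chart $\xi_0=1$, $a=\xi_{12}/\xi_0$: there
\[
dW_1=(dz_1-a\,d\bar z_2)-\bar z_2\,da,\qquad dW_2=(dz_2+a\,d\bar z_1)+\bar z_1\,da,
\]
and since (as the paper records in the proof of Theorem~\ref{graphthm}) the space $\Lambda^{(1,0)}$ for the twistor complex structure is spanned by $\{da,\ dz_1-a\,d\bar z_2,\ dz_2+a\,d\bar z_1\}$, both $dW_1$ and $dW_2$ are manifestly $(1,0)$-forms for $J_1$. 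This makes your ``block-diagonal up to holomorphic terms'' sketch fully explicit and removes any residual worry about the cross terms.
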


The missing fiber $\cp^1_{\infty}$ is given by points with the first two
coordinates equal to zero.
By adding this missing twistor fiber over the point at infinity, and
since $S^4 \setminus \{p\}$ is conformally equivalent to
$\mathbb{R}^4$, we obtain
\begin{corollary}
The map $\Psi$ can be extended to a biholomorphism 
\begin{align} 
\label{biholo}
\hat{\Psi} : \mathcal{Z}_+(S^{4}) \rightarrow \mathbb{CP}^3.  
\end{align}
\end{corollary}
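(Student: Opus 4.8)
The plan is to cover $S^4=\mathbb{HP}^1$ by the two affine charts $U_0=S^4\setminus\{[0,1]\}$ and $U_\infty=S^4\setminus\{[1,0]\}$ — each conformally equivalent to $\rr^4$, with $[1,0]$ corresponding to the origin $q=0$ and $[0,1]$ to the point at infinity over which the fiber $\cp^1_\infty=\pi^{-1}([0,1])=\{[0,0,W_1,W_2]\}$ missing from the image of $\Psi$ sits — and to glue two copies of $\Psi$. By the preceding theorem, $\Psi$ is a biholomorphism from $\mathcal{Z}_+(U_0)\cong\mathcal{Z}_+(\rr^4)=Z_2^+\times\rr^4$ onto $\cp^3\setminus\cp^1_\infty$, the identification $\mathcal{Z}_+(U_0)\cong\mathcal{Z}_+(\rr^4)$ coming from the conformal invariance of the twistor bundle and its complex structure \cite{AHS}. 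It therefore suffices to produce a second biholomorphism $\Psi_\infty$ from $\mathcal{Z}_+(U_\infty)$ onto $\cp^3\setminus\pi^{-1}([1,0])$ agreeing with $\Psi$ on the overlap $\mathcal{Z}_+(U_0\cap U_\infty)$. Granting this, $\Psi$ and $\Psi_\infty$ glue to a holomorphic map $\hat\Psi:\mathcal{Z}_+(S^4)\rightarrow\cp^3$ which is a local biholomorphism near every point, is surjective because $\cp^1_\infty\subset\cp^3\setminus\pi^{-1}([1,0])$ and $\pi^{-1}([1,0])\subset\cp^3\setminus\cp^1_\infty$, and is injective because each of $\Psi,\Psi_\infty$ is and their images overlap only where they agree; a bijective local biholomorphism is a biholomorphism.

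To build $\Psi_\infty$, I would use the conformal involution of $S^4$ exchanging $[1,0]$ and $[0,1]$. In the coordinates of Section~\ref{quat}, the $\hh$-linear map $(q_1,q_2)\mapsto(q_2,q_1)$ of $\hh^2$ is represented by the block matrix $\bigl(\begin{smallmatrix}0&I\\I&0\end{smallmatrix}\bigr)$, which satisfies \rf{GJr} and has complex determinant $1$, so it lies in $SL(2,\hh)$. Hence it acts holomorphically on $\cp^3$ by $\iota([\xa,\xb,W_1,W_2])=[W_1,W_2,\xa,\xb]$, and by \rf{piz} it covers the conformal involution $\bar\iota$ of $\mathbb{HP}^1=S^4$ which in the affine chart is quaternionic inversion $q\mapsto q^{-1}$; since it is covered by an element of $SL(2,\hh)$, the map $\bar\iota$ lies in the identity component $SO_\circ(1,5)$ of the conformal group and in particular is orientation-preserving. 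By the conformal naturality of the twistor construction $\bar\iota$ lifts to a biholomorphism $\tilde\iota$ of $\mathcal{Z}_+(S^4)$ taking $\mathcal{Z}_+(U_\infty)$ to $\mathcal{Z}_+(U_0)$, and since $\iota$ sends $\cp^1_\infty$ to $\pi^{-1}([1,0])$, the composite $\Psi_\infty:=\iota\circ\Psi\circ\tilde\iota$ is a biholomorphism $\mathcal{Z}_+(U_\infty)\rightarrow\cp^3\setminus\pi^{-1}([1,0])$.

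The one nontrivial point — and the step I expect to be the main obstacle — is the agreement of $\Psi_\infty$ with $\Psi$ on $\mathcal{Z}_+(U_0\cap U_\infty)$, equivalently that $\Psi$ intertwines $\tilde\iota$ with $\iota$. Using \rf{Psi} and \rf{suggest}, one finds for $q=z_1+jz_2\neq0$ and $\zeta=\xa+j\xb$ that $\iota(\Psi([\xa,\xb],(z_1,z_2)))$ is the point of $\cp^3$ with quaternionic homogeneous coordinates $(q\zeta,\zeta)$; this lies over $[1,q^{-1}]=\bar\iota([1,q])$, so the only real content is the transformation law of the twistor-fiber parameter $[\xa,\xb]$, which must match the induced action of $\bar\iota$ on $Z_2^+\cong\cp^1$ built from the isomorphism $\psi$ and the conjugation action of $SO(4)$ on $\mathcal{J}_2^+$. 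Verifying this demands careful bookkeeping with the conventions of Section~\ref{quat} — the identification $\cc^4=\hh^2$ and the distinction between left and right quaternion multiplication — but uses no new idea. Alternatively, one can bypass the explicit equivariance check and extend $\Psi$ directly across the divisor $\cp^1_\infty$: using the chart over $U_\infty$ for local holomorphic coordinates near $\pi^{-1}([0,1])\subset\mathcal{Z}_+(S^4)$, the map $\Psi$ is holomorphic off that divisor and, in suitable affine coordinates on $\cp^3$, locally bounded, hence extends holomorphically by Riemann's removable-singularity theorem; the resulting holomorphic map $\mathcal{Z}_+(S^4)\rightarrow\cp^3$ is proper of degree one and therefore a biholomorphism.
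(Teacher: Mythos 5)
Your argument is correct and is essentially the detailed version of what the paper only sketches: the paper derives the corollary in one line by ``adding the missing twistor fiber over the point at infinity'' and invoking the conformal equivalence $S^4\setminus\{p\}\cong\rr^4$, which is exactly your two-chart gluing via the inversion (the lift of which the paper records explicitly in \rf{invlift}). Your identification of the overlap-agreement check as the only substantive point, and the removable-singularity alternative, are sound elaborations of the same route.
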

\begin{remark}
It is not hard to verify that under this identification, the 
twistor projection indeed corresponds to the quaternionic 
projection discussed in the beginning of this section.
\end{remark}

%%%%%%%%%%%%%%%%%%%%%%%%%%%%%%%%%%%%%%%%%%%%%%%%%%%%%%%%%%%%%%%%%%%%%%%%%%
\subsection{Complex hypersurfaces and orthogonal complex structures} 
\label{corresp}
%%%%%%%%%%%%%%%%%%%%%%%%%%%%%%%%%%%%%%%%%%%%%%%%%%%%%%%%%%%%%%%%%%%%%%%

We next explain the correspondence between OCSes and holomorphic
submanifolds of $\cp^3$. Background results on this aspect of
twistor theory can be found in \cite{EellsSalamon, Besse,
  deBartolomeisNannicini}.

\begin{theorem}
\label{graphthm}
 Let $\Omega \subset \rr^4$ be a domain.  If $J$ is an
  OCS defined on $\Omega$, then the graph $J(\Omega) \subset \rr^4
  \times \cp^1$ is a holomorphic submanifold.

Conversely, let $H \subset \pi^{-1}(\Omega)$ be a holomorphic 
submanifold such that $H$ intersects each fiber $\cp^1$ in 
exactly one point. Then $H$ is the graph of an OCS. 
\end{theorem}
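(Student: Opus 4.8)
The plan is to prove the two directions separately, in each case reducing to a pointwise/local statement about the biholomorphism $\Psi$ of the previous section, which already encodes how a choice of orthogonal almost complex structure at a point sits inside the fiber $\cp^1$ of the twistor projection. For the first direction, suppose $J$ is an OCS on $\Omega$. By Remark (a) after the definition, integrability of $J$ is equivalent to the local existence of $J$-holomorphic coordinates, so around each point of $\Omega$ we may pick holomorphic functions $w_1,w_2$ whose differentials span $\Lambda^{1,0}(J)$. Since $J$ is orthogonal and orientation-preserving, $\Lambda^{1,0}(J)$ is a maximal oriented isotropic subspace of $\Lambda^1_\C$, i.e.\ a point of $Z_2^+$, and by the identification $Z_2^+\cong\cp^1$ it is $\mathrm{span}\{\eta_1,\eta_2\}$ for $[\xi_0,\xi_{12}]$ depending (smoothly, since $J$ is $C^1$) on the point of $\Omega$. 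The graph $J(\Omega)$ is then the image under $\Psi$ of $\{([\xi_0(z),\xi_{12}(z)],z):z\in\Omega\}$. The key point is that this graph is holomorphic as a submanifold of $\cp^3$ \emph{exactly when} the functions $[\xi_0,\xi_{12}]$ are $J_1$-holomorphic on $\mathcal{Z}_+(\R^4)$, and one checks that the holomorphicity of $w_1,w_2$ together with the formulas \rf{Wxi} forces the vertical component $[\xi_0,\xi_{12}]$ to vary holomorphically. Concretely: the tangent space to the graph at a point is spanned by the horizontal lifts of the $J$-$(1,0)$ vectors together with the image of $\partial/\partial z_j$ applied to the section $z\mapsto[\xi_0(z),\xi_{12}(z)]$; integrability of $N_J=0$ is what guarantees that these combine to a $J_1$-invariant subspace of $T\mathcal Z_+$. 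It is cleanest to phrase this as: the Nijenhuis tensor of $J$ vanishes iff the section $\Omega\to\mathcal Z_+(\Omega)$ given by $z\mapsto(J_z,z)$ is a holomorphic (hence holomorphic-submanifold-defining) map, and then transport through $\hat\Psi$.

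For the converse, let $H\subset\pi^{-1}(\Omega)$ be a holomorphic submanifold meeting each fiber $\cp^1$ in exactly one point. Then $\pi|_H:H\to\Omega$ is a bijective holomorphic map from a complex surface to (a chart on) $S^4$; since it is a smooth bijection which is a local diffeomorphism — here one uses that $H$ is transverse to the fibers, which holds on a dense open set automatically and, because the intersection is a single \emph{reduced} point everywhere, in fact everywhere — it admits a smooth inverse section $s:\Omega\to H\subset\mathcal Z_+(\Omega)$. A section of the twistor bundle is by definition a choice, for each $z\in\Omega$, of an orthogonal almost complex structure $J_z$ on $T_z\R^4$; so $s$ defines an orthogonal almost complex structure $J$ on $\Omega$. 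It remains to see $J$ is integrable. For this run the argument of the first part in reverse: the complex structure $J_1$ on $\mathcal Z_+$ restricted to (the smooth) $H=\mathrm{graph}(J)$ is, by construction of $J_1$, tautological in the horizontal directions, so the condition that $H$ be a $J_1$-holomorphic submanifold is precisely the condition that $T^{(1,0)}(J)$ be closed under Lie bracket, i.e.\ condition (c). Hence $J$ is an OCS and $H=J(\Omega)$.

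I expect the main obstacle to be the regularity/transversality bookkeeping in the converse rather than any deep idea: one must argue that ``$H$ meets every fiber in exactly one point'' upgrades to ``$\pi|_H$ is a diffeomorphism onto $\Omega$'', which requires ruling out tangencies between $H$ and the twistor fibers (a tangency would make $\pi|_H$ fail to be an immersion, and although set-theoretically the intersection is still one point, scheme-theoretically it would be non-reduced — one should interpret ``exactly one point'' as ``one reduced point'', or equivalently observe that a holomorphic $H$ tangent to a fiber would force extra intersection multiplicity incompatible with the stated hypothesis once perturbed, or simply invoke that $\pi|_H$ is a proper holomorphic bijection onto a smooth base and hence biholomorphic onto its image in the complex-analytic sense, then note the image is all of the chart). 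A secondary, more computational obstacle is verifying cleanly the equivalence ``$H$ holomorphic $\iff$ $N_J=0$''; the efficient route is to do this once at a single basepoint using the explicit coordinates $(\xi_0,\xi_{12},W_1,W_2)$ and \rf{Wxi}, and then invoke conformal invariance together with transitivity of $SO_\circ(1,5)$ on $S^4$ to conclude it everywhere, rather than grinding through the Nijenhuis tensor in general coordinates.
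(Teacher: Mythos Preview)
Your outline has the right shape, and you correctly identify that everything hinges on the local equivalence ``the section $z\mapsto J_z$ defines a $J_1$-holomorphic graph $\iff$ $N_J=0$''. But you don't actually prove this equivalence, and the shortcut you propose at the end---verify it at a single basepoint and then invoke transitivity of $SO_\circ(1,5)$ on $S^4$---does not work. Both sides of the equivalence are \emph{first-order} conditions on $J$ (they involve $\partial J$), so transitivity on points of $S^4$, or even on points of $\cp^3$, cannot reduce the verification to a single point: you would need transitivity on 1-jets of sections, which the conformal group does not provide. Conformal invariance tells you the equivalence is coordinate-independent once established, but it is not a substitute for the computation.

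The paper's proof simply does the computation you are trying to avoid, and it is short. Write the section in an affine chart as $\phi(z)=[1,a(z)]$ with $a:\Omega_1\to\C$. On $\C\times\Omega_1$ the $(1,0)$-forms of $J_1$ are $da$, $\omega_1=dz_1-a\,d\bar z_2$, $\omega_2=dz_2+a\,d\bar z_1$. The graph is holomorphic iff $da$ lies in the span of $\omega_1,\omega_2$ along the graph, which gives the two equations
\[
a\,a_{z_1}+a_{\bar z_2}=0,\qquad a\,a_{z_2}-a_{\bar z_1}=0.
\]
On the other hand, $T^{(0,1)}(J)$ is spanned by $v_1=a\,\partial_{z_1}+\partial_{\bar z_2}$ and $v_2=a\,\partial_{z_2}-\partial_{\bar z_1}$, and integrability of $J$ is $d\omega_i(v_1,v_2)=0$ for $i=1,2$; this yields exactly the same pair of equations. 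That is the whole argument, and it handles both directions at once. Your appeal to the existence of $J$-holomorphic coordinates $w_1,w_2$ in the forward direction is not wrong, but it is an unnecessary detour given how directly the two conditions match up.

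On the transversality issue in the converse: your concern is reasonable in general, but here the hypothesis ``$H$ meets each fiber in exactly one point'' is used by the paper simply to write $H$ as the image of a map $z\mapsto(\phi(z),z)$ and then run the same computation. No separate transversality argument is needed because the computation itself shows that once the section exists, holomorphicity of its image is equivalent to integrability of the induced $J$.
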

\begin{proof}
This fact is well known, but we provide a short proof in 
our setting for completeness. 
In each case, we can write the graph or hypersurface 
in $\cp^1 \times \cc^2$ as
\begin{align*}
\{ \phi(z_1, z_2),\,(z_1, z_2) \},
\end{align*}
where $\phi : \Omega \rightarrow \cp^1$. 
Let us assume $\phi(z) \neq [0,1]$ for $z$ in some 
open subset $\Omega_1 \subset \Omega$. 
Then taking an affine coordinate on $\cp^1$, we let 
\begin{align*}
\phi(z_1, z_2) = [1,\,a(z_1, z_2)],
\end{align*}
where $a : \Omega_1 \rightarrow \cc$. 
Consider the subset $\cc \times \Omega_1$ of the total space.  
On this subset, the twistor complex structure has 
$\Lambda^{(1,0)}$ spanned by the forms
\begin{align*}
\{ d a, \  
\omega_1 = dz_1 - a d\bz_2, \ \omega_2 = dz_2 + a d\bz_1 \}. 
\end{align*}
Clearly, $J(\Omega_1)$ is a holomorphic submanifold if and only
if the map $J$ is holomorphic, with the complex structure 
on $\Omega_1$ induced by $J$ itself. 
This is satisfied if and only if
\begin{align*}
da = a_1 dz_1 + a_2 dz_2 + a_{\bar{1}} d\bz_1 + a_{\bar{2}}d \bz_2 
= c_1 ( dz_1 - a d\bz_2) + c_2 ( dz_2 + a d\bz_1). 
\end{align*} 
This clearly implies the equations
\begin{align}
\label{i1}
&a a_1 + a_{\bar{2}} = 0, \ \ \ \  a a_2 - a_{\bar{1}} = 0.
\end{align}
We next write the equations for $J$ to be integrable. 
On the set $\Omega_1$, define
\begin{align*}
v_1 &= a\,\pp z_1 + \pp \bar{z}_2, \ \ \ \ v_2 = a\,\pp z_2 - \pp \bar{z}_1.
\end{align*}
Note that $v_1$ and $v_2$ span $T^{(0,1)}(J)$. 
For $J$ to be integrable, it
suffices to show that $d\omega^1$ and $d\omega^2$ have no $(0,2)$
component relative to $J$, that is,
\begin{align*}
d\omega^1 ( v_1, v_2) &= 0, \ \ \ \ d \omega^2 ( v_1, v_2) =  0. 
\end{align*}
A simple calculation results in equations identical to (\ref{i1}).
The proof is finished by performing an 
analogous argument on the set $\Omega_2 = \{ z \in \Omega : \phi(z) \neq [1 , 0] \}$.
\end{proof}
A corollary of the above proof is 
\begin{corollary} $J$ is integrable if and only if
\begin{align*}
\phi: \Omega \rightarrow \cp^1
\end{align*}
is almost-holomorphic with respect to $J$ itself.
\end{corollary}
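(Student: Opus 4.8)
The plan is to read the Corollary straight off the computation in the proof of Theorem~\ref{graphthm}. Recall that on the open set $\Omega_1$ where $\phi\neq[0,1]$ we wrote $\phi=[1,a]$ with $a:\Omega_1\to\cc$, and that the forms $\omega_1=dz_1-a\,d\bz_2$ and $\omega_2=dz_2+a\,d\bz_1$ span $\Lambda^{(1,0)}(J)$; indeed each $\omega_k$ annihilates the vectors $v_1=a\,\pp z_1+\pp\bz_2$ and $v_2=a\,\pp z_2-\pp\bz_1$ that span $T^{(0,1)}(J)$, and a dimension count does the rest. First I would make explicit what ``almost-holomorphic with respect to $J$ itself'' means: the domain $\Omega$ is equipped with the almost complex structure $J$, the target $\cp^1\cong Z_2^+$ carries its standard complex structure, and $\phi$ is almost-holomorphic precisely when $d\phi$ intertwines the two, i.e.\ when $\phi^{*}\Lambda^{(1,0)}(\cp^1)\subset\Lambda^{(1,0)}(J)$. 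In the affine chart $[1,a]$ on $\cp^1$ this is nothing but the single scalar condition $\bar\partial_J a=0$, that is, $da\in\mathrm{span}\{\omega_1,\omega_2\}$.

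Now comes the key observation, already present in the proof of Theorem~\ref{graphthm}: the equation $da=c_1\omega_1+c_2\omega_2$ (for some functions $c_1,c_2$) is, by comparing the coefficients of $dz_1,dz_2,d\bz_1,d\bz_2$, equivalent to the pair \rf{i1}; and the integrability condition $d\omega_1(v_1,v_2)=0=d\omega_2(v_1,v_2)$ was shown in that same proof to be \emph{also} equivalent to \rf{i1}. Chaining these equivalences, on $\Omega_1$ the integrability of $J$ holds if and only if $\phi$ is almost-holomorphic with respect to $J$.

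To conclude globally I would run the identical argument on $\Omega_2=\{z:\phi(z)\neq[1,0]\}$ using the other affine chart on $\cp^1$; since $[0,1]\neq[1,0]$, every $z\in\Omega$ lies in $\Omega_1$ or $\Omega_2$, so $\Omega=\Omega_1\cup\Omega_2$ and the pointwise equivalence extends to all of $\Omega$. There is no genuine obstacle here: the only thing requiring care is the bookkeeping, namely checking that ``$\phi$ intertwines complex structures'' really does reduce in each chart to a single $\bar\partial_J$-type equation on the affine coordinate, and that the identification $\mathcal{J}_2^+\cong Z_2^+\cong\cp^1$ fixed earlier is the one under which the graph of $J$ is $\{(\phi(z),z)\}$ — both being immediate from the conventions of this section.
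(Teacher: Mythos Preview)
Your proposal is correct and is precisely the paper's approach: the Corollary is stated immediately after Theorem~\ref{graphthm} with no separate proof, the paper merely writing ``A corollary of the above proof is\ldots'', and what you have written is exactly the unpacking of that claim using the equivalence of both conditions with \rf{i1} established there. Your added care about the two affine charts and the meaning of ``almost-holomorphic'' simply makes explicit what the paper leaves implicit.
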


%%%%%%%%%%%%%%%%%%%%%%%%%%%%%%%%%%%%%%%%%%%%%%%%%%%%%%%%%%%
\subsection{Action of the conformal group}
\label{action}
%%%%%%%%%%%%%%%%%%%%%%%%%%%%%%%%%%%%%%%%%%%%%%%%%%%%%%%%%%%

We denote by $SO_{\circ}(1,5)$ the identity component of the group
$O(1,5)$, which is the group of orientation 
preserving conformal automorphisms of $S^4$, \cite{Kobayashi}. 
There is an induced action of the group $SO_{\circ}(1,5)$ on
$\mathcal{Z}_+(S^4)$ that we first define abstractly. Let $\phi: S^4 \rightarrow
S^4$ be a conformal automorphism.
If we identify the fiber over a point $p$ with the set of 
orthogonal maps $J: T_p S^4
\rightarrow T_p S^4$ with $J^2 = - I$, then the required action is
\begin{align}
\label{JtoJ}
J_p \mapsto J_{\phi(p)} = (\phi_*)_p\kern1pt J_p\kern1pt (\phi_*)_p^{-1}.
\end{align}
The right hand side is a valid orthogonal map, since $\phi$
satisfies $\phi_* \in CO(4)$. 

We shall now interpret \rf{JtoJ} in terms of the group $SL(2,\HH)$ of
restricted quaternionic transformations defined in
Section~\ref{quat}. A $2\times 2$ quaternionic matrix
\begin{align*}
M = \left(
\begin{matrix}
\tilde{a}  & \tilde{b}  \\
\tilde{c}  & \tilde{d} \\
\end{matrix}
\right), 
\end{align*}
acts on $\hh \oplus \hh$ as follows
\begin{align}
\label{act}
\left(\!\ba{c}q_1\\q_2\ea\!\right)\ \mapsto\
M\left(\!\ba{c}q_1\\q_2\ea\!\right) =
\left(\!\ba{c}\tilde{a} q_1 + \tilde{b} q_2\\
\tilde{c} q_1 + \tilde{d} q_2\ea\!\right).
\end{align}
If we identify $\HH\oplus\HH=\C^4$ and take $M \in SL(2, \hh)$, then
there is an induced action on both $\cp^3$ and $S^4 =
\mathbb{HP}^1$. The latter is given by
\begin{align}
\label{q1q2}
[q_1, q_2] \mapsto [ \tilde{a} q_1 + \tilde{b} q_2,\> 
\tilde{c} q_1 + \tilde{d} q_2],\qquad [q_1,q_2]\in\mathbb{HP}^1.
\end{align}
Using stereographic projection, the corresponding conformal 
map in $\rr^4 = \hh$ is given by 
\begin{align*} 
q  \mapsto (\tilde{c} + \tilde{d}q) (\tilde{a} + \tilde{b} q)^{-1},
\end{align*}
since, by convention, we have used {\em{right}} quaternionic 
multiplication to define $\mathbb{HP}^1$. 
\begin{proposition}
\label{PsiPsi}
Under the identification $\hat\Psi\colon \mathcal{Z}_+(S^4)\to\cp^3$ from
\rf{biholo}, the two actions of $SO_\circ(1,5)$ and $SL(2,\hh)$
coincide.
\end{proposition}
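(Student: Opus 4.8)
The plan is to verify that the two group actions agree by checking compatibility on the level of explicit coordinates, using the biholomorphism $\hat\Psi$ (equivalently $\Psi$ on the affine chart $\cp^3\setminus\cp^1_\infty$) together with the parametrization \rf{suggest}--\rf{Psi}. The key point is that both actions are holomorphic automorphisms of $\mathcal{Z}_+(S^4)\cong\cp^3$ covering conformal automorphisms of $S^4$ compatibly with $\pi$, so it suffices to match them.

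First I would recall that $SL(2,\hh)$ acts on $\cp^3$ simply by its inclusion $SL(2,\hh)\subset GL(4,\cc)$ given by \rf{GJr}--\rf{GABCD}, i.e.\ as ordinary projective linear maps on $[\xi_0,\xi_{12},W_1,W_2]$; and on $S^4=\hh\cp^1$ it acts by \rf{q1q2}, hence covers $\pi$ because \rf{piz} is exactly the map that sends $[\xi_0,\xi_{12},W_1,W_2]$ to $[\xi_0+j\xi_{12},\,W_1+jW_2]$ and the condition \rf{GJr} says precisely that $G$ is $\hh$-linear for the right module structure, so $\pi\circ G=G\circ\pi$ with the two incarnations of $G$. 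So the $SL(2,\hh)$ action on $\cp^3$ is a bundle automorphism of $\pi$ covering a conformal map of $S^4$. Second, I would recall that the action \rf{JtoJ} is, by the construction of $\hat\Psi$, exactly the natural action of the conformal group on the twistor bundle; abstractly it is characterized as the \emph{unique} lift of the conformal automorphism $\phi$ of $S^4$ to a fiberwise-linear-fractional (in fact, induced-by-$CO(4)$) automorphism of $\mathcal{Z}_+(S^4)$ that is holomorphic for $J_1$. Then the proof reduces to two things: (i) the $SL(2,\hh)$ action on $\cp^3$ is holomorphic for the twistor complex structure — but this is automatic, since it acts by projective linear maps and $\hat\Psi$ transports $J_1$ to the standard complex structure of $\cp^3$; and (ii) the conformal map of $S^4$ induced by $M\in SL(2,\hh)$ via \rf{JtoJ}-pushforward agrees with the one in \rf{q1q2}. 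For (ii) I would use the fact, already recorded in the excerpt, that $SL(2,\hh)\cong\mathit{Spin}_\circ(1,5)$, so the image of $SL(2,\hh)$ in the conformal group is all of $SO_\circ(1,5)$, and the map $[q_1,q_2]\mapsto[\tilde aq_1+\tilde bq_2,\tilde cq_1+\tilde dq_2]$ is precisely the standard realization of $SO_\circ(1,5)$ acting on $\hh\cp^1=S^4$ (equivalently $q\mapsto(\tilde c+\tilde dq)(\tilde a+\tilde bq)^{-1}$ in the stereographic chart), which is the conformal automorphism $\phi$.

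Putting these together: given $M\in SL(2,\hh)$, its action on $\cp^3$ is a holomorphic bundle automorphism of $\pi$ covering the conformal map $\phi$ of $S^4$; the action \rf{JtoJ} associated to that same $\phi$ is another such lift; and a holomorphic automorphism of the twistor bundle covering a given conformal map is determined on each fiber up to the (finite, here trivial on $\cp^1=SO(4)/U(2)$) ambiguity by how it acts on twistor lines — and since both lifts are linear-fractional on fibers and send the point $[1,0]\in\cp^1$ (i.e.\ $J_0$, i.e.\ $\mathbb{J}_r$, per the Remark after the matrix Proposition) to the image determined by $\phi_*$ at the corresponding point, they coincide. Concretely I would carry this out by checking on one explicit generating set of $SL(2,\hh)$ (say the "rotations'' $\mathrm{diag}(A,\bar A^{-\top})$-type elements coming from $SO(4)$, the dilations, and the translations $q\mapsto q+q_0$, which together generate $SO_\circ(1,5)$), and on each generator simply compute the effect on $[\xi_0,\xi_{12},W_1,W_2]$ through $\Psi$ using \rf{Wxi}, matching it termwise with the pushforward formula for $J$. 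The translations are the illuminating case: $q\mapsto q+q_0$ sends $(z_1,z_2)\mapsto(z_1,z_2)+(\text{const})$ on $\rr^4$ while fixing $[\xi_0,\xi_{12}]$, and one reads off from \rf{Wxi} that $W_k$ transforms by an additive $\xi$-linear term, which is exactly the corresponding unipotent element of $SL(2,\hh)\subset GL(4,\cc)$ acting on $\cp^3$.

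The main obstacle I anticipate is purely bookkeeping: pinning down all the sign and conjugation conventions so that the matrix realization \rf{GABCD} of a given $q_0,A\in SL(2,\hh)$ lines up \emph{on the nose} with the conformal action \rf{JtoJ} pushed through the matrix formula for $J$ in terms of $\xi=f+ig$. There is no conceptual difficulty once one accepts the identification $SL(2,\hh)\cong\mathit{Spin}_\circ(1,5)$ and the naturality of the twistor construction; the risk is a spurious discrepancy introduced by the choice that $\cp^3$ is built with \emph{right} $\hh$-multiplication (as flagged repeatedly in the excerpt) versus the left conventions one might habitually use. I would therefore organize the verification so that the right-module conventions of Section~\ref{quat} are used consistently throughout, and treat the fixed-point/fiber-matching argument above as the structural backbone, with the generator-by-generator coordinate check serving only to nail down the constants.
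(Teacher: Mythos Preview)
The paper does not actually prove this proposition: immediately after stating it, the authors write ``This result is well-known and can be found in \cite{Atiyah,Pontecorvo}.'' So there is no argument in the paper to compare against, only a citation.

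Your plan is a reasonable outline of the standard argument, and goes well beyond what the paper provides. The structural backbone---both actions are holomorphic bundle maps of $\pi\colon\cp^3\to S^4$ covering the same conformal automorphism of $S^4$, hence agree once one matches the fiberwise action---is sound, and the generator-by-generator verification through \rf{Wxi} is the natural way to make it concrete. One small gap: the set of generators you propose (translations, $SO(4)$ rotations, dilations) spans only the $11$-dimensional similarity group of $\R^4$, not the full $15$-dimensional $SO_\circ(1,5)$; you must also include an inversion (or the special conformal transformations). The paper in fact records, in the Remark immediately following the proposition, exactly how the inversion $q\mapsto q^{-1}$ lifts to $\cp^3$, namely as \rf{invlift}, so adding that case to your check completes the generating set with no extra difficulty.
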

This result is well-known and can be found in
\cite{Atiyah,Pontecorvo}. 
\begin{remark}
For later use, we record the following. 
The inversion $q \rightarrow q^{-1}$
lifts to the automorphism of $\cp^3$ defined by 
\begin{align}
\label{invlift}
[ \xi_0, \xi_{12}, W_1, W_2] \rightarrow [  W_1, W_2, \xi_0, \xi_{12}].
\end{align}
\end{remark}
%%%%%%%%%%%%%%%%%%%%%%%%%%%%%%%%%%%%%%%%%%%%%%%%%%%%%%%%%%%%%%%%%%%%%%
\section{Degree one solutions and real quadrics}
\label{dosrq}
%%%%%%%%%%%%%%%%%%%%%%%%%%%%%%%%%%%%%%%%%%%%%%%%%%%%%%%%%%%%%%%%

In this section, we will consider degree one solutions, 
and prove Theorem \ref{t1}.  
We will also introduce the notion of a real 
quadric from the algebraic point of view, examine
its geometry under the twistor projection, 
and prove Theorem \ref{t2}.

%%%%%%%%%%%%%%%%%%%%%%%%%%%%%%%%%%%%%%%
\subsection{Removal of Singularities}
\label{remove}
%%%%%%%%%%%%%%%%%%%%%%%%%%%%%
We quote two crucial theorems which will be used later in this
section. The first is due to Bernard Shiffman. 
We refer the reader to \cite[Chapter 2]{EvansGariepy} for background on 
Hausdorff measures. 
\begin{theorem}(\cite{Shiffman}).  Let $U$ be open in $\cc^n$, and let
  $E$ be closed in $U$.  Let $A$ be a pure complex $k$-dimensional analytic
  set in $U \setminus E$, and let $A'$ be the closure of $A$ in $U$.
  If $E$ has Hausdorff $(2k-1)$-measure zero then $A'$ is a pure
  $k$-dimensional analytic set in $U$.
\end{theorem}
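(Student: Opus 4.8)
The final statement is Shiffman's removable-singularity theorem, generalizing the classical Thullen--Remmert--Stein theorem (the case in which $E$ is itself an analytic set of dimension $<k$). The plan is to reduce to a purely local extension problem near $E$ and to solve it by a branched-covering argument, the hypothesis $\mathcal H^{2k-1}(E)=0$ entering precisely through the fact that orthogonal projections are $1$-Lipschitz. Only $k<n$ needs argument, since for $k=n$ the closed set $A'$ contains the dense open set $U\setminus E$ and hence equals $U$. Since analyticity is local, $A=A'\setminus E$ is already analytic off $E$, and $U\setminus A'$ is open, it suffices to prove $A'$ analytic near each $q\in A'\cap E$; write $F:=A'\cap E$, a closed set with $\mathcal H^{2k-1}(F)=0$. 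Purity of dimension follows once analyticity is known: a component $Z$ of $A'$ of dimension $j>k$ must lie in $E$ (because $A'\setminus E$ is pure of dimension $k$), so $0<\mathcal H^{2j}(Z)\le\mathcal H^{2j}(E)=0$ as $2j>2k-1$, a contradiction; and a component of dimension $<k$ meets either $U\setminus E$, contradicting purity of $A$, or only $E$, impossible for the same reason.

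Next I would set up the geometry at $q$. The set of secant directions $\{\,[x-q]\in\cp^{n-1}:x\in E,\ 0<|x-q|\le\epsilon\,\}$ is a locally Lipschitz image of $E$, hence of Hausdorff dimension $\le 2k-1$; a complex $(n-k)$-plane through $q$ contributes only a $\cp^{n-k-1}\subset\cp^{n-1}$ of directions, and since $(2k-1)+(2n-2k-2)=2n-3<2n-2$, a generic such plane avoids all of them (and also meets the $k$-dimensional analytic set $A$ discretely near $q$). After a linear change of coordinates centred at $q$ this plane is $\{q'\}\times\cc^{n-k}$, the first-coordinate projection $\pi\colon\cc^n\to\cc^k$ has discrete fibres on $A'$ near $q$, and there is a polydisc $P=P'\times P''\subset U$ with $A'\cap(\overline{P'}\times\partial P'')=\varnothing$, so that $\pi\colon A'\cap\overline P\to\overline{P'}$ is proper. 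Over $P'_0:=P'\setminus\pi(F\cap\overline P)$ one has $A'=A$ — a point of $A'$ over $P'_0$ lying in $E$ would lie in $F$ and hence over $\pi(F\cap\overline P)$ — so $\pi\colon A\cap\pi^{-1}(P'_0)\to P'_0$ is proper with finite fibres (each fibre is a compact analytic subset of a polydisc, hence finite), i.e.\ a branched covering.

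The crucial input is then measure-theoretic. Since $\pi$ is $1$-Lipschitz, $\mathcal H^{2k-1}\big(\pi(F\cap\overline P)\big)\le\mathcal H^{2k-1}(E)=0$, and this image is closed by properness; adjoining the discriminant $D\subset P'$ of the covering (analytic of dimension $<k$, hence also of vanishing $(2k-1)$-measure) yields a closed set $S\subset P'$ with $\mathcal H^{2k-1}(S)=0$. For such $S$ one invokes two classical facts: (i) $S$ does not locally separate $\cc^k$, so $P'\setminus S$ is connected; and (ii) $S$ is removable for bounded holomorphic functions — reduce to Painlev\'e's one-variable theorem by slicing with complex lines, using that $\mathcal H^{2k-1}(S)=0$ forces $\mathcal H^1(S\cap\ell)=0$ for almost every line $\ell$, and patch the slice-wise extensions by separate holomorphy. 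By (i) the covering over $P'\setminus S$ has a well-defined finite degree $d$, so $A$ there is a union of $d$ graphs of bounded holomorphic maps into $P''$; their elementary symmetric functions are bounded and holomorphic on $P'\setminus S$, hence by (ii) extend to $P'$. These extensions are the coefficients of the canonical pseudopolynomial defining functions of an analytic cover over $P'$, whose common zero set is a pure $k$-dimensional analytic subset of $P$ that coincides with $A$ off the thin set $\pi^{-1}(S)$ and has no component inside it, hence equals $A'$; local finiteness of the $2k$-volume of $A'$ near $E$ drops out as a byproduct from Wirtinger's identity and Cauchy estimates on the graphs, giving the $C^0$ limit $A'$. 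The main obstacle is exactly to secure facts (i) and (ii): to show that the single hypothesis $\mathcal H^{2k-1}(E)=0$ is strong enough both to prevent $\pi(F)$ from disconnecting the base polydisc and to render it removable for the bounded symmetric functions — these removability and separation statements for Hausdorff-null sets being the heart of the matter, reduced via Lipschitz projections and complex-line slicing to the classical Painlev\'e theorem.
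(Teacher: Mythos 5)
The paper does not prove this statement---it is quoted verbatim from \cite{Shiffman}, with only the remark that the proof expresses $A$ as a finite branched holomorphic covering and then applies a removable-singularity theorem. Your proposal is precisely that argument (essentially Shiffman's own) and is correct modulo the two classical inputs you explicitly isolate---non-separation of $\cc^k$ and Painlev\'e-type removability for bounded holomorphic functions across closed sets of vanishing $\mathcal{H}^{2k-1}$-measure---which carry the real weight; the only blemish is the throwaway purity argument for putative components of dimension $<k$ contained in $E$ (where $\mathcal{H}^{2k-1}(E)=0$ gives no control on $\mathcal{H}^{2i}(E)$ for $i<k$), but this is redundant since your local analytic-cover model is pure $k$-dimensional by construction.
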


The second theorem we will use is the following 
theorem of Errett Bishop.
\begin{theorem}(\cite[Lemma 9]{Bishop})\label{EBish}
Let $U$ be an open subset of $\cc^n$, and $B$ a 
proper analytic subset of $U$.  Let $A$ be an 
analytic subset of $U \setminus B$ of pure complex 
dimension $k$ such that $\ol{A} \cap B$
has $2k$-dimensional Hausdorff measure zero. 
Then $\ol{A} \cap U$ is analytic. 
\end{theorem}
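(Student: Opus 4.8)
This is the Bishop extension theorem (\cite[Lemma 9]{Bishop}), a strengthening of the classical Thullen--Remmert--Stein theorem in which the hypothesis $\dim_{\mathbb C}B<k$ is relaxed to $\mathcal H^{2k}(\overline A\cap B)=0$; the paper quotes it rather than proving it, but here is the strategy I would follow. The assertion is local, so fix $x_0\in\overline A\cap B$. One first reduces to the case that $B$ is a complex submanifold near $x_0$: stratify $B$ by its iterated singular loci, so that the top stratum is a complex submanifold, and run a downward induction on $\dim_{\mathbb C}B$ -- having extended $\overline A$ analytically across the regular part of $B$ one is left with an analytic set on the complement of $B_{\mathrm{sing}}$, a proper analytic subset of strictly smaller dimension still meeting the (unchanged) closure in a set of $\mathcal H^{2k}$-measure zero, so the inductive hypothesis applies. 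Choosing holomorphic coordinates one may then take $B$ linear; slicing by generic complex hyperplanes (the coarea inequality guarantees that almost every slice preserves the hypotheses, lowering the measure index from $2k$ to $2k-2$) reduces us either to $\dim_{\mathbb C}B<k$, where $\mathcal H^{2k-1}(B)=0$ and Shiffman's theorem quoted above applies verbatim (recovering Remmert--Stein), or to the essential case $\dim_{\mathbb C}B=k$.

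In the essential case, write $\mathbb C^n=\mathbb C^k_z\times\mathbb C^{n-k}_w$ with $B=\mathbb C^k_z\times\{0\}$. The projection $\pi(z,w)=z$ maps $B$ isomorphically onto $\mathbb C^k_z$, so $Z:=\pi(\overline A\cap B)=\overline A\cap B$ is a closed subset of $\mathbb C^k_z$ of Lebesgue measure zero; and since $A\cap(\{0\}\times\mathbb C^{n-k}_w)$ is discrete away from the origin one can choose a polydisc $\Delta^k_z\times\Delta^{n-k}_w$ about $x_0$ on which $\pi$ is proper on $\overline A$. Over $\Delta^k_z\setminus Z$ the set $A$ is then an analytic branched cover, with some number of sheets over each point (counted with multiplicity) that is constant on each connected component of $\Delta^k_z\setminus Z$. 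The crux, and the main obstacle, is to bound the number of sheets near $Z$ -- equivalently, to show $\overline A$ has locally finite $2k$-dimensional Hausdorff measure. This is not soft: a Lebesgue-null closed set can disconnect $\mathbb C^k_z$, so the sheet number need not be globally bounded a priori, and it is precisely here that the hypothesis $\mathcal H^{2k}(\overline A\cap B)=0$ must be used quantitatively. One covers $\overline A\cap B$ near $x_0$ by balls $B(y_i,t_i)$ with $\sum_it_i^{2k}$ arbitrarily small; on the complement of these balls $A$ is an honest analytic variety whose boundary lies on finitely many spheres, so by the monotonicity formula and the area bounds for complex-analytic (hence area-minimizing) varieties the volume of $A$ inside each ball $B(y_i,t_i)$ is $O(t_i^{2k})$, while the volume of $A$ outside the cover is bounded independently of the cover; letting $\sum_it_i^{2k}\to0$ yields the local finiteness. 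Choosing the covering radii (via the coarea formula) so that the sphere slices of $A$ have controlled area and the small pieces of $A$ may legitimately be closed up is the technical heart of \cite{Bishop}.

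Granting the volume bound one finishes as follows. The integration current $[A]$ on $U\setminus B$ now has locally finite mass, and since $\overline A\cap B$ is negligible it carries no mass there, so $[A]$ extends to a positive closed $(k,k)$-current $T$ on $U$ supported on $\overline A$. By the structure theorem for positive closed currents whose support has locally finite $\mathcal H^{2k}$-measure -- in Bishop's original treatment this step is instead done by Weierstrass preparation, the elementary symmetric functions of the branched-cover sheets being square-integrable holomorphic functions on $\Delta^k_z\setminus Z$ that extend across the negligible set $Z$ and furnish defining equations -- one obtains $T=\sum_jn_j[A_j]$ for finitely many pure $k$-dimensional analytic subvarieties $A_j$ with positive integer multiplicities. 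Hence $\overline A=\operatorname{supp} T=\bigcup_jA_j$ is a pure $k$-dimensional analytic subset of $U$, as asserted.
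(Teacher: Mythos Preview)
The paper does not prove this theorem: it is quoted as a cited result from Bishop, and the only comment offered is a one-sentence sketch (``expressing $A$ as a finite branched holomorphic covering, and then using a removable singularity theorem''). Your proposal is a substantially more detailed account of exactly that strategy --- the stratification/induction reduction, the local branched-cover setup via projection, the crucial volume bound coming from the $\mathcal H^{2k}$-hypothesis, and the endgame via either symmetric functions of the sheets or the structure theorem for positive closed currents --- and is consistent both with the paper's remark and with Bishop's original argument. There is nothing to compare here: you have supplied what the paper explicitly defers to the literature.
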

The difference from Shiffman's Theorem is that
the set $B$ is required to be an analytic subset. 
The idea of the proof is the same, by expressing $A$ as a finite 
branched holomorphic covering, and then using a removable singularity 
theorem (the proof of Shiffman is in fact based on Bishop's proof).

 Both of these theorems are generalizations of the classical 
theorem of Thullen-Remmert-Stein which requires the stronger 
assumption that the set $B$ be contained in 
a subvariety of {\em{strictly}} lower dimension \cite{Thullen, RemmertStein}. 

\subsection{Degree one solutions}
A degree one hypersurface in $\mathbb{CP}^3$ is given by a
single linear equation
\begin{align*} 
c_1 \xi_0 + c_2 \xi_{12} + c_3 W_1 + c_4 W_2 = 0,
\end{align*}
where $\xi_0,\xi_{12},W_1,W_2$ are our chosen coordinates and $c_1,
c_2, c_3, c_4$ are constants.

\begin{proposition}
\label{degfib}
 A hyperplane in $\mathbb{CP}^3$ hits every fiber over $S^4$ 
in exactly one point, except over one point in $S^4$, over which 
it contains a twistor line. 
\end{proposition}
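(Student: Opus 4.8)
The plan is to compute directly, using the explicit coordinates $[\xi_0,\xi_{12},W_1,W_2]$ on $\cp^3$ and the explicit form of the twistor projection. Recall from \rf{Wxi} and \rf{Psi} that over a finite point $q=z_1+jz_2\in\hh=\rr^4$, the twistor fiber is the image under $\Psi$ of $[\xi_0,\xi_{12}]\in\cp^1$, with $W_1=\xi_0 z_1-\xi_{12}\bz_2$ and $W_2=\xi_0 z_2+\xi_{12}\bz_1$; over the point at infinity the fiber is $\{[0,0,W_1,W_2]\}$. So to intersect the hyperplane $c_1\xi_0+c_2\xi_{12}+c_3W_1+c_4W_2=0$ with a given fiber, one substitutes these expressions and counts solutions $[\xi_0,\xi_{12}]$.

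First I would handle the fiber over infinity: substituting $\xi_0=\xi_{12}=0$ gives $c_3W_1+c_4W_2=0$, a single linear equation in $[W_1,W_2]\in\cp^1$, which has exactly one solution provided $(c_3,c_4)\neq(0,0)$, and is identically satisfied (the whole fiber, i.e.\ a twistor line) when $c_3=c_4=0$. Next, over a finite point $q$, substituting \rf{Wxi} yields
\begin{align*}
(c_1+c_3z_1+c_4z_2)\,\xi_0+(c_2-c_3\bz_2+c_4\bz_1)\,\xi_{12}=0,
\end{align*}
again a single linear equation in $[\xi_0,\xi_{12}]\in\cp^1$. This has exactly one solution unless both coefficients vanish, i.e.\ unless $c_3z_1+c_4z_2=-c_1$ and $-c_3\bz_2+c_4\bz_1=-c_2$; in that degenerate case the entire fiber lies in the hyperplane, giving a twistor line. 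So the content reduces to: the coefficient system
\begin{align*}
c_3z_1+c_4z_2=-c_1,\qquad c_4\bz_1-c_3\bz_2=-c_2
\end{align*}
has at most one solution $(z_1,z_2)$, and when $(c_3,c_4)=(0,0)$ it has none (then instead infinity carries the twistor line).

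The key algebraic step is to show that this last $2\times 2$ complex-linear system (after conjugating the second equation, it becomes $\ol{c_4}z_1-\ol{c_3}z_2=-\ol{c_2}$) has a unique solution whenever $(c_3,c_4)\neq(0,0)$. Its determinant is $-c_3\ol{c_3}-c_4\ol{c_4}=-(|c_3|^2+|c_4|^2)$, which is nonzero precisely when $(c_3,c_4)\neq(0,0)$. Thus for $(c_3,c_4)\neq(0,0)$ there is exactly one finite point $q_0$ over which the fiber is contained in the hyperplane, and over every other finite point and over infinity the intersection is a single point; whereas if $c_3=c_4=0$ the hyperplane is $\{[0,0,W_1,W_2]\}^{\perp}$-type, meeting every finite fiber in one point and containing the fiber at infinity. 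Either way there is exactly one exceptional point of $S^4$. I do not anticipate a genuine obstacle here — the only thing to be careful about is bookkeeping between the finite chart and the point at infinity, and confirming that the "one bad point" in the two cases ($q_0$ finite versus $q_0=\infty$) are genuinely the only degeneracies, which the determinant computation makes transparent; one should also note that a hyperplane always has nonzero coefficient vector $(c_1,c_2,c_3,c_4)$, so the two cases $(c_3,c_4)\neq 0$ and $(c_3,c_4)=0$ are exhaustive and in the latter $(c_1,c_2)\neq 0$ so infinity does carry a genuine line.
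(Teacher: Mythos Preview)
Your proof is correct, but it takes a different route from the paper's. The paper gives a short coordinate-free argument: since twistor fibers are projective lines, a hyperplane meets each either in a single point or contains it entirely; the hyperplane must contain at least one fiber because otherwise $\pi$ would restrict to a homeomorphism from $\cp^2$ onto $S^4$; and it contains at most one because any two lines in a $\cp^2$ intersect, whereas distinct twistor fibers are disjoint.

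Your approach substitutes the explicit fiber parametrization \rf{Wxi} into the hyperplane equation and reduces the question to a $2\times2$ linear system over $\cc$ whose determinant $-(|c_3|^2+|c_4|^2)$ is transparently nonzero exactly when $(c_3,c_4)\ne(0,0)$. This is more hands-on but has the advantage of pinpointing the exceptional base point explicitly; indeed, the paper records precisely this formula immediately after its own proof when writing down the corresponding solution $a(z_1,z_2)$. The paper's argument, by contrast, is cleaner to state and makes the topological obstruction ($S^4\not\cong\cp^2$) do the work, at the cost of not locating the special point.
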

\begin{proof}
Since the twistor lines are linear, 
a hyperplane $P$ will intersect a twistor line in exactly one point
or will contain the line. Since $S^4$ is not homeomorphic
to $\cp^2$, $P$ must contain at least one twistor line.  Furthermore, 
it must contain exactly one twistor line because any 
two projective lines in $P$ must intersect.  
\end{proof}
 We next prove Theorem \ref{t1} from the Introduction:
\begin{proof}[Proof (of Theorem \ref{t1})]
Consider the graph of $J$ in $\cp^3$. It is a 
$C^1$ complex hypersurface of $\cp^3 \setminus ( \Lambda \times \cp^1)$.
Such a hypersurface is analytic by a standard 
regularity theorem in several complex variables
\cite{GunningII}.
From our assumption, we have $\mathcal{H}^3( \Lambda \times \cp^1) = 0$.  We
can then use the above theorem of Shiffman \cite{Shiffman}, to
conclude that the closure of the graph of $J$, $\ol{ J( \Omega)}$ is
an analytic subvariety of $\cp^3$.  Note that most fiber $\cp^1$s
transversely intersect $\ol{ J( \Omega)}$ in one point, this implies
$\ol{ J( \Omega)}$ is a degree one subvariety, which must therefore be
linear \cite[page 173]{GriffithsHarris}, \cite[Chapter 5]{Mumford}.  
The conformal equivalence now follows from the fact 
that $SO(5) \subset SO(1,5)$ acts transitively on the dual projective space
$(\cp^3)^{\ast}$ parametrizing hyperplanes in $\cp^3$.
\end{proof}
 Note that in $\rr^4$, the corresponding solution of the 
system (\ref{i1}) is 
\begin{align*}
a(z_1, z_2) = -\frac{c_1 + c_3 z_1 + c_4 z_2}{c_2 - c_3 \bz_2 + c_4 \bz_1}. 
\end{align*}
The singular point is located where both the numerator and 
denominator simultaneously vanish, which corresponds to 
the twistor line contained in the hyperplane. 
\begin{remark}
In the case that $\Lambda$ is a finite collection of points,
one just needs to use the theorem of 
Thullen-Remmert-Stein. This was in fact well-known to experts, 
we thank Paul Gauduchon and Claude LeBrun for informing 
us of this fact. As mentioned in the Introduction, 
this was also previously observed in \cite{WoodIJM}. 
\end{remark}

%%%%%%%%%%%%%%%%%%%%%%%%%%%%%%%%%%%%%%%%%%%%%%%%%%%%%%%
\subsection{Real quadrics}
\label{algebra}
For notational purposes, it is convenient to 
re-order the coordinates of $\C^4$ by means of the change of basis
\begin{align}
\label{cbasis}
[ \xi_0, \xi_{12}, W_1, W_2] \rightarrow [ \xi_0, W_1, \xi_{12}, W_2]. 
\end{align}
Letting $P$ denote the change of basis matrix, 
we define 
\be{AJA} \mathbb{J} = P\,\mathbb{J}_r P.
\ee
This will lead to an alternative description of $GL(2,\HH)$ that will
be developed in Sections~\ref{class1} and \ref{class2}.

A complex symmetric $4\times4$ matrix $Q$ represents a quadratic form
$q\colon\C^4\to\C$ by means of the formula \be{qf} q(\vv)=\vv\tp Q\vv,
\ee where $\vv$ denotes a column vector. The associated symmetric
bilinear form (SBF) can be recovered from the so-called polarization
formula
\begin{align}
\label{polar}
\textstyle \vv\tp Q\ww  
= \frac12\big( q(\vv + \ww) - q(\vv) - q(\ww)\big).
\end{align}
Henceforth we shall use the terms ``complex symmetric matrix'',
``quadratic form'' and ``SBF'' interchangeably, via \rf{qf} and
\rf{polar}.

In order to analyze such forms in the context of the twistor
projection $\pi\colon\mathbb{CP}^3\to S^4$, we shall define the
quaternionic structure on $S^4$ by means of the $4\times4$ matrix
$\mathbb{J}$ defined in \rf{AJA}, in place of $\mathbb{J}_r$. In other
words, we identify $\C^4$ with $\HH^2$ in the following way.  Set
\[ I = \mat{1&0\\0&1},\] so that \[\mathbb{J} = \mat{0&-I\\I&0}.\] 
Multiplication by the quaternion $j$ now
corresponds to the mapping $\vv\mapsto\mathbb{J}\ol\vv$.

With this new convention, $\mathfrak{gl}(2,\HH)$ consists of those
linear transformations $G$ of $\C^4$ that commute with this mapping,
which in matrix terms means that \be{glH} G\mathbb{J}=\mathbb{J}\ol
G,\ee compare \rf{GJr}. Equivalently, we can state

\begin{definition}
  The space $\mathfrak{gl}(2,\HH)$ consists of matrices $G=\block AB$,
  using the notation \be{AB} \block AB=\mat{A&B\\\!-\ol B&\ol A}\ee
  for this type of block matrix.
\end{definition}
\noindent
In particular, $\mathbb{J}=\block0{\!-\kern-4pt I}$ is itself in
$\mathfrak{gl}(2,\HH)$.

 Since our complex vector space $\C^4$ is endowed with a
\emph{quaternionic} structure, whether or not the entries of a
$4\times4$ matrix are real numbers is of no great concern to us.
The relevant notion of reality is instead the following 
 
\begin{definition}\label{realq}
A quadratic form $q\colon\C^4\to\C$ is \emph{real} if
\begin{align}
\label{real}
q(\mathbb{J}\ol\vv) =\ol{q(\vv)},
\end{align}
whereas $q$ is \emph{purely imaginary} if
\begin{align*}
q(\mathbb{J}\ol\vv) = -\ol{q(\vv)}.
\end{align*}
\end{definition}

We can convert this definition into equations for the associated
symmetric matrix $Q$. Using \rf{polar}, the first condition becomes
\[\ol\vv\tp\mathbb{J}\tp Q\mathbb{J}\ww = \ol{\vv\tp Q\ww},\]
for all $\vv,\ww$. This is equivalent to \be{JQJ} \mathbb{J}\tp
Q\mathbb{J}=\ol Q, \ee which is the same as \rf{glH} (with $Q$ in
place of $G$).  Hence, the symmetric matrix $Q$ represents a real
quadratic form (in the sense of the definition above) if and only if
\be{QAB} Q=\block AB\in\mathfrak{gl}(2,\HH).\ee The fact that $Q$ is
symmetric implies that $(\ol{Q\mathbb{J}})\tp+Q\mathbb{J}=0$, and so
\be{Qt} \tilde Q=Q\mathbb{J}=\block B{\!-\kern-4pt
  A}\in\mathfrak{sp}(2),\ee where we identify the Lie algebra
$\mathfrak{sp}(2)$ with $\mathfrak{gl}(2,\HH)\cap\mathfrak{u}(4)$. In
this case, the $2\times2$ matrix $A$ is symmetric, while
$B\in\mathfrak{u}(2)$.

\begin{definition} 
\label{rqdef}
The real 10-dimensional space of SBFs
  characterized by the equivalent conditions \rf{QAB},\,\rf{Qt} will
  be denoted henceforth by $\Sigma$.
\end{definition}

An obvious element of $\Sigma$ is represented by the identity matrix
\begin{align}
\label{II}
\mathbb{I}=\block I0.
\end{align}
Moreover, we have
\begin{lemma} 
\label{realim}
A SBF $Q$ is real if and only if $iQ$ is purely
  imaginary, and any SBF is represented uniquely by a symmetric
  matrix
\begin{align}
\label{Q12}
Q = Q_1+ iQ_2,
\end{align}
where $Q_1,Q_2\in\Sigma$.
\end{lemma}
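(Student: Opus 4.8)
The plan is to prove Lemma~\ref{realim} in two pieces, exactly as stated. First I would verify the correspondence $Q\mapsto iQ$ between real and purely imaginary SBFs directly from Definition~\ref{realq}: if $q$ satisfies $q(\mathbb{J}\ol\vv)=\ol{q(\vv)}$, then the form $iq$ satisfies $(iq)(\mathbb{J}\ol\vv)=i\,q(\mathbb{J}\ol\vv)=i\,\ol{q(\vv)}=-\ol{i\,q(\vv)}=-\ol{(iq)(\vv)}$, which is precisely the purely imaginary condition; the converse is identical, dividing by $i$. At the matrix level, this says $Q$ satisfies \rf{JQJ} if and only if $iQ$ satisfies $\mathbb{J}\tp(iQ)\mathbb{J}=-\ol{iQ}$, which is again immediate by conjugating through the scalar.

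For the decomposition \rf{Q12}, the idea is that the real linear involution $Q\mapsto \mathbb{J}\tp\ol Q\,\mathbb{J}$ on the complex vector space of symmetric $4\times4$ matrices splits that space into its $(\pm1)$-eigenspaces, with $\Sigma$ being the $(+1)$-eigenspace by \rf{JQJ}--\rf{QAB}, and $i\Sigma$ (the purely imaginary SBFs) being the $(-1)$-eigenspace by the first part of the lemma. Concretely, given any symmetric $Q$, I would set
\begin{align*}
Q_1=\tfrac12\big(Q+\mathbb{J}\tp\ol Q\,\mathbb{J}\big),\qquad
iQ_2=\tfrac12\big(Q-\mathbb{J}\tp\ol Q\,\mathbb{J}\big),
\end{align*}
and check that both are symmetric (since $\mathbb{J}\tp\ol Q\,\mathbb{J}$ is symmetric whenever $Q$ is, using $\mathbb{J}\tp=-\mathbb{J}$ and that conjugation and transpose commute), that $Q_1$ satisfies \rf{JQJ} because $\mathbb{J}^2=-I$, hence $Q_1\in\Sigma$, and that $Q-Q_1$ is purely imaginary, hence equals $iQ_2$ with $Q_2\in\Sigma$ by the already-established real/imaginary correspondence. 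So $Q=Q_1+iQ_2$ with $Q_1,Q_2\in\Sigma$.

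Finally, uniqueness: if $Q_1+iQ_2=Q_1'+iQ_2'$ with all four matrices in $\Sigma$, then $Q_1-Q_1'=i(Q_2'-Q_2)\in\Sigma\cap i\Sigma$; but an element of $\Sigma\cap i\Sigma$ is both real and purely imaginary as a quadratic form, forcing $q(\mathbb{J}\ol\vv)=\ol{q(\vv)}=-\ol{q(\vv)}$, hence $q\equiv0$ and the matrix is zero, giving $Q_1=Q_1'$ and $Q_2=Q_2'$. There is no real obstacle here; the only thing needing a little care is keeping track of the interplay of transpose, complex conjugation, and the identities $\mathbb{J}\tp=-\mathbb{J}$, $\mathbb{J}^2=-I$ when checking that the projections land in the space of \emph{symmetric} matrices and in $\Sigma$ — but all of this is routine once the eigenspace-decomposition viewpoint is in place.
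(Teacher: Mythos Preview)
Your proof is correct and follows essentially the same approach as the paper: both use the involution $Q\mapsto\mathbb{J}\tp\ol Q\,\mathbb{J}$ and the explicit projections $Q_1=\tfrac12(Q+\mathbb{J}\tp\ol Q\,\mathbb{J})$, $iQ_2=\tfrac12(Q-\mathbb{J}\tp\ol Q\,\mathbb{J})$. You have supplied more detail (the symmetry check and the uniqueness argument via $\Sigma\cap i\Sigma=0$) than the paper, which leaves these routine verifications implicit.
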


\begin{proof} The first statement is an immediate consequence of
  \rf{JQJ}. The second follows from setting \[\textstyle
  Q_1=\frac12\big(Q + \mathbb{J}\tp \ol Q\mathbb{J}\big),\qquad iQ_2
  =\frac12\big(Q - \mathbb{J}\tp\ol Q\mathbb{J}\big).\] Note that
  $Q_2$ (resepctively, $Q_1$) vanishes if and only if $Q$ is real
  (resepctively, purely imaginary).
\end{proof}

Recall that $GL(2,\HH)$ denotes the group of invertible matrices of
the form \rf{AB}, and $SL(2,\HH)$ those with determinant equal to $1$.

\begin{lemma} 
\label{preserve}
If $Q$ is a real SBF and $G\in GL(2,\HH)$ then $G\tp Q G$ is also
real.
\end{lemma}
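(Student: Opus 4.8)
The plan is to verify the defining condition \rf{real} directly for the transformed form $q'(\vv) = \vv\tp (G\tp QG)\vv$, i.e.\ to show $q'(\mathbb{J}\ol\vv) = \ol{q'(\vv)}$. The only facts needed are that $Q$ is real (so $\mathbb{J}\tp Q\mathbb{J} = \ol Q$ by \rf{JQJ}), and that $G\in GL(2,\HH)$ (so $G\mathbb{J} = \mathbb{J}\ol G$ by \rf{glH}, equivalently $\mathbb{J}\ol G = G\mathbb{J}$, and of course $\ol{(G\tp)} = \ol G{}\tp$).

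First I would write out the associated symmetric matrix of $q'$, namely $Q' = G\tp Q G$, and check that $Q'$ satisfies \rf{JQJ}. Compute
\begin{align*}
\mathbb{J}\tp Q'\mathbb{J} = \mathbb{J}\tp G\tp Q G\mathbb{J}
= (G\mathbb{J})\tp Q (G\mathbb{J})
= (\mathbb{J}\ol G)\tp Q (\mathbb{J}\ol G)
= \ol G{}\tp \mathbb{J}\tp Q\mathbb{J}\,\ol G
= \ol G{}\tp\,\ol Q\,\ol G = \ol{G\tp Q G} = \ol{Q'},
\end{align*}
using $G\mathbb{J} = \mathbb{J}\ol G$ in the third equality and the reality of $Q$ in the fifth. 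Since $Q'$ is obviously symmetric ($Q$ symmetric implies $G\tp Q G$ symmetric), and condition \rf{JQJ} was shown in the text to be equivalent to \rf{real} for symmetric matrices, this proves $q'$ is real. Equivalently, in the block-matrix language one observes that $G\tp Q G$ again has the form $\block{A'}{B'}$ because $GL(2,\HH)$ is closed under transpose-conjugation-type operations and $\mathfrak{gl}(2,\HH)$ is a $G\tp(\cdot)G$-stable subspace; but the one-line matrix computation above is cleaner.

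There is really no substantive obstacle here: the lemma is a formal consequence of the compatibility of the quaternionic structure $\mathbb{J}$ with both $GL(2,\HH)$ and the reality condition, and the proof is the short chain of identities displayed above. The only point requiring a modicum of care is bookkeeping with transposes and conjugates — making sure that $(G\mathbb{J})\tp = \mathbb{J}\tp G\tp$ and $\ol{(\ol G{}\tp \ol Q\,\ol G)} = G\tp Q G$ are applied correctly, and that $\mathbb{J}$ being real (its entries are $0$ and $\pm I$) means $\ol{\mathbb{J}} = \mathbb{J}$, so no stray conjugate on $\mathbb{J}$ appears. I would present the argument as the single display above with a sentence before and after, and note as a remark that the same computation shows $G\tp Q G$ is purely imaginary whenever $Q$ is.
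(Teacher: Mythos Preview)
Your proof is correct and is essentially identical to the paper's own argument: the paper also verifies \rf{JQJ} for $G\tp QG$ via the same one-line chain $(G\mathbb{J})\tp Q(G\mathbb{J}) = (\mathbb{J}\ol G)\tp Q(\mathbb{J}\ol G) = \ol G\tp(\mathbb{J}\tp Q\mathbb{J})\ol G = \ol{G\tp QG}$. Your additional remarks about $\mathbb{J}$ being real and the analogous statement for purely imaginary $Q$ are correct but not needed for the lemma as stated.
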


\begin{proof} We have
\begin{align*}
\mathbb{J}\tp(G\tp Q G)\mathbb{J}\>
=\>(G\mathbb{J})\tp Q(G\mathbb{J})
= (\mathbb{J}\ol G)\tp Q(\mathbb{J}\ol G)
= \ol G\tp (\mathbb{J}\tp Q\mathbb{J})\ol G)
\>=\>\ol{\vphantom{1^1} G\tp QG},
\end{align*}
as required.
\end{proof}

%%%%%%%%%%%%%%%%%%%%%%%%%%%%%%%%%%%%%%%%
\subsection{Geometry of real quadrics}
\label{grq}
%%%%%%%%%%%%%%%%%%%%%%%%%%%%%%%%%%%%%%%%%

Let $\Q$ be a quadric hypersurface in $\cp^3$. Then $\Q$ intersects
each twistor fiber in one of three possible cases:
(0)  the entire $\cp^1$, (1) one point; (2) two points.
Let $p\in S^4 = \rr^4 \cup \{ \infty \}$. If Case (0) or (1)
happens over $p$, then we say that $p$ belongs to $D$, the
\emph{discriminant locus}.  We write the latter as a disjoint union $D
= D_0 \cup D_1$, where $p\in D_0$ if and only if Case (0) happens over
$p$.

We shall say that a quadric $\Q\subset\cp^3$ is \emph{real} if it is
represented by a SBF that is real in the sense of
Definition~\ref{realq}. 
The collection of real quadrics $\Q$ is an $\mathbb{RP}^9$, 
with maximal rank quadrics corresponding to an open subset.
In the following theorem we will show that a non-degenerate real
quadric is uniquely determined by its discriminant locus.
\begin{theorem}
\label{s1u} 
If $\Q$ is a real non-degenerate quadric, then the
  discriminant locus $D$ is a geometric circle $S^1\subset S^4$, and
  $D = D_0$. Furthermore, $\Q$ is uniquely determined by this circle.
\end{theorem}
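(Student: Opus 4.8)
The plan is to reduce everything to linear algebra using the canonical form machinery already set up for the space $\Sigma$ of real SBFs. First I would observe that $GL(2,\HH)$ acts on $\Sigma$ by $Q\mapsto G\tp Q G$ (well-defined by Lemma \ref{preserve}), and since this action descends from $SL(2,\HH)\cong\mathit{Spin}_\circ(1,5)$ and covers the conformal action on $S^4$, two real quadrics related by such a $G$ have conformally equivalent discriminant loci. So it suffices to find a normal form for a non-degenerate $Q\in\Sigma$ under congruence by $GL(2,\HH)$, compute the discriminant locus in that one case, and check the uniqueness statement on the normal form.

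For the normal form: a real $Q$ has the block shape $Q=\block AB$ with $A$ symmetric and $B\in\mathfrak{u}(2)$, equivalently $\tQ=Q\mathbb{J}\in\mathfrak{sp}(2)$. The congruence action $Q\mapsto G\tp QG$ corresponds on $\tQ$ to the adjoint-type action $\tQ\mapsto G\tp\tQ\,\ol G{}^{-\top}\cdot(\ldots)$ — more cleanly, $\tQ=Q\mathbb{J}$ and $G\mathbb{J}=\mathbb{J}\ol G$ give $(G\tp QG)\mathbb{J}=G\tp\tQ\,\mathbb{J}\tp\ol G\mathbb{J}\cdot(\ldots)$; the upshot is that $\tQ\in\mathfrak{sp}(2)$ transforms by conjugation under $Sp(2)$, so I can diagonalize it. Concretely, $\mathbb{I}=\block I0$ is real and non-degenerate, and I claim every non-degenerate real $Q$ is $GL(2,\HH)$-congruent to a real multiple of $\mathbb{I}$, i.e. to $\mathbb{I}$ itself after scaling (which does not change the zero set). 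Equivalently, $Q$ purely ``real'' and non-degenerate means $Q_2=0$, and $Q=Q_1$ with $Q_1\in\mathfrak{sp}(2)$-type; diagonalizing gives the standard form. Then the quadric is $\{\xi_0^2+\xi_{12}^2+W_1^2+W_2^2=0\}$ (in the reordered coordinates, appropriately), and I read off its discriminant locus directly from \rf{suggest}/\rf{Wxi}: substituting $W_1=\xi_0z_1-\xi_{12}\bz_2$, $W_2=\xi_0z_2+\xi_{12}\bz_1$, the equation becomes $(\xi_0^2+\xi_{12}^2)(1+|z_1|^2+|z_2|^2)=0$ plus cross terms that I expect to vanish identically, forcing $\xi_0^2+\xi_{12}^2=0$, i.e. $[\xi_0,\xi_{12}]=[1,\pm i]$ — two points in each fiber generically, but with the whole fiber contained over the locus where $z$ satisfies a further real equation; a short computation identifies $D=D_0$ as a round circle.

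For uniqueness: since we have shown every non-degenerate real quadric is $SL(2,\HH)$-equivalent to the standard one, and $SL(2,\HH)$ acts on $S^4$ as $SO_\circ(1,5)$ which acts transitively on round circles (with the stabilizer of a circle acting transitively enough), any two non-degenerate real quadrics with the same discriminant circle differ by an element of the stabilizer of that circle in $SO_\circ(1,5)$. I then need to check that this stabilizer, acting on the space of real quadrics with that fixed discriminant locus, fixes the quadric itself — equivalently, that the map (non-degenerate real quadric) $\mapsto$ (discriminant circle) is injective. This I would do by a dimension/orbit count: the non-degenerate real quadrics form an open subset of $\mathbb{RP}^9$, the round circles in $S^4$ form a space of dimension $\dim SO_\circ(1,5) - \dim(\text{stab of a circle})$, and I must check $SO_\circ(1,5)$ acts on the former with the standard quadric having stabilizer exactly equal to the stabilizer of its circle; then both sides have the same dimension and the orbit map is a bijection onto the connected space of round circles. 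Alternatively, and perhaps more cleanly, I would directly compute the stabilizer in $SL(2,\HH)$ of the standard real quadric and of its circle and verify they coincide.

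The main obstacle I expect is the uniqueness part — specifically, pinning down that the stabilizer of the discriminant circle in $SO_\circ(1,5)$ does not act nontrivially on the pencil of real quadrics sharing that locus. The congruence-normal-form step is essentially the spectral theorem for $\mathfrak{sp}(2)$ and should be routine, and the discriminant-locus computation for the one model quadric is a direct substitution; but ruling out a nontrivial family of real quadrics with a common discriminant circle requires either a careful orbit-dimension bookkeeping or an explicit determination of the relevant stabilizer subgroup, and that is where the real work lies.
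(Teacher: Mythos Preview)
Your reduction to a single model via the congruence action on $\Sigma$ is exactly what the paper does (Proposition~\ref{realcan}), and then computing $D$ for that one model is also the paper's strategy, though the paper works with $Q_0=\block0\K$ rather than $\mathbb{I}$. Your sketch of the model computation is off: for $\mathbb{I}$ the fiber substitution gives
\[
\xi_0^2(1+z_1^2+z_2^2)+2\xi_0\xi_{12}(z_2\bz_1-z_1\bz_2)+\xi_{12}^2(1+\bz_1^2+\bz_2^2),
\]
so the cross terms do \emph{not} vanish and the equation does not factor as $(\xi_0^2+\xi_{12}^2)(1+|z|^2)$. This is only a slip --- the honest discriminant $B^2-AC$ still cuts out a round circle --- but you should redo the computation (or switch to $Q_0$, where it is cleaner).

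The real divergence is in the uniqueness argument. The paper does \emph{not} use a stabilizer comparison. Instead it invokes Bishop's extension theorem (Theorem~\ref{EBish}): given two real non-degenerate quadrics $\Q_1,\Q_2$ with the same discriminant circle $S^1$, the set $S^1\times\cp^1$ sits inside both and disconnects $\Q_2$; each component $C_2$ is an analytic surface meeting generic twistor lines once. Viewing $C_2\setminus\Q_1$ as analytic in $\cp^3\setminus\Q_1$, the closure meets the analytic set $\Q_1$ in something of $\mathcal H^4$-measure zero (a complex curve together with the real-3-dimensional $S^1\times\cp^1$), so Bishop extends $\ol{C_2}$ to an algebraic surface in $\cp^3$. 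Degree one forces it to be a hyperplane, which a non-degenerate quadric cannot contain --- contradiction unless $\Q_1=\Q_2$. This argument is what also drives the proof of Theorem~\ref{t2}; indeed the two proofs are written together.

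Your stabilizer route is not wrong, but be aware that the dimension count alone is insufficient: both $\mathrm{Stab}(\Q_0)$ and $\mathrm{Stab}(S^1)$ in $SO_\circ(1,5)$ are $6$-dimensional, so the fiber of the map (quadric)$\,\mapsto\,$(circle) is finite, but to get a single point you must match \emph{components}. The circle stabilizer is $S(O_\circ(1,2)\times O(3))$, which has two components; you would need to exhibit an element of $SL(2,\HH)$ realizing $G\tp Q_0G=-Q_0$ (for instance $G=\mathrm{diag}(1,-1,1,-1)$ in the reordered basis) to see that the projective stabilizer of $\Q_0$ also has two components, hence equals the circle stabilizer. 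That closes the argument, but it is exactly the ``explicit determination of the relevant stabilizer subgroup'' you flagged as the obstacle, and it does not obviously yield the stronger OCS-uniqueness of Theorem~\ref{t2}, which the paper gets essentially for free from the same Bishop argument.
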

\begin{proof} 
Recalling the notation \rf{JorK} and \rf{AB}, we introduce
the matrix 
\begin{align}
\label{Q00} 
Q_0 = \block 0{\K}.
\end{align}
This is compatible with the convention of
Section~\ref{algebra} and the use of coordinates
$[\xi_0,W_1,\xi_{12},W_2]$ on $\cp^3$ (see \rf{AJA}). The
corresponding quadratic form is therefore
\begin{align*}
 - 2\xi_0 W_2 + 2\xi_{12} W_1.
\end{align*}
Using \rf{Wxi}, the fiber equation can be written
\begin{align*}
\begin{split}
  0 &= -\xi_0 ( \xi_0 z_2 + \xi_{12} \bz_1) +
  \xi_{12} ( \xi_0 z_1 - \xi_{12} \bz_2)
 = -z_2 \xi_0^2 + (z_1 -\bz_1) \xi_0 \xi_{12} - \bz_2\xi_{12}^2.
\end{split}
\end{align*}
Clearly, all coefficients vanish when $z_2 = 0$ and $z_1$ is real,
which is a line in $\rr^4$.  The discriminant is
\begin{align*}
- 4\big(( \Im z_1)^2 + |z_2|^2\big),
\end{align*}
which vanishes exactly along the same line.
The quadric is invariant under the inversion (\ref{invlift}), 
so it also contains the twistor line over infinity,
consequently $D = D_0$ is a circle. 
It will be shown below that all real quadrics are
equivalent under the conformal action (Proposition \ref{realcan}). 
Since the conformal group maps circles to circles, 
this proves the first statement.

We next prove the uniqueness statement.  First, fix an $S^1$. Let
$\Q_1$ and $\Q_2$ be two real non-degenerate quadrics with
discriminant locus this fixed $S^1$.  
The lift of the discriminant locus is $\pi^{-1}(D) = S^1 \times \cp^1$
(since an oriented $\cp^1$-bundle over $S^1$ is trivial), and it
disconnects $\Q_2$ into two components.  So just consider a connected
component $C_2$ of $\Q_2 \setminus(S^1 \times \cp^1)$.  Note that
$C_2$ is a degree one hypersurface.  Now look at the closure of $C_2$
in $\cp^3$, and observe that
\begin{align*}
 C_2  \setminus \Q_1\ \subset\ \cp^3 \setminus \Q_1.
\end{align*}
If $C_2 \setminus \Q_1$ is empty, then there is nothing to prove.
Otherwise $C_2 \cap \Q_1$ is a subvariety of dimension one. So we can
then apply Bishop's Theorem (\cite[Lemma 9]{Bishop} $=$
Theorem~\ref{EBish}) to conclude that the closure of $C_2$ is an
algebraic variety in $\cp^3$. But again a degree one subvariety must
be a linear hyperplane, so then $\Q_2$ is degenerate, a contradiction
unless $\Q_1 = \Q_2$.
\end{proof}
We note that the quadratic formula yields an explicit 
solution for the OCS induced by the above quadric, 
\begin{align}
a(z_1, z_2) = \frac{  i \Im z_1 \pm \sqrt{ -(\Im z_1)^2 - |z_2|^2}}{ \bz_2}.
\end{align}
 Theorem \ref{s1u} implies that this OCS is in fact 
invariant under all conformal transformations 
which fix the singular $S^1$.
As a corollary of the above proof we have Theorem \ref{t2} 
from the Introduction:
\begin{proof}[Proof (of Theorem \ref{t2})]
  Given a circle $S^1 \subset S^4$, let $\Q_1$ be the unique real quadric with
  discriminant locus this $S^1$.  In the above proof, now let
  $C_2$ be the graph of an OCS defined on $\R^4\setminus S^1$. If
  $C_2$ is not equal to a branch of the quadric, then it must be a
  hyperplane, in which case the corresponding OCS is conformally
  equivalent to a constant OCS.  The proof is finished by noting that
  the two branches of a real quadric induce $\pm J$. It is also clear
  that the corresponding OCS cannot be extended {\em{smoothly}} to any
  larger domain.
\end{proof}
\begin{remark} It is easy to see from the above proof that
Theorem \ref{t2} remains valid under the more general assumption 
that $\Lambda$ is a finite union of circles. 
\end{remark}

%%%%%%%%%%%%%%%%%%%%%%%%%%%%%%%%%%%%%%%%%%%%%%%%%%%%
\subsection{Uniformization}
In this section, we show that Theorems \ref{t1} and \ref{t2}, together with 
some well-known results in conformal and Hermitian geometry, 
give a new proof of Pontecorvo's classification of locally 
conformally flat Hermitian surfaces. As pointed out in \cite{Pontecorvo}, this
was also stated without proof in \cite{Boyer1988}. 

\begin{theorem}[Pontecorvo \cite{Pontecorvo}]
If $(M,g,J)$ is a compact connected locally conformally flat Hermitian 
surface, then it is conformally equivalent to one of the following:

1) A complex torus or hyperelliptic surface with the flat metric. 

2) A Hopf surface (finitely covered by $\cc^2 \setminus \{ 0 \} = S^1 \times S^3$)
with the Hermitian metric of Vaisman \cite{Vaisman}.

3) A product $\cp^1 \times S_g$ where $S_g$ is a Riemann surface 
of genus $g \geq 2$ with metric the product of
the $+1$ curvature metric on $\cp^1$ and the $-1$ curvature
metric on $S_g$. 
\end{theorem}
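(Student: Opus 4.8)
The plan is to leverage the twistor-theoretic machinery developed above to reduce Pontecorvo's classification to the two Liouville-type theorems (Theorems \ref{t1} and \ref{t2}), together with standard facts about conformally flat $4$-manifolds. First I would recall that on a Hermitian surface $(M^4,g,J)$ which is locally conformally flat, the complex structure $J$ is, after a conformal change of the metric, a \emph{local} orthogonal complex structure on flat space; that is, the developing map of the flat conformal structure sends $J$ to a (multivalued) OCS on domains in $S^4$. Concretely, let $\delta\colon\widetilde M\to S^4$ be the developing map of the conformal structure and $\rho\colon\pi_1(M)\to SO_\circ(1,5)$ the holonomy representation. Since $J$ is $g$-orthogonal and the metric is (locally) conformally Euclidean, $J$ pushes forward under $\delta$ to an OCS $\widetilde J$ on an open subset of $S^4$, equivariant under $\rho$. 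The strategy is then to analyze the developing image and the singular set, invoke Theorems \ref{t1} and \ref{t2} to pin down $\widetilde J$ up to conformal motion, and descend.

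The key steps, in order, would be: (1) Recall/cite the Kuiper-type structure theory for compact locally conformally flat manifolds: either $M$ is conformally covered by $S^4$, or by $\R^4$ (equivalently the developing map omits at least one point), or by a domain $\Omega\subsetneq S^4$ with nonempty limit set. Since $S^4$ admits no almost complex structure, the first case cannot occur for our $(M,g,J)$, so $\delta$ is a covering onto its image which is a \emph{proper} invariant open subset of $S^4$. (2) The complement $\Lambda = S^4\setminus\delta(\widetilde M)$ is a closed $\rho(\pi_1(M))$-invariant set; it is precisely the locus where the OCS $\widetilde J$ fails to extend. Stratify by the ``size'' of $\Lambda$: if $\mathcal H^1(\Lambda)=0$ then Theorem \ref{t1} forces $\widetilde J$ to be (conformally) the constant OCS $J_0$ after removing at most one point, so $\widetilde M$ is $\R^4$ or $\R^4\setminus\{p\}$ and $\pi_1(M)$ acts by conformal transformations preserving a constant OCS — I would identify the holonomy group as a discrete subgroup of the complex affine group, and conclude $M$ is a flat complex torus or hyperelliptic surface (case 1) when $\widetilde M=\R^4$, or a Hopf surface with the Vaisman metric (case 2) when a point is deleted, using that a cocompact lattice acting on $\R^4\setminus\{p\}=S^1\times S^3$ fixing $J$ up to sign is virtually generated by a complex homothety. (3) If instead $\mathcal H^1(\Lambda)\neq0$, then $\Lambda$ must be large enough to carry a cocompact action; I would argue (this is where one must work) that $\Lambda$ is exactly a round circle (or a finite union of circles that the holonomy permutes, reducible to one after passing to a finite cover), so Theorem \ref{t2} applies and identifies $\widetilde J$, up to sign, with the unique OCS on $S^4\setminus S^1$ coming from the real quadric $Q_0$. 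The holonomy then lies in the stabilizer of that circle in $SO_\circ(1,5)$, which is (essentially) $SO(2)\times SO_\circ(1,3)\cong SO(2)\times \mathrm{Isom}(\HH^3)$; a cocompact lattice there, up to the circle factor, is a cocompact Fuchsian-type group acting on $\HH^3$ — but to get a \emph{compact complex surface} quotient one needs the group to act on $S^1\times\HH^2$, i.e. to be (virtually) a surface group of genus $g\ge2$ times a rotation. The resulting quotient is, as a complex surface, $\cp^1\times S_g$, and unwinding the metric $Q_0$ induces gives precisely the product of the round metric on $\cp^1$ with the hyperbolic metric on $S_g$ (case 3). (4) Conversely check that the three families listed are indeed locally conformally flat Hermitian, which is classical.

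The main obstacle is Step (3): controlling the limit set $\Lambda$ and the holonomy group in the ``large singular set'' case. A priori $\Lambda$ could be some complicated invariant Cantor-like set, and I would need to rule this out. The argument here should combine two ingredients: first, the developing image $\delta(\widetilde M)$ supports an OCS, so $\Lambda$ cannot be, say, a wild set over which no OCS extends unless that set has a very rigid structure — concretely, $\overline{\widetilde J(\delta(\widetilde M))}$ is (by Bishop's theorem, as in the proof of Theorem \ref{s1u}, once one knows $\mathcal H^{\bullet}$ bounds) the closure of a degree-two piece of $\cp^3$, forcing $\overline{\widetilde J(\cdot)}$ to be a non-degenerate quadric and hence, \emph{if it is real}, $\Lambda$ a circle; second, one must argue the relevant quadric is forced to be real because the holonomy preserves the OCS and its conjugate symmetrically (the $\pm J$ ambiguity). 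The genuinely delicate point is showing the quotient of $S^1\times\HH^2$ (rather than $\HH^3$) arises — that is, the holonomy's image in $SO_\circ(1,3)$ preserves a geodesic plane $\HH^2\subset\HH^3$ — which I would deduce from the fact that $M$ is a complex surface: the OCS, restricted to the normal directions of the developing picture, is integrable and of the specific form given by $Q_0$, and integrability together with compactness pins down the invariant plane. Everything else is bookkeeping: matching the three holonomy types to the three surface types, and recognizing the induced metrics, is routine given the explicit normal forms for $\widetilde J$ supplied by Theorems \ref{t1}, \ref{t2} and Theorem \ref{s1u}.
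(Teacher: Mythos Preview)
Your overall architecture---pull back to the developing map, split on the size of the limit set $\Lambda$, and invoke Theorems \ref{t1} and \ref{t2}---is exactly the shape of the paper's argument, but two load-bearing inputs are missing, and without them Step (3) does not go through.

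First, the ``Kuiper-type'' trichotomy you invoke in Step (1) is not available in this generality: for an arbitrary compact locally conformally flat $4$-manifold the developing map $\delta\colon\widetilde M\to S^4$ need be neither injective nor a covering onto its image, so there is no well-defined limit set $\Lambda=S^4\setminus\delta(\widetilde M)$ with the properties you assume. The paper obtains injectivity of $\delta$ from Gauduchon's theorem that a locally conformally flat Hermitian surface has $R\ge0$, together with Schoen--Yau's result that $R\ge0$ forces the developing map to be injective with image the domain of discontinuity of a Kleinian group. This is not optional bookkeeping; it is what makes the whole set-up legitimate.

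Second, and more seriously, your Step (3) contains no mechanism for proving that $\Lambda$ is a round circle. The sketch ``$\overline{\widetilde J(\delta(\widetilde M))}$ is a degree-two piece of $\cp^3$, hence a real quadric'' is circular: the graph of an OCS is locally degree one, Bishop's theorem needs an a priori Hausdorff bound on $\Lambda$ that you do not have, and nothing forces the closure to have degree two rather than higher. The paper does \emph{not} attempt to prove $\Lambda=S^1$ directly. Instead it uses a scalar-curvature dichotomy: either some conformal metric has $R>0$, in which case Schoen--Yau/Nayatani give $\dim_{\mathcal H}(\Lambda)<1$ and Theorem \ref{t1} applies; or $R\equiv0$, in which case Gauduchon says $g$ is K\"ahler, and then Lafontaine's Weitzenb\"ock argument classifies $(M,g)$ as flat or as the product $\cp^1\times S_g$ with the standard metric. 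In the product case one already \emph{knows} $\Omega=S^4\setminus S^1$ before touching Theorem \ref{t2}; the latter is used only to identify the OCS on the universal cover, and then Proposition \ref{stabG} pins down the holonomy in $SU(2)\times SO_\circ(1,2)$, forcing $\pi_1(M)\subset PSL(2,\R)$. Your attempt to extract an invariant $\HH^2\subset\HH^3$ from integrability is unnecessary once the stabilizer is identified this way.
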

\begin{proof}
A result of Gauduchon states that, under the conditions assumed, 
the scalar curvature $R \geq 0$, and $R \equiv 0$ implies  
that $g$ is K\"ahler \cite[Theorem 1]{Gauduchon} (see also \cite{Boyer1986}). 
Since $R \geq 0$, \cite[Theorem 4.5]{SchoenYau} implies that 
the developing map is injective, and has image $\Omega$, which is the 
domain of discontinuity of the Kleinian group $\pi_1(M)$,
with limit set $\Lambda = S^4 \setminus \Omega$.   
By \cite[Lemma 1.1]{SchoenYaulcf},
there are 2 possibilities: $M$ admits a conformal metric
of strictly positive scalar curvature, 
or $M$ admits a metric of identically zero scalar curvature. 
In the case of positive scalar curvature, 
\cite[Theorem 4.7]{SchoenYaulcf} and \cite[Corollary 3.4]{Nayatani}
imply that $\dim_{\mathcal{H}}(\Lambda) < 1$.
By Theorem \ref{t1}, the complex structure extends to the standard OCS
on $S^4 \setminus \{ p \}$. This implies that $\pi_1(M)$ is 
a subgroup generated by $U(2)$, dilations, and translations. 
The only possible compact quotients are finitely covered 
by a torus, or a Hopf surface. The only ones with positive 
scalar curvature are the latter, in which case 
$\Lambda = \{p_1, p_2\}$, and the metric 
is conformal to the scale-invariant Vaisman metric 
$g = \Vert z \Vert^{-2} g_0$ \cite{Vaisman}.   
In the case $R \equiv 0$, the metric is K\"ahler, 
so $H^2(M) \neq 0$. Using Bourguignon's Weitzenb\"ock formula
\cite[Section 8]{Bourguignon}, Lafontaine showed that in
this case $(M,g)$ is either flat, or a metric in case 3) \cite{Lafontaine}.  
If it is flat, again Theorem \ref{t1} implies
the OCS lifts to the standard OCS on $\rr^4$, which is 
case 1). In case 3), $\Omega = S^4 \setminus S^1$ which is 
conformally equivalent to $S^2 \times H^2$, where $H^2$
is hyperbolic space. Theorem \ref{t2} implies the OCS on
$M$ lifts to a unique OCS on $S^2 \times H^2$, which 
must be the product OCS. Proposition \ref{stabG} below
implies that $\pi_1(M) \subset SU(2) \times SO_{\circ}(1,2)$,
which forces $\pi_1(M) \subset  SO_{\circ}(1,2) = PSL(2, \rr)$. 
\end{proof}

\begin{remark}
If an OCS on a domain $\Omega \subset \rr^4$ is K\"ahler,
then it must be constant. This follows since the K\"ahlerian
condition implies that $J$ is parallel, thus $J$ is constant. 
If one takes the image of a constant OCS under
an inversion, then it is no longer K\"ahler, but it is
locally conformally K\"ahler. As seen in the above proof, 
a real quadric also induces a locally conformally K\"ahler OCS. 
Any other OCS coming from an algebraic hypersurface in $\cp^3$
will not be locally conformally K\"ahler.  This follows from 
the result of Tanno in \cite{Tanno} (see also \cite[Section 3]{Derdzinski}),  
in which it is shown that a locally conformally flat K\"ahlerian space 
must be locally symmetric. 
We point out that the proof of the classification in \cite{Pontecorvo}
relies on some special properties of locally conformally K\"ahler
metrics proved in \cite{PontecorvoTrans}; our proof avoids 
this step. 
\end{remark}

%%%%%%%%%%%%%%%%%%%%%%%%%%%%%%%%%%%%%%%%%%%%%%%%%%%%%%%%%%%%%%%%%%%%%
\section{Group actions and stabilizers}
\label{class1}
%%%%%%%%%%%%%%%%%%%%%%%%%%%%%%%%%%%%%%%%%%%%%%%%%%%%%%%%%%%%%%%%%%%%%
The main equivalence we will consider is 

\begin{definition}\label{equidef} 
  Two non-zero complex symmetric $4\times4$ matrices $Q,Q'$ are
  \emph{equivalent} if there exists $G\in SL(2,\HH)$ and $\gamma\in
  \C^*$ such that $Q'=\gamma\,G\tp QG$.
\end{definition}

We plan to study the orbits of the complex 10-dimensional vector space
\[ S^2(\C^4)=\Sigma\oplus i\Sigma,\] under the action of the
17-dimensional group \be{17}\C^*\times SL(2,\HH)=U(1)\times
GL(2,\HH).\ee Observe that $-\mathbb{I}$ belongs to $SL(2,\HH)$ (see
\rf{II}), but acts as the identity on $Q$; indeed, $\C^*$ and
$SL(2,\HH)$ only share an identity element.

Since $Q$ and $Q'$ define the same quadric \be{quad} \Q =
\{[\vv]:q(\vv)=0\}\subset \mathbb{CP}^3\ee if and only if $Q'=\gamma
Q$ for some $\gamma\in\C^*$, two quadratic forms are equivalent if and only if
their associated quadrics are related by an element of the connected
group \be{2:1} SL(2,\HH)/\mathbb{Z}_2\cong SO_\circ(1,5),\ee 
as discussed in Section ~\ref{action}. If $Q$ is non-degenerate, we
can use the $\C^*$ action to assume that $\det Q=1$, but it is not
convenient to do this initially.  Indeed, we shall allow $G$ to lie in
$GL(2,\HH)$, the action of which preserves the splitting \rf{Q12} by
Lemma~\ref{preserve}.
We begin the classification theory of quadrics by discussing the case
in which $Q=Q_1$ is itself real in the sense of
Definition~\ref{realq}. We also suppose that the associated quadric
is non-degenerate, meaning that $\mathrm{rank}\,Q=4$.

\begin{proposition} 
\label{realcan}
  Suppose that $Q\in\Sigma$ has rank 4. Then there exists $G\in
  GL(2,\HH)$ such that $G\tp QG=\mathbb{I}$.
\end{proposition}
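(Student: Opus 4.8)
The plan is to exhibit an explicit normal form for $Q$ and then use the transitivity of $GL(2,\HH)$ (equivalently, $SO_\circ(1,5)$ on circles) to reduce an arbitrary rank-4 element of $\Sigma$ to it. First I would recall from \rf{Qt} that $\tilde Q = Q\mathbb{J} = \block B{\!-\kern-4pt A} \in \mathfrak{sp}(2)$, with $A$ a symmetric $2\times2$ complex matrix and $B\in\mathfrak{u}(2)$, so the $10$ real parameters of $\Sigma$ are encoded in the pair $(A,B)$. The target matrix $\mathbb{I}=\block I0$ corresponds to $\tilde{\mathbb{I}} = \mathbb{I}\mathbb{J} = \block 0{\!-\kern-4pt I}$, i.e. $A=I$, $B=0$. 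Under $Q\mapsto G\tp QG$ for $G\in GL(2,\HH)$, the associated element $\tilde Q\in\mathfrak{sp}(2)$ transforms by the (twisted) adjoint-type action; since $\mathfrak{sp}(2)\cong\mathfrak{so}(5)$ and $Sp(2)$ acts on it, the core fact I would invoke is that the nondegeneracy condition $\rank Q = 4$ is exactly the condition that $\tilde Q$ is a regular (nowhere-zero) element in the relevant sense, and $GL(2,\HH)$ acts transitively on such elements up to the residual scalar. Concretely: the quadric $\Q$ is real and nondegenerate, so by Theorem~\ref{s1u} its discriminant locus is a geometric circle $S^1\subset S^4$; since $SO_\circ(1,5)$ acts transitively on geometric circles in $S^4$, there is a conformal transformation — hence (Proposition~\ref{PsiPsi}) an element of $SL(2,\HH)$, and after rescaling an element of $GL(2,\HH)$ — carrying $\Q$ to the quadric $Q_0 = \block 0{\K}$ of \rf{Q00}, whose discriminant circle is the standard line $\{z_2 = 0,\ \Im z_1 = 0\}\cup\{\infty\}$.

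The remaining point is then purely algebraic: to pass from $Q_0=\block0\K$ to $\mathbb{I}=\block I0$. Here I would note that $Q_0$ and $\mathbb{I}$ are both real, both of rank $4$, and both have the \emph{same} discriminant circle is not quite what I want — rather, I want an explicit $G_0\in GL(2,\HH)$ with $G_0\tp Q_0 G_0 = \mathbb{I}$ (or a scalar multiple, which can be absorbed since $\det$ is real positive on $GL(2,\HH)$ and we may rescale $G_0$). Since $Q_0$ is symmetric of rank $4$, over $\C$ it is of course congruent to the identity; the content is that the congruence can be chosen inside the quaternionic group. I would produce $G_0$ by a direct computation: write $G_0 = \block UV$ and solve $G_0\tp Q_0 G_0 = \gamma\mathbb{I}$, using the block form \rf{AB} and the identity $\K\tp = -\K$, $\K^2=-I$. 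This is a small system — essentially finding $U,V$ with $U\tp\K\ol V$ suitably normalized — and one checks a convenient solution exists, e.g. built from the $\block{}{}$ analogue of a $45^\circ$ rotation mixing the two $\HH$-factors. Finally, composing the conformal reduction with $G_0$ gives the desired $G\in GL(2,\HH)$ with $G\tp QG = \mathbb{I}$.

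Alternatively — and this is probably the cleaner route to write up — I would bypass the appeal to Theorem~\ref{s1u} (which risks circularity if \ref{realcan} feeds back into that proof) and argue intrinsically on $\mathfrak{sp}(2)$. The element $\tilde Q\in\mathfrak{sp}(2)$ is, up to the real scalar, a nonzero element, and $\rank Q = 4$ forces $\tilde Q$ to be \emph{invertible} as a $4\times4$ matrix (since $\mathbb{J}$ is invertible). An invertible element of $\mathfrak{sp}(2)$ is semisimple with eigenvalues $\pm it, \pm is$ or similar; the key claim is that the $GL(2,\HH)$-congruence orbit of such a $\tilde Q$, together with $\C^*$-scaling, is a single orbit containing $\tilde{\mathbb{I}}=\block0{\!-\kern-4pt I}$. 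One proves this by the standard "simultaneous diagonalization" argument: $B\in\mathfrak u(2)$ can be conjugated to diagonal form by $Sp(2)$, then one uses the remaining freedom to normalize $A$, and nondegeneracy rules out the degenerate combinations, leaving exactly $\mathbb{I}$ up to scale.

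I expect the main obstacle to be the bookkeeping in the second paragraph: verifying that the explicit $G_0$ lies in $GL(2,\HH)$, i.e. has the block form $\block UV$ of \rf{AB}, rather than being merely a complex congruence — the quaternionic constraint is what makes the statement nontrivial, and it is easy to write down a $G_0$ that works over $\C$ but fails the reality condition \rf{glH}. Equivalently, in the intrinsic approach, the obstacle is checking that the residual stabilizer computations actually collapse all rank-$4$ real quadrics to a single point of the moduli space rather than a positive-dimensional family; this is where the hypothesis $\rank Q=4$ must be used in an essential way, and where I would be most careful.
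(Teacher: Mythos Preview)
Your first route is indeed circular: the proof of Theorem~\ref{s1u} in the paper explicitly invokes Proposition~\ref{realcan} to establish that all non-degenerate real quadrics are conformally equivalent, so you cannot appeal to the transitivity on circles to prove the proposition. You suspected this, and you are right to abandon it.

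Your second, intrinsic route is essentially the paper's approach, but the paper executes it more cleanly than the ``diagonalize $B$, then normalize $A$'' procedure you sketch. The key observation is that for $G\in Sp(2)$ one has $(G\tp QG)\mathbb{J}=(\mathrm{Ad}\,G\tp)\tilde Q$, so the congruence action on $Q$ becomes the \emph{adjoint} action on $\tilde Q\in\mathfrak{sp}(2)$. One then invokes the standard fact that every adjoint orbit of a compact Lie group meets a chosen Weyl chamber: for $Sp(2)$ with a diagonal maximal torus this puts $\tilde Q$ into the form $\mathrm{diag}(i\lambda,i\mu,-i\lambda,-i\mu)$ with $\lambda,\mu>0$ (strict positivity is exactly where $\rank Q=4$ enters). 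This gives $G\tp QG=\block0D$ with $D=\mathrm{diag}(i\lambda,i\mu)$; a further real diagonal rescaling in $GL(2,\HH)$ reduces to $D=iI$, and then an explicit $H=\block I{iI}\in GL(2,\HH)$ satisfies $H\tp H=\block0{2iI}$, finishing the job. One small correction to your sketch: you invoke ``$\C^*$-scaling'', but multiplying $Q$ by a non-real scalar takes you out of $\Sigma$; the reduction must be done entirely by congruence inside $GL(2,\HH)$, and the real positive diagonal rescaling is what absorbs the eigenvalue magnitudes.

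Your worry about producing an explicit $G_0\in GL(2,\HH)$ with $G_0\tp Q_0G_0=\mathbb{I}$ is well-placed; the paper records such a matrix (called $\FF$) in \rf{F}, and verifying it satisfies the quaternionic reality condition \rf{glH} is a direct check.
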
 

\begin{proof}
  If $G\in Sp(2)$ so that $G^{-1} = \ol G\tp$, then by \rf{glH}, \be2
  (G\tp QG)\mathbb{J} = G\tp\tQ\ol G = (\mathrm{Ad}\,G\tp)\tQ\ee in
  the notation of \rf{Qt}. We should therefore concern ourselves with
  the adjoint action of $Sp(2)$ on its Lie algebra $\mathfrak{sp}(2)$.
  Any adjoint orbit of a compact Lie group must intersect a
  fundamental Weyl chamber in the Lie algebra of a maximal torus
  \cite{Adams}. In the case of $Sp(2)$, we may choose a diagonal
  maximal torus $U(1)^2$, and coordinates $\lambda,\mu$ on its Lie
  algebra $\R^2$. The Weyl group includes the reflections in the
  coordinate axes, so there exists $G\in Sp(2)$ so that
  \[(\mathrm{Ad}\,G\tp)\tQ=\left(\!\ba{cccc}
    i\lambda&0&0&0\\0&i\mu&0&0\\0&0&\!-i\lambda&0\\0&0&0&\!-i\mu\ea\!\right)
  =\left(\!\ba{cccc}
    0&0&i\lambda&0\\0&0&0&i\mu\\i\lambda&0&0&0\\0&i\mu&0&0\ea\!\right)
  \mathbb{J},\] with $\lambda,\mu>0$. Thus \be{0D} G\tp QG
  =\mat{0&D\\D&0}=\block 0D,\ee where
  $D=\mathrm{diag}(i\lambda,\,i\mu)$.

  By postmultiplying $G$ by
  $\mathrm{diag}(\lambda^{-1/2},\mu^{-1/2},\lambda^{-1/2},\mu^{-1/2})$,
  we see that \rf{0D} is valid for $D=iI$ and some $G\in GL(2,\HH)$.
  If we now set $H=\block I{iI}$ then $H\tp H= \block 0{2iI}$, and \[
  2G\tp Q G=H\tp\,\mathbb{I}\kern1pt H.\] Replacing $G$ by
  $\sqrt2\,GH^{-1}$ completes the proof.
\end{proof}

The stabilizer of $\mathbb{I}$ by the action of $SL(2,\HH)$ is the
group \be{SO2H} SO(2,\HH) = O(4,\C)\cap SL(2,\HH),\ee where $O(4,\C)$
is the set of complex orthogonal matrices characterized by the
equation $X\tp X=\mathbb{I}$. The group \rf{SO2H} is isomorphic to the
group $SO^*(4)$ described by Helgason \cite[Ch~X,\,\S6]{Helgason}.
Both $SO(4)$ and $SO^*(4)$ are real forms of
\begin{align}
\label{SO4C}
SO(4,\C)\cong SL(2,\C)\times_{\mathbb{Z}_2}SL(2,\C),
\end{align}
and as a counterpart of the well-known isomorphism $SO(4)\cong
SU(2)\times_{\mathbb{Z}_2} SU(2)$, we have $SO^*(4)\cong SL(2,\R)
\times_{\mathbb{Z}_2} SU(2)$. 
We next make explicit its action on the real vector space $\Sigma$.

Let $Q=Q_1$ continue to be a matrix of the form \rf{QAB} representing
a real SBF. 
Then $Q$ decomposes as the
sum of three real symmetric matrices modulo $Q_0$ defined above in 
(\ref{Q00}). 
To see this, given
\rf{QAB}, write $A=L+iM$ and $B=-v\K+iN$ with $v\in\R$ (later $v$ will
be the imaginary part of a complex scalar $\tt$) and $L,M,N$ real.
Then \[ Q = \mat{L+iM& \!-vK+iN\\[4pt] vK+iN & L-iM},\] and the
$2\times2$ matrices $L,M,N$ are all symmetric. In this way, we have
decomposed the tracefree component \be{9+1} Q + vQ_0= \mat{L+iM&
iN\\[4pt] iN & L-iM}=\block{L+iM}{\,iN}\ee of $Q$ (relative to $Q_0$)
into the triple $(L,M,N)$.

The above decomposition of $\Sigma$ into ``$9+1$'' dimensions is
invariant under the stabilizer \be{stab} \mathscr{G} = \{G\in
SL(2,\HH):G\tp Q_0G=Q_0\}\ee that we describe next.

\begin{proposition}\label{stabG}
  The subgroup \rf{stab} equals the set of matrices
  \[\mat{aR &bR\\\!-\ol b R&\ol aR}=\block{aR}{bR},\ \ \hbox{with
  } R\in SL(2,\R),\ a,b\in\C,\ |a|^2\!+\!|b|^2=1.\] It is conjugate
  to $SO(2,\HH)$, and isomorphic to
  $SL(2,\R)\!\times_{\mathbb{Z}_2}\!SU(2)$.
\end{proposition}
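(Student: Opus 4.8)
The plan is to compute the stabilizer $\mathscr G$ directly from the defining equation $G\tp Q_0 G = Q_0$ using the block form of both $Q_0$ and $G$, and then to identify the resulting group. First I would write $G = \block{A}{B} = \mat{A&B\\-\ol B&\ol A}$ as in Definition~\ref{realq}, and recall $Q_0 = \block 0{\K}$, so that in $2\times2$ blocks $Q_0 = \mat{0&\K\\ \K&0}$ (using the convention of Section~\ref{algebra}). Multiplying out $G\tp Q_0 G$ in block form, the condition $G\tp Q_0 G = Q_0$ becomes a small system of $2\times2$ matrix equations in $A,B$. The key identities that drop out are that $A\tp \K A$ and $B\tp \K B$ are of a forced type and that a certain mixed term $A\tp\K \ol A - \ol B\tp \K B$ (or its analogue) must equal $\K$. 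Since $\K$ is (up to scalar) the symplectic form on $\C^2$, the equation $A\tp \K A = (\text{scalar})\K$ says exactly that $A$ is a scalar multiple of an element of $SL(2,\C)$; combined with the quaternionic block structure this will pin down $A = aR$, $B = bR$ with a common $R$ and scalars $a,b$.

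Next I would extract the reality/normalization constraints. The quaternionic block form of $G$ already encodes that $G$ commutes with $\mathbb J$, and the condition $\det G = 1$ (membership in $SL(2,\HH)$, not merely $GL(2,\HH)$) together with the stabilizer equation forces $|a|^2 + |b|^2 = 1$ and $R\in SL(2,\C)$. To see $R$ is actually in $SL(2,\R)$: the matrix $\K$ is real, so the equation $A\tp\K A = a^2 \K$ is a \emph{real} equation once we separate the scalar $a^2$, and writing $A = aR$ with $\det R = 1$ forces $R\tp\K R = \K$, i.e.\ $R\in Sp(2,\C)=SL(2,\C)$; but the additional reality built into the block $\block{aR}{bR}$ — the requirement that the lower blocks be the conjugates $-\ol{bR}$ and $\ol{aR}$ of the upper ones — is automatically consistent for \emph{any} matrix $R$, so the constraint that upgrades $R$ from $SL(2,\C)$ to $SL(2,\R)$ comes from the remaining stabilizer equation relating the upper-left and lower-right $2\times2$ blocks of $G\tp Q_0 G$, which reads $\ol A\tp \K B + \ol B\tp\K A = 0$ or similar, and substituting $A=aR,\ B=bR$ collapses this to a scalar multiple of $R\tp\K R = \ol R\tp\K\ol R$, forcing $R = \ol R$, hence $R\in SL(2,\R)$. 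I would carry out this block bookkeeping carefully; it is the technical heart of the argument but entirely routine.

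For the final two identifications: that $\mathscr G$ is conjugate to $SO(2,\HH)$ follows because $Q_0$ has rank $4$ (it is, up to the scalar issue, the same shape $\block 0D$ that appeared in the proof of Proposition~\ref{realcan}), so by the argument of that proposition there is $G_0\in GL(2,\HH)$ with $G_0\tp Q_0 G_0 = \mathbb I$, and conjugation by $G_0$ carries $\mathscr G$ onto the stabilizer of $\mathbb I$ in $SL(2,\HH)$, which is $SO(2,\HH)$ by \rf{SO2H} — here one must check $G_0$ can be taken in $SL(2,\HH)$ rather than just $GL(2,\HH)$, but the $\C^*$-rescaling used to normalize $\det$ in Proposition~\ref{realcan} handles this, or one absorbs the determinant into the scalars $a,b$. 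Finally, the explicit description of the matrices $\block{aR}{bR}$ is manifestly $SL(2,\R)\times SU(2)$ modulo the common sign $(R,a,b)\sim(-R,-a,-b)$: the map $(R,(a,b))\mapsto \block{aR}{bR}$, where $(a,b)$ with $|a|^2+|b|^2=1$ is a point of $SU(2)$ in the standard $\block ab$ identification, is a surjective homomorphism with kernel $\mathbb Z_2$, giving $\mathscr G \cong SL(2,\R)\times_{\mathbb Z_2} SU(2)$, consistent with the isomorphism type of $SO(2,\HH)\cong SO^*(4)$ recorded in \rf{SO4C}. The main obstacle is purely organizational — getting the block multiplication and the placement of $\K$ exactly right under the reordered-coordinate convention of \rf{cbasis}–\rf{AJA} — rather than any conceptual difficulty.
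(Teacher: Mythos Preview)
Your conjugacy and isomorphism arguments essentially match the paper's, but your route to the explicit description of $\mathscr G$ is different and has a genuine gap. The paper proves conjugacy to $SO(2,\HH)$ \emph{first}, via an explicit matrix $F\in SL(2,\HH)$ with $F^\top Q_0 F=\mathbb I$; this identifies $\mathscr G$ with a connected real form of $SO(4,\C)$, hence a $6$-dimensional group. It then checks only the easy inclusion --- that every $\block{aR}{bR}$ with $R\in SL(2,\R)$ and $|a|^2+|b|^2=1$ satisfies $G^\top Q_0 G=Q_0$ --- using $R^\top KR=K$. Surjectivity of the homomorphism $SL(2,\R)\times SU(2)\to\mathscr G$ then follows by dimension, with no need to solve the block equations in the hard direction.

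You instead attempt to \emph{derive} the form $A=aR$, $B=bR$ by direct computation, and this is where the sketch breaks. First, $Q_0=\block0K$ has lower-left block $-\ol K=-K$, not $+K$. More seriously, the identity $A^\top K A=(\det A)K$ holds for \emph{every} $2\times2$ matrix, so it cannot constrain $A$; the equations that actually arise from $G^\top Q_0 G=Q_0$ involve the mixed objects $A^\top K\ol A$ and $\ol B^\top K A$, and you never explain how these force $A$ and $B$ to share a \emph{common} factor $R$. Your reality step also fails as written: the relation $R^\top K R=\ol R^\top K\ol R$ gives only $\det R=\overline{\det R}$, not $R=\ol R$. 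A correct direct argument does exist --- from the off-diagonal block one obtains $\ol B^\top K A=cK$, whence $B=\beta\ol A$ when $A$ is invertible; the remaining equation then reads $A^\top K\ol A=\lambda K$ with $\lambda>0$, and an entrywise analysis shows all four entries of $A$ share a common phase, so $A=e^{i\phi}R$ with $R$ real --- but this is substantially more work than your outline indicates, and is exactly what the paper's dimension-count bypasses.
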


\begin{proof} Proposition~\ref{realcan} (and the fact that
  $\det\mathbb{I}= 1 = \det Q_0$) implies that there exists $\FF\in
  SL(2,\HH)$ such that
\begin{align}\label{FQF}
\FF\tp Q_0\FF=\mathbb{I}.
\end{align}
For the sequel, we record one possible choice, namely
\be{F}\FF=\frac1{\sqrt2}\!\left(\!\ba{cccc}
  1&0&0&i\\0&\!-i&1&0\\0&i&1&0\\\!-1&0&0&i\ea\!\right).\ee Let $G\in
SL(2,\HH)$. It follows that
\[G\tp G=\mathbb{I}
\ \Leftrightarrow\ (\FF G)\tp Q_0(\FF G)=\FF\tp Q_0\FF
\ \Leftrightarrow\ \FF G\FF^{-1}\in\mathscr{G},\]
and $\mathscr G$ is certainly conjugate to \rf{SO2H}.

Now suppose that $G=\block{aR}{bR}$ where $R\in SL(2,\R)$ and $|a|^2 +
|b|^2 = 1$, so that \[ U=\left(\!\!\ba{cc}a&b\\-\ol b&\ol
  a\ea\!\right)\in SU(2).\] Recall the definition \rf{Q00} of $Q_0$ in
terms of the $2\times2$ matrix $K$, and observe that $R\tp KR=K$ for
all $R\in SL(2,\R)$. It follows easily that $G\tp Q_0G=Q_0$, and we
obtain have a homomorphism $SL(2,\R)\times SU(2)\to\mathscr{G}$ given
by \be{iso6}(R,U)\mapsto\block{aR}{bR},\ee with kernel
$\{(I,I),(-I,-I)\}$. Its surjectivity follows from the fact that the
stabilizer $\mathscr{G}$ is effectively a subgroup of \rf{SO4C}.
\end{proof}

We now continue the main discussion. Fix any $G\in\mathscr{G}$. Then
the action of $G$ on \rf{9+1} is determined by setting \[ \mat{L'+iM'&
iN'\\ iN' & L'-iM'}= G\tp \mat{L+iM& iN\\ iN & L-iM}G.\] In these
terms, the representation $\mathscr{G}\to\mathrm{Aut}(\R^9)$ is
characterized by the two separate homomorphisms obtained by seeing
what happens when \[ G=\mat{R&0\\0&R}\quad\hbox{ and }\quad
\mat{aI&bI\\\!-\ol bI&\ol a},\] equivalently $U=I$ and $R=I$,
respectively.  The first one is given by \[ (L',\,M',\,N') = (R\tp L
R,\>R\tp M R,\>R\tp N R),\quad R\in SL(2,\R).\] The second requires
more complicated notation, but we can set \[
(\Lambda_{11}',\,\Lambda_{12}',\,\Lambda_{22}') = (U\tp
\Lambda_{11}U,\>U\tp \Lambda_{12}U,\>U\tp \Lambda_{22}U),\quad U\in
SU(2),\] in which (for fixed $\alpha,\beta$) \be{Sab} \Lambda_\ab =
\mat{L_\ab+iM_\ab& iN_\ab\\[4pt] iN_\ab&L_\ab-iM_\ab}\ee is the
complex $2\times2$ matrix whose entries are made up from the $\ab$
entries of $L,M,N$.

Both of these homomorphisms are built up from elementary double
coverings:

\begin{lemma}
  (1) If $L$ is a $2\times2$ real symmetric matrix then $\pi_1(R)(L)=
  R\tp LR$ defines a homomorphism from $SL(2,\R)$ onto the connected
  group $SO_\circ(1,2)$, with kernel $\{I,-I\}$.\\[3pt] (2) If
  $\Lambda$ is a $2\times2$ complex symmetric matrix such that
  $\Lambda \K\in \mathfrak{su}(2)$ then $\pi_2(U)(\Lambda)= U\tp
  \Lambda U$ defines a homomorphism from $SU(2)$ onto $SO(3)$, with
  kernel $\{I,-I\}$.
\end{lemma}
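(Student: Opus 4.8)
The plan is to treat the two statements as standard ``spin cover'' computations, exploiting that the quadratic forms of $L$ and of $\Lambda$ are preserved and have the right signature. For part (1): the space of $2\times2$ real symmetric matrices $L$ is $3$-dimensional, and $R\tp LR$ is linear in $L$ for fixed $R$, so $\pi_1$ maps $SL(2,\R)$ into $GL(3,\R)$. First I would observe that $\det L$ is a quadratic form on this $3$-space which is preserved: $\det(R\tp LR)=(\det R)^2\det L=\det L$ since $\det R=1$. Writing $L=\mat{p+q&r\\r&p-q}$ one computes $\det L=p^2-q^2-r^2$, a form of signature $(1,2)$, so $\pi_1$ lands in $O(1,2)$; connectedness of $SL(2,\R)$ forces the image into $SO_\circ(1,2)$. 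A dimension count ($\dim SL(2,\R)=3=\dim SO_\circ(1,2)$) plus surjectivity of the Lie-algebra map (or the classical fact that this is the adjoint/Killing-form action, identifying $\mathfrak{sl}(2,\R)\cong\R^{1,2}$) shows the image is all of $SO_\circ(1,2)$. Finally the kernel: $R\tp LR=L$ for all symmetric $L$ forces $R$ to be a scalar, hence $R=\pm I$, giving kernel $\{I,-I\}$.

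For part (2): the hypothesis $\Lambda\K\in\mathfrak{su}(2)$ says that $Q\mapsto Q\mathbb{J}$ is the Lie-algebra analogue of the reality condition, so by Lemma~\ref{realim}-type reasoning (or directly) the matrices $\Lambda$ with $\Lambda\K\in\mathfrak{su}(2)$ form a real $3$-dimensional space. I would verify that $U\tp\Lambda U$ again satisfies the condition: since $U\in SU(2)$ we have $\ol U\tp\K\ol U$ is a multiple of $\K$ (indeed $\K$ spans the $U(1)$-invariants, and one checks $U\tp\K U=\K$ for $U\in SU(2)$ using $\ol U=\K U\K^{-1}$), hence $(U\tp\Lambda U)\K=U\tp\Lambda\K\,(\ol U\tp{}^{-1})\cdots$; cleaner is to note $U\tp\Lambda U\cdot\K = U\tp(\Lambda\K)\ol U$ using $U\K=\K\ol U$, and conjugation by $U$ preserves $\mathfrak{su}(2)$. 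So $\pi_2$ is a representation of $SU(2)$ on $\R^3$. The invariant quadratic form is again $\det\Lambda$: $\det(U\tp\Lambda U)=\det\Lambda$. One checks this form is definite (on $\Lambda=\mat{s+it&iu\\iu&s-it}$ with $s,t,u$ real one gets $\det\Lambda = s^2+t^2+u^2$ up to normalization), so $\pi_2$ maps into $O(3)$, and connectedness of $SU(2)$ gives image in $SO(3)$. Again $\dim SU(2)=3=\dim SO(3)$ and surjectivity of the derived map (this is the adjoint representation $SU(2)\to SO(\mathfrak{su}(2))$) yields the image is all of $SO(3)$; the kernel is $\{U: U\tp\Lambda U=\Lambda\ \forall\Lambda\}=\{\pm I\}$.

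The main obstacle, such as it is, will be bookkeeping: confirming the precise signature of $\det$ on each $3$-dimensional space in the paper's sign conventions (the factors of $i$ in \rf{Sab} mean one must be careful that $\det\Lambda$ really is $\pm(s^2+t^2+u^2)$ and not indefinite), and checking that $U\tp\Lambda U$ stays in the constrained space — both reduce to the identity $U\K=\K\ol U$ valid for $U\in SU(2)$, together with the fact that $SU(2)$-conjugation preserves $\mathfrak{su}(2)$. Everything else is the standard identification of $SL(2,\R)\to SO_\circ(1,2)$ and $SU(2)\to SO(3)$ as two-to-one covers, for which I would cite the classical fact that the adjoint representation of a simple Lie group has kernel equal to the center. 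I expect the whole lemma to be a few lines once the invariant form and its signature are pinned down.
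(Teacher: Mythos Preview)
Your proposal is correct and follows essentially the same approach as the paper: parametrize each $3$-space explicitly, use $\det$ as the invariant quadratic form and compute its signature, invoke connectedness and equality of dimensions for surjectivity, and for part~(2) verify closure of the constrained space via the identity $U\K=\K\ol U$ (equivalently, that $\Lambda\mapsto\Lambda\K$ intertwines $\pi_2$ with the adjoint action on $\mathfrak{su}(2)$). The paper's write-up is terser but the content is the same.
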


\begin{proof}
(1) The secret is to identify a vector $\vv=(x,y,z)\in\R^3$ with the
symmetric matrix \[ L=L_\vv=\mat{x+y&z\\z&x-y},\] so that
\be{med}\textstyle x=\frac12(L_{11}+L_{22}),\quad
y=\frac12(L_{11}-L_{22}),\quad z=L_{12}.\ee Since $R\tp LR$ is
symmetric, $\pi_1$ is well defined, and its kernel is readily
computed. Since $\det L_\vv$ equals the Lorentzian norm squared of
$\vv$, the image of $SL(2,\R)$ by $\pi_1$ lies inside $SO(1,2)$;
indeed it is a Lie subgroup of maximal dimension and must therefore be
the connected component of the identity.

(2) This is a partially obscure version of the well-known
representation of rotations by unit quaternions.  The secret is to
identify $\ww=(l,m,n)\in\R^3$ with the matrix \be{Kw}
\Lambda=\Lambda_\ww=\left(\ba{cc}l+im& in\\in&l-im\ea\right),\ee so
that $\det \Lambda_\ww=|\ww|^2$. Since
\[\tK=\Lambda \K=\mat{in& -l-im\\l-im &-in}\in\mathfrak{su}(2),\] we
have the correct condition on $\Lambda$. Moreover
\[(U\tp \Lambda U)\K = U\tp\tK\ol U = (\mathrm{Ad}\,U\tp)\tK\in
\mathfrak{su}(2),\] just as in \rf2, and $\pi_2$ is equivalent to the
adjoint representation of $SU(2)$ on $\mathfrak{su}(2)$.
\end{proof}

Now consider the $3\times3$ real matrix \be{X} X=\left(\ba{ccc}
  \frac12(L_{11}+L_{22})&\frac12(M_{11}+M_{22})&
  \frac12(N_{11}+N_{22})\\[5pt]
  \frac12(L_{11}-L_{22})&\frac12(M_{11}-M_{22})&
  \frac12(N_{11}-N_{22})\\[5pt] L_{12} &M_{12} &N_{12}\ea\right).\ee
Its columns encode the matrices $L,M,N$, or more precisely, the
vectors
\[ \vv_1,\qquad\vv_2,\qquad \vv_3\] associated to them via
\rf{med}. This means that if we premultiply $X$ by (the inverse of) an
element of $SO_\circ(1,2)$ then this (right) action coincides with that
of $SL(2,\R)\subset\mathscr{G}$ on \rf{9+1} via $\pi_1$.

As for the rows, these encode the objects \[\ba{ccc}
\frac12(\Lambda_{11}+\Lambda_{22})&\leftrightarrow&\frac12(\ww_1+\ww_2)\\[3pt]
\frac12(\Lambda_{11}-\Lambda_{22})&\leftrightarrow&
\frac12(\ww_1-\ww_2)\\[3pt] \Lambda_{12}&\leftrightarrow&\ww_3,\ea\]
since \rf{Sab} has the same form as the matrix \rf{Kw}. This enables
us to identify the rows with linear combinations of elements
$\ww_1,\ww_2,\ww_3$ in the fundamental representation of $SO(3)$. This
means that if we postmultiply $X$ by an element of $SO(3)$ then this
(right) action coincides with that of $SU(2)\subset\mathscr{G}$ on
\rf{9+1} via $\pi_2$.

We can summarize our discussion by

\begin{proposition}\label{pi1pi2} 
  The action of $\mathscr{G}$ on \rf{9+1} is given by \[ G\cdot
  X=\pi_1(R)^{-1}X\,\pi_2(U),\] in terms of the isomorphism \rf{iso6}
  and identification \rf{X}.
\end{proposition}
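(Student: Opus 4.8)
The plan is to assemble the action of $\mathscr G$ on the $9$-dimensional space \rf{9+1} by unwinding the two generating subgroups separately, exactly as set up in the two paragraphs preceding the statement. By Proposition~\ref{stabG}, every $G\in\mathscr G$ has the form $\block{aR}{bR}$ with $R\in SL(2,\R)$ and $U=\bigl(\begin{smallmatrix}a&b\\-\ol b&\ol a\end{smallmatrix}\bigr)\in SU(2)$, and since the homomorphism \rf{iso6} is surjective, any such $G$ factors as a product of one element with $U=I$ (i.e.\ $G=\mat{R&0\\0&R}$) and one with $R=I$ (i.e.\ $G=\block{aI}{bI}$). So it suffices to compute the action of each of these two types on the triple $(L,M,N)$ encoded by \rf{X}, and then multiply.

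First I would handle $G=\mat{R&0\\0&R}$. A direct block computation of $G\tp\left(\begin{smallmatrix}L+iM&iN\\iN&L-iM\end{smallmatrix}\right)G$ gives $(L',M',N')=(R\tp LR,\,R\tp MR,\,R\tp NR)$, which is precisely the formula already recorded in the discussion above the statement. By Lemma~(1), the map $L\mapsto R\tp LR$ is $\pi_1(R)$ acting on the column vector $\vv=(x,y,z)$ via \rf{med}; since each of $L,M,N$ transforms the same way, and since the columns of $X$ are exactly these vectors $\vv_1,\vv_2,\vv_3$, the effect on $X$ is left multiplication of each column by $\pi_1(R)$, i.e.\ $X\mapsto\pi_1(R)\,X$. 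To match the statement's normalization $\pi_1(R)^{-1}X$, I would simply observe that the identification in Proposition~\ref{stabG} sends $R$ to $G$, so that the natural (right) group action puts the inverse on $X$; this is the remark already flagged in the sentence ``if we premultiply $X$ by (the inverse of) an element of $SO_\circ(1,2)$''. I will just be careful to state whether we are composing actions on the left or the right so the inverse lands in the right place.

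Next I would handle $G=\block{aI}{bI}$. Here the relevant observation is that each $2\times2$ matrix $\Lambda_\ab$ of \rf{Sab}, built from the $\ab$-entries of $L,M,N$, satisfies $\Lambda_\ab\K\in\mathfrak{su}(2)$, so Lemma~(2) applies: $\Lambda_\ab\mapsto U\tp\Lambda_\ab U=\pi_2(U)(\Lambda_\ab)$, acting on the vector $\ww_\ab=(l,m,n)$ via \rf{Kw}. The three independent combinations $\tfrac12(\Lambda_{11}+\Lambda_{22})$, $\tfrac12(\Lambda_{11}-\Lambda_{22})$, $\Lambda_{12}$ are exactly the three rows of $X$, so $\pi_2(U)$ acts by $SO(3)$-rotation on the ``$\ww$-coordinates'' of each row simultaneously, which in matrix terms is right multiplication $X\mapsto X\,\pi_2(U)$. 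Combining the two pieces — left by $\pi_1(R)^{-1}$ from the first factor, right by $\pi_2(U)$ from the second, and noting the two operations commute since one acts on columns and the other on rows — yields $G\cdot X=\pi_1(R)^{-1}X\,\pi_2(U)$, as claimed. I would close by checking consistency on the kernel $\{(I,I),(-I,-I)\}$ of \rf{iso6}: both $\pi_1$ and $\pi_2$ kill $-I$, so the formula is well defined on $\mathscr G$.

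The only real subtlety — the ``hard part'' — is bookkeeping: making sure the transpose-versus-inverse and left-versus-right conventions are consistent between the $\pi_1$ factor and the $\pi_2$ factor, so that a generic product $G=\mat{R&0\\0&R}\cdot\block{aI}{bI}$ really does give the clean two-sided formula rather than a twisted version. Everything else is the two block matrix multiplications, which are routine given Lemma~(1)--(2) and the identifications \rf{med}, \rf{Kw}, \rf{Sab}, \rf{X} already in place.
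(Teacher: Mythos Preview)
Your proposal is correct and follows essentially the same route as the paper: the proposition is presented there as a summary of the preceding discussion, and that discussion does exactly what you outline --- split $G$ via \rf{iso6} into the $SL(2,\R)$ and $SU(2)$ factors, compute each block action on $(L,M,N)$ and on the $\Lambda_{\alpha\beta}$, invoke parts (1) and (2) of the Lemma, and read off the two-sided action on the matrix $X$ via the column/row identifications. Your flagging of the inverse/transpose bookkeeping as the only genuine subtlety matches the paper's parenthetical remark about premultiplying $X$ by ``(the inverse of) an element of $SO_\circ(1,2)$''.
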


This proposition is effectively saying that the real $3\times3$ matrix
$X$, or equivalently the triple $L,M,N$ of $2\times2$ matrices,
encodes an element of the tensor product of the adjoint
representations $\mathfrak{sl}(2,\R)$ and $\mathfrak{su}(2)$. We shall
exploit this next.

%%%%%%%%%%%%%%%%%%%%%%%%%%%%%%%%%%%%%%%%%%%%%%%%%%%%%%%%%%%%%%%%%%%%%
\section{Classification of quadrics}
\label{class2}
%%%%%%%%%%%%%%%%%%%%%%%%%%%%%%%%%%%%%%%%%%%%%%%%%%%%%%%%%%%%%%%%%%%%%

Consider the complex symmetric matrix \rf{Q12}, whose real part $Q_1$
we assume to have rank 4. Using Proposition~\ref{realcan} and
\rf{FQF}, we may find $H\in GL(2,\HH)$ such that $H\tp Q_1 H = Q_0$.
Using \rf{9+1}, we may write \be{LMN} H\tp QH = Q_0+iH\tp
Q_2H=i\mat{L+iM& iN\\ iN & L-iM}+(1-iv)Q_0.\ee We are at liberty to
act on \rf{X} by elements of $\mathscr{G}$ in an attempt to simplify
$L,M,N$, but $v\in\R$ is invariant by this action.

The \textit{singular value decomposition} (SVD) of an arbitrary real
matrix $X$ of order $m\times n$ asserts that $X$ can be expressed as
$PDQ$ where $P$ is a square orthogonal matrix, $D$ a diagonal matrix
with non-negative entries, and $Q$ a matrix with orthonormal rows
\cite{HornJohnson, Strang}. Hyperbolic versions of this result are
known to hold \cite{Politi}, and we shall now investigate the case
$m=n=3$ in the context of the following definition, whose scope is
limited to this section.

\begin{definition}\label{condia} 
  We shall say that two real $3\times3$ matrices $X,Y$ are
  \emph{congruent} if there exist $P\in SO_\circ(1,2)$ and $O\in
  SO(3)$ such that $PXO=Y$. If $X$ is congruent to a diagonal matrix,
  we shall call it \emph{diagonalizable}.
\end{definition}

We can then prove

\begin{theorem}\label{SVD}
  Any real $3\times3$ matrix $X$ is diagonalizable or congruent to the
  matrix 
\begin{equation}\label{Mk}
M_k=\left(\!\ba{ccc}1&0&0\\1&0&0\\0&k&0\ea\!\right)
\end{equation}
  for some $k\ge0$.
\end{theorem}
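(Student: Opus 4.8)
The plan is to analyze the action $X \mapsto PXO$ with $P \in SO_\circ(1,2)$ and $O \in SO(3)$ by thinking of $X$ as a linear map and using the invariant theory of the two groups. The key object is the symmetric bilinear form $X^\top \eta X$ on $\R^3$, where $\eta = \mathrm{diag}(1,-1,-1)$ is the Lorentzian form preserved by $SO_\circ(1,2)$; since $O$ is orthogonal, $X \mapsto X^\top \eta X$ transforms by $S \mapsto O^\top S O$, so the $SO(3)$-orbit of the \emph{symmetric matrix} $S = X^\top \eta X$ is a congruence invariant. First I would reduce to understanding these orbits: a real symmetric $3\times 3$ matrix $S$ can always be diagonalized by $O \in SO(3)$ (spectral theorem), so $S$ is congruent to $\mathrm{diag}(s_1, s_2, s_3)$ with the $s_i$ being its eigenvalues. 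The hyperbolic SVD philosophy is then that $X$ itself should be recoverable, up to the residual freedom, from the signature data of $S$ together with rank information.

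The main case split is whether $X^\top \eta X$ is a nondegenerate/definite-on-a-complement form or is \emph{degenerate in a null direction}. Concretely: if the columns of $X$ (as vectors in Minkowski $\R^{2,1}$, since $P$ acts on the left on column space) span a subspace on which $\eta$ restricts nondegenerately, one expects a genuine diagonalization; the obstruction is precisely when the column span of $X$ contains, or is built around, a \emph{null vector} of $\eta$, which is exactly the situation that $M_k$ models — note $M_k$ has first column $(1,1,0)^\top$, a null vector, and $M_k^\top \eta M_k = \mathrm{diag}(0,-k^2,0)$, which is degenerate. So the strategy is: (i) use $O \in SO(3)$ to arrange the columns of $X$ in a convenient position; (ii) if the first column $\vv_1$ (or more invariantly, the range of $X$) can be made $\eta$-nonnull, use $P \in SO_\circ(1,2)$ to normalize it along a coordinate axis and then run an inductive Gram–Schmidt-type argument inside the Lorentzian space, peeling off one column at a time and using the residual $SO(3)$ on the remaining columns — this yields the diagonalizable case; (iii) in the remaining case, the range of $X$ is forced to contain a null line, and one shows by an explicit normalization (rotating the null direction to $(1,1,0)^\top$ via $P$, then using $SO(3)$ to clean up the second and third columns) that $X$ is congruent to $M_k$, with $k \ge 0$ the length of the component of $X$ orthogonal (in a suitable sense) to the null plane.

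Carrying this out, the concrete steps are: classify the $SO(3)$-orbits of $S = X^\top \eta X$ by eigenvalues/signature; in parallel track the rank and the image of $X$; show that when $S$ has a complement where it is anisotropic one gets the diagonal form, handling signs of eigenvalues by absorbing them (reflections in coordinate hyperplanes lie in $SO_\circ(1,2)$ only in pairs, so care is needed — but one extra sign can always be pushed into $O$ or absorbed by a rotation of a null pair); and in the degenerate-null case, produce the explicit congruence to $M_k$. A cleaner route for step (ii)/(iii) is to directly mimic the proof of ordinary SVD: consider $\max / \min$ of suitable quadratic quantities subject to $\eta$-constraints (a Lagrange-multiplier / generalized-eigenvalue setup for the pencil $X^\top \eta X - \lambda I$), peeling off eigendirections; the breakdown of this procedure — failure of the generalized eigenvalue problem to be diagonalizable — happens exactly at a null Jordan block, producing the $2\times 2$ nilpotent piece visible in the $(1,1,0)$ column of $M_k$.

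The main obstacle I expect is the bookkeeping of \emph{signs and orientations}: both $SO_\circ(1,2)$ and $SO(3)$ are the \emph{connected} (hence determinant- and, for $SO_\circ(1,2)$, time-orientation-preserving) groups, so one cannot freely use individual reflections to fix signs of the diagonal entries or to move a null vector to $(1,1,0)^\top$ rather than $(1,-1,0)^\top$; one must check that every needed normalization can be realized inside the identity components, possibly by combining two sign changes or by a continuous rotation through a null plane. The second, more structural, difficulty is proving the dichotomy is exhaustive — that there is no third normal form — which I would handle by the invariant-theoretic count: the pair $(S \bmod SO(3),\ \mathrm{rank}\,X)$ together with at most one further continuous parameter $k$ must separate orbits, and a dimension count of $\dim SO_\circ(1,2) + \dim SO(3) = 3 + 3 = 6$ acting on the $9$-dimensional space of matrices leaves a $3$-parameter family of orbits generically, consistent with three diagonal entries in the diagonalizable case and with the single parameter $k$ (plus discrete data) in the exceptional case.
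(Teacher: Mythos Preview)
Your approach via $S=X^\top\eta X$ is exactly the paper's: in the full-rank case the paper diagonalizes $X^\top E X$ (with $E=\eta=\mathrm{diag}(1,-1,-1)$) by some $O\in SO(3)$, \emph{defines} $P$ by the resulting diagonal equation, and then verifies a posteriori that $P\in O(1,2)$, fixing the connected component by changing the signs of two diagonal entries. For lower ranks the paper splits by $\mathrm{rank}\,X\in\{1,2\}$ and performs explicit column manipulations; this is just a concrete packaging of your null/non-null dichotomy on the column span.

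One point needs correcting. ``The column span contains a null vector'' is \emph{not} the right obstruction: a timelike $2$-plane in $\R^{2,1}$ meets the light cone in two null lines, yet any rank-$2$ matrix with that column span is diagonalizable (your own step~(ii) reduces it to a $2\times2$ hyperbolic SVD). The criterion you state first --- that $\eta$ restricts \emph{degenerately} to the column span, i.e.\ the span is a null line or a null plane tangent to the light cone --- is the correct one, and is exactly what distinguishes the $M_k$ family (for which $M_k^\top\eta M_k=\mathrm{diag}(0,-k^2,0)$, degenerate of the wrong rank) from diagonal matrices. Use that formulation consistently.

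Beyond this, your plan is sound but remains a sketch. The sign and connected-component issues you flag are real and are handled in the paper as you anticipate: pair up sign changes to stay in $SO_\circ(1,2)$, and use a boost $\left(\begin{smallmatrix}\cosh t&\sinh t\\\sinh t&\cosh t\end{smallmatrix}\right)$ to rescale a null vector to $(1,1,0)^\top$ after an $SO(2)$ rotation kills its third entry. What your outline does not yet contain is the actual normalization in the rank-$2$ degenerate case, where one must reach $M_k$ with $k>0$; the paper does this by a specific chain of $SO(2)$, $SO(1,1)$, and $SO(2)$ moves, and your ``explicit normalization'' would have to supply something equivalent.
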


This theorem will be established with a series of lemmas that deal
with the cases in which the rank of $X$ is respectively $3,1,2$.

\begin{lemma}\label{SVD3} 
  If $X$ has rank 3 then it is diagonalizable.
\end{lemma}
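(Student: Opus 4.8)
The statement concerns a real $3\times3$ matrix $X$ of rank $3$, and asserts it is congruent (in the sense of Definition~\ref{condia}) to a diagonal matrix via the action $X\mapsto PXO$ with $P\in SO_\circ(1,2)$ and $O\in SO(3)$. The natural strategy is a hyperbolic analogue of the singular value decomposition. First I would consider the symmetric positive-definite (since $\rank X=3$) matrix $S=X\tp X$, and try to bring it into Lorentzian normal form by the action $S\mapsto O\tp S O$ of $SO(3)$ alone won't suffice; instead the relevant move is to use the $SO_\circ(1,2)$ factor on the left to control $X\tp\eta X$ for the Lorentzian form $\eta=\mathrm{diag}(1,-1,-1)$ (or whichever signature convention matches $SO_\circ(1,2)$ acting on the left). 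So I would look at the pair of quadratic forms $(X\tp X,\ X\tp \eta X)$ on $\R^3$: the first is positive definite, the second is symmetric, and the $SO(3)$ action diagonalizes $X\tp X$ while preserving it (if we first rescale), whereas simultaneous diagonalization of a pencil of forms, one definite, is classical.

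\textbf{Key steps.} (1) Using $O\in SO(3)$ acting on the right, reduce to the case where the columns of $X$ are orthogonal in the \emph{Lorentzian} metric coming from $SO_\circ(1,2)$ on the left — equivalently, diagonalize the symmetric matrix $X\tp\eta X$, which is possible by the spectral theorem since $X\tp\eta X$ is a real symmetric $3\times3$ matrix and $SO(3)$ acts on it by orthogonal conjugation. Here the generic situation gives three distinct eigenvalues, but eigenvalue coincidences are harmless as any orthonormal eigenbasis works. (2) Now $X$ has Lorentz-orthogonal columns $\vv_1,\vv_2,\vv_3$ in $\R^{1,2}$; since $X$ has rank $3$ these span $\R^3$, so the Lorentzian form restricted to their span is nondegenerate, which forces exactly one of them to be timelike (or more carefully: the signature $(1,2)$ is recovered from the three mutually orthogonal nonzero vectors, so precisely one has $\eta(\vv_i,\vv_i)>0$ and two have $\eta(\vv_i,\vv_i)<0$ — \emph{assuming} none is null; the null case must be handled separately and is where rank $3$ is really used to exclude it, or it is pushed to the $M_k$ case of the theorem but Lemma~\ref{SVD3} claims it does not occur for rank $3$, so I must rule it out). (3) Reorder the columns (this reordering must be realized inside $SO(3)$, so use an even permutation or absorb a sign) so the timelike one is first, then apply an element of $SO_\circ(1,2)$ on the left to rotate the timelike column to a multiple of $e_1$ and simultaneously the two spacelike columns into the $e_2,e_3$ plane and orthonormalize their directions; because the columns were already Lorentz-orthogonal, a single $SO_\circ(1,2)$ element moves the orthonormal-in-$\eta$ frame $\{\vv_i/|\vv_i|\}$ to the standard frame $\{e_1,e_2,e_3\}$, leaving $X$ diagonal with entries the $\eta$-norms $|\vv_i|$.

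\textbf{Main obstacle.} The delicate point is step (2)–(3): showing that when $\rank X=3$, \emph{none of the Lorentz-orthogonal columns $\vv_i$ is null}, so that the frame can actually be normalized and carried to the standard frame by $SO_\circ(1,2)$. If some $\vv_i$ were null, the ``normalization'' breaks down and one lands in the $M_k$ family instead — so the content of this lemma (as opposed to the rank $1,2$ lemmas) is precisely that this degeneration is incompatible with full rank. I would argue this by contradiction: a null vector orthogonal to two other mutually orthogonal vectors spanning a complement, in signature $(1,2)$, forces a drop in rank of the Gram-type matrix, hence of $X$ itself, since $\det(X\tp\eta X)=\det(\eta)\det(X)^2\neq0$ while the diagonalized $X\tp\eta X$ would have a zero entry — that is the clean way to see it, and it is the crux of the proof. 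The remaining bookkeeping (realizing permutations and sign changes within $SO(3)$ and $SO_\circ(1,2)$ rather than their full orthogonal groups) is routine and I would dispatch it quickly, noting that one is free to flip signs of an even number of columns/rows.
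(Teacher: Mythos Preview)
Your approach is essentially the same as the paper's: diagonalize the real symmetric matrix $X\tp E X$ (with $E=\mathrm{diag}(1,-1,-1)$) by an $O\in SO(3)$, use $\rank X=3$ to conclude the diagonal entries are nonzero with signature $(+,-,-)$ (your determinant argument $\det(X\tp E X)=\det(E)\det(X)^2\ne0$ is exactly what is needed), and then produce the required $P$. The only cosmetic difference is that the paper defines $P$ directly by the equation $PXO=\mathrm{diag}(y,x,u)$ and checks $P\tp EP=E$, whereas you phrase the same step geometrically as moving a Lorentz-orthonormal frame to the standard one; the sign/component adjustments you call ``routine bookkeeping'' are handled in the paper by flipping the signs of one or two of the diagonal entries.
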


\begin{proof}
  Suppose that $\rank X=3$. Consider the matrix \[ E =
  \left(\!\ba{ccc}1&0&0\\0&\!-1&0\\0&0&\!-1\ea\!\right)\] representing
  the Lorentzian inner product on $\R^3$. A matrix $P$ belongs to
  $O(1,2)$ if and only if $P\tp EP=E$. The matrix $X\tp E X$ is
  symmetric and represents a quadratic form with the same signature as
  $E$. Thus, there exists $O\in SO(3)$ for which \be{yxu} O\tp(X\tp
  EX)O =\left(\!\ba{ccc}y^2&0&0\\0&-x^2&0\\0&0&-u^2\ea\!\right),\ee
  where $y,x,u$ are all non-zero. 
  
  We now define $P$ by the equation \be{PXO}
  PXO=\left(\!\ba{ccc}y&0&0\\0&x&0\\0&0&u\ea\!\right).\ee Comparing
  \rf{yxu} with \rf{PXO}, we see that \be{simclass} O\tp(X\tp
  EX)O=(PXO)\tp E(PXO),\ee so that \[ (XO)\tp(P\tp EP-E)(XO)=0.\] It
  follows that $P\in O(1,2)$. Finally, we may ensure that $P$ belongs
  to the identity component $SO_\circ(1,2)$ by changing the sign of
  one or both of $y,x$ in \rf{PXO}.
\end{proof}

\begin{lemma}\label{SVD1} If $X$ has rank 1 then it is either
  diagonalizable or congruent to $M_0$.
\end{lemma}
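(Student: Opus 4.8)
The plan is to exploit the congruence action $X \mapsto PXO$ with $P \in SO_\circ(1,2)$ and $O \in SO(3)$, using the freedom in $O$ to normalize the (unique up to scale) nonzero row of $X$ and the freedom in $P$ to rescale and pick out the Lorentzian type. Since $\rank X = 1$, we can write $X = \vv\,\ww\tp$ for nonzero column vectors $\vv,\ww \in \R^3$. First I would use an element $O \in SO(3)$ acting on the right to rotate $\ww$ to a multiple of $(1,0,0)\tp$; absorbing the scalar, we may assume $X$ has the form $(\vv \mid 0 \mid 0)$, i.e. only the first column is nonzero, equal to $\vv = (v_1, v_2, v_3)\tp$.

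Next I would analyze the orbit of $\vv$ under $SO_\circ(1,2)$ acting on the left (the other column operations by $O$ are now frozen out, but rotations fixing the first basis vector — an $SO(2)$ acting on the last two coordinates — are still available, and they commute with the $SO_\circ(1,2)$ action in the relevant sense). The Lorentzian norm $\vv\tp E \vv = v_1^2 - v_2^2 - v_3^2$ is the key invariant, where $E = \mathrm{diag}(1,-1,-1)$. There are three cases. If $\vv$ is timelike or spacelike ($\vv\tp E\vv \neq 0$), then $SO_\circ(1,2)$ (together with the residual $SO(2)$ and a sign) can bring $\vv$ to either $(c,0,0)\tp$ or $(0,c,0)\tp$ for some $c > 0$, and rescaling via the ambient $\C^*$-action — wait, rather by choosing the diagonal normalization as in the rank-3 case — makes $X$ diagonal. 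The remaining case is $\vv$ null and nonzero: then $v_1^2 = v_2^2 + v_3^2$, and using the residual $SO(2)$ on the last two coordinates we may assume $\vv = (c, c, 0)\tp$, and one checks no element of $SO_\circ(1,2)$ can move a null vector to a non-null one, so this orbit is genuinely distinct. Rescaling gives $\vv = (1,1,0)\tp$, which is exactly the first column of $M_0 = \left(\!\ba{ccc}1&0&0\\1&0&0\\0&0&0\ea\!\right)$.

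I expect the main obstacle — or at least the point requiring care — to be bookkeeping the interaction between the left $SO_\circ(1,2)$ action and the residual right $SO(2)$ stabilizing the normalized $\ww$: one must check that after fixing $\ww = (1,0,0)\tp$ there is still enough symmetry to normalize $\vv$ to one of the three canonical forms, and simultaneously verify that these forms are pairwise incongruent (the Lorentzian norm of $\vv$, rescaled to $\{+1, -1, 0\}$, is the discriminating invariant, but one should confirm it is genuinely preserved). A secondary subtlety is that $M_0$ as written above has the second column zero, matching \rf{Mk} with $k=0$; the diagonal forms $\mathrm{diag}(c,0,0)$ and $\mathrm{diag}(0,c,0)$ are both congruent to each other only after allowing $O$ to permute coordinates — which it can, since coordinate transpositions composed with a sign lie in $SO(3)$ — so the timelike and spacelike null-complement cases collapse to a single diagonalizable orbit. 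Once these verifications are in place, the trichotomy ``timelike / spacelike / null'' yields exactly: diagonalizable, diagonalizable, or congruent to $M_0$, completing the proof.
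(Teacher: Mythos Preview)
Your argument is essentially the paper's: use the right $SO(3)$ action to reduce $X$ to a matrix with a single nonzero column, then classify that column vector by its Lorentzian type under the left $SO_\circ(1,2)$ action, with the timelike and spacelike cases giving diagonal matrices and the null case giving $M_0$. One aside is incorrect --- the forms $\mathrm{diag}(c,0,0)$ and $\mathrm{diag}(0,c,0)$ are \emph{not} congruent (no $P\in SO_\circ(1,2)$ sends the timelike $e_1$ to a multiple of the spacelike $e_2$, and a right $SO(3)$ permutation moves the nonzero entry within its row, not to a new row) --- but this is harmless, since the lemma only asks for diagonalizability, not a single orbit.
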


\begin{proof}
  Suppose that $\rank X=1$. Using just the right $SO(3)$ action we
  transform the first row of $X$ into $(a,0,0)$. Since the three rows
  of $X$ are proportional, we see that $X$ is congruent to a matrix
\begin{equation}\label{X00} 
X=\left(\!\ba{ccc} a & 0 & 0\\b & 0& 0\\c&0&0\ea\!\right)
\end{equation}
with two zero columns.
  
  Using the left action by $SO_\circ(1,2)$, we may now convert the
  only non-zero column into one of \be{1col}
  (x,0,0)\tp,\qquad(0,x,0)\tp,\qquad(1,1,0)\tp,\ee according as the
  Lorentzian norm squared $a^2-b^2-c^2$ equals $x^2$, $-x^2$ or $0$.
  To verify the last case, we first make the third coordinate vanish;
  we are then free to change the sign of the middle coordinate, and
  apply a typical element \be{sinh} \mat{\cosh t &\sinh t\\\sinh t
    &\cosh t}\ee of $SO(1,1)$. The matrices associated to the first
  two cases in \rf{1col} are obviously diagonalizable in the sense of
  Definition~\rf{condia}, while the third gives $M_0$.
\end{proof}

The final case is

\begin{lemma}\label{SVD2} If $X$ has rank 2 then it is either
  diagonalizable or congruent to $M_k$ with $k > 0$.
\end{lemma}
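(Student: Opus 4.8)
The plan is to reduce to a normal form by exploiting the two group actions separately, just as in the rank~1 and rank~3 cases. First I would use the right $SO(3)$ action to arrange that the last column of $X$ is zero, so that $X$ has the block form $\left(\begin{smallmatrix}Y&0\end{smallmatrix}\right)$ where $Y$ is a real $3\times2$ matrix of rank~2. (This is possible because the column space of $X\tp$ is a $2$-dimensional subspace of $\R^3$, and $O(3)$ acts transitively on such subspaces, so we may rotate it to the $(e_1,e_2)$-plane.) We are then left with the action of $SO_\circ(1,2)$ on the left and the residual $O(2)$ acting on the right (fixing the third coordinate), and we must classify rank-$2$ matrices $Y$ up to $Y\mapsto PY\tt R$ with $P\in SO_\circ(1,2)$, $R\in O(2)$.

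The key step is to analyze the symmetric $2\times2$ matrix $Y\tp EY$, where $E=\mathrm{diag}(1,-1,-1)$ as in the proof of Lemma~\ref{SVD3}; this is the restriction of the Lorentzian form to the column space of $Y$, pulled back to $\R^2$. There are three cases according to the signature/degeneracy of this form: it can be positive definite of signature $(1,1)$ on the $2$-plane spanned by the columns, negative definite, or degenerate. Using $O(2)$ on the right we may diagonalize $Y\tp EY$. In the nondegenerate cases one runs the same trick as in Lemma~\ref{SVD3}: write $O\tp(Y\tp EY)O=\mathrm{diag}(\pm y^2,\pm x^2)$ and then define $P$ by demanding $PYO=\mathrm{diag}(y,x)$ padded by a zero row, checking as before that $P\in O(1,2)$ and then fixing the component by sign changes; this yields a diagonal $X$. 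The remaining possibility is that $Y\tp EY$ is degenerate but nonzero, i.e. the $2$-plane of columns is tangent to the light cone; after the $O(2)$ reduction we may assume $Y\tp EY=\mathrm{diag}(0,-x^2)$ or $\mathrm{diag}(0, x^2)$ with $x\ne0$, meaning the first column is null and the second is spacelike (or timelike) and $E$-orthogonal to it. Using a boost in $SO(1,1)$ acting on the $(e_1,e_2)$-block one normalizes the null first column to $(1,1,0)\tt$ exactly as in the last case of Lemma~\ref{SVD1}; the second column, being $E$-orthogonal to it, then lies in the span of $(1,1,0)\tt$ and $(0,0,1)\tt$, and subtracting a multiple of the (already fixed) first column via a residual shear of $SO_\circ(1,2)$ that stabilizes the null direction, together with a final rescaling, brings it to $(0,0,k)\tt$ with $k>0$ (it cannot be zero since $\rank X=2$). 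This is precisely $M_k$ with $k>0$.

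The main obstacle I anticipate is the bookkeeping in the degenerate case: one must check that the stabilizer in $SO_\circ(1,2)$ of the null line through $(1,1,0)\tt$ is large enough (it is the parabolic subgroup, of dimension~$2$) to both preserve the normalization of the first column and clear out the unwanted entries of the second column, and that the scaling needed to set the surviving entry to a positive $k$ does not conflict with the earlier normalizations. One should also verify that the three cases are genuinely distinct — i.e. that $M_k$ for $k>0$ is not diagonalizable — which follows by computing the invariant $\det(X\tt EX)$ or the signature of $X\tt EX$: for a diagonal rank-$2$ matrix this form is nondegenerate on the image, whereas for $M_k$ it is degenerate. I expect this to be routine once the case division by the signature of $Y\tt EY$ is set up correctly.
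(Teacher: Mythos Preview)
Your approach is correct and genuinely different from the paper's. The paper proceeds by ad hoc coordinate manipulation: it first uses the right $SO(3)$ action to simplify the \emph{first row} to $(a,0,0)$, then uses $SO(2)\subset SO_\circ(1,2)$ and $SO(1,1)$ acting on the left to reduce the first column according to its Lorentzian type, and finally handles each subcase by explicit $2\times2$ block diagonalization or a further $SO(2)$ rotation. Your route is more invariant-theoretic: you zero out a \emph{column} first and then organize the case split by the signature of the pulled-back form $Y\tp EY$ on $\R^2$, mimicking the rank-3 argument in the nondegenerate cases and treating the degenerate (tangent-to-light-cone) case via the parabolic stabilizer of a null direction. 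Your method is cleaner conceptually and generalizes more readily; the paper's is more elementary in that every step is an explicit matrix move and no extension theorem is invoked.

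Two small points to tighten. First, in the nondegenerate subcases you write ``define $P$ by demanding $PYO=\mathrm{diag}(y,x)$ padded by a zero row, checking as before that $P\in O(1,2)$''---but unlike in Lemma~\ref{SVD3}, here $YO$ is $3\times2$ and this equation does \emph{not} determine $P$, so the verification ``as before'' does not literally go through. You need either Witt's extension theorem for $O(1,2)$, or the simple fix of adjoining a third column $w$ to $YO$ chosen so that the resulting $3\times3$ matrix is invertible with $E$-Gram matrix matching that of your target; then the rank-3 argument applies verbatim. Second, in the degenerate case the nonzero eigenvalue of $Y\tp EY$ is necessarily negative (a $2$-plane tangent to the light cone in $\R^{1,2}$ is negative semidefinite), so your ``or $\mathrm{diag}(0,x^2)$'' alternative does not occur; and the ``final rescaling'' you mention is not available---only a sign flip on the second column via the residual $O(1)$ is, which is exactly what is needed to make $k>0$.
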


\begin{proof}
  Suppose that $\rank X=2$. Using the right $SO(3)$ action we again
  transform the first row of $X$ into $(a,0,0)$. This time, it follows
  that $X$ is congruent to a matrix of the form
  \be{XX} \left(\!\ba{ccc} a & 0 & 0\\b & e&0\\
    c& f &0\ea\!\right),\ee where not both of $e,f$ are zero.  Using
  the action of $SO(2)\subset SO_\circ(1,2)$, we can also assume that
  $e=0$, whence $f\ne0$. The second column is now preserved by
  $SO(1,1)$ acting on the first and second coordinates, and we can use
  this subgroup to convert the first column into one of
  \[(a',0,c)\tp,\qquad (0,b',c)\tp,\qquad (1,\pm1,c)\tp.\]
  In the first subcase, we effectively have a $2\times2$
  block of rank 2, which can be diagonalized by a subgroup
  $SO(1,1)\times SO(2)$ (as in Lemma~\ref{SVD3}). In the second
  subcase, we may diagonalize the result using $SO(2)\times SO(2)$ and
  ordinary SVD. The final subcase yields a matrix congruent to
  \be{11lm} \left(\!\ba{ccc} 1 & 0 & 0\\1 & 0 &
    0\\c&g&0\ea\!\right),\ee where $g=\pm f\ne0$.
  
  Applying $SO(2)\subset SO(3)$, convert \rf{11lm} to
  \[\left(\!\ba{ccc} \cos\theta & \sin\theta & 0\\ 
\cos\theta& \sin\theta & 0\\c\cos\theta-g\sin\theta &
    c\sin\theta+g\cos\theta &0\ea\!\right).\] 
  Choose $\theta=\arctan(c/g)$ so that
  \[ c\cos\theta-g\sin\theta=0,\qquad k=c\sin\theta+g\cos\theta>0.\]
Since $SO_\circ(2,1)$ acts transitively on elements of Lorentzian 
norm squared $-k^2$, we can act by this group so that the second column 
becomes $(0,0,k)\tp$. The new first column remains null and orthogonal to the
  second, and must have the form $(x,x,0)\tp$ with
  $x \neq 0$. We obtain $M_k$ by applying \rf{sinh}.
\end{proof}

Having completed the proof of Theorem~\ref{SVD}, we can interpret the
result in terms of complex $4\times4$ symmetric matrices.

\begin{corollary}\label{cross}
  Let $Q=Q_1+iQ_2$ be a SBF, whose real part $Q_1$ is non-degenerate.
  Then $Q$ lies in the same $GL(2,\HH)$-orbit as one of the matrices
  \be{can1}
  \left(\!\ba{cccc} x+iy &0&0&u+iv-1\\0&-x+iy&u-iv+1&0\\
    0&u-iv+1&-x+iy&0\\u-iv-1&0&0&x+iy\ea\!\right)\ee where
  $x,y,u,v\in\R$, or \be{can2}
  \left(\!\ba{cccc} 2i &-k&0&iv-1\\-k&0&-iv+1&0\\
    0&-iv+1&2i&k\\-iv-1&0&k&0\ea\!\right)\ee where $k,v\in\R$.
  Moreover, the orbit containing \rf{can1} determines $v$, $yxu$,
  $y^2$ and the unordered set $\{x^2,u^2\}$, and the orbit of
  \rf{can2} determines $v$ and $k^2$.
\end{corollary}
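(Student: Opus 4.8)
The plan is to translate the matrix-level statement of Theorem~\ref{SVD} into the $4\times4$ symmetric-matrix language via the dictionary set up in Sections~\ref{class1}--\ref{class2}, and then to extract invariants from the two normal forms. First I would recall from \rf{LMN} that, after acting by a suitable $H\in GL(2,\HH)$ putting $Q_1$ into the standard form $Q_0$, the transformed $Q$ has the shape $Q_0+i\,\block{L+iM}{\,iN}+(1-iv)Q_0$, where $(L,M,N)$ is exactly the triple of real symmetric $2\times2$ matrices encoded by the $3\times3$ matrix $X$ of \rf{X}. The residual freedom preserving $Q_0$ is the stabilizer $\mathscr G$, and by Proposition~\ref{pi1pi2} its action on $X$ is precisely $X\mapsto \pi_1(R)^{-1}X\,\pi_2(U)$, i.e.\ left multiplication by $SO_\circ(1,2)$ and right multiplication by $SO(3)$ --- this is the congruence relation of Definition~\ref{condia}. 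So Theorem~\ref{SVD} says $X$ may be taken to be $\mathrm{diag}(x,y,u)$ or $M_k$; I would then substitute these back. For the diagonal case $L=\mathrm{diag}$ etc.\ gives $L+iM=\mathrm{diag}(x+iy,-x+\cdots)$... more carefully, using \rf{med} in reverse, $X=\mathrm{diag}(x,y,u)$ corresponds to $\vv_1=(x,0,0),\vv_2=(0,y,0),\vv_3=(0,0,u)$, so $L=\mathrm{diag}(x+y,\,x-y)$, $M=\mathrm{diag}(y,-y)$... I need to be careful here: actually the columns of $X$ encode $L,M,N$ via \rf{med}, so the first column $(\tfrac12(L_{11}+L_{22}),\tfrac12(L_{11}-L_{22}),L_{12})\tp=(x,0,0)\tp$ forces $L_{11}=L_{22}=x$, $L_{12}=0$, i.e.\ $L=xI$; similarly $M=yI$... wait, that can't be right either since then $X=\mathrm{diag}$ would not be generic. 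Let me not grind this: the point is it is a direct linear substitution, and one checks it produces \rf{can1}; the $M_k$ case produces \rf{can2} after a routine computation. I would present this substitution explicitly but compactly.

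The second, more substantive half is the invariance claims. For \rf{can1}, the group acting is $\mathscr G\cong SL(2,\R)\times_{\mathbb Z_2}SU(2)$ together with the scalar $\C^*$ --- but wait, we have already used the $\C^*$ to normalize, or rather we are in the $GL(2,\HH)$-orbit, so only $\mathscr G$ acts on the reduced form, and $v$ is manifestly invariant because it sits in the $Q_0$-component which $\mathscr G$ fixes. For the remaining triple $(x,y,u)$: by Proposition~\ref{pi1pi2} these are the (signed) entries of a diagonalized $3\times3$ matrix under $SO_\circ(1,2)\times SO(3)$ congruence. The invariants of such an orbit are exactly what one expects from a "hyperbolic SVD": $y^2$ (the timelike singular value squared, which is canonical since $SO_\circ(1,2)$ distinguishes the timelike direction), the unordered pair $\{x^2,u^2\}$ (the two spacelike singular values squared, unordered because $SO(2)\subset SO(3)$ can permute the corresponding coordinates and $SO(2)\subset SO_\circ(1,2)$ likewise), and the product $yxu=\det X$ (whose sign is an invariant not captured by the squares, since the allowed sign changes in \rf{PXO} come in pairs --- reflections in $O(1,2)\setminus SO_\circ(1,2)$ and $O(3)\setminus SO(3)$ are excluded, so only an even number of sign flips is available, preserving $yxu$). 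I would verify that no further invariant is lost or gained by checking that $\{y^2,\{x^2,u^2\},yxu\}$ determines $(x,y,u)$ up to the allowed symmetries, which is an elementary case analysis. For \rf{can2}, the analogous statement is cleaner: $X=M_k$, and one argues $k\ge0$ was already forced in Lemma~\ref{SVD2}, while $k^2$ (equivalently $k$) together with $v$ are the only parameters, so the orbit of $M_k$ determines $k^2$ and $v$, with the caveat that one must confirm distinct $k$ give distinct orbits --- which follows since, e.g., $k$ is recoverable from $X\tp E X$ or from the rank-degeneracy structure of $X$ under congruence.

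The step I expect to be the main obstacle is the verification that the listed quantities are genuinely \emph{complete} invariants of the $GL(2,\HH)$-orbit (not just of the $\mathscr G$-orbit of the normal form). The subtlety is that two different elements of $GL(2,\HH)$ could conjugate $Q_1$ to $Q_0$ via different $H,H'$; the ambiguity $H'H^{-1}$ lies in $\{G:G\tp Q_0 G=Q_0\}=\mathscr G$ up to the $\C^*$ scaling, so the two resulting $X$'s are congruent in the sense of Definition~\ref{condia} --- but one must also track that the $\C^*$-scaling has been correctly accounted for (it would rescale $Q_2$ and hence $X$ and $v$ uniformly, which is why we work with $GL(2,\HH)$-orbits of $Q$ and not $\C^*\times SL(2,\HH)$-orbits at this stage, keeping $v$ and the scale of $X$ meaningful). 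I would handle this by carefully stating that the claim is about the $GL(2,\HH)$-orbit (as written), noting that $GL(2,\HH)$ acts transitively on rank-$4$ real SBFs by Proposition~\ref{realcan} with stabilizer exactly $\mathscr G$ (by \rf{FQF} it is conjugate to $SO(2,\HH)$, hence the stabilizer of $Q_0$ is $\mathscr G$ of Proposition~\ref{stabG}), so the $GL(2,\HH)$-orbit of $Q$ is in bijection with the $\mathscr G$-orbit of its $(X,v)$ data, and then Theorem~\ref{SVD} plus the invariant-extraction above finishes it. The rest is bookkeeping.
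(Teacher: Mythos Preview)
Your approach is essentially identical to the paper's: reduce the real part to $Q_0$, encode the imaginary part by the $3\times3$ matrix $X$ and the scalar $v$ via \rf{LMN}, apply Theorem~\ref{SVD} through the congruence action of Proposition~\ref{pi1pi2}, and substitute back (the paper resolves your substitution confusion by the explicit assignment \rf{choice}, which produces \rf{can1} and \rf{can2} directly). For the invariants the paper uses the similarity class of $X\tp EX$ (see \rf{simclass}) together with $\det X$, which packages your timelike/spacelike singular-value and even-sign-flip analysis into a single observation, but otherwise the arguments coincide.
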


\begin{proof}
  Assume that $Q$ already equals the right-hand side of \rf{LMN}.
  Suppose that $X$ is diagonalizable in the sense of
  Definition~\rf{condia}. We can then find $G\in\mathscr{G}$ such
  that, having replaced the left-hand side of \rf{X} by $G\cdot X$,
  the new off-diagonal terms
  \[ L_{11}-L_{22},\quad L_{12},\quad M_{22}+M_{11},\quad M_{12},\quad
  N_{11},\quad N_{22}\] all vanish.  It follows that $L+iM$ is itself
  diagonal and $iN$ is off-diagonal. Equation \rf{PXO} allows us to
  choose
  \be{choice}\ba{rcl} y&=&\>\frac12(L_{11}+L_{22})\>=\>L_{11}\\[3pt]
  x&=&\!-\frac12(M_{11}-M_{22})\>=\>-M_{11}\\[3pt]
  u&=&\!-N_{12}\ea\ee as the diagonal elements of $X$, yielding
  \rf{can1}. If, on the other hand, $X$ is congruent to \rf{Mk} then
  we immediately obtain \rf{can2}.

  For the last statement, note that the canonical forms \rf{can1} and
  \rf{can2} both have real part equal to $Q_0$, and this property will
  only be preserved by a subgroup of $\mathscr G$. The latter leaves
  $v$ invariant, and also the similarity class of $X\tp EX$ (see
  \rf{simclass}). If $X$ is diagonalizable, this class is specified by
  the set $\{y^2,-x^2,-u^2\}$ of eigenvalues in \rf{yxu}, or
  equivalently by the characteristic coefficients
  \be{charpoly}\ba{rcl} p=y^2-x^2-u^2,\quad q=x^2u^2 -u^2y^2 -
  y^2x^2,\quad r=y^2x^2u^2=d^2.\ea\ee Moreover $d=\det X$ is invariant
  by $\mathscr G$. To conclude, note that $p,q,d$ are defined for
  arbitrary $X$, and $M_k$ has $p=-k^2$ and $q=0=d$.\end{proof}

While we now have a complete description of the action of
$GL(2,\HH)$, it remains to describe the effect of the group $U(1)$ in
Definition~\ref{equidef}. This amounts to understanding the action
induced on the canonical forms \rf{can1} and \rf{can2} by multiplying
$Q=Q_1+iQ_2$ by $e^{i\theta}$.

\begin{theorem}
\label{mainclass}
A complex symmetric $4\times4$ matrix $Q$ of rank 4 lies in the same
$U(1)\times GL(2,\HH)$ orbit as either a diagonal matrix \be{st}
Q_{\lambda,\mu,\nu}=
\mathrm{diag}\big(e^{\lambda+i\nu},\,e^{\mu-i\nu},\,
e^{-\lambda+i\nu},\,e^{-\mu-i\nu}\big)\ee with $0\le\lambda\le\mu$ and
$\nu\in[0,\pi/2)$, or a matrix \rf{can2} with $v=0$ and $k\in[0,1)$.
\end{theorem}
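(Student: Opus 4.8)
The plan is to start from Corollary~\ref{cross}, which already reduces $Q$ to one of the canonical forms \rf{can1} or \rf{can2} modulo $GL(2,\HH)$, and then analyze how the residual $U(1)$-action $Q\mapsto e^{i\theta}Q$ interacts with these forms. The key point is that the $GL(2,\HH)$-action preserves the splitting $Q=Q_1+iQ_2$ with $Q_1,Q_2\in\Sigma$ (Lemma~\ref{preserve}), so multiplying by $e^{i\theta}$ rotates the pair $(Q_1,Q_2)$ and then we are free to re-normalize the new real part by $GL(2,\HH)$. First I would treat the diagonalizable case: the canonical form \rf{can1} has real part $Q_0$ and the triple $(L,M,N)$ giving a diagonal matrix $L+iM=\mathrm{diag}(y-ix,\,y+ix)$ together with off-diagonal $iN$, $N_{12}=-u$. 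I would observe that $Q+vQ_0$, the tracefree part relative to $Q_0$, is what carries the $(x,y,u)$ data, while $v$ is the (invariant) coefficient of $Q_0$; the $U(1)$-action mixes the "$Q_0$-direction" with the "$\mathbb{I}$-direction". Concretely, writing the form \rf{can1} and multiplying by $e^{i\theta}$, one re-diagonalizes using Proposition~\ref{realcan} and the discussion following it; the upshot is that $(x,y,u,v)$ and $\theta$ combine into new parameters which, after matching, can be put in the diagonal shape \rf{st}. To see that \rf{st} is indeed the end result, I would directly verify that the diagonal matrix $Q_{\lambda,\mu,\nu}$ lies in $\Sigma\oplus i\Sigma$ in the coordinates $[\xi_0,W_1,\xi_{12},W_2]$ — i.e. that $e^{\lambda+i\nu}\xi_0{}^2+e^{-\lambda+i\nu}\xi_{12}{}^2+e^{\mu-i\nu}W_1{}^2+e^{-\mu-i\nu}W_2{}^2$ is handled correctly by \rf{JQJ} — and then count parameters: three real parameters $\lambda,\mu,\nu$ matching the three invariants $v$, $y^2$, $\{x^2,u^2\}$ of Corollary~\ref{cross} modulo the extra $U(1)$.

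Next I would pin down the ranges. The Weyl group of $Sp(2)$ together with the coordinate reflections already used in Proposition~\ref{realcan} lets us permute and sign-change the pair $(\lambda,\mu)$; combined with the freedom to swap the roles via an element of $\mathscr G$ (the $SO^*(4)$ stabilizer), this forces $0\le\lambda\le\mu$. The parameter $\nu$ comes from the overall $U(1)$ phase $e^{i\theta}$, and the reason it is confined to $[0,\pi/2)$ rather than $[0,2\pi)$ is that $Q$ and $-Q$ define the same quadric and also that the substitution $\nu\mapsto\nu+\pi/2$ can be absorbed by a sign change/permutation of coordinates coming from $GL(2,\HH)$ — I would spell out the precise element realizing this. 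For the non-diagonalizable case I would take \rf{can2}, which depends on $(k,v)$ with the real part again $Q_0$, and show that the $U(1)$-action can be used to kill $v$: multiplying \rf{can2} by a suitable $e^{i\theta}$ and then re-normalizing the real part by $GL(2,\HH)$ shifts $v$, and exactly one phase brings $v$ to $0$; the residual invariant is $k^2$, and since $k$ and $-k$ are congruent (as in Lemma~\ref{SVD2}, via \rf{sinh}) we may take $k\ge0$. The bound $k<1$ is the subtle one: when $k=1$ the quadric \rf{can2} degenerates (its determinant vanishes, or it acquires extra symmetry making it equivalent to a diagonal form), so non-degeneracy forces $k\in[0,1)$; I would check $\det$ of \rf{can2} with $v=0$ explicitly and show it is proportional to $(1-k^2)^{2}$ up to a nonzero factor.

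The main obstacle I expect is the bookkeeping in the diagonalizable case: tracking how the $U(1)$-rotation of $(Q_1,Q_2)$ interacts with the subsequent $GL(2,\HH)$-renormalization of the real part, and showing this really collapses the four real parameters $(x,y,u,v)$-plus-phase down to $(\lambda,\mu,\nu)$ with the stated ranges, without double-counting or missing an orbit. A clean way to organize this is to pass to the invariants: $v$ and the characteristic coefficients $p,q,d$ of $X\tp E X$ from \rf{charpoly} are a complete set of $\mathscr G\ltimes(\text{shift})$-invariants of the tracefree part, and the $U(1)$-action acts on the pair (tracefree part, $\mathbb{I}$-component) in a computable linear way; matching these invariants against those of $Q_{\lambda,\mu,\nu}$ gives the bijection. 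The degenerate boundary behavior — $\lambda$ or $\mu\to\infty$, $\nu\to\pi/2$, or $k\to1$ — should be cross-checked against $\det Q=0$ to confirm the half-open/closed nature of each interval. Everything else (the non-diagonalizable case, the sign normalizations) is then routine given Corollary~\ref{cross} and the structure of $\mathscr G$ from Proposition~\ref{stabG}.
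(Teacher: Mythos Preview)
Your plan has the right architecture but two concrete gaps. First, your justification for $k\in[0,1)$ is wrong: you say non-degeneracy forces $k<1$ because $\det$ of \rf{can2} at $v=0$ is proportional to $(1-k^2)^2$. But $(1-k^2)^2\ne0$ for $k>1$ as well; non-degeneracy only excludes $k=\pm1$. The actual mechanism in the paper is that after fixing $\det Q=1$ there is still a residual $\mathbb{Z}_4\subset U(1)$ (multiplication by powers of $i$), and computing \rf{4ith} with $v=\tilde v=0$, $\theta=\pi/2$ shows this residual action sends $k\mapsto 1/k$. So the restriction $k<1$ is a \emph{choice} of fundamental domain for this involution, not a rank condition. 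Your $k\leftrightarrow-k$ discussion is irrelevant here since Corollary~\ref{cross} already pins down $k^2$.

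Second, your treatment of the diagonalizable case is too soft to constitute a proof, and your proposed order of operations (rotate by $U(1)$ first, then re-normalize and match invariants) is not how the paper proceeds. The key step you are missing is that \rf{can1} is \emph{already} $GL(2,\HH)$-equivalent to a diagonal matrix, with no $U(1)$ needed at that stage: the matrix $\FF$ of \rf{F} sends \rf{can1} to $\mathbb{I}+i\block{A}{0}$ with $A=\mathrm{diag}(\tan\alpha,\tan\beta)$ as in \rf{tau}, and then a diagonal rescaling $\mathrm{diag}(c,d,\ol c,\ol d)\in GL(2,\HH)$ with $c^2=\cos\alpha$, $d^2=\cos\beta$ yields $\mathrm{diag}(e^{i\alpha},e^{i\beta},e^{i\ol\alpha},e^{i\ol\beta})$. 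Only \emph{then} does one use $U(1)$, simply to impose $\det Q=1$, which fixes the relation \rf{nu-nu} and gives the $\nu$ parameter. Your parameter-counting and invariant-matching sketch does not supply this explicit diagonalization, and without it there is no argument that the exponential form \rf{st} is actually attained. The range restrictions on $(\lambda,\mu,\nu)$ are then handled by exhibiting explicit elements of $SL(2,\HH)$ realizing the maps \rf{trans}; your appeal to ``the Weyl group of $Sp(2)$'' is morally right but you should produce those matrices.
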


\begin{proof} 
  By multiplying $Q$ by a suitable unit complex number, we may suppose
  that the real part $Q_1$ of $Q$ in \rf{realim} also has rank 4, and
  apply Corollary~\ref{cross}.

  First suppose that the $GL(2,\HH)$ orbit of $Q$ contains a matrix
  \rf{can1}. Not only does the matrix $F$ defined by \rf{F} transform
  $Q_0$ into $\mathbb I$, but it transforms \rf{can1} into the
  diagonal matrix $\mathbb{I}+i\block A0$, where \be{tau}
  A=\mat{y-v+i(-x+u)&0\\0&-y-v-i(x+u)}=\mat{\tan\alpha
    &0\\0&\tan\beta}.\ee The second equality can be used here to
  define $\alpha,\beta\in\C$, given that $\det Q\ne0$ and the
  meromorphic function $\tan$ avoids only the values $\pm i$. Choose
  $c,d\in\C$ such that $c^2=\cos\alpha$, $d^2=\cos\beta$.  Acting by
  the matrix \be{cdcd} \mathrm{diag}(c,\>d,\>\ol c,\>\ol d)\in
  GL(2,\HH),\ee we see that $Q$ is equivalent to 
\[\mathrm{diag}\big(e^{i\alpha},
\>e^{i\beta},\>e^{i\ol\alpha},\>e^{i\ol\beta}\big).\] This form makes
the action by $e^{i\theta}\in U(1)$ in \rf{17} transparent, and we may
use it to choose \be{nu-nu} i\alpha=\lambda+i\nu,\qquad
i\beta=\mu-i\nu,\ee so that $\det Q=1$.

  The statement that further restrictions can be imposed on
  $\lambda,\mu,\nu$ follows from Lemma~\ref{restr} below.

  To handle the non-diagonalizable case, denote \rf{can2} by $Q=Q_0+iR_{v,k}$.
  A calculation reveals that $\det Q$ equals
  \be{detQ}   1-2k^2+k^4-6v^2+2k^2v^2+v^4+4iv(k^2+v^2-1)
  =(k^2+(v+i)^2)^2,\ee and we are assuming that this is non-zero.
  The only points $(k,v)$ that are excluded are $(\pm1,0)$, for which
  $\rank Q=3$. The
  real part of the matrix $e^{i\theta}Q$ equals
  \[Q_0\cos\theta-R_{v,k}\sin\theta,\] and the determinant of this
  real part is $\delta^2$ where \be{delta2}\delta = (\cos\theta +
  v\sin\theta)^2+ (k\sin\theta)^2.\ee Since $e^{i\theta}Q$ is again
  non-diagonalizable, Corollary~\ref{cross} implies that \be{HtH}
  H\tp(e^{i\theta}Q)H= Q_0+iR_{\tilde k,\tilde v},\ee for some $H\in
  GL(2,\HH)$. Examining the real part of \rf{HtH} shows that $\det
  H=1/\delta$, whence \be{4ith} e^{4i\theta}\det Q=\delta^2(\tilde
  k^2+(\tilde v+i)^2)^2.\ee Now choose $\theta$ so that the left-hand
  side of \rf{4ith} is real and positive, and (more specifically) take
  $v=\tan\theta$ if $k=0$. Then $\delta\ne0$, and \rf{detQ} tells
  us that $\tilde v=0$.

  Imposing the condition $\det Q=1$ almost fixes a representative of
  the $U(1)$ orbit; the only ambiguity that remains is to multiply $Q$
  by a power of $i$. Setting $v=0 = \tilde{v}$ and $\theta=\pi/2$ in \rf{4ith}
  yields $\tilde k=1/k$, so it suffices to restrict $k$ to the
  interval $[0,1)$.
\end{proof}

Combining the final sentences of the two preceding proofs, we conclude
that the equivalence classes of non-diagonalizable matrices are
faithfully parametrized by $k\in[0,1)$. The situation is
more complicated in the diagonalizable case:

\begin{lemma}\label{restr}
  $Q_{\lambda,\mu,\nu}$, $Q_{\lambda',\mu',\nu'}$ are equivalent in
  the sense of Definition~\ref{equidef} if and only if
  $(\lambda,\mu,\nu)$, $(\lambda',\mu',\nu')$ belong to the same orbit
  under the group $\Gamma$ of transformations of $\R^3$ generated by
  the four maps \be{trans}\left\{\ba{l}
    (\lambda,\,\mu,\,\nu)\mapsto(\lambda,\,\mu,\,\nu+\frac\pi2)\\[2pt]
    (\lambda,\,\mu,\,\nu)\mapsto(-\lambda,\,\mu,\,\nu)\\
    (\lambda,\,\mu,\,\nu)\mapsto(\lambda,-\mu,\,\nu)\\
    (\lambda,\,\mu,\,\nu)\mapsto(\mu,\,\lambda,\,\!-\nu).\ea\right.\ee
\end{lemma}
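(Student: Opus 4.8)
The plan is to prove the two implications of the ``if and only if'' separately, the first by exhibiting explicit group elements and the second by extracting a computable invariant.

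\smallskip
\noindent\textbf{Sufficiency.} For the ``if'' direction it is enough to realize each of the four generators listed in \rf{trans} by an explicit pair $(\gamma,G)\in\C^*\times SL(2,\HH)$ with $Q_{\lambda',\mu',\nu'}=\gamma\,G\tp Q_{\lambda,\mu,\nu}\,G$. Recall from \rf{st} that in the coordinates $[\xi_0,W_1,\xi_{12},W_2]$ one has $Q_{\lambda,\mu,\nu}=\mathrm{diag}(e^{\lambda+i\nu},e^{\mu-i\nu},e^{-\lambda+i\nu},e^{-\mu-i\nu})$. I would take $\gamma=i$ and $G=\mathrm{diag}(1,-i,1,i)$ for the translation $\nu\mapsto\nu+\pi/2$; $\gamma=1$ together with a signed permutation matrix interchanging the $\xi_0$- and $\xi_{12}$-coordinates (resp.\ the $W_1$- and $W_2$-coordinates), which one checks lies in $SL(2,\HH)$, for $\lambda\mapsto-\lambda$ (resp.\ $\mu\mapsto-\mu$); and $\gamma=1$ with the permutation matrix of $(1\,2)(3\,4)$, which lies in $SL(2,\HH)$, for $(\lambda,\mu,\nu)\mapsto(\mu,\lambda,-\nu)$. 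Each verification is a one-line computation with the diagonal matrices involved.

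\smallskip
\noindent\textbf{Necessity, Step 1 (normalizing $\gamma$).} Suppose $Q_{\lambda',\mu',\nu'}=\gamma\,G\tp Q_{\lambda,\mu,\nu}\,G$ with $\gamma\in\C^*$ and $G\in SL(2,\HH)$. Taking determinants and using $\det G=1$ and $\det Q_{\lambda,\mu,\nu}=1$ forces $\gamma^4=1$, so $\gamma=i^k$. Since a short computation gives $i^k Q_{\lambda,\mu,\nu}=G_0\tp Q_{\lambda,\mu,\,\nu+k\pi/2}\,G_0$ with $G_0=\mathrm{diag}(1,i^k,1,(-i)^k)\in SL(2,\HH)$, and since $(\lambda,\mu,\nu+k\pi/2)$ lies in the $\Gamma$-orbit of $(\lambda,\mu,\nu)$ by the first generator, I may replace $(\lambda,\mu,\nu)$ by that triple and henceforth assume $\gamma=1$, i.e.\ $Q_{\lambda',\mu',\nu'}=G\tp Q_{\lambda,\mu,\nu}\,G$ with $G\in GL(2,\HH)$.

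\smallskip
\noindent\textbf{Necessity, Step 2 (the invariant).} Decompose $Q_{\lambda,\mu,\nu}=Q_1+iQ_2$ as in Lemma~\ref{realim}. By Lemma~\ref{preserve} this splitting is $GL(2,\HH)$-equivariant, so $Q_1'=G\tp Q_1G$ and $Q_2'=G\tp Q_2G$; hence the binary quartic $\det(sQ_1+tQ_2)$ is scaled by $(\det G)^2=1$, and its zero divisor in $\cp^1$ — the characteristic roots of the pair $(Q_1,Q_2)$, which are the eigenvalues of $Q_1^{-1}Q_2$ whenever $Q_1$ is invertible — is a genuine $GL(2,\HH)$-invariant of $Q$. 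Computing $\mathbb{J}\tp\ol Q\mathbb{J}$ for the diagonal matrix $Q_{\lambda,\mu,\nu}$ gives
\[ Q_1=\mathrm{diag}\big(\cosh(\lambda+i\nu),\,\cosh(\mu-i\nu),\,\cosh(\lambda-i\nu),\,\cosh(\mu+i\nu)\big), \]
with a similar diagonal formula for $Q_2$ involving $\sinh$; using $\tanh w=-i\tan(iw)$ and $\ol{\tan w}=\tan\ol w$, the four roots come out to be
\[ -\tan(\nu-i\lambda),\quad-\tan(\nu+i\lambda),\quad\tan(\nu+i\mu),\quad\tan(\nu-i\mu)\ \in\ \cp^1, \]
each a value of $\tan$, and within each pair the complex conjugate of the other. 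The same description holds for $(\lambda',\mu',\nu')$, and equating the two unordered quadruples leaves only finitely many ways to match the roots. Since $\tan$ has period $\pi$ and is injective on every strip of width $\pi$, each matching identifies an argument $\nu'\pm i\lambda'$ or $\nu'\pm i\mu'$, modulo $\pi\mathbb{Z}$, with one of $\pm(\nu-i\lambda)$ or $\pm(\nu+i\mu)$; comparing real and imaginary parts, and using that $\nu'$ is a single real number, then yields $(\lambda',\mu',\nu')\in\{(\pm\lambda,\pm\mu,\,\nu+\pi n),\,(\pm\mu,\pm\lambda,\,-\nu+\pi n):n\in\mathbb{Z}\}$. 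This set is contained in the $\Gamma$-orbit of $(\lambda,\mu,\nu)$, so together with Step 1 the ``only if'' direction follows.

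\smallskip
\noindent\textbf{Main obstacle.} The real work is concentrated in Step 2: the matching of the two quadruples of roots must be carried out exhaustively, in particular through the degenerate configurations in which the four roots of $Q_{\lambda,\mu,\nu}$ are not distinct — this happens, e.g., when $\lambda=\mu$ or when $2\nu\in\pi\mathbb{Z}$ — and one has to verify that such coincidences never force an equivalence between triples that are not already $\Gamma$-related. Phrasing the invariant as the zero divisor of $\det(sQ_1+tQ_2)$, rather than as the spectrum of $Q_1^{-1}Q_2$, keeps the argument robust even where some real part $Q_1$ degenerates (which occurs exactly when $\nu$ is an odd multiple of $\pi/2$ and $\lambda\mu=0$). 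A subsidiary bookkeeping point is to confirm that the matrices $G$ used in the ``if'' direction genuinely lie in $SL(2,\HH)$ and induce precisely the maps \rf{trans}; this rests on the conventions $\mathbb{J}=\block{0}{-I}$ and the coordinate ordering $[\xi_0,W_1,\xi_{12},W_2]$ fixed in Section~\ref{algebra}.
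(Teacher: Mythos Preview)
Your argument is correct and follows a genuinely different route from the paper's. For sufficiency both proofs exhibit explicit elements of $SL(2,\HH)$ realising the generators (your choices differ from the paper's $H_1,\dots,H_4$ only by harmless signs), and both begin necessity by taking determinants to reduce to $\gamma^4=1$. The divergence is in the invariant used. The paper transports $Q_{\lambda,\mu,\nu}$ back to the form \rf{can1} via \rf{cdcd} and then invokes the last sentence of Corollary~\ref{cross} --- that the $GL(2,\HH)$-orbit determines $v$, $yxu$, $y^2$ and $\{x^2,u^2\}$ --- finally reading off from \rf{tau} how the maps \rf{trans} act on $(y,x,u)$. This rests on the whole $SO_\circ(1,2)\times SO(3)$ reduction of Section~\ref{class1} and the hyperbolic SVD of Theorem~\ref{SVD}. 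Your pencil invariant $\det(sQ_1+tQ_2)$ is self-contained and bypasses all of that machinery.

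The two invariants are in fact the same data packaged differently: with $\alpha=\nu-i\lambda$ and $\beta=-\nu-i\mu$ as in \rf{nu-nu}, the four roots are $\{\tan\alpha,\tan\ol\alpha,\tan\beta,\tan\ol\beta\}$, and \rf{tau} shows this quadruple encodes exactly $v$, $y^2$, $\{x^2,u^2\}$ and $yxu$. (A careful recomputation gives the roots as $\tan(\nu\pm i\lambda)$ and $\tan(-\nu\pm i\mu)$; your displayed list differs by signs because entries $3$ and $4$ of $iQ_2$ pick up an extra minus, but this is immaterial for the matching and the conjugate-pair structure you assert is correct.) What your approach buys is independence from the earlier structure theory; what the paper's approach buys is that the case analysis of coincident roots --- precisely the ``main obstacle'' you flag --- is already absorbed into Corollary~\ref{cross} and need not be redone.
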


\begin{proof}
  Suppose that \be{QQ}
  e^{i\theta}H\tp Q_{\lambda,\mu,\nu}H=Q_{\lambda',\mu',\nu'},\ee
  with $H\in GL(2,\HH)$. Taking determinants of both sides shows that
  $e^{4i\theta}=1$, so we only need consider the action by the
  subgroup of $U(1)$ generated by $i$. To obtain the first map in
  \rf{trans}, observe that
\be{map1}
i\,H_1\tp Q_{\lambda,\mu,\nu}H_1=Q_{\lambda,\mu,\nu+\frac\pi2},
\ee
where $H_1=\mathrm{diag}(1,i,1,-i)\in SL(2,\HH)$.

The second map corresponds to $(\alpha,\beta)\mapsto(\ol\alpha,\beta)$
and arises from the equation \be{map2} H_2\tp(Q_{\lambda,\mu,\nu})H_2
= Q_{-\lambda,\mu,\nu},\ee where \[ H_2=\left(\!\ba{cccc}
  0&0&\!-1&0\\0&1&0&0\\1&0&0&0\\0&0&0&1\ea\!\right)\in SL(2,\HH).\]
The simultaneous conjugation $(\alpha,\beta)\mapsto
(\ol\alpha,\ol\beta)$ is realized by \be{map3}
H_3{}\!\tp(Q_{\lambda,\mu,\nu})H_3 = Q_{-\!\lambda,-\!\mu,\,\nu}\ee
with $H_3=\block 0I$. Thus the third map in \rf{trans} is generated by
$H_2H_3$. The final map arises by swapping $\alpha$ and $\beta$, and
the equation \be{map4} H_4{}\!\tp(Q_{\lambda,\mu,\nu})H_4 =
Q_{\mu,\lambda,-\!\nu},\ee with $H_4=\block{\K}0$.

For the `only if' statement, we may suppose that $\nu\in[0,\pi/2)$ and
use \rf{nu-nu} to write \be{Qdiag} Q_{\lambda,\mu,\nu}=\mathrm{diag}
\big(e^{i\alpha},\,e^{i\beta},\,e^{i\ol\alpha},\,e^{i\ol\beta}\big).\ee
Since $\cos\alpha$ and $\cos\beta$ are both non-zero, we can reverse
the procedure \rf{cdcd} to obtain a matrix \rf{can1} in the same
$GL(2,\HH)$ orbit. We now appeal to the last part of
Corollary~\ref{cross} to conclude that $y,x,u$ are uniquely specified
up to changing signs of any two of them or swapping $x,u$. But these
operations exactly correspond to the maps above, by means of \rf{tau}.
Indeed, $x\leftrightarrow u$ translates into \rf{map2},
$(y,x,u)\mapsto(y,-x,-u)$ into \rf{map3}, and $(y,x,u)\mapsto(-y,x,-u)$
into \rf{map4}. The description of $\Gamma$ is completed by the
discussion leading to \rf{map1}.
\end{proof}

In view of the lemma, we may now represent each orbit of $\Gamma$ by a
unique point of the domain \[
\{(\lambda,\mu):0\le\lambda\le\mu\}\times S^1,\] where $S^1$
parametrizes $e^{4i\nu}$, except for the identification of
$(\mu,\mu,\nu)$ with $(\mu,\mu,\frac\pi2\!-\!\nu)$. Examples of
diagonal matrices $Q_{\lambda,\mu,\nu}$ for which the stabilizer of
$(\lambda,\mu,\nu)$ in $\Gamma$ is not the identity are

(a) the real quadratic form $Q_{0,0,0}=\mathbb{I}$;

(b) $Q_{0,0,\nu}$ with $\nu\not\equiv0$ modulo $\pi/2$;

(c) $Q_{0,\mu,\nu}$ with $\mu\ne0$, which is equivalent to
$\mathbb{I}+iQ_2$ where $\rank Q_2=2$.

(d) $Q_{\lambda,\lambda,0}$ and $Q_{\lambda,\lambda,\frac\pi4}$ with 
$\lambda\ne0$.

\noindent Most of these distinguished quadrics will play a role in the
sequel.

\begin{remark}
  The half line of forms $Q_{\lambda,\lambda,0}$ with $\lambda\ge0$ is
  realized by taking $y=u=v=0$ and $x=\tanh\lambda\in[0,1)$; it
  follows that $q=d=0$ and $p=-x^2$ in \rf{charpoly}. The
  non-diagonalizable solutions also have $q=d=0$ and $p=-k^2\le0$.  It
  follows that the invariants $p,q,d$ do not separate the $U(1)\times
  GL(2,\HH)$ orbits, and the associated quotient space is not
  Hausdorff.
\end{remark}

%%%%%%%%%%%%%%%%%%%%%%%%%%%%%%%%%%%%%%%%%%%%%%%%%%%%
\subsection{Quadrics and twistor lines}
\label{LQ}
%%%%%%%%%%%%%%%%%%%%%%%%%%%%%%%%%%%%%%%%%%%%%%%%%%%%%

As a first application of the classification 
theorem, we identify exactly how many twistor 
lines a quadric may contain.  
We use the notation from Section \ref{grq}.
\begin{theorem}
\label{quadricdisc}
For any non-degenerate quadric $\Q$, there are three possibilities:

(0) $D = D_0 = S^1$, which is the case when $\Q$ is real.

(1) $\Q$ contains no twistor lines, that is, $D = D_1$. %

(2) $\Q$ contains exactly one or exactly two twistor lines. 
\end{theorem}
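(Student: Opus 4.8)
The plan is to reduce to the canonical forms of Theorem~\ref{mainclass} and count twistor lines on each. Since the conformal group $SO_\circ(1,5)$ acts on $\cp^3$ compatibly with the twistor fibration (Proposition~\ref{PsiPsi}), it permutes the twistor lines, so both the number of twistor lines on a quadric and the property of being a real quadric are $SO_\circ(1,5)$-invariant. Hence, by Theorem~\ref{mainclass} (in the explicit form of Theorem~\ref{cf1}), every non-degenerate $\Q$ is equivalent either to the diagonal quadric $\Q_{\lambda,\mu,\nu}$ defined by $e^{\lambda+i\nu}\xi_0^2+e^{-\lambda+i\nu}\xi_{12}^2+e^{\mu-i\nu}W_1^2+e^{-\mu-i\nu}W_2^2$, with $0\le\lambda\le\mu$ and $\nu\in[0,\pi/2)$, or to the quadric $\Q_k$ defined by $i(\xi_0^2+\xi_{12}^2)-k\xi_0W_1+k\xi_{12}W_2-\xi_0W_2+\xi_{12}W_1$, with $k\in[0,1)$. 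It is enough to treat these two families.

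Among them the only real form is $\Q_{0,0,0}=\mathbb{I}$, and Theorem~\ref{s1u} tells us that its discriminant locus is a circle $S^1=D_0$ over which $\Q$ contains every twistor line; this is case (0). For the other quadrics I would determine $D_0$ by direct computation. The twistor fiber over a finite point $q=z_1+jz_2$ is parametrized by $[\xi_0:\xi_{12}]$ via $W_1=\xi_0 z_1-\xi_{12}\bz_2$, $W_2=\xi_0 z_2+\xi_{12}\bz_1$ as in \rf{Wxi}; substituting this into the defining polynomial yields a binary quadratic form in $(\xi_0,\xi_{12})$, and the fiber lies on $\Q$ precisely when its three coefficients, of $\xi_0^2$, $\xi_0\xi_{12}$, $\xi_{12}^2$, all vanish. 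For $\Q_{\lambda,\mu,\nu}$ the $\xi_0\xi_{12}$-coefficient reduces to $e^{\mu}z_1\bz_2=e^{-\mu}z_2\bz_1$; when $\mu>0$ this forces $z_1\bz_2=0$, and inserting $z_1=0$ or $z_2=0$ into the remaining two equations and comparing a resulting quantity with its complex conjugate forces $\lambda=\mu$ and $\nu=0$, in which case exactly the solutions $q=\pm i$ survive, while when $\mu=0$ (so $\lambda=0$) the same conjugation argument leaves only the real form. The fiber $\{\xi_0=\xi_{12}=0\}$ over $\infty$ is never on a diagonal quadric, since $e^{\mu-i\nu}W_1^2+e^{-\mu-i\nu}W_2^2\not\equiv0$. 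So a diagonal $\Q$ other than $\mathbb{I}$ contains exactly two twistor lines if $\lambda=\mu>0$ and $\nu=0$, and none otherwise.

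For the family $\Q_k$, every monomial of the defining polynomial has a factor $\xi_0$ or $\xi_{12}$, so that polynomial vanishes identically on $\{\xi_0=\xi_{12}=0\}$: the twistor line over $\infty$ always lies on $\Q_k$. For a finite $q$, the $\xi_0\xi_{12}$-coefficient forces $\Im z_1=0$ and, when $k>0$, also $\Re z_2=0$, and then the $\xi_0^2$- and $\xi_{12}^2$-equations become inconsistent for every $k\in[0,1)$; thus $\Q_k$ contains exactly one twistor line. Putting the cases together---a circle's worth of twistor lines exactly for the real form $\mathbb{I}$, exactly two for $\Q_{\lambda,\lambda,0}$ with $\lambda>0$, exactly one for each $\Q_k$, and none for every other diagonal form---gives the trichotomy, and also shows that case (0) occurs precisely when $\Q$ is real. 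The only delicate part is the bookkeeping in the diagonal family: keeping the edge cases $\mu=0$ versus $\mu>0$, $\lambda=\mu$, $\nu=0$ straight, and verifying that equating an expression with its conjugate really kills every solution but $q=\pm i$; the family $\Q_k$ and the reduction to canonical forms are routine by comparison.
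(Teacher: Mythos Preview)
Your proposal is correct and follows essentially the same route as the paper: reduce to the canonical forms of Theorem~\ref{mainclass}, substitute the twistor-line parametrization \rf{Wxi} to get a binary quadratic in $(\xi_0,\xi_{12})$, and solve $A=B=C=0$ case by case. The only cosmetic differences are that you check the fiber over $\infty$ by direct inspection of the polynomial (the paper applies the inversion \rf{invlift}), and you organize the diagonal case by $\mu>0$ versus $\mu=0$ rather than by which of $z_1,z_2$ vanishes; the underlying computations coincide with those in Proposition~\ref{allprop}. One small wording issue: when $\mu>0$ and $z_1=0$ the equations force $\lambda+\mu=0$ (so no solution), not $\lambda=\mu$, so your sentence ``inserting $z_1=0$ or $z_2=0$\ldots forces $\lambda=\mu$ and $\nu=0$'' should distinguish the two insertions.
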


Since the statement is conformally invariant, we may assume that the
quadratic form $q$ is in the canonical form of
Theorem~\ref{mainclass}, with the coordinates of $\C^4$ taken in the
order $\xi_0,W_1,\xi_{12},W_2$. Theorem \ref{quadricdisc} therefore follows from
\begin{proposition}
\label{allprop}
Let $Q$ be in canonical form (\ref{st}),
with $ 0\le \l \le \m$, and $0 \le \n < \pi/2$. The corresponding quadric
$\Q$ contains exactly two twistor lines if and only if $\lambda = \mu \ne0$,
and $\nu = 0$.  The quadric $\Q$ contains a family of twistor lines over a
circle if and only if $\lambda = \mu = \nu= 0$.  Otherwise, $\Q$
contains no twistor lines.
 
In the case $Q = Q_0 + i R_{k,0}$ of (\ref{can2}) with $k \in [0,1)$,
the corresponding quadric $\Q$ contains exactly one twistor line.

\end{proposition}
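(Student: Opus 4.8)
The plan is to translate ``$\Q$ contains a twistor line over $q$'' into explicit polynomial conditions on $q\in S^4=\HH\cup\{\infty\}$, treating finite $q=z_1+jz_2\in\HH$ and $q=\infty$ separately. Recall that in the coordinates $[\xi_0,W_1,\xi_{12},W_2]$ used in \rf{st} and \rf{can2}, the twistor fibre over $[1,q]$ for finite $q=z_1+jz_2$ is the conic parametrised by $[\xi_0,\xi_{12}]\in\cp^1$ through $W_1=\xi_0z_1-\xi_{12}\bar z_2$ and $W_2=\xi_0z_2+\xi_{12}\bar z_1$ (this is \rf{Wxi}), while the fibre over $\infty$ is the line $\{\xi_0=\xi_{12}=0\}$. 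Writing $\Q=\{q(\vv)=0\}$ with $q(\vv)=\vv\tp Q\vv$, the fibre over a given point lies in $\Q$ iff $q(\vv)$ restricts to the zero form there. For a finite fibre, substituting \rf{Wxi} turns $q(\vv)$ into a binary quadratic form in $(\xi_0,\xi_{12})$, so $\Q$ contains that fibre iff the three coefficients — of $\xi_0^2$, of $\xi_0\xi_{12}$, and of $\xi_{12}^2$ — all vanish. First I would write out this system of three equations in $z_1,z_2$ for each canonical form, comparing, in each subcase, one equation with the complex conjugate of another.

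For the diagonal form \rf{st} the system reads $e^{\lambda+i\nu}+e^{\mu-i\nu}z_1^2+e^{-\mu-i\nu}z_2^2=0$, its $\xi_{12}^2$-counterpart $e^{-\lambda+i\nu}+e^{\mu-i\nu}\overline{z_2}^{\,2}+e^{-\mu-i\nu}\overline{z_1}^{\,2}=0$, and the cross-term relation $e^{2\mu}z_1\overline{z_2}=\overline{z_1}\,z_2$. The decisive observation is that taking absolute values in the cross-term relation forces $e^{2\mu}=1$ whenever $z_1z_2\ne0$, hence (using $0\le\lambda\le\mu$) $\lambda=\mu=0$; if instead $z_1=0$, comparing the $\xi_0^2$-equation with the conjugate of the $\xi_{12}^2$-equation gives $e^{2(\lambda+\mu)+4i\nu}=1$, forcing $\lambda=\mu=\nu=0$ (using $\nu\in[0,\pi/2)$); and if $z_2=0$ the same comparison gives $e^{2(\lambda-\mu)+4i\nu}=1$, forcing $\lambda=\mu$ and $\nu=0$. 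I would then run through the outcomes: when $\lambda=\mu\ne0$, $\nu=0$ only the subcase $z_2=0$ survives and yields exactly $z_1^2=-1$, i.e.\ the two distinct twistor lines over $q=\pm i$; when $\lambda=\mu=\nu=0$ the quadric is the real one $\mathbb{I}$, and by Theorem~\ref{s1u} (or direct inspection of the system) the solution set is a geometric circle; in every remaining case the system has no solution. Finally, on the fibre over $\infty$ the diagonal form restricts to $e^{\mu-i\nu}W_1^2+e^{-\mu-i\nu}W_2^2$, which is never the zero form, so $\infty$ never contributes a twistor line. This establishes the trichotomy for \rf{st}.

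For the non-diagonal form \rf{can2} with $v=0$ the situation is opposite. That matrix has its entire $W_1,W_2$ block equal to zero, so $q(\vv)$ vanishes identically on the line $\{\xi_0=\xi_{12}=0\}$ and $\Q$ always contains the twistor line over $\infty$. It remains to show there is no finite one. Substituting \rf{Wxi} into the corresponding quadratic form, which up to a factor $2$ is $i(\xi_0^2+\xi_{12}^2)-k\xi_0W_1+k\xi_{12}W_2-\xi_0W_2+\xi_{12}W_1$ (the second form in Theorem~\ref{cf1}), the $\xi_0\xi_{12}$-coefficient comes out as $k(z_2+\overline{z_2})+(z_1-\overline{z_1})$; as $k$ is real this forces $\Im z_1=0$, and also $\Re z_2=0$ when $k\ne0$. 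Feeding this into the $\xi_0^2$-equation $i-kz_1-z_2=0$ forces $z_1=0$ and $z_2=i$ (and $z_2=i$ already when $k=0$), but then the $\xi_{12}^2$-equation $i-\overline{z_2}+k\overline{z_1}=0$ becomes $2i=0$, impossible. Hence for every $k\in[0,1)$ the quadric $\Q$ contains exactly the one twistor line over $\infty$. Assembling the diagonal and non-diagonal cases yields Proposition~\ref{allprop}.

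The one genuine idea is the modulus argument applied to the cross-term relation, which rules out $z_1z_2\ne0$ unless the two ``$W$-eigenvalues'' of $Q$ are equal in modulus; everything else is bookkeeping. I expect the most error-prone part to be keeping the coordinate reordering $[\xi_0,\xi_{12},W_1,W_2]\mapsto[\xi_0,W_1,\xi_{12},W_2]$ straight throughout the substitutions, verifying that the three subcases $z_1z_2\ne0$, $z_1=0$, $z_2=0$ are genuinely exhaustive, and confirming at the end that $q=i$ and $q=-i$ really correspond to distinct twistor lines and that the $\lambda=\mu=\nu=0$ solution set is exactly a round circle and nothing larger.
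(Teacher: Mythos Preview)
Your approach is essentially the paper's: both reduce to the three-coefficient system $A=B=C=0$ on a finite fibre, treat the fibre at infinity separately, and split into the subcases $z_2=0$, $z_1=0$, $z_1z_2\ne0$. The modulus argument on the cross-term is exactly the paper's key step (its equation~\rf{e3'h}).

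There is one small gap. In the subcase $z_1z_2\ne0$ your modulus argument yields only $\lambda=\mu=0$, not $\nu=0$. So when you later assert ``in every remaining case the system has no solution'', the triple $(\lambda,\mu,\nu)=(0,0,\nu)$ with $\nu\ne0$ is not actually covered: your case analysis still permits a solution with $z_1z_2\ne0$ there. The fix is immediate and uses your own comparison trick: once $\lambda=\mu=0$, the $\xi_0^2$-equation reads $e^{i\nu}+e^{-i\nu}(z_1^2+z_2^2)=0$ while the conjugate of the $\xi_{12}^2$-equation reads $e^{-i\nu}+e^{i\nu}(z_1^2+z_2^2)=0$; together these force $e^{4i\nu}=1$, hence $\nu=0$. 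This is precisely what the paper does with its equations~\rf{e1'} and~\rf{e2'}.

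Two minor stylistic differences worth noting: you handle the fibre at infinity by directly restricting the quadratic form to the line $\{\xi_0=\xi_{12}=0\}$, which is slightly cleaner than the paper's appeal to the inversion \rf{invlift}; and in the non-diagonalisable case the paper uses the shortcut $A+\ol C=-2\bar z_2$ to obtain $z_2=0$ in one stroke, whereas you reach the contradiction via the cross-term first. Both routes are fine.
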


\begin{proof} 
  Using our identification \rf{Psi} of $\mathbb{CP}^1 \times
  \mathbb{R}^4$ with $\mathbb{CP}^3 \setminus \mathbb{CP}^1$, $\Q$ is
  defined by
\begin{align*}
e^{ \lambda + i \nu}(\xi_0)^2 +  e^{ - \lambda + i \nu} (\xi_{12})^2  
+  e^{ \mu - i \nu}(  \xi_0z_1 - \xi_{12}\bz_2)^2
+ e^{ - \mu - i \nu}(  \xi_0z_2 + \xi_{12}\bz_1)^2 = 0.
\end{align*}
Write this as 
\begin{align}
\label{ABC}
A (\xi_0)^2 + 2B \xi_0 \xi_{12}  + C (\xi_{12})^2 = 0,
\end{align}
with 
\begin{align}
\label{ABC2} 
\begin{split}
\left\{\ba{rcl}
A &=&  c_1 + c_3 (z_1)^2 + c_4 (z_2)^2,\\[3pt]
B &=&  \!-c_3 z_1 \bz_2 + c_4 \bz_1 z_2,\\[3pt]
C &=& c_2 + c_3 (\bz_2)^2 + c_4 (\bz_1)^2,
\ea\right.
\end{split}
\end{align}
where to simplify notation we have set
\begin{align}
\label{c1c2c3c4}
c_1 = e^{ \lambda + i \nu},\
c_2 =  e^{ - \lambda + i \nu},\
c_3 = e^{ \mu - i \nu},\
c_4 =  e^{ - \mu - i \nu}.
\end{align}
The subset of the discriminant locus in $\rr^4$ 
containing entire twistor fibers is given by
\begin{align*}
D_0 = \{  (z_1, z_2)\>:\> A = 0,\ B = 0,\ C = 0 \}.
\end{align*}
If $(z_1, z_2)\in D_0$, then
\begin{align}
\nonumber
e^{ \lambda + i \nu} + e^{ \mu - i \nu} (z_1)^2 +  e^{ - \mu - i \nu} (z_2)^2 &= 0\\
\nonumber
e^{ - \lambda + i \nu} + e^{ \mu - i \nu} (\bz_2)^2 + e^{ - \mu - i \nu} (\bz_1)^2 &= 0\\
\label{e3}
e^{ \mu - i \nu} z_1 \bz_2 - e^{ - \mu - i \nu}\bz_1 z_2 & = 0.
\end{align}
Clearly $(z_1, z_2) \ne(0,0)$, since the coefficients are non-zero. 
For the same reason, these quadrics do not contain the twistor line over infinity,
as easily follows from applying the inversion (\ref{invlift}).

Let us examine first the case that $z_1 \neq 0$ and $z_2 = 0$. 
Then we have
\begin{align*}
e^{ \lambda + i \nu} + e^{ \mu - i \nu} (z_1)^2 &= 0\\
e^{ - \lambda + i \nu}+ e^{ - \mu - i \nu}(\bz_1)^2 &= 0.
\end{align*}
This yields 
\begin{align*}
(z_1)^2 = - e^{ \l - \m + 2i\n} = - e^{\m - \l - 2i \n},
\end{align*}
which implies that $\l = \m$ and $\n = 0$, 
in which case there are exactly two solutions
$(\pm i, 0)$. 

Similarly, if $z_1 = 0$ and $z_2 \ne0 $, then 
\begin{align*}
(z_2)^2 = - e^{ \l + \m + 2i\n} = - e^{ -\l - \m - 2 i \n},
\end{align*}
which implies that $(\l, \m, \n) = (0,0,0)$ since by assumption
$0 \le \l \le \m$. 

We next consider the case that we have a solution with 
both $z_1$ and $z_2$ non-zero.  
From (\ref{e3}) we obtain
\begin{align}
\label{e3'h}
e^{ \mu } z_1 \bz_2 = e^{ - \mu }\bz_1 z_2.
\end{align}
Taking norms, it follows that $\m = 0$, and therefore 
also $\l =0$.
The equations then simplify to
\begin{align}
\label{e1'}
e^{ 2 i \nu} + (z_1)^2 + (z_2)^2 &= 0\\
\label{e2'}
e^{2 i \nu} + (\bz_2)^2 + (\bz_1)^2 &= 0\\
\nonumber
z_1 \bz_2 - \bz_1 z_2 &= 0.
\end{align}
Equations (\ref{e1'}) and (\ref{e2'}) imply that $\n =0$.
Consequently, $\Q$ is a real quadric, and this case 
was proved in Theorem \ref{s1u}. 

In the non-diagonalizable case, the quadratic form is twice
\begin{align*}
 i(\xi_0^2+\xi_{12}^2) -k \xi_0W_1 + k \xi_{12}W_2 
-\xi_0W_2+\xi_{12}W_1.
\end{align*}
Using (\ref{Psi}), 
the quadric is then written in the form (\ref{ABC}), with
\begin{align}
\label{ndABC}
A=i + k \kern1pt\bz_1- \bz_2,\quad B=\Re(k z_2)+i \Im(z_1),
\quad C=i - k z_1- z_2.
\end{align}
To find twistor fibers, we set $A=B=C=0$.
From $A+\overline C= -2 \bz_2$, we get $z_2=0$. 
From $B=0$,
we get $z_1=x_1\in\R$. From $C = 0$, we obtain
$i -k x_1 = 0$, which clearly has no solution. 
Thus there are no fibers in $\mathcal{Q}$ over $\rr^4$. 

We next look at the point at infinity by performing 
the inversion (\ref{invlift}).  
The expression for the inverted $Q$ is twice
\begin{align}
\label{invnon}
 i(W_1^2+W_2^2) - k W_1 \xi_0 +k W_2 \xi_{12} 
-W_1 \xi_{12} + W_2 \xi_0.
\end{align}
The new $A, B, C$ have no constant 
term, so the corresponding inverted quadric contains 
the fiber over the origin. 
\end{proof}

%%%%%%%%%%%%%%%%%%%%%%%%%%%%%%%%%%%%%%%%%%%%%%%%%%%%%%%
\section{Smooth points}
\label{smoothness}
%%%%%%%%%%%%%%%%%%%%%%%%%%%%%%%%%%%%%%%%%%%%%%%%%%%%%%%

Next we examine the smooth points on the discriminant locus.
In this section, we will consider the diagonalizable 
case (\ref{st}), and assume throughout that
$0\le\lambda\le\mu$, and $\nu\in[0,\pi/2)$.
The non-diagonalizable case will be considered 
seperately in Proposition \ref{ndp} below. 
Writing the quadric in the form (\ref{ABC}), 
the discriminant is defined by $\Delta = B^2 - AC$, which is
\begin{align}
\label{ABC3}
\Delta 
& =
(-c_3 z_1 \bz_2 + c_4 \bz_1 z_2)^2 -  (  c_1 + c_3 (z_1)^2 + c_4 (z_2)^2)
( c_2 + c_3 (\bz_2)^2 + c_4 (\bz_1)^2),
\end{align}
and the discriminant locus is given by 
\begin{align}\label{discD}
D = \{(z_1, z_2) \in \rr^4\>:\>\Delta(z_1, z_2) = 0 \}.
\end{align} 
We also adopt the following notation: if $F: \rr^4 \rightarrow
\rr^2=\cc$ with
\begin{align*}
F = 
\left(
\begin{matrix}
f \\
g \\
\end{matrix}
\right)
= f + i g,
\end{align*}
set
\begin{align*}
\begin{split}
d_{\rr} F &= 
\left(
\begin{matrix}
\partial_{x_1} f & \partial_{y_1} f & \partial_{x_2} f & \partial_{y_2}f\\[4pt] 
\partial_{x_1} g & \partial_{y_1} g & \partial_{x_2} g & \partial_{y_2}g\\ 
\end{matrix}
\right)
= \left(
\begin{matrix}
\partial_{x_1} F & \partial_{y_1} F & \partial_{x_2} F & \partial_{y_2}F\\ 
\end{matrix}
\right).
\end{split}
\end{align*}
\begin{proposition}
The matrix of $d_{\rr} F \otimes \cc : \cc^4 \rightarrow \cc^2$ 
in the bases 
\begin{align*}
\left\{
{\partial / \partial z_1},\,
{\partial / \partial z_2},\,
{\partial / \partial \bar{z}_1},\,
{\partial / \partial \bar{z}_2}
\right\},\quad
\left\{
{\partial / \partial w},\,
{\partial / \partial \bar{w}}
\right\}
\end{align*}
is given by 
\begin{align*}
d_{\rr} F \otimes \cc
=
\left(
\begin{matrix}
 \partial_{z_1} F & \partial_{z_2} F &
\partial_{\ol{z}_1} F & \partial_{\ol{z}_2} F \\[4pt] 
 \partial_{z_1} \ol{F} & \partial_{z_2} \ol{F} &
\partial_{\ol{z}_1} \ol{F} & \partial_{\ol{z}_2} \ol{F}
\end{matrix}
\right).
\end{align*}
\end{proposition}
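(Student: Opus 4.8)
The plan is to treat this as a pure change-of-basis identity. By definition $d_{\rr}F$ is the real-linear map whose matrix in the coordinate bases $\{\partial/\partial x_1,\partial/\partial y_1,\partial/\partial x_2,\partial/\partial y_2\}$ of the source and $\{\partial/\partial u,\partial/\partial v\}$ of the target is the Jacobian displayed above, where $w=u+iv$ is the standard coordinate on the target $\rr^2=\cc$, so that $u\circ F=f$ and $v\circ F=g$. Tensoring with $\cc$ does not change this matrix at all: it merely extends the map to $\cc^4\to\cc^2$ while keeping the same two (now complexified) bases. So the content of the proposition is just the passage to the bases $\{\partial/\partial z_j,\partial/\partial\bar z_j\}$ on the source and $\{\partial/\partial w,\partial/\partial\bar w\}$ on the target, and I would carry this out invariantly rather than by multiplying change-of-basis matrices.

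First I would record the elementary relations $\partial/\partial x_j=\partial/\partial z_j+\partial/\partial\bar z_j$ and $\partial/\partial y_j=i(\partial/\partial z_j-\partial/\partial\bar z_j)$ on the source, and $\partial/\partial u=\partial/\partial w+\partial/\partial\bar w$, $\partial/\partial v=i(\partial/\partial w-\partial/\partial\bar w)$ on the target; these are immediate from the definitions of the operators $\partial/\partial z_j$, $\partial/\partial\bar z_j$. Writing $d_{\rr}F=df\otimes(\partial/\partial u)+dg\otimes(\partial/\partial v)$ and substituting the target relations gives the invariant expression
\[
d_{\rr}F\otimes\cc\;=\;dF\otimes(\partial/\partial w)\;+\;d\ol F\otimes(\partial/\partial\bar w),
\]
since $df+i\,dg=dF$ and $df-i\,dg=d\ol F$, where $F=f+ig$ and $\ol F=f-ig$. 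It then remains only to evaluate the complex one-forms $dF$ and $d\ol F$ on the source basis vectors. By the very definition of $\partial_{z_j}$ and $\partial_{\bar z_j}$ one has $dh=\sum_j(\partial_{z_j}h)\,dz_j+\sum_j(\partial_{\bar z_j}h)\,d\bar z_j$ for any $\cc$-valued function $h$ (this is nothing but the substitution $dx_j=\tfrac12(dz_j+d\bar z_j)$, $dy_j=\tfrac1{2i}(dz_j-d\bar z_j)$ into $dh$), so $dF(\partial/\partial z_j)=\partial_{z_j}F$, $dF(\partial/\partial\bar z_j)=\partial_{\bar z_j}F$, and likewise with $\ol F$ in place of $F$. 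Reading off coefficients, the $\partial/\partial w$-row of the matrix of $d_{\rr}F\otimes\cc$ is $(\partial_{z_1}F,\partial_{z_2}F,\partial_{\bar z_1}F,\partial_{\bar z_2}F)$ and the $\partial/\partial\bar w$-row is $(\partial_{z_1}\ol F,\partial_{z_2}\ol F,\partial_{\bar z_1}\ol F,\partial_{\bar z_2}\ol F)$, which is precisely the asserted matrix.

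I do not expect a genuine obstacle here; the only point that needs care is the bookkeeping of the factors of $i$ and the distinction between the action of the target change of basis (which multiplies the matrix on the left) and that of the source change of basis (which multiplies on the right). The invariant rewriting $d_{\rr}F\otimes\cc=dF\otimes(\partial/\partial w)+d\ol F\otimes(\partial/\partial\bar w)$ sidesteps this entirely, since the source change of basis then enters only through the defining identity for $dh$ and no $4\times4$ matrix ever has to be inverted. As a consistency check, $\partial_{z_j}\ol F=\overline{\partial_{\bar z_j}F}$, so the second row of the matrix is the complex conjugate of the first with the two pairs of columns $\{\partial/\partial z_1,\partial/\partial z_2\}$ and $\{\partial/\partial\bar z_1,\partial/\partial\bar z_2\}$ interchanged --- exactly what one expects for the complexification of a real map.
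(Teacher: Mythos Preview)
Your argument is correct and complete; the paper itself gives no details beyond the single sentence ``This is an elementary computation,'' so your invariant rewriting $d_{\rr}F\otimes\cc=dF\otimes(\partial/\partial w)+d\ol F\otimes(\partial/\partial\bar w)$ is in fact more explicit than what the paper provides.
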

\begin{proof}
This is an elementary computation. 
\end{proof}
It follows from the implicit function theorem 
that the set $F^{-1}(\{0\})$ is a submanifold
provided that $d_{\rr} F (p) : \rr^4 \rightarrow \rr^2$
has rank $2$ for each $p \in F^{-1}(\{0\})$.

\begin{proposition}
  The rank of $d_{\rr}F(p)$ is strictly less than $2$ if and only if
\begin{align*}
(\partial_{\ol{z}_1} F ,\, \partial_{\ol{z}_2} F) 
= e^{i \theta}\,
(  \ol{\partial_{z_1} F} ,\, \ol{\partial_{z_2} F})
\end{align*}
for some $\theta \in[0, 2\pi)$.
\end{proposition}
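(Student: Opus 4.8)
The plan is to reduce the statement to a linear-algebra fact about the complexified differential computed in the preceding proposition. Since complexification preserves rank, the rank over $\rr$ of $d_{\rr}F(p)$ equals the rank over $\cc$ of $(d_{\rr}F\otimes\cc)(p)$, so it is enough to decide when the $2\times 4$ complex matrix
\[
(d_{\rr}F\otimes\cc)(p) =
\left(\begin{matrix}
\partial_{z_1}F & \partial_{z_2}F & \partial_{\ol z_1}F & \partial_{\ol z_2}F\\
\partial_{z_1}\ol F & \partial_{z_2}\ol F & \partial_{\ol z_1}\ol F & \partial_{\ol z_2}\ol F
\end{matrix}\right)
\]
has its two rows linearly dependent over $\cc$.

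First I would invoke the conjugation identities $\partial_{z_j}\ol F = \ol{\partial_{\ol z_j}F}$ and $\partial_{\ol z_j}\ol F = \ol{\partial_{z_j}F}$. Setting $v = (\partial_{z_1}F,\partial_{z_2}F)$ and $w = (\partial_{\ol z_1}F,\partial_{\ol z_2}F)$ in $\cc^2$, the two rows become $(v,w)$ and $(\ol w,\ol v)$ in $\cc^2\oplus\cc^2 = \cc^4$. The second row vanishes exactly when the first does, so if the rows are dependent with one of them zero, then $v = w = 0$; in that case $d_{\rr}F(p) = 0$ has rank $0$, and the claimed identity $w = e^{i\theta}\ol v$ holds trivially (both sides zero).

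For the remaining case $(v,w)\ne(0,0)$, linear dependence means there is $\lambda\in\cc^{*}$ with $\ol w = \lambda v$ and $\ol v = \lambda w$. Conjugating the first equation and substituting the second gives $w = \ol\lambda\,\ol v = |\lambda|^{2}w$, and similarly $v = |\lambda|^{2}v$, so $|\lambda| = 1$. Writing $\lambda = e^{-i\theta}$ turns $\ol w = \lambda v$ into $w = e^{i\theta}\ol v$, which is precisely the asserted condition. Conversely, if $w = e^{i\theta}\ol v$ then $(\ol w,\ol v) = e^{-i\theta}(v,w)$, the rows are proportional, and the rank is strictly less than $2$.

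I do not expect a genuine obstacle: the only computation of substance — the matrix form of $d_{\rr}F\otimes\cc$ — is already the content of the previous proposition, and the rest is the elementary remark that a proportionality between a vector and its ``conjugate swap'' forces the scalar to have modulus one. The only points needing a moment's care are the invariance of rank under complexification and the separate treatment of the degenerate case $v = w = 0$, both handled above.
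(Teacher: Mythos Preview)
Your argument is correct. Both you and the paper establish the ``if'' direction by showing that the second row of $d_\rr F\otimes\cc$ is $e^{-i\theta}$ times the first. For the converse, however, the routes differ. The paper works with the \emph{real} $2\times4$ matrix: assuming the first row $(\partial_{x_1}f,\ldots,\partial_{y_2}f)$ is nonzero, rank less than $2$ means the second row equals $c$ times the first for some $c\in\rr$, and one then checks directly that this forces $\partial_{\ol z_j}F = e^{i\theta}\,\ol{\partial_{z_j}F}$ with $e^{i\theta}=\dfrac{(1-c^2)+2ci}{1+c^2}$, i.e.\ $c=\tan(\theta/2)$. Your approach stays with the complexified matrix and exploits the conjugate-swap symmetry of the two rows $(v,w)$ and $(\ol w,\ol v)$: proportionality by $\lambda$ yields $w=|\lambda|^2 w$ and $v=|\lambda|^2 v$, forcing $|\lambda|=1$. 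Your version is a bit cleaner and avoids the half-angle computation; the paper's version has the minor bonus of an explicit formula for $\theta$ in terms of the real proportionality constant.
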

\begin{proof}
If this condition is satisfied, then 
\begin{align*}
d_{\rr} F \otimes \cc
&=
\left(
\begin{matrix}
 \partial_{z_1} F & \partial_{z_2} F &
 e^{i \theta} \ol{\partial_{z_1} F} &  e^{i \theta}
 \ol{\partial_{z_2} F}  
\\[4pt] 
  e^{-i \theta}  {\partial_{z_1} F}   &    e^{-i \theta} {\partial_{z_2} F} &
\partial_{\ol{z}_1} \ol{F} & \partial_{\ol{z}_2} \ol{F}
\end{matrix}
\right)\\[4pt] 
& = 
\left(
\begin{matrix}
 \partial_{z_1} F & \partial_{z_2} F &
 e^{i \theta} \partial_{\bz_1} \ol{F} &  e^{i \theta} \partial_{\bz_2}
 \ol{F}  
\\[5pt] 
  e^{-i \theta}  {\partial_{z_1} F}   &    e^{-i \theta} {\partial_{z_2} F} &
\partial_{\ol{z}_1} \ol{F} & \partial_{\ol{z}_2} \ol{F}
\end{matrix}
\right)
\end{align*}
The second row is $e^{-i \theta}$ times the first, so the rank is not
maximal.

For the converse, without loss of generality 
assume that the first row is non-zero. 
The rank is strictly less than $2$ if and only if
there exists a constant $c$ so that 
\begin{align*}
\begin{split}
d_{\rr} F &= 
\left(
\begin{matrix}
\partial_{x_1} f & \partial_{y_1} f & \partial_{x_2} f & \partial_{y_2}f\\[4pt]  
c\partial_{x_1} f & c\partial_{y_1} f & c \partial_{x_2} f & c \partial_{y_2}f\\ 
\end{matrix}
\right).
\end{split}
\end{align*}
A computation shows that this corresponds to 
\begin{align*}
e^{ i \theta} = \frac{ (1-c^2) + 2 c i }{ 1 + c^2},
\end{align*}
so that $c$ equals $\tan(\theta/2)$.
\end{proof}

We let $\nabla_{\cc}$ denote the complex gradient of a function, so that
\begin{align*}
\nabla_{\cc} F =(  \partial_{z_1} F,\,\partial_{z_2} F,\,
\partial_{\ol{z}_1} F,\,\partial_{\ol{z}_2}F ).
\end{align*}
We now apply this to the discriminant defining \rf{discD}.
\begin{proposition}
We have 
\begin{align*}
\begin{split}
\nabla_{\cc} 
(\Delta)
= \Big(\! &-\!2 c_3 c_4 \bz_1 |\zz|^2 - 2 c_2 c_3 z_1,\> 
  -2 c_3 c_4 \bz_2 |\zz|^2 - 2 c_2 c_4 z_2, \\
&  -2 c_3 c_4 z_1 |\zz|^2 - 2 c_1 c_4 \bz_1,\> 
  - 2 c_3 c_4 z_2 |\zz|^2 - 2 c_1 c_3 \bz_2\> \Big),
\end{split}
\end{align*}
where $|\zz|^2 = |z_1|^2 + |z_2|^2$. 
\end{proposition}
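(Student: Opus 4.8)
The statement is a direct computation, so the plan is simply to organize it cleanly. First I would record the Wirtinger derivatives of the three coefficient functions in \rf{ABC2}. Since $A$ depends only on $z_1,z_2$ and $C$ only on $\bz_1,\bz_2$, most of these vanish: $\partial_{z_1}A=2c_3z_1$, $\partial_{z_2}A=2c_4z_2$, $\partial_{\bz_1}A=\partial_{\bz_2}A=0$; symmetrically $\partial_{\bz_1}C=2c_4\bz_1$, $\partial_{\bz_2}C=2c_3\bz_2$, $\partial_{z_1}C=\partial_{z_2}C=0$; and $\partial_{z_1}B=-c_3\bz_2$, $\partial_{z_2}B=c_4\bz_1$, $\partial_{\bz_1}B=c_4z_2$, $\partial_{\bz_2}B=-c_3z_1$.

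Next, applying the product rule to $\Delta=B^2-AC$ gives, for each variable $\bullet\in\{z_1,z_2,\bz_1,\bz_2\}$,
\[
\partial_\bullet\Delta=2B\,\partial_\bullet B-(\partial_\bullet A)\,C-A\,\partial_\bullet C.
\]
For $\bullet=z_1$ the term $A\,\partial_{z_1}C$ vanishes and one obtains $\partial_{z_1}\Delta=-2c_3\big(B\bz_2+z_1C\big)$; expanding the bracket, the two $\pm c_3z_1\bz_2^{\,2}$ contributions cancel, and the remaining terms combine, using $z_j\bz_j=|z_j|^2$, into $c_4\bz_1(|z_1|^2+|z_2|^2)+c_2z_1=c_4\bz_1|\zz|^2+c_2z_1$, which is the first asserted entry. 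For $\bullet=\bz_1$ one has $\partial_{\bz_1}\Delta=2c_4\big(Bz_2-A\bz_1\big)$, and the same kind of cancellation (this time of the $\pm c_4z_2^{\,2}\bz_1$ terms) reassembles the factor $|\zz|^2$ and yields the third entry. The remaining two entries follow identically: each $\partial_\bullet\Delta$ collapses to $\pm2c_3$ or $\pm2c_4$ times a bracket in which one conjugate pair cancels and the leftover terms produce $|\zz|^2$.

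There is no genuine obstacle here beyond bookkeeping: one must keep track of which arguments are conjugated in $A,B,C$ and verify at the end of each line that $|z_1|^2+|z_2|^2$ has indeed appeared, so the answer may be written in terms of $|\zz|^2$. A mild convenience is the symmetry $z_1\leftrightarrow z_2$, $c_3\leftrightarrow c_4$ (with $c_1,c_2$ fixed), which sends $A\mapsto A$, $C\mapsto C$, $B\mapsto -B$, hence fixes $\Delta$, and therefore exchanges the first entry with the second and the third with the fourth; this halves the work.
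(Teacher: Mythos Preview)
Your proof is correct and follows the same approach as the paper: compute $\nabla_{\cc}A$, $\nabla_{\cc}B$, $\nabla_{\cc}C$ and apply the product rule to $\Delta=B^2-AC$. You in fact supply more detail than the paper (which merely records the three gradients and says the claim follows), and your symmetry observation $z_1\leftrightarrow z_2$, $c_3\leftrightarrow c_4$ is a valid shortcut that the paper does not mention.
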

\begin{proof}
  Referring to \rf{ABC2}, we have the easy formulae
\begin{align*}
\begin{split}
\nabla_{\cc} A &= ( 2 c_3 z_1, 2 c_4 z_2, 0 ,0)\\
\nabla_{\cc} B & = ( - c_3 \bz_2, c_4 \bz_1, c_4 z_2, -c_3 z_1)\\
\nabla_{\cc} C & = ( 0 , 0, 2 c_4 \bz_1, 2 c_3 \bz_2).
\end{split}
\end{align*}
The claim follows applying these to 
$\Delta = B^2 - A C $. 
\end{proof}
We examine the bad points, i.e., points at which there 
exists $\theta \in [0, 2 \pi)$ such that 
\begin{align}
\begin{split}
\label{badp}
 - 2 c_3 c_4 z_1 |\zz|^2 - 2 c_1 c_4 \bz_1 &= e^{i \theta} 
\big(\!-2 \ol{c}_3 \ol{c}_4 z_1 |\zz|^2 - 2 \bc_2 \bc_3 \bz_1\big)\\
 - 2 c_3 c_4 z_2 |\zz|^2 - 2 c_1 c_3 \bz_2 
& =  e^{i \theta}\big(\!-2\bc_3\bc_4 z_2 |\zz|^2 - 2 \bc_2 \bc_4 \bz_2\big).
\end{split}
\end{align}
Note that $(z_1, z_2) = (0,0)$ is trivially a solution, but this point
is clearly not on the discriminant locus.

\smallbreak 
It is convenient to divide the following analysis of potential 
singular points into three cases:
\par \vskip2pt (i) $z_2 = 0$ and $z_1 \ne0$; \quad (ii)
$z_1 = 0$, and $z_2\ne0$; \quad (iii) $z_1 \ne0$ and $z_2 \ne0$. 
\vskip2pt \par \noindent
We shall now consider in turn each of these three
cases and related examples.
%%%%%%%%%%%%%%%%%%%%%%%%%%%%%%%%%%%%%
\subsection{Case (i) and twistor fibers} Assume that $z_2 = 0$ but
$z_1\ne0$. We now have only the one equation
\begin{align}
\label{badp2}
 \Big| c_3 c_4 z_1 |z_1|^2 + c_1 c_4 \bz_1 \Big| &=
\Big| \ol{c}_3 \ol{c}_4 z_1 |z_1|^2 + \bc_2 \bc_3 \bz_1 \Big|.
\end{align}
We must also have the discriminant vanish:
\begin{align}
\label{discp2}
0 = \Delta &= -(c_1 + c_3 z_1^2)(c_2 + c_4 \bz_1^2)
= - c_1 c_2 - c_2 c_3 z_1^2 - c_1 c_4 \bz_1^2 - c_3 c_4 |z_1|^4.
\end{align}
Recalling (\ref{c1c2c3c4}),
the equations (\ref{badp2}) and (\ref{discp2}) may be rewritten as
\begin{align}
\label{c11}
( e^{ -2 i \n + \l - \m}  -   e^{ 2 i \n - \l + \m}) z_1^2 + 
( e^{ +2 i \n + \l - \m} -  e^{ -2 i \n - \l + \m} ) \bz_1^2 &= 
e^{2 ( \m - \l)} - e^{2( \l - \m)}\\
\label{c12}
e^{2 i \n} + e^{\m - \l} z_1^2 + e^{\l - \m} \bz_1^2
+ e^{-2 i \n} |z_1|^4 &= 0.
\end{align}
Writing out the real and imaginary parts, we obtain three real 
equations
\begin{align}
\label{e1c}
\sinh( \l - \m) \cos (2 \n) (x^2 - y^2) + 2 \cosh(\l - \m) \sin (2 \n) x y
= - \sinh( \l - \m) & \cosh( \l - \m)\\
\label{e2c}
2 \cosh( \l - \m) ( x^2 - y^2) + \cos(2 \n)(1 + (x^2 + y^2)^2) & = 0\\
\label{e3c}
-4 \sinh ( \l - \m) x y + \sin ( 2 \n) (1 - (x^2 + y^2)^2) & = 0.
\end{align}
\vskip3pt

\begin{proposition} 
\label{casei} If Case (i) happens, then 
$\l = \m$ and $\n = 0$. 
\end{proposition}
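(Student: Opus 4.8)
The plan is to read off from the hypothesis ``Case (i) happens'' the three scalar equations \rf{e1c}, \rf{e2c}, \rf{e3c}, which hold for some real $x,y$ with $(x,y)\ne(0,0)$, and to treat them as an algebraic system. The first move is to hide the trigonometry and the hyperbolic functions: set $x=\Re z_1$, $y=\Im z_1$ and
\[
P=x^2-y^2,\qquad R=2xy,\qquad \rho=(x^2+y^2)^2=P^2+R^2,
\]
and abbreviate $h=\sinh(\mu-\lambda)$, $k=\cosh(\mu-\lambda)$ (so $k^2-h^2=1$, $h\ge0$, $k\ge1$), $c=\cos2\nu$, $\sigma=\sin2\nu$. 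Because $\nu\in[0,\pi/2)$ we have $\sigma\ge0$ and $c\in(-1,1]$, with $\sigma=0$ iff $\nu=0$; and $z_1\ne0$ forces $\rho>0$. With these substitutions \rf{e1c}, \rf{e2c}, \rf{e3c} become the clean system
\[
-hcP+k\sigma R=hk,\qquad 2kP+c(1+\rho)=0,\qquad 2hR+\sigma(1-\rho)=0,
\]
and the statement to prove is just $h=0$ and $\sigma=0$.

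First I would dispose of the case $\lambda\ne\mu$, i.e.\ $h>0$. Then the last two equations give $P=-c(1+\rho)/(2k)$ and $R=\sigma(\rho-1)/(2h)$; feeding these into the first equation and into $\rho=P^2+R^2$, and clearing denominators, produces the pair
\[
h^2c^2(1+\rho)+k^2\sigma^2(\rho-1)=2h^2k^2,\qquad
h^2c^2(1+\rho)^2+k^2\sigma^2(\rho-1)^2=4h^2k^2\rho.
\]
Multiplying the first of these by $(1+\rho)$ and subtracting from the second makes the $c^2$ terms cancel and collapses everything to $k^2(\rho-1)(\sigma^2+h^2)=0$; since $h>0$ we have $\sigma^2+h^2>0$, so $\rho=1$. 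Putting $\rho=1$ back into the first relation kills the $\sigma^2$ term and leaves $c^2=k^2$. But $c^2=\cos^22\nu\le1\le\cosh^2(\mu-\lambda)=k^2$, so $c^2=k^2=1$, hence $k=1$ and $h=0$, contradicting $h>0$. Therefore $\lambda=\mu$.

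With $\lambda=\mu$ (so $h=0$, $k=1$) the system degenerates to $\sigma R=0$, $2P+c(1+\rho)=0$, $\sigma(1-\rho)=0$. If $\nu\ne0$ then $\sigma>0$, forcing $R=0$ and then $\rho=1$; hence $P^2=\rho=1$ and $P=\pm1$, and the middle equation becomes $\pm1=-c$, i.e.\ $\cos2\nu=\mp1$, which is impossible for $\nu\in(0,\pi/2)$. Thus $\nu=0$, and the proposition follows. I expect the only real work to be the elimination in the $h>0$ step; the whole argument turns on the elementary inequality $\cos^2\le1\le\cosh^2$, so the delicate part is organizing that elimination so that the clean relation $c^2=k^2$ (rather than something messier) is what drops out. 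As a consistency check, $\lambda=\mu$ together with $\nu=0$ is precisely the locus of \emph{real} quadrics among the diagonal canonical forms \rf{st} (cf.\ Proposition~\ref{allprop} and Theorem~\ref{s1u}), for which every point of the discriminant locus is singular — so Proposition~\ref{casei} is saying that Case~(i) bad points arise only for real quadrics.
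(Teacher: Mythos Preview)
Your proof is correct and follows essentially the same strategy as the paper's: split into the cases $\lambda\ne\mu$ and $\lambda=\mu$, solve the last two equations for $x^2-y^2$ and $xy$, feed these into the first, and derive a contradiction via $\cos^2 2\nu\le1\le\cosh^2(\mu-\lambda)$. Your substitutions $P,R,\rho$ and the elimination step (multiply the first relation by $1+\rho$ and subtract) are considerably cleaner than the paper's longer computation, which arrives at the equivalent conclusion $\cosh(2(\lambda-\mu))=\cos 2\nu$ only after passing through the intermediate identities $1+|z_1|^4=2\cosh^2(\lambda-\mu)$ and then further squaring.

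One small correction to your closing remark: the condition $\lambda=\mu$, $\nu=0$ is \emph{not} the locus of real quadrics among the diagonal forms --- the real quadric is the single point $(\lambda,\mu,\nu)=(0,0,0)$. For $\lambda=\mu\ne0$, $\nu=0$ the quadric contains exactly two twistor lines (Proposition~\ref{allprop}), and only those two points of $D$ are singular (Proposition~\ref{caseib}); the discriminant locus is a pinched torus, not a circle. This does not affect your argument, only the interpretive comment.
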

\begin{proof}
First, let us assume that $\l\ne\m$. 
Equations (\ref{e2c}) and (\ref{e3c}) give 
\begin{align*}
x^2 - y^2 &= - \frac{ \cos ( 2 \n)}{ 2 \cosh( \l - \m)} (1 + |z_1|^4), \ \ \ \ 
xy = \frac{ \sin (2 \n)}{ 4 \sinh (\l - \m)} (1 - |z_1|^4).
\end{align*}
Substituting these into (\ref{e1c}), we have
\begin{align*} 
\begin{split} 
&\sinh( \l - \m)  \cos(2\n)\Big(\!-\frac{ \cos ( 2 \n)}
{ 2 \cosh( \l - \m)} (1 + |z_1|^4)  \Big) \\ & + 2 \cosh(\l - \m) \sin
(2\n)\frac{ \sin (2 \n)}{ 4 \sinh (\l - \m)}(1 - |z_1|^4)
= - \sinh( \l - \m) \cosh( \l - \m)\\
\end{split}
\end{align*}
Multiplying by $2 \sinh \cosh$, 
\begin{align*}
\begin{split} 
-\sinh^2( \l - \m) \cos^2 (2 \n)(1 + |z_1|^4) &+ \cosh^2(\l - \m) 
\sin^2 (2 \n)(1 - |z_1|^4)\\[3pt]
& = -2 \sinh^2( \l - \m) \cosh^2( \l - \m).
\end{split} 
\end{align*}
This is 
\begin{align*}
\begin{split} 
&|z_1|^4 \Big( -\sinh^2( \l - \m) \cos^2 (2 \n) - \cosh^2(\l - \m) \sin^2 (2 \n) \Big)\\
&=  +\sinh^2( \l - \m) \cos^2 (2 \n) - \cosh^2(\l - \m) \sin^2 (2 \n)
-2 \sinh^2( \l - \m) \cosh^2( \l - \m),
\end{split} 
\end{align*}
which simplifies to 
\begin{align*}
\begin{split} 
|z_1|^4 \Big(\!-\sinh^2( \l - \m) & \cos^2 (2 \n) - \cosh^2(\l - \m) \sin^2 (2 \n) \Big)\\
&= \fr12 \Big( \cos( 4 \n) - \cosh\big( 2( \l - \m)\big)\Big)
\cosh \big( 2 ( \l - \m) \big). 
\end{split} 
\end{align*}
A computation shows that then 
\begin{align}
\label{bb}1 + |z_1|^4 &= 2 \cosh^2( \l - \m), \ \ \ \ 
1 - |z_1|^4 = -2 \sinh^2 ( \l - \m).
\end{align}
Substituting these into (\ref{e2c}) and (\ref{e3c}), 
and squaring the resulting equations, we obtain
\begin{align*}
x^4 - 2 x^2 y^2 + y^4 = \cos^2 (2 \n) \cosh^2 ( \l - \m), \ \ \ \ 
x^2 y^2 = \fr14\sin^2 (2 \n) \sinh^2 ( \l - \m).
\end{align*}
From these we compute
\begin{align*}
1 + |z_1|^4 &= 1 + x^4 + 2 x^2 y^2 + y^4
= 1 + x^4 - 2 x^2 y^2 + y^4 + 4 x^2 y^2\\
& = 1 +  \cos^2 (2 \n) \cosh^2 ( \l - \m) + \sin^2 (2 \n) \sinh^2 ( \l - \m). 
\end{align*}
This simplifies to 
\begin{align*}
1 + |z_1|^4 = \fr12 \Big( 2 + \cos( 4 \n) + \cosh\big( 2( \l - \m)\big)
\Big).
\end{align*}
From the above equation (\ref{bb}), the coefficients must 
therefore satisfy
\begin{align*} 
2 \cosh^2( \l - \m) -  \fr12\cosh\big( 2( \l - \m) \big) = 1 +  \fr12\cos( 2 \n).
\end{align*} 
This simplifies to 
\begin{align*} 
1 + \fr12 \cosh \big( 2 ( \l - \m) \big)  =  1 + \fr12\cos( 2 \n).
\end{align*}
We conclude that $ \cosh \big( 2 ( \l - \m) \big) = \cos( 2 \n)$,
which is contrary to $\l \neq \m$. 

 Finally, we consider the case $\l = \m$. 
In this case, the above system is 
\begin{align*}
( e^{ -2 i \n }  -   e^{ 2 i \n }) z_1^2 + 
( e^{ +2 i \n } -  e^{ -2 i \n } ) \bz_1^2 &= 0\\
e^{2 i \n} + z_1^2 + \bz_1^2 + e^{-2 i \n} |z_1|^4 &= 0.
\end{align*}
If $\n \neq 0$, the first equation yields $z_1^2 = \bz_1^2$,
which says that $z_1^2$ is real, or equivalently,
$z_1$ is either purely real or 
purely imaginary. The second equation is then 
\begin{align*}
2 z_1^2 &= - e^{2 i \n} - e^{-2 i \n} |z_1|^4\\
& = - \cos(2 \n) - |z_1|^4 \cos (2 \n)
+ i\big(\!- \sin (2 \n) + |z_1|^4 \sin(2\n)\big).
\end{align*}  
Since the left hand side is real, and $\n\ne0$, 
we  conclude that that $|z_1|^4 = 1$. 
But then the equation reads $2 z_1^2 = - 2 \cos(2 \n)$,
so taking norms yields $1 = |z_1|^2 = \cos(2\n)$;
since $0 < \n < \pi/2$, this is a contradiction.
We conclude that $(\l, \l, 0)$ is the only possibility. 
\end{proof}

\begin{proposition}
\label{caseib}
If $\l = \m\ne0$ and $\n = 0$, then Case (i) happens over exactly two
points, which are those over which the quadric contains the entire
fiber.
\end{proposition}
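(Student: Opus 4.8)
The plan is to specialize the discriminant $\Delta$ and the bad-point equations \rf{badp} to the parameters $\l=\m\ne0$, $\n=0$, and to observe that they pin down exactly the two points $(\pm i,0)$ already singled out in Proposition~\ref{allprop}. The only place calling for a little care will be checking that those two points really are bad points; everything else is a direct specialization of formulas already in hand.

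First I would substitute $\l=\m$, $\n=0$ into \rf{c1c2c3c4}, which gives $c_1=c_3=e^{\l}$ and $c_2=c_4=e^{-\l}$, so in particular $c_1c_2=c_3c_4=c_1c_4=1$. With $z_2=0$ one has $B=0$, and \rf{discp2} collapses to
\[
\Delta=-(c_1+c_3z_1^2)(c_2+c_4\bz_1^2)=-(1+z_1^2)(1+\bz_1^2)=-\,|1+z_1^2|^2,
\]
using $\overline{1+z_1^2}=1+\bz_1^2$. Hence the portion of the discriminant locus \rf{discD} lying in $\{z_2=0,\ z_1\ne0\}$ is exactly $\{(i,0),(-i,0)\}$, so Case (i) can occur over \emph{at most} two points.

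Next I would verify that Case (i) in fact occurs over each of $(i,0)$ and $(-i,0)$. At $z_1=\pm i$, $z_2=0$ one has $z_1^2=\bz_1^2=-1$, so by \rf{ABC2} all three of $A,B,C$ vanish. Plugging this into the formula for $\nabla_{\cc}(\Delta)$ from the proposition immediately preceding \rf{badp} (equivalently, differentiating $\Delta=B^2-AC$) shows $\nabla_{\cc}\Delta=0$ at both points, so $d_{\rr}\Delta$ has rank $0<2$ there and \rf{badp} holds trivially (for every $\theta$). Thus Case (i) happens over exactly the two points $(i,0)$ and $(-i,0)$.

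Finally I would match these with the points over which $\Q$ contains an entire twistor fiber. This is precisely the sub-case $z_1\ne0$, $z_2=0$ of the proof of Proposition~\ref{allprop}: there the fiber-containment equations force $z_1^2=-e^{\l-\m+2i\n}$, which for $\l=\m$, $\n=0$ gives $z_1=\pm i$. Hence the two Case (i) points coincide with the two points of $D_0$, completing the proof. The main (and only mild) obstacle is the bad-point check in the third paragraph, but it is immediate once one notes that these points lie in $D_0$, where the simultaneous vanishing of $A,B,C$ forces $\nabla_{\cc}\Delta=0$.
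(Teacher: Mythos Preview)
Your proof is correct and follows essentially the same route as the paper's. The only notable difference is that the paper first observes that equation \rf{c11} becomes $0=0$ when $\l=\m$ and $\n=0$ (so \emph{every} $(z_1,0)$ on $D$ is automatically a bad point), and then solves the discriminant equation \rf{c12} by expanding $1+|z_1|^4+z_1^2+\bz_1^2$ in real variables and factoring it as $(1+x_1^2-2y_1+y_1^2)(1+x_1^2+2y_1+y_1^2)$. Your factorization $\Delta=-|1+z_1^2|^2$ is a bit tidier and gets to $z_1=\pm i$ more directly; you then check the bad-point condition by noting $\nabla_{\cc}\Delta=0$ at those two $D_0$ points, rather than via the triviality of \rf{c11}. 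Both routes yield the same two points and the same identification with the fiber-containment points from Proposition~\ref{allprop}.
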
 
\begin{proof}
The first equation (\ref{c11}) is trivially satisfied, so we are just
looking at points in the discriminant locus 
of the form $(z_1, 0)$. The second equation (\ref{c12})
is
\begin{align*}
1 + |z_1|^4 + z_1^2 + \bz_1^2 = 0.
\end{align*}
Expanding, this becomes
\begin{align*}
1 + 2x_1^2 + x_1^4 - 2 y_1^2 + 2 x_1^2 y_1^2 + y_1^4
= (1 + x_1^2 - 2y_1 + y_1^2)(1 + x_1^2 + 2 y_1 + y_1^2). 
\end{align*}
It is easy to check this has two solutions $(x,y) = (0,\pm 1)$, which
are exactly the points over which the quadric contains the entire
fiber.
\end{proof}
\begin{remark} 
  Below we shall see a direct description of the situation described
  in Proposition~\ref{caseib}. After a conformal transformation, the
  discriminant locus will be a cone in $\rr^4$, with these singular
  points corresponding to the cone points at the origin and at
  infinity.
\end{remark}
%%%%%%%%%%%%%%%%%%%%%%%%%%%%%%%
\subsection{Case (ii) and a limit of Clifford tori}
Now assume that $z_1 = 0$ and $z_2 \ne0$. 
In this case we again have just one equation
\begin{align}
\label{badp3}
 \Big| c_3 c_4 z_2 |z_2|^2 + c_1 c_3 \bz_2 \Big| 
& =  \Big| \bc_3 \bc_4 z_2 |z_2|^2 + \bc_2 \bc_4 \bz_2 \Big|.
\end{align}
Again, we must also have the discriminant vanish
\begin{align}
\label{discp3}
0 = \Delta &= -(c_1 + c_4 z_2^2)(c_2 + c_3 \bz_2^2)
= - c_1 c_2 - c_2 c_4 z_2^2 - c_1 c_3 \bz_2^2 - c_3 c_4 |z_2|^4.
\end{align}
Recalling (\ref{c1c2c3c4}), 
the equations (\ref{badp3}) and (\ref{discp3}) may be rewritten as
\begin{align*}
( e^{ -2 i \n + \l + \m}  -   e^{ 2 i \n - \l - \m}) z_2^2 + 
( e^{ +2 i \n + \l + \m} -  e^{ -2 i \n - \l - \m} ) \bz_2^2 &= 
e^{ -2 ( \l + \m)} - e^{2( \l + \m)}\\
e^{2 i \n} + e^{- \l - \m} z_2^2 + e^{\l + \m} \bz_2^2
+ e^{-2 i \n} |z_2|^4 &= 0.
\end{align*}
\begin{proposition}
If Case (ii) occurs then $(\l,\m,\n) = (0,0,0)$. 
\end{proposition}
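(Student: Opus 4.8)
The plan is to recognize that the system governing Case~(ii) is nothing but the system of Case~(i) in disguise, so that Proposition~\ref{casei} applies directly and no fresh computation is needed. Concretely, in the two displayed equations immediately preceding the statement (the rewritten forms of (\ref{badp3}) and (\ref{discp3})), perform the substitution $z_2 = \bar w$, using $z_2^{\,2} = \bar w^{\,2}$, $\bar z_2^{\,2} = w^2$, $|z_2|^4 = |w|^4$; after multiplying the first of the two equations through by $-1$, one obtains exactly equations (\ref{c11}) and (\ref{c12}) with $z_1$ replaced by $w$ and with the parameter $\m-\l$ replaced by $\l+\m$ (equivalently $\l-\m$ replaced by $-(\l+\m)$). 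Since $0\le\l\le\m$ forces $\l+\m\ge0$, the triple $(0,\,\l+\m,\,\n)$ still obeys the standing hypotheses $0\le\l'\le\m'$ and $\n\in[0,\pi/2)$ of this section. Hence Case~(ii) occurs for $(\l,\m,\n)$ if and only if Case~(i) occurs for $(0,\,\l+\m,\,\n)$ --- the correspondence of solution sets being complex conjugation $z_2\leftrightarrow w$, which is an involution.

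Granting this, the conclusion is immediate: Proposition~\ref{casei}, applied to the triple $(0,\,\l+\m,\,\n)$, gives $0=\l+\m$ and $\n=0$; combined with $0\le\l\le\m$ this yields $\l=\m=0$, so $(\l,\m,\n)=(0,0,0)$. If one prefers not to invoke the substitution, the proof of Proposition~\ref{casei} can simply be rerun verbatim for Case~(ii), with $\l-\m$ everywhere replaced by $-(\l+\m)$: assuming $\l+\m\ne0$ one solves the analogues of (\ref{e2c}) and (\ref{e3c}) for $x^2-y^2$ and $xy$, substitutes into the analogue of (\ref{e1c}), and is forced into the impossible identity \[ \cosh\big(2(\l+\m)\big)=\cos(2\n); \] and when $\l+\m=0$ the first equation forces $z_2^{\,2}=\bar z_2^{\,2}$ (if $\n\ne0$), whereupon the second forces $|z_2|^2=1$ and then $\cos(2\n)=\pm1$, contradicting $0<\n<\pi/2$.

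The only step that needs genuine attention is the sign bookkeeping in the substitution: $\sinh$ is odd, so $\sinh(\l-\m)$ becomes $-\sinh(\l+\m)$ while $\cosh(\l-\m)$ becomes $\cosh(\l+\m)$, and the right-hand side $e^{-2(\l+\m)}-e^{2(\l+\m)}$ of the discriminant/gradient equation carries precisely the overall minus sign that matches $e^{2(\m-\l)}-e^{2(\l-\m)}$ after the sign of (\ref{c11}) is flipped. Spotting and checking this exact correspondence is essentially the whole content of the argument; there is no real analytic obstacle beyond it.
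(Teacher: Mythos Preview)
Your proof is correct and is essentially the paper's own argument spelled out in more detail: the paper simply observes that the Case~(ii) system is the Case~(i) system with $\lambda-\mu$ replaced by $\lambda+\mu$, reruns Proposition~\ref{casei} to obtain $\lambda=-\mu$ and $\nu=0$, and then uses $0\le\lambda\le\mu$. Your packaging of the substitution as ``apply Proposition~\ref{casei} to the triple $(0,\lambda+\mu,\nu)$'' via $z_2\mapsto\bar w$ is a clean way to say the same thing, and your sign bookkeeping is accurate.
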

\begin{proof}
The proof is more or less the same as in Proposition \ref{casei}, 
with $\l - \mu$ replaced with $\l + \m$.
We conclude that $\l = - \m$ and $\n = 0$.
This implies the statement since $0 \le \l \le \m$.
\end{proof}

We already know that if $\l,\m,\n$ all vanish then $\Q$ is equivalent
to a real quadric, so that $D$ is a circle and coincides with $D_0$.
We next consider another important case in which we can describe $D$
explicitly.

\begin{proposition}\label{Cliff}
  If $\l = \m = 0$ and $\n\ne0$, then the discriminant locus is a
  smooth Clifford torus.
\end{proposition}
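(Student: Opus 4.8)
The plan is to compute the discriminant $\Delta$ of \rf{ABC3} explicitly when $\l=\m=0$, and to recognize its zero set \rf{discD} as a product of two round circles lying in orthogonal complementary $2$-planes of $\rr^4$.

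First I would put $\l=\m=0$ in \rf{c1c2c3c4}, so that $c_1=c_2=e^{i\n}$ and $c_3=c_4=e^{-i\n}$. Then \rf{ABC2} gives
\[
A=e^{i\n}+e^{-i\n}(z_1^2+z_2^2),\qquad C=e^{i\n}+e^{-i\n}(\bz_1^2+\bz_2^2),\qquad B=e^{-i\n}(\bz_1 z_2-z_1\bz_2)=2ie^{-i\n}\Im(\bz_1 z_2),
\]
so that $B^2=-4e^{-2i\n}\big(\Im(\bz_1 z_2)\big)^2$ and $AC=e^{2i\n}+2\Re(z_1^2+z_2^2)+e^{-2i\n}|z_1^2+z_2^2|^2$. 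Substituting into \rf{ABC3},
\[
\Delta=-e^{2i\n}-2\Re(z_1^2+z_2^2)-e^{-2i\n}\Big(|z_1^2+z_2^2|^2+4\big(\Im(\bz_1 z_2)\big)^2\Big).
\]
The key algebraic observation, checked by writing $z_1\bz_2=a+ib$ and expanding both sides, is the identity
\[
|z_1^2+z_2^2|^2+4\big(\Im(\bz_1 z_2)\big)^2=(|z_1|^2+|z_2|^2)^2 .
\]

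Next I would separate $\Delta=0$ into real and imaginary parts. Since $\n\in(0,\pi/2)$ forces $\sin2\n>0$, the imaginary part of $\Delta=0$ reads $(|z_1|^2+|z_2|^2)^2=1$, i.e.\ $|z_1|^2+|z_2|^2=1$; feeding this back, the real part gives $\Re(z_1^2+z_2^2)=-\cos2\n$. Writing $z_j=x_j+iy_j$ and combining with $x_1^2+y_1^2+x_2^2+y_2^2=1$, these two conditions become
\[
x_1^2+x_2^2=\sin^2\n,\qquad y_1^2+y_2^2=\cos^2\n .
\]
Hence $D$ is the product $S^1(\sin\n)\times S^1(\cos\n)$ of two round circles, one in the plane $\{y_1=y_2=0\}$ and one in the orthogonal plane $\{x_1=x_2=0\}$. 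Since $\n\ne0$ and $\n<\pi/2$, both radii are strictly positive, so this torus is smoothly embedded; it is a Clifford torus inside $S^3\subset\rr^4$, congruent to the standard one when $\n=\pi/4$. That $D$ lies entirely in the affine part of $S^4$ and coincides with the full zero set of $\Delta$ (no twistor fibers lie over it) is already part of Proposition~\ref{allprop}.

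The computation is essentially routine; the only real trick is the identity $|z_1^2+z_2^2|^2+4(\Im(\bz_1 z_2))^2=(|z_1|^2+|z_2|^2)^2$, after which the fortunate decoupling of the imaginary part pins $(z_1,z_2)$ to the unit sphere and reduces the real part to a single linear condition. I do not anticipate a genuine obstacle; the only place to be careful is the bookkeeping between $\cos2\n$ and $\sin^2\n,\cos^2\n$. As a sanity check, letting $\n\to0$ collapses these tori onto a circle, matching the degeneration to the singular circle of the real quadric and the section title ``a limit of Clifford tori''.
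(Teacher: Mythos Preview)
Your proposal is correct and follows essentially the same route as the paper: separate $\Delta=0$ into real and imaginary parts, use the imaginary part (with $\sin2\n\ne0$) to pin the locus to the unit sphere $|\zz|^2=1$, and then read off the two circle equations from the real part. The only cosmetic difference is that the paper works from the pre-expanded form of $\Delta$ (given at the start of Section~\ref{topology}), whereas you recompute $\Delta$ from $A,B,C$ and invoke the identity $|z_1^2+z_2^2|^2+4(\Im(\bz_1 z_2))^2=(|z_1|^2+|z_2|^2)^2$; both arrive at the same expression and the same Clifford torus $x_1^2+x_2^2=\sin^2\n$, $y_1^2+y_2^2=\cos^2\n$.
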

\begin{proof}
The discriminant equation in this case is 
\begin{align}\label{zzzz}
0 = e^{ 2i \n} + e^{- 2 i \n} |\zz|^4 + z_1^2
+ \bz_1^2 + z_2^2 + \bz_2^2.
\end{align}
The imaginary part of this equation is 
\begin{align*} 
\sin(2 \n) (1 - |\zz|^4). 
\end{align*}
Since $\n \neq 0$, we must have $|\zz|^4 = 1$.  That is, the
discriminant locus $D$ is a subset of the unit sphere.  The real part
of \rf{zzzz} is
\begin{align*}
(1 + |\zz|^4) \cos( 2 \n) + 2 ( x_1^2 - y_1^2) + 2 (x_2^2 - y_2^2).
\end{align*}
So $D$ is the subset of the unit sphere defined by 
\begin{align*}
y_1^2 + y_2^2 = x_1^2 + x_2^2 + \cos(2 \nu). 
\end{align*}
If we let $\xx = (x_1, x_2)$ and $\yy = (y_1, -y_2)$ (the minus sign
will be convenient in the next subsection), then the equations are
\begin{align}
\label{xxyy}
|\yy|^2 + |\xx|^2 = 1, \ \ \ \ |\yy|^2 - |\xx|^2 = \cos(2\n), 
\end{align}
from which it follows that 
\begin{align*}
|\yy|^2 &= \frac{ 1 + \cos(2 \n)}{2}, \ \ \ \ |\xx|^2 = \frac{ 1 - \cos(2 \n)}{2}, 
\end{align*}
which is clearly a smooth Clifford torus. 
\end{proof}
\begin{remark}
  For this family of tori, we see clearly from \rf{xxyy} how the torus
  limits to a circle as $\n \rightarrow 0$.
\end{remark}

%%%%%%%%%%%%%%%%%%%%%%%%%%%%%%%%%%%%%%%%%%%%%%%%
\subsection{Case (iii) cannot happen} We turn to the generic situation
in which $z_1 \ne0$ and $z_2 \ne0$. 
Recalling (\ref{c1c2c3c4}), we rewrite the system as
\begin{align}\label{cansys}
\begin{split}
z_1 |\zz|^2( e^{-2 i \nu} - e^{i \theta} e^{2 i \nu})
& = \bz_1 ( e^{i \theta} e^{ - \lambda + \mu}  - e^{\lambda - \mu} )\\
z_2 |\zz|^2( e^{ -2 i \nu}  - e^{i \theta} e^{2 i \nu})
& = \bz_2 ( e^{i \theta} e^{ - \lambda - \mu} - e^{\lambda + \mu} ).
\end{split}
\end{align}

\begin{lemma} Let $\lambda\ne0$. A point $(z_1, z_2)$ on the 
discriminant locus with both $z_1$ and $z_2$ non-zero is a smooth
point of the discriminant locus.
\end{lemma}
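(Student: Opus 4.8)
The plan is to argue by contradiction. Assume $\lambda\ne0$ and that $(z_1,z_2)$ is a bad point of the discriminant locus with $z_1\ne0$ and $z_2\ne0$. Such a point satisfies the system \rf{cansys} for some $\theta\in[0,2\pi)$, since \rf{cansys} is exactly the rank-deficiency condition for $d_\rr\Delta$ rewritten in Case (iii); hence it suffices to show that \rf{cansys} has no such solution. The guiding idea is that, with both coordinates nonzero, we may pass to absolute values in the two equations of \rf{cansys} and divide out $|z_1|^2$ and $|z_2|^2$, which eliminates the unknown point entirely and leaves a relation among the parameters $\lambda,\mu,\nu,\theta$ alone.

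Carrying this out, I would write $\zeta=e^{-2i\nu}-e^{i\theta}e^{2i\nu}$ for the common left-hand factor, so that \rf{cansys} becomes $z_j|\zz|^2\zeta=\bz_j\,r_j$ with $r_1=e^{i\theta}e^{-\lambda+\mu}-e^{\lambda-\mu}$ and $r_2=e^{i\theta}e^{-\lambda-\mu}-e^{\lambda+\mu}$ (these identifications just unwind the constants \rf{c1c2c3c4}). Taking moduli and using $|\bz_j|=|z_j|\ne0$, both equations give $|\zz|^4|\zeta|^2=|r_1|^2$ and $|\zz|^4|\zeta|^2=|r_2|^2$, so in particular $|r_1|^2=|r_2|^2$ — a conclusion which also covers the degenerate subcase $\zeta=0$, where it reads $|r_1|^2=0=|r_2|^2$. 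Next I would expand $|r_j|^2$ using $|e^{i\theta}|=1$ together with the fact that in each $r_j$ the two real exponents sum to zero, so the cross term equals $-2\cos\theta$ in both cases; this yields $|r_1|^2=2\cosh(2(\mu-\lambda))-2\cos\theta$ and $|r_2|^2=2\cosh(2(\mu+\lambda))-2\cos\theta$. Therefore $\cosh(2(\mu-\lambda))=\cosh(2(\mu+\lambda))$, and since $0<\lambda\le\mu$ and $\cosh$ is strictly increasing on $[0,\infty)$ this forces $\lambda=0$, a contradiction.

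It follows that there is no bad point of the discriminant locus with $z_1\ne0\ne z_2$ when $\lambda\ne0$, which is precisely the statement of the lemma. I do not expect a genuine obstacle: the only mildly delicate point is to confirm that passing to moduli is not too lossy to still produce a contradiction, and it is not, because after cancelling $|z_j|^2$ the data $r_1,r_2$ involve only the parameters, and the $\theta$-dependence cancels from $|r_1|^2=|r_2|^2$. The remaining work — the bookkeeping of $c_1,\dots,c_4$ from \rf{c1c2c3c4} in identifying $r_1,r_2$, and the elementary expansion of $|r_j|^2$ — is routine.
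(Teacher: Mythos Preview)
Your proof is correct and follows essentially the same route as the paper: take moduli in \rf{cansys}, cancel $|z_j|$, and compare the two resulting expressions to obtain $\cosh\bigl(2(\mu-\lambda)\bigr)=\cosh\bigl(2(\mu+\lambda)\bigr)$, forcing $\lambda=0$. Your explicit treatment of the degenerate case $\zeta=e^{-2i\nu}-e^{i\theta}e^{2i\nu}=0$ is in fact slightly more careful than the paper's version, which tacitly divides by $|\zeta|$.
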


\begin{proof}
  Since $z_1$ and $z_2$ are non-zero, taking norms of these 
equations gives
\begin{align*}
|\zz|^2 = \frac{  \big| e^{i \theta} e^{ - \lambda + \mu}  - e^{\lambda - \mu} \big|}
{ \big| e^{-2 i \nu} - e^{i \theta} e^{2 i \nu} \big|}
= \frac{ \big|  e^{i \theta} e^{ - \lambda - \mu} - e^{\lambda + \mu} \big|}
{ \big| e^{ -2 i \nu}  - e^{i \theta} e^{2 i \nu} \big|}.
\end{align*}
The denominators are the same, so this can only be possible if 
\begin{align*}
 e^{2( -\l + \m)} - e^{i \theta} - e^{-i \theta} + e^{2 ( \l - \m)}
= e^{ -2( \l + \m)} - e^{i \theta} - e^{-i \theta} + e^{2(\l + \m)},
\end{align*}
which says that $\cosh ( \l - \mu) = \cosh( \l + \m)$. 
Since $0 \le \l \le \mu$, we deduce that $\m - \l = \l + \m$, whence
$\l = 0$.
\end{proof}

Finally, we shall show that the assumption $\lambda\ne0$ is not needed
in the previous lemma.

\begin{proposition} If $0=\lambda<\mu$, then the discriminant locus is
  necessarily smooth.
\end{proposition}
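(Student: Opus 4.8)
The plan is to reduce to Case~(iii) and then exploit the discriminant equation \rf{ABC3} directly, since the norm trick of the preceding lemma becomes vacuous when $\lambda=0$. With $0=\lambda<\mu$, Proposition~\ref{casei} shows that a singular point with $z_2=0$ would force $\lambda=\mu$, $\nu=0$, and the analogous proposition for Case~(ii) shows that one with $z_1=0$ would force $\lambda=\mu=\nu=0$; both are impossible. So only Case~(iii), with $z_1z_2\ne0$, remains, and there taking absolute values in \rf{cansys} gives no contradiction, because with $\lambda=0$ one has $|e^{i\theta}e^{\mu}-e^{-\mu}|=|e^{i\theta}e^{-\mu}-e^{\mu}|$ identically. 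The point is that the singularity condition must be coupled with the statement that $(z_1,z_2)$ lies on the discriminant locus.

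First I would write $w=e^{i\theta}$, $a=e^{2i\nu}$, $E=\bar a-wa$, $p_1=e^{i\theta}e^{\mu}-e^{-\mu}$, $p_2=e^{i\theta}e^{-\mu}-e^{\mu}$, and $s=|z_1|^2+|z_2|^2$, so that \rf{cansys} becomes $z_1sE=\bar z_1p_1$, $z_2sE=\bar z_2p_2$ in Case~(iii). The structural facts I would rely on are the reality identities $p_2=-w\bar p_1$ and $E=-w\bar E$, which give $p_1p_2=-w|p_1|^2$ and $E^2=-w|E|^2$. Conjugating the first equation and using $E=-w\bar E$ yields $\bar z_1sE=z_1p_2$; multiplying this by $z_1sE=\bar z_1p_1$ and dividing by $|z_1|^2\ne0$ gives $s^2E^2=p_1p_2$, hence
\[
s^2|E|^2=|p_1|^2 ,
\]
and in particular $E\ne0$ (if $E=0$ then $p_1=0$, forcing $|w|=e^{-2\mu}\ne1$).

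Next I would substitute back into the discriminant. Multiplying the equations of \rf{cansys} and their conjugates by $z_j$ and by $\bar z_j$ expresses $z_1^2,\bar z_1^2,z_2^2,\bar z_2^2$ as multiples of $p_1/(sE)$ and $p_2/(sE)$; adding, the factors $|z_j|^2$ cancel and one obtains $z_1^2+\bar z_2^2=p_1/E$ and $z_2^2+\bar z_1^2=p_2/E$. On the other hand, with $\lambda=0$ the coefficients \rf{c1c2c3c4} satisfy $c_1=c_2=e^{i\nu}$, and pulling $e^{-i\nu}$ out of \rf{ABC3} shows, using $e^{\mu}e^{-\mu}=1$, that $\Delta=0$ is equivalent to
\[
e^{2i\nu}+e^{\mu}(z_1^2+\bar z_2^2)+e^{-\mu}(z_2^2+\bar z_1^2)+s^2e^{-2i\nu}=0 .
\]
Inserting the two expressions $p_1/E$, $p_2/E$ and clearing $E$ collapses this to $E(a^2+s^2)=2a\,(1-w\cosh2\mu)$.

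Finally, I would multiply this identity by $\bar E$, substitute $s^2|E|^2=|p_1|^2$ together with the explicit values $|E|^2=2-wa^2-\bar w\bar a^2$ and $|p_1|^2=2\cosh2\mu-w-\bar w$, and simplify; after all the cancellations this reduces to
\[
a^4-2(\cosh2\mu)\,a^2+1=0,\qquad\text{whence}\qquad a^2=\cosh2\mu\pm\sinh2\mu=e^{\pm2\mu}.
\]
Since $a^2=e^{4i\nu}$ has modulus $1$ while $e^{\pm2\mu}\ne1$ for $\mu>0$, this is a contradiction, so Case~(iii) produces no singular points and the discriminant locus is smooth. The hardest part will be that last computation: because the norm estimate is useless here, the contradiction only appears once \rf{cansys} and $\Delta=0$ are genuinely combined, and one must verify that the resulting relation telescopes down to $a^2=e^{\pm2\mu}$.
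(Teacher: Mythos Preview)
Your argument is correct, and it reaches the contradiction by a genuinely different route from the paper's own proof.

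The paper works in real coordinates: writing $\xx=(x_1,x_2)$, $\yy=(y_1,-y_2)$, it cross-multiplies the two equations of \rf{cansys} to conclude $\xx=k\yy$ for some $k\in\R$, then feeds this into the real and imaginary parts \rf{reD}, \rf{imD} of $\Delta=0$. After a separate treatment of $\nu=0$, it eliminates $k$ and $|\yy|^2$ from the resulting system and arrives at $\cosh^2\mu=\cos^2(2\nu)$, contradicting $\mu>0$.

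Your approach stays entirely in complex notation and exploits the Hermitian symmetries $p_2=-w\bar p_1$, $E=-w\bar E$ to obtain the clean intermediate identities $z_1^2+\bar z_2^2=p_1/E$ and $z_2^2+\bar z_1^2=p_2/E$, together with the norm relation $s^2|E|^2=|p_1|^2$. Substituting directly into $\Delta=0$ collapses everything to the quadratic $a^4-2(\cosh2\mu)a^2+1=0$ in $a^2=e^{4i\nu}$, whose roots $e^{\pm2\mu}$ have the wrong modulus. This is more uniform than the paper's argument (no $\nu=0$ case split) and algebraically tighter; the paper's real-variable derivation, by contrast, yields the auxiliary equations \rf{reD}, \rf{imD} that are reused in Section~\ref{topology} for the topological analysis. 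The two final contradictions are of the same nature---a modulus-one quantity forced to equal a nontrivial hyperbolic value---just packaged differently.
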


\begin{proof} Fix $\zz=(z_1,z_2)=(x_1+iy_1,\>x_2+iy_2)$, and again set
  $\xx=(x_1,x_2)$, $\yy=(y_1,-y_2)$.

  The points where the gradient of $\Delta$ degenerates are given by
  \rf{cansys}. Rearranging, \[\ba{rcl} e^{-2i\n}z_1|\zz|^2 + e^{-\mu}\bz_1
  &=& e^{i\theta}(e^{2i\n}z_1|\zz|^2 + e^\mu\,\bz_1)\\[3pt]
  e^{-2i\n}z_2|\zz|^2 + e^\mu\,\bz_2 &=&
  e^{i\theta}(e^{2i\n}z_2|\zz|^2 + e^{-\mu}\bz_2).\ea\] Cross
  multiplying yields \be{cm}
  2\big(\cos(2\n)sX+\sin(2\n)cY\big)|\zz|^2+\sinh(2\mu)\bz_1\bz_2=0,\ee
  where $X+iY=\bz_1z_2$ and $c=\cosh\mu$, $s=\sinh\mu$. Therefore,
  \be{J} \Im(z_1z_2)=\xx\cdot(J\yy) = 0,\ee where $J\yy=(y_2,y_1)$.
  Thus $\yy=\mathbf0$ or \be{k} \xx=k\yy,\ee where $k\in\R$.

  The discriminant locus is given by \be{reD} 2c(|\xx|^2 - |\yy|^2) +
  (1 + |\zz|^4)\cos(2\nu)= 0,\ee \be{imD}\kern35pt
  4s\,\xx\cdot\yy-(|\zz|^4-1)\sin(2\nu)=0.\ee We know that these
  equations, with $c>1$, imply that $|\yy|>0$, so we may asssume
  \rf{k}.

We now have $|\zz|^2=(k^2+1)|\yy|^2$. Recalling the definition of
$\yy$, we also have
\[ \bz_1z_2=-(k-i)^2y_1y_2,\qquad z_1z_2=-(k^2+1)y_1y_2.\] Thus
$X=(1-k^2)y_1y_2$ and $Y=2ky_1y_2$. However, \rf{cm} implies that
\[\big(\cos(2\n)sX+\sin(2\n)cY\big)|\zz|^2=-cs\Re(z_1z_2),\] so either
$y_1y_2=0$ or \be{ss}
\left((1-k^2)s\cos(2\n)+2kc\sin(2\n)\right)|\yy|^2=cs.\ee But if
$y_1y_2=0$ then one of $z_1,z_2$ vanishes, which is impossible with
the current hypothesis $\l\ne\mu$. From \rf{reD} and \rf{imD}, we
also have \be{rek} 2c(k^2-1)|\yy|^2 + \big((k^2+1)^2|\yy|^4 +
1\big)\cos(2\nu)= 0,\ee \be{imk}\kern35pt 4sk|\yy|^2 -
\big((k^2+1)^2|\yy|^4-1\big)\sin(2\nu)=0.\ee

Consider first the special case $\n=0$. Then \rf{imk} gives $k=0$ and
thus $\xx=0$. From \rf{ss}, we have $|\yy|^2=c$. But \rf{rek} then
implies that $c=1$, which is not the case. 

Write $C=\cos(2\n)$ and $S=\sin(2\n)$. We may now assume that $s>0$
and $S>0$. In the new notation, \rf{ss} is \be{cs}
-sC\,k^2+2cS\,k+sC=cs/|\yy|^2.\ee We get an analogous expression by
multiplying \rf{rek} by $S$, \rf{imk} by $C$, and adding: \be{CS}
cS\,k^2+2sC\,k-cS=-CS/|\yy|^2.\ee This is surprising, since \rf{cs}
arises from the \emph{gradient} of $\Delta$, whereas \rf{CS} arises
from $\Delta$ itself.

Now multiply \rf{cs} by $cS$ and \rf{CS} by $sC$. Adding and
simplifying, we get
\be {ky} 2k|\yy|^2=sS.\ee
A similar operation involving subtraction gives
\be{kk} (1-k^2)|\yy|^2=cC.\ee 
We also obtain a solution
\[ 2|\yy|^2 = cC\pm\sqrt{c^2C^2+s^2S^2}=cC+\sqrt{s^2+C^2}.\]
Rearranging \rf{kk} gives
\[ (k^2+1)|\yy|^2=2|\yy|^2-cC=\sqrt{s^2+C^2}.\]
This can now be substituted into \rf{imk} to give
\[ 4sk|\yy|^2-(s^2+C^2-1)S=0.\]
Using also \rf{ky},
\[2s^2S-(s^2+C^2-1)S=0,\] and $c^2=C^2$, which contradicts
$c>1$.\end{proof}

We summarize what has been proved so far: 
\begin{theorem}
\label{dstruc}
Assume that $\Q$ is a quadric in the canonical form $(\l, \mu , \n)$
with $0 \le \l \le \mu$ and $0 \le \n < \pi /2$. Then the discriminant
locus $D$ is a smooth submanifold of real dimension $2$, unless $\l = \mu$
and $\n = 0$ (in which case there are exactly two singular points).
If $(\l,\mu,\n) = (0,0,\n)$ with $\n\ne0$ then $D$ is a smooth
Clifford torus. If $(\l,\mu,\n) = (0,0,0)$, then $D$ is a circle.
\end{theorem}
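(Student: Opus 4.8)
The plan is to assemble Theorem~\ref{dstruc} directly from the case-by-case analysis already carried out in this section, together with the twistor-line count from Section~\ref{LQ}. The statement is conformally invariant, so by Theorem~\ref{mainclass} we may assume $\Q$ is one of the canonical forms, and since the non-diagonalizable family is handled separately (Proposition~\ref{ndp}), we restrict to the diagonal form $(\l,\mu,\n)$ with $0\le\l\le\mu$ and $0\le\n<\pi/2$.

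First I would recall the general principle established at the start of the section: by the implicit function theorem, $D=\{\Delta=0\}$ fails to be a smooth submanifold of dimension $2$ precisely at points where $d_\rr\Delta$ drops rank, and by the two preliminary propositions this happens exactly at solutions of the system \rf{badp}. The analysis was then split into Cases (i), (ii), (iii) according to whether $z_1$ or $z_2$ vanishes. Case (iii) never produces a bad point: this is the content of the final lemma together with the proposition covering $0=\l<\mu$ (and when $\l=\mu$ the current canonical-form conventions make $z_1,z_2$ both nonzero impossible at a degenerate point except in the real case). Case (ii) forces $(\l,\mu,\n)=(0,0,0)$. Case (i) forces $\l=\mu$ and $\n=0$ by Proposition~\ref{casei}, and in that situation Proposition~\ref{caseib} shows the bad locus consists of exactly two points, which are precisely the two points over which $\Q$ contains a full twistor fiber.

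It then remains only to identify the resulting discriminant locus in the three special configurations. If $(\l,\mu,\n)=(0,0,0)$ then $\Q$ is equivalent to the real quadric $\mathbb I$, and Theorem~\ref{s1u} gives that $D=D_0$ is a geometric circle. If $\l=\mu=0$ but $\n\ne0$, Proposition~\ref{Cliff} identifies $D$ as a smooth Clifford torus. If $\l=\mu\ne0$ and $\n=0$, Proposition~\ref{caseib} (with Proposition~\ref{allprop} for the identification of the two bad points with the twistor-line points) gives the two singular points. In every remaining case---that is, $(\l,\mu,\n)$ not of one of these three special forms---Cases (i), (ii), (iii) all yield no bad points, so $d_\rr\Delta$ has full rank along $D$ and $D$ is a smooth real $2$-manifold. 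Assembling these observations yields exactly the statement of Theorem~\ref{dstruc}.

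I do not expect a genuine obstacle here: every ingredient has been proved in the preceding subsections, and the theorem is essentially a bookkeeping summary. The only point requiring a little care is the $\l=\mu\ne0$, $\n=0$ case, where one must confirm that Case (iii) contributes nothing---this is where the real-quadric subcase was explicitly excluded in the Case~(iii) discussion, and one should note that with $\l=\mu$ and $\n=0$ the form is \emph{not} real (it has $\l\ne0$), so the exclusion is legitimate and the only singular points are the two from Case~(i). Thus the proof is just: invoke conformal invariance and Theorem~\ref{mainclass} to reduce to canonical form; quote Propositions~\ref{casei}, \ref{caseib}, \ref{Cliff}, the Case~(ii) proposition and the Case~(iii) results to locate all points where $d_\rr\Delta$ degenerates; and quote Theorem~\ref{s1u} and Proposition~\ref{Cliff} to name $D$ in the degenerate configurations.
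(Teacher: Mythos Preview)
Your proposal is correct and matches the paper's approach exactly: in the paper, Theorem~\ref{dstruc} is presented with the preamble ``We summarize what has been proved so far,'' and no separate proof is given---it is purely a bookkeeping collection of Propositions~\ref{casei}, \ref{caseib}, \ref{Cliff}, the Case~(ii) proposition, the Case~(iii) lemma and proposition, and Theorem~\ref{s1u}, just as you describe. One small clarification: your parenthetical about Case~(iii) when $\lambda=\mu$ is slightly tangled; the Case~(iii) lemma already covers $\lambda\ne0$ directly (hence $\lambda=\mu\ne0$), so no ``exclusion of the real subcase'' is needed there, while $\lambda=\mu=0$ is handled globally by Proposition~\ref{Cliff} (for $\nu\ne0$) or Theorem~\ref{s1u} (for $\nu=0$) rather than by the Case~(iii) machinery.
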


It will be the purpose of the next section to prove that $D$ is an
unknotted torus \emph{whenever} it is smooth and $(\l,\mu,\n)\ne(0,0,0)$.

\iffalse
 For the case $\l = 0$, the equations are
\begin{align}
\begin{split}
z_1 |\zz|^2( e^{-2 i \nu} - e^{i \theta} e^{2 i \nu})
& = \bz_1 ( e^{i \theta} e^{ \m}  - e^{- \m} )\\
z_2 |\zz|^2( e^{ -2 i \nu}  - e^{i \theta} e^{2 i \nu})
& = \bz_2 ( e^{i \theta} e^{ - \m} - e^{\m} ).
\end{split}
\end{align}
The discriminant equation is  
\begin{align}
0 = - e^{2 i \n} -  e^{-2 i \n} |\zz|^4
- e^{\m} ( z_1^2+ \bz_2^2) - e^{-\m}( \bz_1^2 + z_2^2)
\end{align} 
\fi

%%%%%%%%%%%%%%%%%%%%%%%%%%%%%%%%%%%
\section{Topology of the discriminant locus}\label{topology}
%%%%%%%%%%%%%%%%%%%%%%%%%%%%%%%%%%%%%
We begin with the equation of the discriminant (\ref{ABC3}), 
which expands to 
\begin{align*}
\Delta = - e^{2 i \n} -  e^{-2 i \n} |\zz|^4
- e^{\l + \m} \bz_2^2 - e^{-\l - \m}z_2^2
- e^{\l - \m} \bz_1^2 - e^{\m - \l} z_1^2,
\end{align*} 
where $|\zz|^2 = |z_1|^2 + |z_2|^2$. We shall consider separately 
the real and imaginary parts
\begin{align}
\nonumber
\ \Re\Delta &= 2 \cosh( \l - \m) ( x_1^2 - y_1^2)
+ 2 \cosh( \l + \m) ( x_2^2 - y_2^2) + (1 + |\zz|^4) \cos( 2 \n),\\
\label{ImDelta}
\ \Im\Delta &= -4 \sinh( \l - \m) x_1 y_1 - 4 \sinh( \l + \m) x_2 y_2
- ( |\zz|^4 - 1) \sin(2 \n),
\end{align}
concentrating attention on the latter.

\begin{proposition}
\label{s3prop}
  If $ \nu \ne0$, then the zero set $\{\Im\Delta = 0 \}$ is a smoothly
  embedded 3-sphere in $\rr^4$.
\end{proposition}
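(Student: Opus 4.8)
The plan is to exhibit $\{\Im\Delta=0\}$ as a \emph{radial graph} over $S^3$. Since $\nu\in(0,\pi/2)$ we have $\sin(2\nu)>0$, so dividing \rf{ImDelta} by $\sin(2\nu)$ shows that $\{\Im\Delta=0\}$ is exactly the level set $\{\zz\in\R^4:f(\zz)=1\}$, where
\[ f(\zz)=|\zz|^4+a\,x_1y_1+b\,x_2y_2,\qquad
a=\frac{4\sinh(\l-\m)}{\sin(2\n)},\qquad b=\frac{4\sinh(\l+\m)}{\sin(2\n)}. \]
Since $f(\mathbf 0)=0\ne1$, the origin does not lie in this set, so it suffices to analyse $f$ along rays from the origin. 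Identifying $\R^4$ with $\{(x_1,y_1,x_2,y_2)\}$ and writing a unit vector as $u=(u_1,u_2,u_3,u_4)\in S^3$, set $Q(u)=a\,u_1u_2+b\,u_3u_4$, so that
\[ f(tu)=t^4+t^2Q(u)=t^2\bigl(t^2+Q(u)\bigr),\qquad t\ge0. \]

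First I would show that for each $u\in S^3$ the equation $f(tu)=1$ has a \emph{unique} positive root $t=\rho(u)$, at which moreover $\frac{d}{dt}f(tu)>0$. Writing $g(t)=t^4+t^2Q(u)$ we have $g'(t)=2t\bigl(2t^2+Q(u)\bigr)$. If $Q(u)\ge0$, then $g$ is strictly increasing on $(0,\infty)$ from $0$ to $+\infty$, so $g=1$ has a unique root and $g'>0$ there. If $Q(u)<0$, then $g$ decreases on $(0,t_0)$ and increases on $(t_0,\infty)$ with $t_0=\sqrt{-Q(u)/2}$, and its minimum value is $g(t_0)=-Q(u)^2/4<0<1$; hence $g=1$ still has a unique root $\rho(u)$, it lies in $(t_0,\infty)$, and there $2\rho(u)^2+Q(u)>0$, so again $g'(\rho(u))>0$. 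This case is the one point requiring a little care, and it is the main obstacle: $t\mapsto f(tu)$ need \emph{not} be monotone along a ray, so one must observe that its only critical value is negative, whence the fibre over $1$ still meets each ray exactly once.

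It then follows from the implicit function theorem applied to $F(u,t)=t^4+t^2Q(u)-1$, whose $t$-derivative $g'(\rho(u))$ is nonzero, that $\rho\colon S^3\to(0,\infty)$ is smooth. Under the polar-coordinate diffeomorphism $(0,\infty)\times S^3\to\R^4\setminus\{\mathbf 0\}$, $(t,u)\mapsto tu$, the set $\{f=1\}$ is precisely the graph $\{(\rho(u),u):u\in S^3\}$ of the smooth function $\rho$, hence the image of the smooth embedding $u\mapsto\rho(u)u$ of $S^3$ into $\R^4$. Therefore $\{\Im\Delta=0\}$ is a smoothly embedded $3$-sphere in $\R^4$; in particular it is a regular level set, so $d(\Im\Delta)$ is nowhere zero on it. Apart from the non-monotonicity just mentioned, the argument is the standard star-shaped-hypersurface reasoning and is routine.
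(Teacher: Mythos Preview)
Your proof is correct and follows essentially the same approach as the paper: both show that $\{\Im\Delta=0\}$ is a radial graph over $S^3$ by proving each ray from the origin meets the set in exactly one point. The only cosmetic difference is that the paper solves the biquadratic $\sin(2\nu)r^4 + Br^2 - \sin(2\nu) = 0$ directly via the quadratic formula (noting the product of the roots in $r^2$ is $-1$, so exactly one is positive), whereas you reach the same conclusion by a critical-value analysis of $g(t)=t^4+t^2Q(u)$; your observation that $g'(\rho(u))>0$ makes the smoothness step via the implicit function theorem slightly more explicit than the paper's appeal to the closed-form root.
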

\begin{proof}
  We shall show that \rf{ImDelta} determines a graph over the unit sphere
  $S^3(1)$. Take $(x_1, y_1, x_2, y_2) \in S^3(1)$, and consider the
  ray $r (x_1, y_1, x_2, y_2)$ for $0 < r < \infty$.  Along this ray,
  we have
\begin{align*}
\sin(2 \n) r^4  + r^2 \big(  4 \sinh( \l - \m) x_1 y_1  + 4 \sinh( \l + \m) x_2 y_2
\big) - \sin (2 \n) = 0.
\end{align*}
The quadratic formula gives the solution
\begin{align*}
r^2 = \frac{ - B \pm \sqrt{  B^2 + 4 \sin^2{2 \n} }} {2 \sin(2 \n)}, 
\end{align*}
where $B \equiv  4 \sinh( \l - \m) x_1 y_1  + 4 \sinh( \l + \m) x_2 y_2$.
By assumption, the discriminant is always positive. Moreover, this
equation always has exactly one positive root and the root can be
chosen smoothly, given that $\nu\ne0$. Therefore $\{ \Im\Delta= 0
\}$ is a smooth graph over $S^3$.
\end{proof}
\begin{remark} For $\n= 0$ then  $\{ \Im\Delta= 0 \}$
is clearly a real cone, which has a singularity at the origin. 
\end{remark}

Our next result extends Proposition~\ref{Cliff}.
\begin{theorem} 
\label{unknot}
If $\l<\m$, and $\n\ne0$, then $D$ is a smooth unknotted torus in
$\rr^4$.
\end{theorem}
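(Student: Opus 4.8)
The plan is to deform the parameters $(\l,\m,\n)$, keeping $\n$ fixed and nonzero, down to the Clifford torus case $(0,0,\n)$ treated in Proposition~\ref{Cliff}, and to show that along this deformation $D$ changes only by an ambient isotopy of $\rr^4$. Concretely, consider the two-leg path $(\l_s,\m_s,\n)$, $s\in[0,2]$: for $s\in[0,1]$ set $\l_s=(1-s)\l$, $\m_s=\m$, and for $s\in[1,2]$ set $\l_s=0$, $\m_s=(2-s)\m$, so $s=0$ gives the original parameters and $s=2$ gives $(0,0,\n)$. On the first leg $0\le\l_s\le\l<\m$, and on the second $0=\l_s<\m_s$ for $s<2$ while at $s=2$ we have $\n\ne0$; in every case Theorem~\ref{dstruc} (and Proposition~\ref{Cliff} at $s=2$) guarantees that the discriminant locus $D_s=\{\Delta_s=0\}\subset\rr^4$ is a smooth closed surface.

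Next I would form the total space $\mathcal{D}=\{(z,s)\in\rr^4\times[0,2]:\Delta_s(z)=0\}$ and check that the projection $p\colon\mathcal{D}\to[0,2]$ is a proper submersion of manifolds with boundary. That $\mathcal{D}$ is a submanifold and $p$ a submersion is formal: $d_z\Delta_s$ has real rank $2$ along $D_s$ for every $s$ — this is exactly the smoothness of $D_s$ — so the differential of $(z,s)\mapsto\Delta_s(z)$ already has rank $2$ from its $z$-part, and any tangent vector to $[0,2]$ lifts into $T\mathcal{D}$ by solving $d_z\Delta_s\cdot v=-\partial_s\Delta_s$. Properness is the one point that uses geometry: by Proposition~\ref{s3prop} and its proof, the hypersurface $\{\Im\Delta_s=0\}$ meets the ray through a unit vector $(x_1,y_1,x_2,y_2)$ at the single radius $r^2=(-B_s+\sqrt{B_s^2+4\sin^2 2\n})/(2\sin 2\n)$ with $B_s=4\sinh(\l_s-\m_s)x_1y_1+4\sinh(\l_s+\m_s)x_2y_2$; since $B_s$ is bounded on the unit sphere uniformly for $s\in[0,2]$ and $\sin 2\n$ is a fixed positive constant, there are constants $0<c_1\le c_2$ with $D_s\subset\{c_1\le|z|\le c_2\}$ for all $s$. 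Hence $\mathcal{D}$ lies in a compact set and $p$ is proper.

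By Ehresmann's fibration theorem $p$ is a smooth fibre bundle over the contractible base $[0,2]$, hence trivial, and in particular there is a smooth isotopy $\{D_s\}_{s\in[0,2]}$ of submanifolds of $\rr^4$ joining $D=D_0$ to $D_2$. Since all the $D_s$ lie in the fixed compact set $\{c_1\le|z|\le c_2\}$, the isotopy extension theorem upgrades this to a compactly supported ambient isotopy of $\rr^4$. Thus $D$ is ambient isotopic to $D_2$, which by Proposition~\ref{Cliff} is a standard Clifford torus in $S^3(1)$; such a torus bounds an embedded solid torus in $S^3\subset S^4$, hence is unknotted in the sense of the Remark following Theorem~\ref{quadricst}. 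Therefore $D$ is a smooth unknotted torus.

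The step I expect to require the most care is properness of the family $\{D_s\}$ — ruling out the discriminant loci running off to $0$ or to infinity as the parameters move — but this is precisely what the uniform graph estimate over the unit sphere, inherited from Proposition~\ref{s3prop}, provides. Once the smoothness of each $D_s$ is granted by Theorem~\ref{dstruc}, the remainder is a routine application of Ehresmann's theorem and the isotopy extension theorem.
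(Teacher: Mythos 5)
Your argument is correct, but it is genuinely different from the one in the paper. The paper's proof is purely topological and works directly at the given parameter values: Riemann--Hurwitz for the branched cover $\Q\to S^4$ gives $\chi(D)=0$; sphere components are excluded by pushing them into an $S^3$ parallel to $\{\Im\Delta=0\}$ and combining the Generalized Schoenflies Theorem with a covering-space argument; Alexander's theorem then shows each torus component bounds a solid torus, the covering argument forces connectedness, and the Hosokawa--Kawauchi criterion gives unknottedness. Your proof instead deforms $(\l,\m,\n)$ to $(0,0,\n)$ with $\n$ fixed, and uses Ehresmann plus isotopy extension to carry the explicit Clifford torus of Proposition~\ref{Cliff} back to $D$. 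This is essentially a rigorous version of the limiting argument the paper itself uses later in Proposition~\ref{n0}, and the key geometric input you identify --- uniform properness of the family, supplied by the graph description of $\{\Im\Delta_s=0\}$ over $S^3(1)$ from Proposition~\ref{s3prop} --- is exactly right and is what fails when $\n\to0$ (which is why Proposition~\ref{n0} needs separate treatment in both approaches). What the paper's route buys is the component count and connectedness via the covering argument, machinery that is reused verbatim for the pinched tori in Propositions~\ref{2pts} and~\ref{ndp}; what your route buys is an explicit ambient isotopy to a standard Clifford torus, bypassing Schoenflies, Alexander and Hosokawa--Kawauchi entirely.

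Two small points to tidy. First, "$d_z\Delta_s$ has rank $2$ along $D_s$" is not literally the statement of Theorem~\ref{dstruc} (a zero set can be a smooth manifold without the defining function being a submersion there); it is what the case analysis of Section~\ref{smoothness} actually proves for $0\le\l_s<\m_s$, but at the endpoint $(0,0,\n)$ Proposition~\ref{Cliff} establishes smoothness by exhibiting $D$ explicitly, so you should verify the rank condition there separately --- a one-line check, since on the Clifford torus $\nabla\Im\Delta$ is radial while $\nabla\Re\Delta$ is radial only where $\xx=0$ or $\yy=0$, which does not occur for $\n\in(0,\pi/2)$. Second, your path has a corner at $s=1$; either smooth it or apply Ehresmann on each leg and concatenate the resulting ambient isotopies. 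Neither issue affects the validity of the argument.
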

\begin{proof}
  The assumption $\l<\m$ guarantees, from Theorem \ref{dstruc}, that
  $D$ is a submanifold. The Riemann-Hurwitz formula for the branched
  covering $\Q\to S^4$ is
\begin{align}
\label{Euler}
\chi(S^2 \times S^2) = 2 \chi(S^4) - \chi(D).
\end{align}
This yields $\chi(D) = 0$.

We first claim that no component of $D$ can be homeomorphic to $S^2$.
Assume by contradiction that $D$ has a component $D_s$ which is a
sphere. By translating $D_s$ in the normal direction to $\{\Im\Delta
= 0\}$ (which we know is a smoothly embedded $S^3$), we may assume
that $D_s$ is contained in a parallel $S^3$. By the Generalized
Schoenflies Theorem \cite[Theorem IV.19.11]{Bredon}, $D_s$ will bound
a $3$-disc $M$ in the parallel $S^3$.  Since $M$ is clearly disjoint
from the other components of $D$, the lift of $M \cup D_s$ under the
branched covering is a hypersurface in $S^2 \times S^2$, which
disconnects since $H^1(S^2 \times S^2) = 0$, as follows from the
Generalized Jordan Curve Theorem \cite[Theorem VI.8.8]{Bredon}.
Therefore the branched covering is trivial away from $D$, so there
cannot be any other branching components. This contradicts $\chi(D) =
0$.

Since there are no $S^2$ components, and $\chi(D) = 0$, all the
components must be tori.  Finally, we claim that there can only be one
component, and it is an unknotted torus.  To see this, take any torus
component $T^2$.  As above, push this torus off of $\{\Im\Delta= 0\}$
in a normal direction to another parallel $S^3$.  Since $T^2 \subset
S^3$, Alexander's Solid Torus Theorem says that $T^2$ must bound a
solid torus $S^1 \times D^2$ in $S^3$ \cite[page 107]{Rolfsen}. So we
have $T^2$ bounding a solid torus, which is clealy disjoint from the
other components of $D$ (the other components lie in the original
$S^3$, while the solid torus lies in a parallel $S^3$). The covering
argument above then shows there cannot be any other components of $D$.
Finally, a torus which bounds a solid torus in $\R^4$ must be
unknotted, it is isotopic to a standard torus $T^2 \subset\R^3
\subset\R^4 \subset S^4$ \cite{HosokawaKawauchi}.
\end{proof}

%%%%%%%%%%%%%%%%%%%%%%%%%%%%%%%%%%%%%%%%%%%%%%%%%%%%%%%%%%%%%%%%%%
\subsection{Special cases revisited} We are now in a position to
refine the descriptions given in the previous section for cases in
which one of $\n$ or $\l$ vanishes.

First, we return to Case~(i) in which $\l=\m$ and $\nu=0$.

\begin{proposition} 
\label{2pts}
Given a quadric $\Q$ defined by the matrix $\Q_{( \l, \l, 0)}$ with
$\l\ne0$, the discriminant locus is a torus pinched at two points.
\end{proposition}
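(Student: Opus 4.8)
The plan is to make the discriminant locus fully explicit, locate its two singular points, and then — following the remark after Proposition~\ref{caseib} — move those points to $0$ and $\infty$ by a conformal transformation and recognise $D$ as a cone. First I would specialise \rf{ImDelta} and the companion formula for $\Re\Delta$ to $\l=\m$, $\n=0$. Since $\l\ne0$, $\Im\Delta=-4\sinh(2\l)\,x_2y_2$, so $D\subset\{x_2=0\}\cup\{y_2=0\}$, and an elementary estimate (of the type used in Proposition~\ref{casei}) shows that $\{y_2=0\}\cap\{\Re\Delta=0\}$ reduces to the two points $p_\pm=(0,\pm1,0,0)$, i.e.\ the points of Proposition~\ref{caseib} over which $\Q$ contains a twistor line. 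Hence $D$ is the real hypersurface $\{h=0\}$ inside the copy of $\rr^3$ cut out by $x_2=0$, with
\begin{align*}
h(x_1,y_1,y_2)=2x_1^2-2y_1^2-2\cosh(2\l)\,y_2^2+1+(x_1^2+y_1^2+y_2^2)^2.
\end{align*}
By Theorem~\ref{dstruc} the only singular points of $D$ are $p_\pm$, and a direct computation gives that the Hessian of $h$ at each $p_\pm$ is nondegenerate of signature $(+,+,-)$; by the Morse lemma $D$ is then, near $p_\pm$, homeomorphic to $\{u^2+v^2=w^2\}$, i.e.\ to the cone on two disjoint circles — the local model at a pinch point.

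Next I would produce a conformal symmetry. In the coordinate order $[\xi_0,W_1,\xi_{12},W_2]$ the form is $e^\l(\xi_0^2+W_1^2)+e^{-\l}(\xi_{12}^2+W_2^2)$, and for each $\phi\in\cc$ the block-diagonal matrix
\begin{align*}
G_\phi=\mat{R_\phi&0\\0&R_{\ol\phi}},\qquad R_\phi=\mat{\cos\phi&-\sin\phi\\\sin\phi&\cos\phi},
\end{align*}
lies in $SL(2,\HH)$ (it has the block shape \rf{AB} with $\det G_\phi=1$) and satisfies $G_\phi\tp Q\kern1pt G_\phi=Q$; in particular $\{G_{i\tau}:\tau\in\rr\}$ is a non-compact one-parameter subgroup. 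Being connected and fixing $\Q$, it preserves $D$, hence preserves the two-point set $D_0=\{p_+,p_-\}$, and therefore fixes each of $p_\pm$. Conjugating by a conformal map $\varphi$ of $S^4$ with $\varphi(p_+)=0$ and $\varphi(p_-)=\infty$ produces a non-compact connected one-parameter group $H\subset SO_\circ(1,5)$ which fixes $0$ and $\infty$ and preserves $\varphi(D)$. Such an $H$ is hyperbolic — it is non-compact, so not elliptic, and it has two fixed points, so not parabolic — hence in suitable coordinates it acts as $x\mapsto e^{ct}e^{tX}x$ with $c\ne0$; each orbit in $\rr^4\setminus\{0\}$ is then a properly embedded line meeting every round sphere once and transversally. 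It follows that $\varphi(D)\cong L\times\rr$, where $L=\varphi(D)\cap S^3$, and that $\varphi(D)\cup\{0,\infty\}$ is the double cone (suspension) on $L$; this is the sense in which $\varphi(D)$ is a cone (and if one takes $\varphi$ from the Cayley-type transform $q\mapsto(q-i)(q+i)^{-1}$ one can check $X=0$, making $\varphi(D)$ literally a cone in $\rr^4$, as in the remark).

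To conclude: $L$ is a compact $1$-manifold, hence a disjoint union of circles, and the number of its components equals that of the link of the vertex $0$ in $\varphi(D)\cup\{0,\infty\}$; by conformal invariance of $D$ this link is the node of the first paragraph, namely two circles. Therefore $\varphi(D)\cup\{0,\infty\}$ is the double cone on two disjoint circles, i.e.\ a pair of $2$-spheres glued along two points — precisely a torus pinched at two points — and since this property is conformally invariant, so is $D$. As a check, the branched-covering relation \rf{Euler}, corrected for the fact that the fibre of $\Q\to S^4$ is a $\cp^1$ over $p_\pm\in D_0$ and a single point over $D\setminus\{p_\pm\}$, gives $\chi(\Q)=4$ against $\chi(D)=2$, which agrees with two spheres glued at two points.

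The step I expect to be the main obstacle is the bookkeeping: verifying that $D$ is \emph{exactly} $\{h=0\}$ with \emph{exactly} the two node singularities at $p_\pm$ (ruling out stray components and confirming the Hessian signature), together with — on the conformal side — reducing the cone statement to the hyperbolicity of a non-compact flow fixing two points, after which the suspension picture is standard. A purely computational alternative avoids $SL(2,\HH)$ altogether: slice $\{h=0\}$ by the planes $y_1=\mathrm{const}$, observe that the slices are ovals which degenerate to the node at $y_1=\pm1$ and shrink to points at $y_1=\pm\sqrt{(\cosh 2\l+1)/2}$, and assemble the two halves $D\cap\{\pm y_2>0\}$, each capped off at $p_\pm$, into two $2$-spheres meeting at $p_\pm$.
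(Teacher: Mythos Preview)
Your argument is correct, and it reaches the same conclusion as the paper by a genuinely different route. The paper never works with the quartic $h$ or with a symmetry flow: instead it uses the equivalence \rf{trans} to replace $(\l,\l,0)$ by $(\l,-\l,0)$, then \rf{tau} to pass to the conformally equivalent quadric $2(u-1)\xi_0W_2+2(u+1)\xi_{12}W_1$ with $u\ne\pm1$. For \emph{that} quadric the discriminant equation is quadratic rather than quartic, and one reads off immediately that $D$ is the ordinary double cone $y_1^2=(u^2-1)(x_2^2+y_2^2)$ (or its analogue for $|u|<1$) sitting inside a copy of $\R^3$, with a second cone point at infinity detected by the inversion \rf{invlift}. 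So the paper's proof is a short explicit computation after choosing a clever conformal representative, whereas your proof stays with $Q_{\l,\l,0}$, extracts the two node points via the Morse lemma, and then exploits the one-parameter group $G_{i\tau}$ of conformal automorphisms of $\Q$ to realise $D$ as a suspension. Your method is more conceptual --- it explains \emph{why} the discriminant is a cone, namely the hyperbolic symmetry --- and the Euler-characteristic check you add is a nice consistency test not present in the paper; the cost is that several steps (the estimate forcing $\{y_2=0\}\cap D=\{p_\pm\}$, the hyperbolicity of the conjugated flow, the equality of the link of $0$ with $L$) require some care that the paper's direct computation simply bypasses.
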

\begin{proof}
According to
\rf{trans}, this case equivalent to the case $\l=-\m\ne0$ and $\n=0$.
  Using \rf{tau} and \rf{nu-nu}, we see that this case corresponds to
  $x = y = v= 0$, and $u \in \rr$, with $u\ne\pm 1$. This
  means that $\Q$ is conformally equivalent to the zero set of the
  quadratic form
\begin{align*}
q = 2(u-1) \xa W_2 + 2(u+1) \xb W_1.
\end{align*}
To find its discriminant locus, we look at
\begin{align*}
\begin{split}
0 &=  2(u-1) \xa (\Wb) + 2( u +1) \xb (\Wa).
\end{split}
\end{align*}
Expanding this, we find $0 = A \xa^2 + 2B \xa \xb + C \xb^2$,
where
\begin{align*}
A &= 2( u -1) z_2, \ \ B = (u-1) \bz_1 + (u +1) z_1, \ \
C = - 2(u +1) \bz_2. 
\end{align*}
Therefore $D$ is defined by  
\begin{align*}
0 = B^2 - A C 
= (u-1)^2 \bz_1^2 + (u+1)^2 z_1^2 + (u^2-1) |z_1|^2 + 4(u^2 -1) |z_2|^2,
\end{align*}
with $u\ne\pm1$ constant.
Expanding into real and imaginary parts,
\begin{align*}
4 u^2 x_1^2 - 4 y_1^2 + 4 (u^2 - 1) (x_2^2 + y_2^2) = 0, \ \ \ \ 8 u x_1 y_1 = 0.
\end{align*}  
If $x_1 = 0 $, then $4 y_1^2 = 4(u^2 - 1)(x_2^2+y_2^2)$,
which is a cone for $|u| > 1$, and 
empty for $|u| < 1$. 
If $y_1 = 0$ then $4 u^2 x_1^2 = - 4 (u^2 - 1) (x_2^2 + y_2^2)$,
which is a cone for $|u| < 1$, and empty for $|u| > 1$.  
Performing an inversion as in (\ref{invlift}), we see a similar 
cone singularity at infinity.
When viewed as a subset of $S^3 \subset S^4$, $D$
is then clearly a torus pinched at two points.
\end{proof}

Next, we settle the case $\l<\m$ and $\n=0$, in which the discriminant
locus $D$ is smooth yet $\{\Im\Delta=0\}$ is not.

\begin{proposition}
\label{n0}
In the case of a quadric $\Q$ defined by $Q_{(\l,\m,0)}$ with
$\l<\m$, the discriminant locus is a smooth unknotted torus.
\end{proposition}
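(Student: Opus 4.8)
The plan is to reduce to the already-settled case $\n\ne0$ (Theorem~\ref{unknot}) by a one-parameter isotopy that slides $\n$ down to $0$ with $\l<\m$ fixed. First observe that smoothness of $D$ is not in question: since $\l<\m$, Theorem~\ref{dstruc} guarantees that $D=D_{(\l,\m,\n)}$ is a compact smooth surface for \emph{every} $\n\in[0,\pi/2)$, in particular for $\n=0$. So the only assertion to prove is that $D_{(\l,\m,0)}$ is an unknotted torus, and for this it suffices to exhibit an ambient isotopy of $\R^4$ carrying it to $D_{(\l,\m,\n_1)}$ for some $\n_1\in(0,\pi/2)$, which is unknotted by Theorem~\ref{unknot}.

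Fix $\n_1\in(0,\pi/4)$. View the discriminant $\Delta=\Delta(\zz,\n)$ of Section~\ref{topology} as a smooth map $\R^4\times[0,\n_1]\to\R^2$, and set $M=\{(\zz,\n):\Delta(\zz,\n)=0\}$, with fibres $M_\n=D_{(\l,\m,\n)}$. The essential point is a \emph{uniform} compactness estimate: in the formula for $\Re\Delta$ preceding \rf{ImDelta}, the coefficient $\cos(2\n)$ of $|\zz|^4$ satisfies $\cos(2\n)\ge\cos(2\n_1)>0$ throughout $[0,\n_1]$, the constant term $\cos(2\n)$ is nonnegative there, and $|x_j^2-y_j^2|\le|\zz|^2$; hence $\Re\Delta\ge\cos(2\n_1)|\zz|^4-C|\zz|^2$ with $C=2\cosh(\l-\m)+2\cosh(\l+\m)$ independent of $\n$, so $M$ lies in $\overline{B}\times[0,\n_1]$ for a fixed ball $\overline{B}\subset\R^4$. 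Combined with Theorem~\ref{dstruc} — which is exactly the statement that $d_\zz\Delta(\cdot,\n)$ has rank $2$ along each fibre $M_\n$ — this makes $0$ a regular value of $\Delta$ on $\R^4\times[0,\n_1]$, so $M$ is a compact smooth $3$-manifold with boundary $D_{(\l,\m,0)}\sqcup D_{(\l,\m,\n_1)}$ and the projection $M\to[0,\n_1]$ is a proper submersion of manifolds with boundary.

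By Ehresmann's fibration theorem this projection is a locally trivial fibre bundle, hence trivial over the contractible base $[0,\n_1]$; the trivialization presents $\{D_{(\l,\m,\n)}\}_{\n\in[0,\n_1]}$ as a smooth isotopy of compact submanifolds of the fixed ball $\overline{B}$, and the isotopy extension theorem upgrades it to a compactly supported ambient isotopy of $\R^4$ taking $D_{(\l,\m,0)}$ to $D_{(\l,\m,\n_1)}$. Since $\n_1\ne0$ and $\l<\m$, Theorem~\ref{unknot} says $D_{(\l,\m,\n_1)}$ is a smooth unknotted torus in $\R^4$; as being an unknotted torus is an ambient-isotopy invariant, the same holds for $D_{(\l,\m,0)}$.

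The only genuinely delicate step is the uniform boundedness, and it is also where one sees why $\n$ must be kept well away from $\pi/2$: as $\n\to\pi/2^-$ even the hypersurface $\{\Im\Delta=0\}$ fails to remain in a fixed compact set (compare the proof of Proposition~\ref{s3prop}), so there is no honest isotopy running across all of $(0,\pi/2)$. Restricting to $\n\in[0,\pi/4)$ — which costs nothing, since Theorem~\ref{unknot} is available for \emph{every} nonzero $\n$ — makes the $|\zz|^4$ term in $\Re\Delta$ dominate uniformly, and the remainder is routine differential topology.
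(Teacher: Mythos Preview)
Your proof is correct and follows the same underlying strategy as the paper --- deform $\nu$ down to $0$ with $\lambda<\mu$ fixed and import the conclusion from Theorem~\ref{unknot} --- but the execution is genuinely cleaner. The paper argues in three separate steps: Hausdorff convergence of the $D_t$ to $D$ gives connectedness, the Riemann--Hurwitz relation \rf{Euler} then forces $D$ to be a torus, and unknottedness is asserted as a ``limit of unknotted tori'' property. You instead prove a uniform compactness bound on $\bigcup_\nu D_{(\lambda,\mu,\nu)}$, combine it with the rank-$2$ condition from Theorem~\ref{dstruc} to make $0$ a regular value of $\Delta$ on $\R^4\times[0,\nu_1]$, and then invoke Ehresmann plus isotopy extension to produce an honest ambient isotopy from $D_{(\lambda,\mu,0)}$ to $D_{(\lambda,\mu,\nu_1)}$. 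This yields both the diffeomorphism type and the unknottedness at once, so the separate Euler-characteristic step is unnecessary. Your explicit bound on $\Re\Delta$ (and the remark that one must keep $\nu$ away from $\pi/2$ to retain it) makes rigorous exactly the point the paper leaves implicit in the phrase ``smooth path of polynomial equations''.
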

\begin{proof}
  In this case, by Theorem \ref{dstruc}, we know $D$ is a submanifold.
  Take a path $(\l, \m, \n_t)$ with $t \in [0,1]$ and $\n_t \ne0$ such
  that $(\l, \m, \n) \rightarrow (\l, \m, 0)$ as $t\to1$.  From
  Theorem \ref{unknot}, we know the corresponding $D_t$ are unknotted
  tori.  Also, since this is a smooth path of polynomials equations,
  their zero sets converge to $D$ in the Hausdorff distance.  Since
  $D_t$ is connected, and $D$ is a submanifold, $D$ must therefore be
  connected.  By the Euler characteristic formula (\ref{Euler}), $D$
  must be a torus. Finally,  the unknottedness follows
  since $D$ is the limit of smooth unknotted tori.
\end{proof}

%%%%%%%%%%%%%%%%%%%%%%%%%%%%%%%%%%%%%%%%%%%%
\subsection{The non-diagonalizable case}
Recall from (\ref{ndABC}) above, the quadric is given by 
$A\xi^2+2B\xi+C=0$, with
\begin{align}
\label{ndABC'}
A=i + k \kern1pt\bz_1- \bz_2,\quad B=\Re(k z_2)+i \Im(z_1),
\quad C=i - k z_1- z_2.
\end{align}
A computation shows that the discriminant is
\begin{align*}
B^2 - AC  &= (k^2 -1) x_2^2 - y_2^2 + k^2 x_1^2
+ (k^2 -1) y_1^2 - 2 k y_1 + 1 + 2i ( x_2 + k x_1 y_2). 
\end{align*}
We let $f = \Re\Delta, g = \Im\Delta$,
and consider the matrix 
\begin{align}
J &= \left(
\begin{matrix}
\partial_{x_1} f & \partial_{y_1} f &\partial_{x_2} f &\partial_{y_2}f \\
\partial_{x_1} g & \partial_{y_1} g &\partial_{x_2} g &\partial_{y_2}g \\
\end{matrix}
\right)
= \left(
\begin{matrix}
2 k^2 x_1   & 2(k^2 -1)y_1 - 2k  
&2(k^2 -1) x_2  & - 2y_2 \\
k y_2 & 0  & 1  & k x_1 \\
\end{matrix}
\right).
\end{align}
\begin{proposition}
\label{ndp}  
If $k \in [0,1)$, then the discriminant 
locus is smooth on $\rr^4$. As a subset of $S^4$, there is 
exactly one singular point (the point at infinity), 
and the discriminant locus is a singular torus pinched at one point.  
\end{proposition}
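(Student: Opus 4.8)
The plan is to prove the two assertions separately, much as in Propositions~\ref{2pts} and \ref{n0}: first that $D$ is a smooth surface over $\rr^4$, using the explicit discriminant $\Delta=B^2-AC$ and the Jacobian $J$ displayed above; then that passing to $S^4$ adjoins the single point $\infty$, at which $D$ acquires one pinch, so that $D$ is globally a once-pinched torus.

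\emph{Smoothness over $\rr^4$.} First I would note that the second row $(ky_2,\,0,\,1,\,kx_1)$ of $J$ is never zero, so $\rank J<2$ at a point exactly when all the $2\times2$ minors of $J$ vanish. The minor in columns $2,3$ gives $y_1=k/(k^2-1)$ (and $y_1=0$ if $k=0$); the minor in columns $3,4$ gives $y_2=-k(k^2-1)x_1x_2$; and the minor in columns $1,3$ then forces $kx_1\big(1+(k^2-1)^2x_2^2\big)=0$, hence $x_1=0$ and $y_2=0$ when $k\ne0$. Since a singular point must lie on $D$, we also need $\Im\Delta=2(x_2+kx_1y_2)=0$, which gives $x_2=0$. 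The only candidate is therefore $\big(0,\,k/(k^2-1),\,0,\,0\big)$, and there a direct substitution gives $\Re\Delta=1/(1-k^2)\neq0$, so it is not on $D$; the case $k=0$ is identical, with candidate $(x_1,0,0,0)$ and $\Re\Delta=1$. Hence $D\cap\rr^4$ is smooth.

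\emph{Global shape and the point at infinity.} Since $\Im\Delta=2(x_2+kx_1y_2)$, the set $\{\Im\Delta=0\}$ is the graph $x_2=-kx_1y_2$, a smoothly embedded $\rr^3\subset\rr^4$ with coordinates $(x_1,y_1,y_2)$ (the analogue here of the $3$-sphere of Proposition~\ref{s3prop}), and $D$ is the hypersurface of this $\rr^3$ cut out by $\Re\Delta=0$. Substituting $x_2=-kx_1y_2$ and completing the square in $y_1$, that equation becomes
\begin{align*}
\big(1+k^2(1-k^2)x_1^2\big)\,y_2^2+(1-k^2)\Big(y_1-\tfrac{k}{k^2-1}\Big)^2\;=\;k^2x_1^2+\frac1{1-k^2}.
\end{align*}
For each fixed $x_1$ the coefficients and the right-hand side are strictly positive, so this is a nondegenerate ellipse; the projection $p\mapsto x_1$ on $D\cap\rr^4$ is then a submersion with connected circle fibres, and $D\cap\rr^4\cong S^1\times\rr$, a cylinder whose points leave every compact set of $\rr^4$ as $x_1\to\pm\infty$. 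Next I would invert via \rf{invlift}: near $\infty$, $\Q$ is given by \rf{invnon}, whose fibre-data have no constant term (proof of Proposition~\ref{allprop}), so $\infty\in D$ and is the base of the unique twistor line in $\Q$. Computing the tangent cone of $D$ at $\infty$, i.e. the zero set of the quadratic part of the inverted $B^2-AC$, I expect
\begin{align*}
(k^2-1)x_2^2+(k^2-1)y_1^2+k^2x_1^2-y_2^2=0,\qquad kx_1y_2=0,
\end{align*}
which on the slice $x_1=0$ is just the origin and on the slice $y_2=0$ is the double cone $k^2x_1^2=(1-k^2)(x_2^2+y_1^2)$ over a circle (degenerating to the $x_1$-axis when $k=0$). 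So near $\infty$ the surface $D$ is two sheets meeting only at $\infty$; capping the cylinder $S^1\times[-1,1]$ at its two ends and identifying the two cap centres gives a $2$-sphere with two points identified, i.e. a torus pinched at the single point $\infty$ — and no other point of $D$ is singular, since $D\cap\rr^4$ is smooth.

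The hard part will be the last matching step: showing that the closure of the cylinder $D\cap\rr^4$ in $S^4$ adds \emph{exactly} the point $\infty$, and that the two ends of the cylinder (namely $x_1\to+\infty$ and $x_1\to-\infty$) run into $\infty$ along the two different sheets of the tangent cone, so that the compactification is the pinched torus rather than, say, a sphere or a smooth torus. This amounts to tracking the inversion \rf{invlift} applied to the explicit asymptotics of $D\cap\rr^4$ coming from the displayed ellipse equation; everything else — the minor computation, the completion of the square, the tangent-cone expansion — is routine algebra with the explicit formula for $\Delta$ and its inversion \rf{invnon}.
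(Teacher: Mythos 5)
Your proof is correct, and while the smoothness half coincides with the paper's argument (same Jacobian, a different but equivalent choice of $2\times2$ minors, same unique candidate point $\big(0,\,k/(k^2-1),\,0,\,0\big)$ ruled out by $\Re\Delta\ne0$), the topological half takes a genuinely more elementary route. The paper identifies $D\cap\rr^4$ as a cylinder by combining the Riemann--Hurwitz formula for the branched cover $\Q\setminus\cp^1\to\rr^4$ (giving $\chi(D\setminus\{p_\infty\})=0$) with a connectedness argument that approximates $\Q$ by generic diagonalizable quadrics and passes to a Hausdorff limit of unknotted tori. You instead restrict to the graph $\{x_2=-kx_1y_2\}=\{\Im\Delta=0\}$ and complete the square, exhibiting $D\cap\rr^4$ explicitly as a family of nondegenerate ellipses over the $x_1$-line (your displayed equation is correct, and specializes at $k=0$ to the paper's cylinder $y_1^2+y_2^2=1$, $x_2=0$). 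This buys a direct, self-contained proof that $D\cap\rr^4\cong S^1\times\rr$, avoiding both the Euler-characteristic bookkeeping and the limiting argument; what it loses is generality, since the paper's covering-space method is the one that also handles the diagonalizable cases where no such explicit fibration is available. Your tangent cone at $\infty$ is also correct: the quadratic part of the inverted discriminant is $(k^2-1)x_2^2+(k^2-1)y_1^2+k^2x_1^2-y_2^2-2ikx_1y_2$, whose zero set for $k\ne0$ is the two-napped cone $k^2x_1^2=(1-k^2)(x_2^2+y_1^2)$, $y_2=0$ (a minor slip: your columns-$1,3$ minor should read $2k^2x_1\big(1+(k^2-1)^2x_2^2\big)$, which is immaterial). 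The step you flag as the remaining ``hard part''---verifying that the link of $D$ at $\infty$ is two disjoint circles, so that the compactification is a once-pinched torus and not a sphere or smooth torus---is precisely the step the paper itself only outlines (``we just briefly outline the details here,'' via intersecting with small spheres $S(r)$ about the inverted singular point), so your proposal is at the same level of completeness as the published argument; note only that for $k=0$ the tangent cone degenerates to the $x_1$-axis, so there the link computation, rather than the tangent cone, is what exhibits the two sheets.
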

\begin{proof}
The discriminant locus $D = \{ f = 0 \} \cap \{ g = 0 \}$.
In the case $k = 0$, the equations
for $D$ simplify to 
\begin{align}
x_2^2 + y_2^2 + y_1^2 = 1, \ \ \ \ x_2 = 0,
\end{align}
which is  a smooth cylinder. Such a cylinder 
is exactly an unknotted torus pinched at one point when 
viewed as a subset of $S^4$. Next, for $k \in (0,1)$, 
we show that $J$ has rank $2$ at every finite point on $D$. 
To see this, take the subdeterminant corresponding to
the first and last columns:
\begin{align}
\left|
\begin{matrix} 
2 k^2 x_1 & - 2y_2\\
k y_2 &  k x_1 \\
\end{matrix}
\right| 
= 2 k^3 x_1^2 + 2 k y_2^2 = 0. 
\end{align}
Since $k \in (0,1)$, this implies that $x_1 = 0$ 
and $y_2 = 0$.
The equations for $D$ simplify to 
\begin{align*}
0 = f =  (k^2 -1) x_2^2 + (k^2 -1) y_1^2 - 2 k y_1 + 1, \ \ \ \ 0 = g =  x_2,
\end{align*} 
which implies
\begin{align}
\label{g1}
0 = f &=  (k^2 -1) y_1^2 - 2 k y_1 + 1.
\end{align} 
Next take the subdeterminant corresponding to
the second and third columns to get
\begin{align}
y_1 = \frac{k}{k^2 -1}. 
\end{align}
Substituting this into (\ref{g1}), we obtain
\begin{align}
0 = \frac{k^2}{k^2 -1} - 2 \frac{k^2}{k^2 -1} + 1 =  - \frac{k^2}{k^2 -1} + 1,
\end{align}
which has no solution.   
Therefore $D$ is always smooth in $\rr^4$. 

 To identify the global topology of $D$, we argue as follows. 
We can identify our non-singular $2$-quadric
with $\cp^1 \times \cp^1$ such that the
fiber over infinity corresponds to a $\cp^1$ 
in one of the factors. Thus, 
\begin{align}
Q_2 \setminus \cp^1 = \cp^1 \times \rr^2.
\end{align}
The Riemann-Hurwitz formula for a branched covering is 
\begin{align}
2 = \chi(  \cp^1 \times \rr^2) = 2 \chi( \rr^4) - \chi(D \setminus \{p_{\infty}\})
= 2 - \chi(D \setminus \{p_{\infty}\}), 
\end{align}
which implies that $\chi(D \setminus \{p_{\infty}\} )= 0$.
We claim that $ D \setminus \{p_{\infty}\}$ is connected. 
This follows from our work in the diagonalizable case:
we can approximate our quadric $Q$ by a sequence 
of generic diagonalizable quadrics $Q_i$, with the discriminant 
loci $D_i$ converging to $D$ in the Hausdorff 
sense, with smooth convergence away from the singular 
point. We have proved above that $D_i$ are smooth unknotted tori, 
which in particular are connected.  Thus, we may connect
any 2 points in $D \setminus \{p_{\infty} \}$ by a path 
which is a limit of paths in the tori $D_i$ each of which 
avoids the singular point of convergence.  
Connectedness, together with $\chi = 0$,
imply that $D \setminus \{p_{\infty}\} = S^1 \times \rr$.

To finish the argument, an analysis of the singularity is needed;
we just briefly outline the details here. Consider the inverted quadric 
(\ref{invnon}), so that 
the singular point is at the origin. One then examines 
the intersection $D \cap S(r)$, where $S(r)$ is a small 
sphere of radius $r$ centered at the origin. 
An elementary computation shows that this limits to two disjoint 
$S^1$s in $S^3$ as $r \rightarrow 0$. This implies 
that the singularity is a double cone point, thus $D$ globally 
has the topology of a torus pinched at one point. 
\end{proof}

Combining Theorems \ref{quadricdisc}, \ref{dstruc}, and \ref{unknot}, 
and Propositions \ref{2pts}, \ref{n0}, and \ref{ndp}, 
we obtain Theorem \ref{quadricst}.

%%%%%%%%%%%%%%%%%%%%%%%%%%%%%%%%%%%%%%%%%%%%%%%%%
\subsection{Lifting the discriminant locus} 

In this subsection, we give a topological ``explanation'' of Cases (0)
and (1) of Theorem~\ref{quadricst}, and prove Theorem \ref{torusq}. 

In Case (0), the discriminant locus lifts to 
\begin{align*}
 \pi^{-1}(S^1) = S^1 \times \cp^1 = S^1 \times S^2, 
\end{align*}
since an oriented $\cp^1$-bundle over $S^1$ is trivial. 
This is a 3-real-dimensional submanifold of $\Q$, which must disconnect 
into two components
\cite[Theorem VI.8.8]{Bredon}. Any non-degenerate quadric is
diffeomorphic to $S^2 \times S^2$.  So we see that $S^2 \times S^2
\setminus S^1 \times S^2$ is equal to two copies of $S^4 \setminus
S^1$. We have the well-known isomorphism 
\begin{align*}
S^4 \setminus S^1 = \rr^4 \setminus \rr = D^2 \times S^2. 
\end{align*}
So we have the identification $S^2 \times S^2
\setminus S^1 \times \cp^1$ with two copies of $ D^2 \times S^2$.

In Case $(1)$, the discriminant locus is a torus, $D = S^1 \times
S^1$.  Since $D = D_1$, the lift of $D$ is $\pi^{-1}(D) = S^1 \times
S^1$.  So we see that $S^2 \times S^2 \setminus S^1 \times S^1$ is a
double cover of $S^4 \setminus S^1 \times S^1$.  Now $D$ is unknotted,
which means that $D$ bounds a solid torus $S^1 \times D^2$. The lift $\pi^{-1} (S^1
\times D^2)$ is a solid torus glued to itself along the boundary
torus, which is easily seen to be
\begin{align*}
\pi^{-1} (S^1 \times D^2 \cup S^1 \times S^1) = S^1 \times S^2. 
\end{align*}
So, by analogy with (i), we have the identification $S^2 \times S^2
\setminus S^1 \times S^2$ is two copies of $S^4 \setminus 
\ol{S^1 \times D^2}$. This also shows that the 
diagonalizable quadrics induce two OCSes on $S^4$ minus a solid torus.
This completes the proof of Theorem \ref{torusq}.

\bibliography{SVref}

\providecommand{\bysame}{\leavevmode\hbox to3em{\hrulefill}\thinspace}
\providecommand{\MR}{\relax\ifhmode\unskip\space\fi MR }
% \MRhref is called by the amsart/book/proc definition of \MR.
\providecommand{\MRhref}[2]{%
  \href{http://www.ams.org/mathscinet-getitem?mr=#1}{#2}
}
\providecommand{\href}[2]{#2}
\begin{thebibliography}{AGG99}

\bibitem[Ada69]{Adams}
J.~Frank Adams, \emph{Lectures on {L}ie groups}, W. A. Benjamin, Inc., New
  York-Amsterdam, 1969.

\bibitem[AGG99]{AGG}
V.~Apostolov, P.~Gauduchon, and G.~Grantcharov, \emph{Bi-{H}ermitian structures
  on complex surfaces}, Proc. London Math. Soc. (3) \textbf{79} (1999), no.~2,
  414--428.

\bibitem[AHS78]{AHS}
M.~F. Atiyah, N.~J. Hitchin, and I.~M. Singer, \emph{Self-duality in
  four-dimensional {R}iemannian geometry}, Proc. Roy. Soc. London Ser. A
  \textbf{362} (1978), no.~1711, 425--461.

\bibitem[Ati79]{Atiyah}
M.~F. Atiyah, \emph{Geometry on {Y}ang-{M}ills fields}, Scuola Normale
  Superiore Pisa, Pisa, 1979.

\bibitem[Bai92]{Baird}
Paul Baird, \emph{Riemannian twistors and {H}ermitian structures on
  low-dimensional space forms}, J. Math. Phys. \textbf{33} (1992), no.~10,
  3340--3355.

\bibitem[Bes87]{Besse}
Arthur~L. Besse, \emph{Einstein manifolds}, Springer, Berlin, 1987.

\bibitem[Bis64]{Bishop}
Errett Bishop, \emph{Conditions for the analyticity of certain sets}, Michigan
  Math. J. \textbf{11} (1964), 289--304.

\bibitem[Bou81]{Bourguignon}
Jean-Pierre Bourguignon, \emph{Les vari\'et\'es de dimension {$4$} \`a
  signature non nulle dont la courbure est harmonique sont d'{E}instein},
  Invent. Math. \textbf{63} (1981), no.~2, 263--286.

\bibitem[Boy86]{Boyer1986}
Charles~P. Boyer, \emph{Conformal duality and compact complex surfaces}, Math.
  Ann. \textbf{274} (1986), no.~3, 517--526.

\bibitem[Boy88]{Boyer1988}
\bysame, \emph{Self-dual and anti-self-dual {H}ermitian metrics on compact
  complex surfaces}, Mathematics and general relativity (Santa Cruz, CA, 1986),
  Contemp. Math., vol.~71, Amer. Math. Soc., Providence, RI, 1988,
  pp.~105--114.

\bibitem[Bre93]{Bredon}
Glen~E. Bredon, \emph{Topology and geometry}, Graduate Texts in Mathematics,
  vol. 139, Springer-Verlag, New York, 1993.

\bibitem[BW03a]{BairdWood2}
Paul Baird and John~C. Wood, \emph{Harmonic morphisms and shear-free ray
  congruences}, arXiv.org:math/0306390, 2003.

\bibitem[BW03b]{BairdWood}
\bysame, \emph{Harmonic morphisms between {R}iemannian manifolds}, London
  Mathematical Society Monographs. New Series, vol.~29, The Clarendon Press
  Oxford University Press, Oxford, 2003.

\bibitem[CGS89]{CGS}
Luis~A. Caffarelli, Basilis Gidas, and Joel Spruck, \emph{Asymptotic symmetry
  and local behavior of semilinear elliptic equations with critical {S}obolev
  growth}, Comm. Pure Appl. Math. \textbf{42} (1989), no.~3, 271--297.

\bibitem[Che79]{Chern}
Shiing~Shen Chern, \emph{Complex manifolds without potential theory}, second
  ed., Springer-Verlag, New York, 1979, With an appendix on the geometry of
  characteristic classes, Universitext.

\bibitem[dBN98]{deBartolomeisNannicini}
Paolo de~Bartolomeis and Antonella Nannicini, \emph{Introduction to the
  differential geometry of twistor spaces}, Geometric Theory of Singular
  Phenomena in P.D.E., Symposia Math., vol.~38, Cambidge University Press,
  1998, pp.~91--160.

\bibitem[Der83]{Derdzinski}
Andrzej Derdzi{\'n}ski, \emph{Self-dual {K}\"ahler manifolds and {E}instein
  manifolds of dimension four}, Compositio Math. \textbf{49} (1983), no.~3,
  405--433.

\bibitem[EG92]{EvansGariepy}
Lawrence~C. Evans and Ronald~F. Gariepy, \emph{Measure theory and fine
  properties of functions}, Studies in Advanced Mathematics, CRC Press, Boca
  Raton, FL, 1992.

\bibitem[ES85]{EellsSalamon}
James Eells and Simon Salamon, \emph{Twistorial constructions of harmonic maps
  of surfaces into four-manifolds}, Ann. Sc. Norm. Sup. Pisa \textbf{12}
  (1985), 589--640.

\bibitem[Gau96]{Gauduchon}
Paul Gauduchon, \emph{Complex structures on compact conformal manifolds of
  negative type}, Complex analysis and geometry (Trento, 1993), Lecture Notes
  in Pure and Appl. Math., vol. 173, Dekker, New York, 1996, pp.~201--212.

\bibitem[GH94]{GriffithsHarris}
Phillip Griffiths and Joseph Harris, \emph{Principles of algebraic geometry},
  Wiley Classics Library, John Wiley \& Sons Inc., New York, 1994, Reprint of
  the 1978 original.

\bibitem[GHR84]{GatesHullRocek}
S.~J. Gates, C.~M. Hull, and M.~Ro{\v c}ek, \emph{Twisted multiplets and new
  supersymmetric nonlinear $\sigma$-models}, Nuclear Phys. B \textbf{248}
  (1984), 157--186.

\bibitem[Gun90]{GunningII}
Robert~C. Gunning, \emph{Introduction to holomorphic functions of several
  variables. {V}ol. {II}}, The Wadsworth \& Brooks/Cole Mathematics Series,
  Wadsworth \& Brooks/Cole Advanced Books \& Software, Monterey, CA, 1990,
  Local theory.

\bibitem[GW93]{GudWood}
Sigmundur Gudmundsson and John~C. Wood, \emph{Multivalued harmonic morphisms},
  Math. Scand. \textbf{73} (1993), no.~1, 127--155.

\bibitem[Hel01]{Helgason}
Sigurdur Helgason, \emph{Differential geometry, {L}ie groups, and symmetric
  spaces}, Graduate Studies in Mathematics, vol.~34, American Mathematical
  Society, Providence, RI, 2001.

\bibitem[Hit]{Hitchin3}
Nigel Hitchin, \emph{Bihermitian metrics on {D}el {P}ezzo surfaces},
  math.DG/0608213.

\bibitem[HJ85]{HornJohnson}
R.~A. Horn and C.~R. Johnson, \emph{Matrix analysis}, Cambridge University
  Press, Cambridge, 1985.

\bibitem[HK79]{HosokawaKawauchi}
Fujitsugu Hosokawa and Akio Kawauchi, \emph{Proposals for unknotted surfaces in
  four-spaces}, Osaka J. Math. \textbf{16} (1979), no.~1, 233--248.

\bibitem[Kob95]{Kobayashi}
Shoshichi Kobayashi, \emph{Transformation groups in differential geometry},
  Springer, Berlin, 1995.

\bibitem[Kob99]{Kobak}
Piotr Kobak, \emph{Explicit doubly-hermitian metrics}, Differential Geom. Appl.
  \textbf{10} (1999), 179--185.

\bibitem[Laf82]{Lafontaine}
Jacques Lafontaine, \emph{Remarques sur les vari\'et\'es conform\'ement
  plates}, Math. Ann. \textbf{259} (1982), no.~3, 313--319.

\bibitem[Mum95]{Mumford}
David Mumford, \emph{Algebraic geometry. {I}}, Classics in Mathematics,
  Springer-Verlag, Berlin, 1995, Complex projective varieties.

\bibitem[Nay97]{Nayatani}
Shin Nayatani, \emph{Patterson-{S}ullivan measure and conformally flat
  metrics}, Math. Z. \textbf{225} (1997), no.~1, 115--131.

\bibitem[Oba72]{Obata}
Morio Obata, \emph{The conjectures on conformal transformations of {R}iemannian
  manifolds}, J. Differential Geometry \textbf{6} (1971/72), 247--258.

\bibitem[Pol04]{Politi}
Tiziano Politi, \emph{A continuous approach for the computation of the
  hyperbolic singular value decomposition}, Computational science---ICCS 2004.
  Part IV, Lecture Notes in Comput. Sci., vol. 3039, Springer, Berlin, 2004,
  pp.~467--474.

\bibitem[Pon92a]{PontecorvoTrans}
Massimiliano Pontecorvo, \emph{On twistor spaces of anti-self-dual {H}ermitian
  surfaces}, Trans. Amer. Math. Soc. \textbf{331} (1992), no.~2, 653--661.

\bibitem[Pon92b]{Pontecorvo}
\bysame, \emph{Uniformization of conformally flat {H}ermitian surfaces},
  Differential Geom. Appl. \textbf{2} (1992), no.~3, 295--305.

\bibitem[Pon97]{PontecorvoBi}
\bysame, \emph{Complex structures on {R}iemannian four-manifolds}, Math. Ann.
  \textbf{309} (1997), no.~1, 159--177.

\bibitem[Rol90]{Rolfsen}
Dale Rolfsen, \emph{Knots and links}, Mathematics Lecture Series, vol.~7,
  Publish or Perish Inc., Houston, TX, 1990.

\bibitem[RS53]{RemmertStein}
Reinhold Remmert and Karl Stein, \emph{\"{U}ber dei wesentlichen
  {S}ingularit\"aten analytischer {M}engen}, Math. Ann. \textbf{126} (1953),
  263--306.

\bibitem[Shi68]{Shiffman}
Bernard Shiffman, \emph{On the removal of singularities of analytic sets},
  Michigan Math. J. \textbf{15} (1968), 111--120.

\bibitem[Ste51]{Steenrod}
Norman Steenrod, \emph{The {T}opology of {F}ibre {B}undles}, Princeton
  Mathematical Series, vol. 14, Princeton University Press, Princeton, N. J.,
  1951.

\bibitem[Str80]{Strang}
Gilbert Strang, \emph{Linear algebra and its applications}, second ed.,
  Academic Press [Harcourt Brace Jovanovich Publishers], New York, 1980.

\bibitem[SY88]{SchoenYaulcf}
R.~Schoen and S.-T. Yau, \emph{Conformally flat manifolds, {K}leinian groups
  and scalar curvature}, Invent. Math. \textbf{92} (1988), no.~1, 47--71.

\bibitem[SY94]{SchoenYau}
\bysame, \emph{Lectures on differential geometry}, Conference Proceedings and
  Lecture Notes in Geometry and Topology, I, International Press, Cambridge,
  MA, 1994.

\bibitem[Tan72]{Tanno}
Sh{\v{u}}kichi Tanno, \emph{{$4$}-dimensional conformally flat {K}\"ahler
  manifolds}, T\v ohoku Math. J. (2) \textbf{24} (1972), 501--504.

\bibitem[Thu35]{Thullen}
Peter Thullen, \emph{\"{U}ber die wesentlichen {S}ingularit\"aten analytischer
  {F}unktionen und {F}l\"achen im {R}aume von {$n$} komplexen
  {V}er\"anderlichen}, Math. Ann. \textbf{111} (1935), no.~1, 137--157.

\bibitem[Vai82]{Vaisman}
Izu Vaisman, \emph{Generalized {H}opf manifolds}, Geom. Dedicata \textbf{13}
  (1982), no.~3, 231--255.

\bibitem[Woo92]{WoodIJM}
John~C. Wood, \emph{Harmonic morphisms and {H}ermitian structures on {E}instein
  {$4$}-manifolds}, Internat. J. Math. \textbf{3} (1992), no.~3, 415--439.

\end{thebibliography}

\end{document}